\newcommand{\theoname}{Theorem}
\newcommand{\lemmname}{Lemma}
\newcommand{\coroname}{Corollary}
\newcommand{\propname}{Proposition}
\newcommand{\definame}{Definition}
\newcommand{\hyponame}{Hypotheses}
\newcommand{\remkname}{Remark}
\newcommand{\explname}{Example}
\theoremstyle{plain}
\newtheorem{theo}{\theoname}[section]
\newtheorem{lemm}[theo]{\lemmname}
\newtheorem{coro}[theo]{\coroname}
\newtheorem{prop}[theo]{\propname}
\theoremstyle{definition}
\newtheorem{defi}[theo]{\definame}
\newtheorem{hypo}[theo]{\hyponame}
\newtheorem{remk}[theo]{\remkname}
\DeclareMathOperator{\Lip}{Lip}
\DeclareMathOperator{\diam}{diam}
\DeclareMathOperator{\diverg}{div}
\DeclareMathOperator{\conv}{co}
\DeclareMathOperator{\Proj}{Pr}
\DeclareMathOperator{\Adm}{Adm}
\DeclareMathOperator{\MFG}{MFG}
\DeclareMathOperator{\OCP}{OCP}
\DeclareMathOperator{\Opt}{Opt}
\DeclareMathOperator{\OOpt}{\mathbf{Opt}}
\newcommand*\diff{\mathop{}\!\mathrm{d}}
\newcommand{\dist}{\mathbf{d}}
\newcommand{\R}{\mathbb{R}}
\newcommand{\suchthat}{\;|\:}
\newcommand{\midsuchthat}{\;\middle|\:}
\newcommand{\gengrad}{\partial^{\mathrm{C}}}
\newcommand{\abs}[1]{\left\lvert #1\right \rvert}
\numberwithin{figure}{section}
\numberwithin{equation}{section}
\begin{document}

\setlength{\parskip}{1pt plus 1pt minus 1pt}

\setlist[enumerate, 1]{label={\textnormal{(\alph*)}}, ref={(\alph*)}, leftmargin=0pt, itemindent=*}
\setlist[description, 1]{leftmargin=0pt, itemindent=*}
\setlist[enumerate, 2]{label={\textnormal{(\roman*)}}, ref={(\roman*)}}

\def\paperTitle{{Minimal-time mean field games}}
\newcommand{\paperKeywords}{Optimal control, Nash equilibrium, time-dependent eikonal equation, congestion games, Pontryagin Maximum Principle, MFG system}
\newcommand{\paperMSC}{91A13, 49N70, 49K15, 35Q91}

\expandafter\title\paperTitle

\author{Guilherme Mazanti}
\address{Laboratoire de Math\'ematiques d'Orsay, Univ.\ Paris-Sud, CNRS, Univ.\ Paris-Saclay, 91405 Orsay, France.}
\email{guilherme.mazanti@math.u-psud.fr}

\author{Filippo Santambrogio}
\address{Institut Camille Jordan, Universit\'e Claude Bernard Lyon 1, 43 Boulevard du 11 novembre 1918, 69622 Villeurbanne cedex, France.}
\email{santambrogio@math.univ-lyon1.fr}

\keywords{\paperKeywords}
\subjclass[2010]{\paperMSC}

\thanks{This work was partially supported by a public grant as part of the ``Investissement d'avenir'' project, reference ANR-11-LABX-0056-LMH, LabEx LMH, PGMO project VarPDEMFG, and by the ANR (Agence Nationale de la Recherche) project ANR-16-CE40-0015-01. The first author was also partially supported by the Hadamard Mathematics LabEx (LMH) through the grant number ANR-11-LABX-0056-LMH in the ``Investissement d'avenir'' project.}

\begin{abstract}
This paper considers a mean field game model inspired by crowd motion where agents want to leave a given bounded domain through a part of its boundary in minimal time. Each agent is free to move in any direction, but their maximal speed is bounded in terms of the average density of agents around their position in order to take into account congestion phenomena.

After a preliminary study of the corresponding minimal-time optimal control problem, we formulate the mean field game in a Lagrangian setting and prove existence of Lagrangian equilibria using a fixed point strategy. We provide a further study of equilibria under the assumption that agents may leave the domain through the whole boundary, in which case equilibria are described through a system of a continuity equation on the distribution of agents coupled with a Hamilton--Jacobi equation on the value function of the optimal control problem solved by each agent. This is possible thanks to the semiconcavity of the value function, which follows from some further regularity properties of optimal trajectories obtained through Pontryagin Maximum Principle. Simulations illustrate the behavior of equilibria in some particular situations.
\end{abstract}

\maketitle

\hypersetup{pdftitle=\paperTitle, pdfauthor={Guilherme Mazanti and Filippo Santambrogio}, pdfkeywords={\paperKeywords}, pdfsubject={\paperMSC}}

\tableofcontents

\section{Introduction}

Introduced around 2006 by Jean-Michel Lasry and Pierre-Louis Lions \cite{Lasry2006JeuxI, Lasry2006JeuxII, Lasry2007Mean} and independently by Peter E.\ Caines, Minyi Huang, and Roland P.\ Malhamé \cite{Huang2006Large, Huang2007Large, Huang2003Individual}, mean field games (written simply MFGs in this paper for short) are differential games with a continuum of players, assumed to be rational, indistinguishable, individually neglectable, and influenced only by the average behavior of other players through a mean-field type interaction. Their original purpose was to provide approximations of Nash equilibria of games with a large number of symmetric players, with motivations from economics \cite{Lasry2006JeuxI, Lasry2006JeuxII, Lasry2007Mean} and engineering \cite{Huang2006Large, Huang2007Large, Huang2003Individual}. In this paper, we use the words ``player'' and ``agent'' interchangeably to refer to those taking part in a game.

Since their introduction, mean field games have attracted much research effort and several works have investigated subjects such as approximation results (how games with a large number of symmetric players converge to MFGs \cite{Kolokoltsov2014Rate, Cardaliaguet2017Convergence}), numerical approximations \cite{Gueant2012New, Carlini2014Fully, Achdou2010Mean, Achdou2016Convergence}, games with large time horizon \cite{Cardaliaguet2013Long1, Cardaliaguet2013Long2}, variational mean field games \cite{Cardaliaguet2016First, Benamou2017Variational, Meszaros2015Variational, Prosinski2017Global}, games on graphs or networks \cite{Gueant2015Existence, Camilli2015Model, Gomes2013Continuous, Cacace2017Numerical}, or the characterization of equilibria using the master equation \cite{CardaliaguetMaster, Bensoussan2017Interpretation, Carmona2014Master}. We refer to \cite{Gomes2014Mean, Gueant2011Mean, CardaliaguetNotes} for more details and further references on mean field games.

The goal of this paper is to study a simple mean field game model for crowd motion. The mathematical modeling and analysis of crowd motion has been the subject of a very large number of works from many different perspectives, motivated not only by understanding but also by controlling and optimizing crowd behavior \cite{Henderson1971Statistics, Piccoli2011Time, Maury2011Handling, Helbing2000Simulating, Faure2015Crowd, Maury2010Macroscopic, CriPicTos}. Some mean field game models inspired by crowd motion have been considered in the literature, such as in \cite{Lachapelle2011Mean, Benamou2017Variational, Cardaliaguet2016First, Burger2013Mean}. Most of these models, as well as most mean field games model in general, consider that the movement of agents takes place in a fixed time interval, and that each agent is free to choose their speed, which is only penalized in the cost. The goal of this paper is to propose and study a novel model where the final time for the movement of an agent is free, and is actually the agent's minimization criterion, and an agent's maximal speed is constrained in terms of the average distribution of agents around their position.  This is motivated by the fact that, in some crowd motion situations, an agent may not be able to move faster by simply paying some additional cost, since the congestion provoked by other agents may work as a physical barrier for the agent to increase their speed. In some sense the closest MFG model to ours is the one in \cite{AchPor-cong}, where congestion effects are modeled via a cost which is multiplicative in the speed and the density (and a constraint could be obtained in the limit); yet, also the model in \cite{AchPor-cong} is set on a fixed time horizon and cannot catch the limit case where speed and density are related by a constraint. As a result, the model that we propose is novel and deserves the preliminary study that we develop in the present paper.

The model we consider in this paper is related to Hughes' model for crowd motion \cite{Hughes2002Continuum, Hughes2003Flow}. As the model we propose here, Hughes' model also considers agents who aim at leaving in minimal time a bounded domain under a congestion-dependent constraint on their speeds. In the present paper, for modeling and existence purposes, we consider both the case where the agents exit through all the boundary and the case where the exit is only a part of it, while some results characterizing equilibria in terms of a system of PDEs require regularity which cannot be obtained when the exit is only trhough a part of the boundary.

The main difference between our model and Hughes' is that, in the latter, at each time, an agent moves in the optimal direction to the boundary assuming that the distribution of agents remains constant, whereas in our model, agents take into account the future evolution of the distribution of agents in the computation of their optimal trajectories. This accounts for the time derivative in the Hamilton--Jacobi equation from \eqref{SystMFG}, which is the main difference between \eqref{SystMFG} and the equations describing the motion of agents in Hughes' model. The knowledge of the future necessary for solving the minimization problem in our model can be interpreted, similarly to other mean field games, as an anticipation of future behavior of other agents based on past experiences in the same situation. In particular, a mean field game model such as ours should be suitable for modeling the behavior of a crowd walking on the streets or subway tunnels, but not for panic situations such as emergency evacuations.

We formulate the notion of equilibrium of a mean field game in this paper in a Lagrangian setting. Contrarily to the classical approach for mean field games consisting on defining an equilibrium in terms of a time-varying measure $m_t$ describing the distribution of agents at time $t$, the Lagrangian approach relies instead on describing the motion of agents by a measure on the set of all possible trajectories. This is a classical approach in optimal transport problems (see, e.g., \cite{Santambrogio2015Optimal, Ambrosio2005Gradient, Villani2009Optimal}), which has been used for instance in \cite{Brenier1989Least} to study incompressible flows, in \cite{Carlier2008Optimal} for Wardrop equilibria in traffic flow, or in \cite{Bernot2009Optimal} for branched transport problems. The Lagrangian approach has also been used for defining equilibria of mean field games, for instance in \cite{Benamou2017Variational, CannarsaExistence, Cardaliaguet2016First, Cardaliaguet2015Weak}.

Our main results are \begin{enumerate*}[label={(\roman*)}]\item Theorem \ref{MainTheoExist}, stating the existence of an equilibrium for the mean field game model we propose in this paper, and \item Theorem \ref{MainTheoCharact}, stating that equilibria satisfy a continuity equation on the time-dependent measure representing the distribution of agents coupled with a Hamilton--Jacobi equation on the value function of the time-minimization problem solved by each agent.\end{enumerate*} The proof of Theorem \ref{MainTheoExist} relies on a reformulation of the notion of equilibrium (provided in Definition \ref{DefiEquilibriumMFG}) in terms of a fixed point and on Kakutani fixed point theorem. The Hamilton--Jacobi equation from Theorem \ref{MainTheoCharact} can be obtained by standard methods on optimal control, but the continuity equation relies on further properties of the value function, and in particular its semiconcavity, obtained thanks to a detailed study of optimal trajectories. 

The notion of equilibrium used in this paper is standard in non-atomic congestion games, and a typical example of these games can be observed in Wardrop equilibria. This notion of equilibrium, introduced in \cite{Wardrop1952Theoretical} for vehicular traffic flow models, has many connections with MFGs and in particular with minimal-time MFGs, and has been studied in several other works \cite{Carlier2011Continuous, Carlier2008Optimal, Haurie1985Relationship, Fischer2010Fast}. In Wardrop equilibria the players are the vehicles and their goal is to minimize their traveling time; a major difference with respect to the notion used in this paper is that Wardrop equilibria are usually considered in the stationary case where the flow of vehicles is constant, whereas MFGs explicitly address the case of a time-dependent flow of agents.

The paper is organized as follows. Section \ref{SecNotations} provides the main notations used in this paper and recalls some classical results and definitions. Section \ref{SecMFG} presents the mean field game model that we consider, together with an associated optimal control problem, presenting and discussing the main assumptions that are used in this paper. The associated optimal control problem is studied in Section \ref{SecOCP}, its main results being used in Section \ref{SecExistence} to obtain existence of an equilibrium to our mean field game and in Section \ref{SecCharacterization} to prove that the distribution of agents and the value function of the optimal control problem solved by each agent satisfy a system of partial differential equations. Examples and numerical simulations are provided in Section \ref{SecExamples}.

\section{Notations and preliminary definitions}
\label{SecNotations}

Let us set the main notations used in this paper. The usual Euclidean norm in $\mathbb R^d$ is denoted by $\abs{\cdot}$, and we write $\mathbb S^{d-1}$ for the usual unit Euclidean sphere in $\mathbb R^d$, i.e., $\mathbb S^{d-1} = \{x \in \mathbb R^d \suchthat \abs{x} = 1\}$. Given two sets $A, B$, the notation $f: A \rightrightarrows B$ indicates that $f$ is a set-valued map from $A$ to $B$, i.e., $f$ maps a point $x \in A$ to a subset $f(x) \subset B$.

For a metric space $X$, $x \in X$, and $r \in \mathbb R_+$, the open and closed balls of center $x$ and radius $r$ are denoted respectively by $B_X(x, r)$ and $\overline B_X(x, r)$, these notations being simplified respectively to $B_d(x, r)$ and $\overline B_d(x, r)$ when $X = \mathbb R^d$, with a further simplification to $B_d$ and $\overline B_d$ if $x = 0$ and $r = 1$. Given two metric spaces $X$ and $Y$, we denote by $\mathcal C(X, Y)$ the set of continuous functions from $X$ to $Y$ and by $\Lip(X, Y)$ the set of those which are Lipschitz continuous. When $X \subset \mathbb R^d$, we will also make use of the space $\mathcal C^{1, 1}(X, \mathbb R)$.

Given a complete, separable, and bounded metric space $X$ with metric $\dist$, let $\mathcal C_X = \mathcal C(\mathbb R_+, X)$\label{MathcalCX} be endowed with the topology of uniform convergence on compact sets ($\mathcal C_X$ is a Polish space for this distance, see, e.g., \cite[Corollary 3, page X.9; Corollary, page X.20; and Corollary, page X.25]{Bourbaki2007Topologie}). Whenever needed, we endow $\mathcal C_X$ with the complete distance
\[
\dist_{\mathcal C_X}(\gamma_1, \gamma_2) = \sum_{n=1}^\infty \frac{1}{2^n} \sup_{t \in [0, n]}\dist(\gamma_1(t), \gamma_2(t)).
\]
For $c \geq 0$, we denote by $\Lip_c(X)$\label{LipcX} the subset of $\mathcal C_X$ containing all $c$-Lipschitz continuous functions. Recall that, if $X$ is compact, then, thanks to Arzelà--Ascoli Theorem \cite[Corollary 3, page X.19]{Bourbaki2007Topologie}, $\Lip_c(X)$ is compact. For $t \in \mathbb R_+$, we denote by $e_t: \mathcal C_X \to X$\label{DefiET} the evaluation map $e_t(\gamma) = \gamma(t)$.

For a given topological space $X$, let $\mathcal P(X)$\label{MathcalPX} denote the set of all Borel probability measures on $X$. If $X$, endowed with a metric $\dist$, is a complete, separable, and bounded metric space, we endow $\mathcal P(X)$ with the usual Wasserstein distance $W_1$\label{DefiW1} defined by its dual formulation (see, e.g., \cite[Chapter 7]{Ambrosio2005Gradient} and \cite[Chapter 5]{Santambrogio2015Optimal}):
\begin{equation}
\label{WassersteinDual}
W_1(\mu, \nu) = \sup\left\{\int_X \phi(x) \diff(\mu - \nu)(x) \midsuchthat \phi \in \Lip(X, \mathbb R) \text{ is $1$-Lipschitz}\right\}.
\end{equation}

Let $X$ be a metric space with metric $\dist$, $a, b \in \mathbb R$ with $a < b$, and $\gamma: (a, b) \to X$. The \emph{metric derivative}\label{AbsDotGamma} of $\gamma$ at a point $t \in (a, b)$ is defined by
\[\abs{\dot\gamma}(t) = \lim_{s \to t} \frac{\dist(\gamma(s), \gamma(t))}{\abs{s - t}}\]
whenever this limit exists. Recall that, if $\gamma$ is absolutely continuous, then $\abs{\dot\gamma}(t)$ exists for almost every $t \in (a, b)$ (see, e.g., \cite[Theorem 1.1.2]{Ambrosio2005Gradient}).

We shall also need in this paper some classical tools from non-smooth analysis, which we briefly recall now, following the presentation from \cite[Chapter 2]{Clarke1990Optimization}.

\begin{defi}
Let $O \subset \mathbb R^d$ be open, $\phi: O \to \mathbb R$ be Lipschitz continuous, $x \in O$, and $v \in \mathbb R^d$. We define \emph{generalized directional derivative} $\phi^\circ(x, v)$ of $\phi$ at $x$ in the direction $v$ by
\[
\phi^\circ(x, v) = \limsup_{\substack{y \to x \\ h \searrow 0}} \frac{\phi(y + h v) - \phi(y)}{h},
\]
and the \emph{generalized gradient}\label{GenGradPhi} $\gengrad \phi(x)$ of $\phi$ at $x$ by
\begin{equation}
\label{EqDefiGeneralizedGradient}
\gengrad \phi(x) = \{\xi \in \mathbb R^d \suchthat \phi^\circ(x, v) \geq \xi \cdot v \text{ for all } v \in \mathbb R^d\}.
\end{equation}
\end{defi}

The generalized gradient is also sometimes referred to as \emph{Clarke's gradient} in the literature, which explains the notation $\gengrad$ used here. We shall also need the notion of partial generalized gradients.

\begin{defi}
Let $O_1 \subset \mathbb R^{d_1}$, $O_2 \subset \mathbb R^{d_2}$ be open sets, $\phi: O_1 \times O_2 \to \mathbb R$ be Lipschitz continuous, and $(x_1, x_2) \in O_1 \times O_2$. The \emph{partial generalized gradient} $\gengrad_1\phi(x_1, x_2) = \gengrad_{x_1}\phi(x_1, x_2)$ is defined as the generalized gradient of the function $x_1 \mapsto \phi(x_1, x_2)$. The partial generalized gradient $\gengrad_{2} \phi = \gengrad_{x_2}\phi$ is defined similarly.
\end{defi}

Recall that, in general, there is no link between $\gengrad\phi(x_1, x_2)$ and $\gengrad_1\phi(x_1, x_2) \times \gengrad_2\phi(x_1, x_2)$ (see, e.g., \cite[Example 2.5.2]{Clarke1990Optimization}).

\begin{defi}
Let $C \subset \mathbb R^d$ be non-empty and $\dist_C: \mathbb R^d \to \mathbb R_+$ be the distance to $C$, defined by $\dist_C(x) = \inf_{y \in C}\abs{x-y}$. Let $x \in C$. We define the \emph{tangent cone} $T_C(x)$ to $C$ at $x$ by
\[
T_C(x) = \{v \in \mathbb R^d \suchthat \dist_C^\circ(x, v) = 0\},
\]
and the \emph{normal cone} $N_C(x)$ to $C$ at $x$ by
\[
N_C(x) = \{\xi \in \mathbb R^d \suchthat \xi \cdot v \leq 0 \text{ for all } v \in T_C(x)\}.
\]
\end{defi}

We refer to \cite[Chapter 10]{Clarke2013Functional} and \cite[Chapter 2]{Clarke1990Optimization} for standard properties of generalized directional derivatives, generalized gradients, and tangent and normal cones.

\section{The MFG model}
\label{SecMFG}

We provide in this section a mathematical description of the mean field game model considered in this paper. Let $(X,\dist)$ be a complete and separable metric space, $\Gamma \subset X$ be non-empty and closed, and $K: \mathcal P(X) \times X \to \mathbb R_+$. We consider the following mean field game, denoted by $\MFG(X, \Gamma, K)$\label{MFGXGammaK}. Agents evolve in $X$, their distribution at time $t \in \mathbb R_+$ being given by a probability measure $m_t \in \mathcal P(X)$. The goal of each agent is to reach the exit $\Gamma$ in minimal time, and, in order to model congestion, we assume the speed of an agent at a position $x$ in time $t$ to be bounded by $K(m_t, x)$.

Notice that, for a given agent, their choice of trajectory $\gamma$ depends on the distribution of all agents $m_t$, since the speed of $\gamma$, i.e.\ its metric derivative $\abs{\dot\gamma}$, should not exceed $K(m_t, x)$. On the other hand, the distribution of the agents $m_t$ itself depends on how agents choose their trajectories $\gamma$. We are interested here in the equilibrium situations, meaning that, starting from a time evolution of the distribution of agents $m: \mathbb R_+ \to \mathcal P(X)$, the trajectories $\gamma$ chosen by agents induce an evolution of the initial distribution of agents $m_0$ that is precisely given by $m$.

In order to provide a more mathematically precise definition of this mean field game and the notion of equilibrium, we first introduce an optimal control problem where agents evolving in $X$ want to reach $\Gamma$ in minimal time, their speed being bounded by some time- and state-dependent function $k: \mathbb R_+ \times X \to \mathbb R_+$. Here $k$ will not depend on the density of the agents, it will be considered as given. This optimal control problem is denoted in the sequel by $\OCP(X, \Gamma, k)$\label{OCPXGammak}.

\begin{defi}[$\OCP(X, \Gamma, k)$]
\label{DefiOCP}
Let $(X,\dist)$ be a complete and separable metric space, $\Gamma \subset X$ be non-empty and closed, and $k: \mathbb R_+ \times X \to \mathbb R_+$ be continuous.
\begin{enumerate}
\item A curve $\gamma \in \Lip(\mathbb R_+, X)$ is said to be $k$-\emph{admissible} for $\OCP(X, \Gamma, k)$ if its metric derivative $\abs{\dot\gamma}$ satisfies $\abs{\dot\gamma}(t) \leq k(t, \gamma(t))$ for almost every $t \in \mathbb R_+$. The set of all $k$-admissible curves is denoted by $\Adm(k)$\label{DefiAdmk}.

\item Let $t_0 \in \mathbb R_+$. The \emph{first exit time}\label{Taut0gamma} after $t_0$ of a curve $\gamma \in \Lip(\mathbb R_+, X)$ is the number $\tau(t_0, \gamma) \in [t_0, +\infty]$ defined by
\[
\tau(t_0, \gamma) = \inf\{t \geq 0 \mid \gamma(t + t_0) \in \Gamma\}.
\]

\item Let $t_0 \in \mathbb R_+$ and $x_0 \in X$. A curve $\gamma \in \Lip(\mathbb R_+, X)$ is said to be a \emph{time-optimal curve} or \emph{time-optimal trajectory} (or simply \emph{optimal curve} or \emph{optimal trajectory}) for $(k, t_0, x_0)$ if $\gamma \in \Adm(k)$, $\gamma(t) = x_0$ for every $t \in [0, t_0]$, $\tau(t_0, \gamma) < +\infty$, $\gamma(t) = \gamma(t_0 + \tau(t_0, \gamma)) \in \Gamma$ for every $t \in [t_0 + \tau(t_0, \gamma),\allowbreak +\infty)$, and
\begin{equation}
\label{EqMinimalTime}
\tau(t_0, \gamma) = \min_{\substack{\beta \in \Adm(k) \\ \beta(t_0) = x_0}} \tau(t_0, \beta).
\end{equation}
The set of all optimal curves for $(k, t_0, x_0)$ is denoted by $\Opt(k, t_0, x_0)$\label{Optkt0x0}.
\end{enumerate}
\end{defi}

\begin{remk}
\label{RemkControlSyst}
If $X\subset\R^d$, the metric derivative of a curve $\gamma \in \Lip(\mathbb R_+, X)$ coincides with the norm of the usual derivative when it exists (cf.\ e.g.\ \cite[Remark 1.1.3]{Ambrosio2005Gradient}), and one obtains that a curve $\gamma$ is $k$-admissible if and only if there exists a measurable function $u: \mathbb R_+ \to \overline B_{d}$ such that
\begin{equation}
\label{AdmissibleIsControlSystem}
\dot\gamma(t) = k(t, \gamma(t)) u(t).
\end{equation}
System \eqref{AdmissibleIsControlSystem} can be seen as a control system, where $\gamma(t) \in X \subset \R^d$ is the state and $u(t) \in \overline B_{d}$ is the control input. This point of view allows one to formulate \eqref{EqMinimalTime} as an optimal control problem, justifying the terminology used in this paper. Optimal control techniques are a key point in the study of equilibria for $\MFG(X, \Gamma, K)$ carried out in the sequel.
\end{remk}

The relation between the optimal control problem $\OCP(X, \Gamma, k)$ and the mean field game $\MFG\allowbreak(X,\allowbreak \Gamma,\allowbreak K)$ is that, given $K: \mathcal P(X) \times X \to \mathbb R_+$, players from the mean field game $\MFG(X, \Gamma, K)$ solve the optimal control problem $\OCP(X, \Gamma, k)$ with $k: \mathbb R_+ \times X \to \mathbb R_+$ given by $k(t, x) = K(m_t, x)$, where $m_t$ is the distribution of players at time $t$. Using this relation, one can provide the definition of an equilibrium for $\MFG(X, \Gamma, K)$.

\begin{defi}[Equilibrium of $\MFG(X, \Gamma, K)$]
\label{DefiEquilibriumMFG}
Let $(X,\dist)$ be a complete and separable metric spa\-ce, $\Gamma \subset X$ be non-empty and closed, and $K: \mathcal P(X) \times X \to \mathbb R_+$ be continuous. Let $m_0 \in \mathcal P(X)$. A measure $Q \in \mathcal P(\mathcal C_X)$ is said to be a \emph{Lagrangian equilibrium} (or simply \emph{equilibrium}) of $\MFG(X, \Gamma,\allowbreak K)$ with initial condition $m_0$ if ${e_0}_\# Q = m_0$ and $Q$-almost every $\gamma \in \mathcal C_X$ is an optimal curve for $(k, 0, \gamma(0))$, where $k: \mathbb R_+ \times X \to \mathbb R_+$ is defined by $k(t, x) = K({e_t}_\# Q, x)$.
\end{defi}

In order to simplify the notations in the sequel, given $K: \mathcal P(X) \times X \to \mathbb R_+$ and $m: \mathbb R_+ \to \mathcal P(X)$, we define $k: \mathbb R_+ \times X \to \mathbb R_+$ by $k(t, x) = K(m_t, x)$ for $(t, x) \in \mathbb R_+ \times X$ and say that $\gamma \in \Lip(\mathbb R_+, X)$ is $m$-admissible for $\MFG(X, \Gamma, K)$ if it is $k$-admissible for $\OCP(X, \Gamma, k)$, and denote $\Adm(k)$ simply by $\Adm(m)$\label{DefiAdmm}. We also say that, given $(t_0, x_0) \in \mathbb R_+ \times X$, $\gamma \in \Lip(\mathbb R_+, X)$ is an optimal trajectory for $(m, t_0, x_0)$ if it is an optimal trajectory for $(k, t_0, x_0)$ for the optimal control problem $\OCP(X, \Gamma, k)$, and denote the set of optimal trajectories for $(m, t_0, x_0)$ by $\Opt(m, t_0, x_0)$\label{Optmt0x0}. Given $Q \in \mathcal P(\mathcal C_X)$, we also consider the time-dependent measure $\mu^Q: \mathbb R_+ \to \mathcal P(X)$ given by $\mu^Q_t = {e_t}_\# Q$, and denote $\Adm(\mu^Q)$ and $\Opt(\mu^Q, t_0, x_0)$ simply by $\Adm(Q)$\label{DefiAdmQ} and $\Opt(Q, t_0, x_0)$\label{OptQt0x0}, respectively.

\begin{remk}
\label{RemkOptimalTrajectoriesRemainStoppedAfterFinalTime}
Since players do not necessarily arrive at the target set $\Gamma$ all at the same time, the behavior of players who have not yet arrived may be influenced by the players who already arrived at $\Gamma$ since $K$ is not necessarily local, i.e., $K(m_t, x)$ may depend on the measure $m_t$ on points other than only $x$. On the other hand, after arriving at $\Gamma$, players are no longer submitted to the time minimization criterion, and thus their trajectory might in principle be arbitrary after their arrival time. In order to avoid ambiguity, we have decided to fix the behavior of players who already arrived at $\Gamma$ by saying that a trajectory is optimal only when it remains at its arrival position after its arrival time. Similarly, we assume that optimal trajectories starting at a time $t_0 > 0$ remain constant on the interval $[0, t_0]$.
\end{remk}

The study of $\OCP(X, \Gamma, k)$ and $\MFG(X, \Gamma, K)$ carried out in this paper requires some assumptions on $X$, $\Gamma$, $K$ and $k$. For simplicity, we state all such assumptions here, and refer to them wherever needed in the sequel.

\begin{hypo}
\label{MainHypo}\mbox{}
\begin{enumerate}
\item\label{HypoXCompact} The metric space $(X,\dist)$ is compact and $\Gamma \subset X$ is non-empty and closed.
\item\label{Hypok1Lip} The function $k: \mathbb R_+ \times X \to \mathbb R_+$ is Lipschitz continuous and there exist $K_{\min}, K_{\max} \in \mathbb R_+^\ast$ such that, for all $(t, x) \in \mathbb R_+ \times X$, one has $k(t, x) \in [K_{\min}, K_{\max}]$.
\item\label{HypoXDist} There exists $D > 0$ such that, for every $x, y \in X$, there exist $T \in [0, D\dist(x, y)]$ and $\gamma \in \Lip([0, T],\allowbreak X)$ such that $\gamma(0) = x$, $\gamma(T) = y$, and $\abs{\dot\gamma}(t) = 1$ for almost every $t \in [0, T]$.
\item\label{HypoXOverlineOmega} One has $X = \overline\Omega$ for $\Omega \subset \mathbb R^d$ open, bounded, and non-empty.
\item\label{HypoExitOnPartialOmega} \ref{HypoXOverlineOmega} holds and $\Gamma = \partial\Omega$.
\item\label{HypokGlobLip} \ref{HypoXOverlineOmega} holds and $k \in \mathcal \Lip(\mathbb R \times \mathbb R^d, \mathbb R_+)$.
\item\label{HypokGlobC11} \ref{HypoXOverlineOmega} holds and $k \in \mathcal C^{1, 1}(\mathbb R \times \mathbb R^d, \mathbb R_+)$.
\item\label{HypoPartialOmegaESP} \ref{HypoXOverlineOmega} holds and $\Omega$ satisfies the \emph{uniform exterior sphere property}: there exists $r > 0$ and $C \subset \mathbb R^d \setminus \Omega$ such that
\[\mathbb R^d \setminus \Omega = \bigcup_{x \in C} \overline B_d(x, r).\]
\item\label{HypoK2Lip} The function $K: \mathcal P(X) \times X \to \mathbb R_+$ is Lipschitz continuous and there exist $K_{\min}, K_{\max} \in \mathbb R_+^\ast$ such that, for all $(\mu, x) \in \mathcal P(X) \times X$, one has $K(\mu, x) \in [K_{\min}, K_{\max}]$.
\item\label{HypoKConvolution} \ref{HypoXOverlineOmega} holds and $K: \mathcal P(\overline\Omega) \times \mathbb R^d$ is given by $K = g \circ E$, where $g \in \mathcal C^{1, 1}(\mathbb R_+, \mathbb R_+^\ast)$ and $E: \mathcal P(\overline\Omega) \times \mathbb R^d \to \mathbb R_+$ is given by
\begin{equation}
\label{DefiE}
E(\mu, x) = \int_{\overline\Omega} \chi(x - y) \eta(y) \diff \mu(y),
\end{equation}
with $\chi \in \mathcal C^{1, 1}(\mathbb R^d, \mathbb R_+)$, $\eta \in \mathcal C^{1, 1}(\mathbb R^d, \mathbb R_+)$, $\chi$ and $\eta$ bounded, and $\eta(x) = 0$ and $\nabla \eta(x) = 0$ for every $x \in \partial\Omega$.
\end{enumerate}
\end{hypo}

Hypotheses \ref{MainHypo}\ref{HypoXCompact}--\ref{HypoXDist} are the standard assumptions needed in all the results from Section \ref{SecOCP} for studying $\OCP(X, \Gamma, k)$. Note that \ref{HypoXDist} provides a relation between the distance $\dist(x, y)$ and the length of curves from $x$ to $y$ in $X$, stating that the former is, up to a constant, an upper bound on the latter. This assumption is satisfied when the geodesic metric induced by $\dist$ is equivalent to $\dist$ itself. In particular, it is satisfied if $X$ is a length space. Moreover, \ref{HypoXDist} implies that $X$ is path-connected.

One uses Hypotheses \ref{MainHypo}\ref{HypoXOverlineOmega}--\ref{HypoPartialOmegaESP} in Section \ref{SecOCP} to obtain more properties of $\OCP(X, \Gamma, k)$, such as the facts that the value function satisfies a Hamilton--Jacobi equation and is semiconcave, and that optimal trajectories can be described via the Pontryagin Maximum Principle. Assumption \ref{HypoXOverlineOmega} allows one to work on a subset of an Euclidean space, making it easier to give a meaning to the Hamilton--Jacobi equation, for instance.

Assumption \ref{HypoExitOnPartialOmega} states that the goal of an agent is to leave the domain $\Omega$ through any part of its boundary. This simplifying assumption is first used when applying Pontryagin Maximum Principle to $\OCP(X, \Gamma, k)$, and, even though it is not strictly needed at this point, as remarked in the beginning of Section \ref{SecPMP}, it is important in Section \ref{SecNormGradientAndOptimalControl} to deduce the semiconcavity of the value function in Proposition \ref{PropVarphiSemiconcave}. Indeed, without assumption \ref{HypoExitOnPartialOmega}, one must take into account in $\OCP(X, \Gamma, k)$ the state constraint $\gamma(t) \in \overline\Omega$, and value functions of optimal control problems with state constraints may fail to be semiconcave (see, e.g., \cite[Example 4.4]{Cannarsa2007Lipschitz}).

Notice that \ref{HypoExitOnPartialOmega} is not needed to obtain existence of equilibria for $\MFG(X, \Gamma, K)$ in Section \ref{SecExistence}, nor for establishing the Hamilton--Jacobi equation on the value function $\varphi$ of $\OCP(X, \Gamma, k)$ in Section \ref{SecHJ}. However, to prove that $m$ satisfies the continuity equation from \eqref{SystMFG} and hence complete the description of equilibria by the MFG system in Section \ref{SecCharacterization}, one uses the characterization of optimal trajectories of $\OCP(X, \Gamma, k)$ in terms of the normalized gradient of $\varphi$ carried out in Section \ref{SecNormGradientAndOptimalControl}, the semiconcavity of $\varphi$ being a key ingredient in the proof of the main result of that section, Theorem \ref{TheoNormGradIffSingleton}.

Assumption \ref{HypokGlobLip} is not restrictive with respect to \ref{Hypok1Lip}, stating that one considers $k$ to be extended to the whole space $\mathbb R \times \mathbb R^d$ in a Lipschitz manner, and is included in the list of hypotheses only for simplifying the statements of the results. A classical technique to obtain Lipschitz extensions of Lipschitz continuous functions is by inf-convolution (see, e.g., \cite{Hiriart-Urruty1980Extension}). Assumptions \ref{HypokGlobC11} and \ref{HypoPartialOmegaESP} are important to obtain the semiconcavity of the value function of $\OCP(X, \Gamma, k)$ in Section \ref{SecNormGradientAndOptimalControl}.

Concerning Hypotheses \ref{MainHypo}\ref{HypoK2Lip} and \ref{HypoKConvolution}, they are used in Sections \ref{SecExistence} and \ref{SecCharacterization} to study $\MFG(X, \Gamma, K)$. Assumption \ref{HypoK2Lip} is the counterpart of \ref{Hypok1Lip}, with \ref{Hypok1Lip} being obtained from \ref{HypoK2Lip} when $k$ is given by $k(t, x) = K({e_t}_{\#} Q, x)$ (see Corollary \ref{CorokFromK}). Assumption \ref{HypoKConvolution}, which is reasonable for modeling reasons (the function $g$ is typically required to be non-increasing, but this is not crucial for the mathematical results we present), will be important in order for this $k$ to satisfy also \ref{HypokGlobC11} when $Q$ is an equilibrium (see Proposition \ref{PropK0C11}). Notice that \ref{HypoKConvolution} implies \ref{HypoK2Lip}; more precisely, we have the following result.

\begin{prop}
\label{PropFConvolution}
Let $\Omega \subset \mathbb R^d$ be open, bounded, and non-empty, $X = \overline\Omega$, $g \in \Lip(\mathbb R_+, \mathbb R_+^\ast)$, $\chi \in \Lip(\mathbb R^d,\allowbreak \mathbb R_+)$, $\eta \in \Lip(\overline\Omega, \mathbb R_+)$, and define $K: \mathcal P(\overline\Omega) \times \overline\Omega \to \mathbb R_+$ by
\[
K(\mu, x) = g\left[\int_{\overline\Omega} \chi(x - y) \eta(y) \diff\mu(y)\right].
\]
Then $K$ satisfies Hypothesis \ref{MainHypo}\ref{HypoK2Lip}.
\end{prop}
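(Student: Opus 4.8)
The plan is to establish the two requirements of Hypothesis \ref{MainHypo}\ref{HypoK2Lip} separately, working first with the auxiliary map $E\colon \mathcal P(\overline\Omega) \times \overline\Omega \to \mathbb R_+$ defined by $E(\mu, x) = \int_{\overline\Omega} \chi(x - y)\eta(y)\diff\mu(y)$, and then transferring the corresponding properties to $K = g \circ E$ by composition with $g$. Since $\overline\Omega$ is compact, the set $\overline\Omega - \overline\Omega = \{x - y \suchthat x, y \in \overline\Omega\}$ is a compact subset of $\mathbb R^d$, so the continuous function $\chi$ is bounded on it; write $M_\chi$ for this bound and $M_\eta$ for the bound of the continuous function $\eta$ on $\overline\Omega$. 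As $\chi$ and $\eta$ are nonnegative and $\mu$ is a probability measure, one immediately gets $0 \leq E(\mu, x) \leq M_\chi M_\eta$ for every $(\mu, x)$.

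For the bounds on $K$, I would observe that $E$ takes values in the compact interval $[0, M_\chi M_\eta]$. Since $g$ is continuous (being Lipschitz) and strictly positive on $\mathbb R_+$, it attains a minimum $K_{\min} > 0$ and a maximum $K_{\max} < +\infty$ on this interval, whence $K(\mu, x) = g(E(\mu, x)) \in [K_{\min}, K_{\max}]$ for every $(\mu, x) \in \mathcal P(\overline\Omega) \times \overline\Omega$, which is exactly the boundedness part of the hypothesis.

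It remains to prove that $E$, and hence $K$, is Lipschitz. First I would treat the dependence on $x$: for fixed $\mu$ and $x_1, x_2 \in \overline\Omega$, the estimate $\abs{E(\mu, x_1) - E(\mu, x_2)} \leq \int_{\overline\Omega}\abs{\chi(x_1 - y) - \chi(x_2 - y)}\eta(y)\diff\mu(y) \leq L_\chi M_\eta \abs{x_1 - x_2}$ follows directly from the Lipschitz continuity of $\chi$ (with constant $L_\chi$) together with the bound on $\eta$. For the dependence on $\mu$, fix $x \in \overline\Omega$ and set $\phi_x(y) = \chi(x - y)\eta(y)$. As a product of two bounded Lipschitz functions, $\phi_x$ is Lipschitz on $\overline\Omega$ with constant at most $L := L_\chi M_\eta + M_\chi L_\eta$, and crucially this bound does not depend on $x$. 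Applying the dual formulation of the Wasserstein distance \eqref{WassersteinDual} to the $1$-Lipschitz function $\phi_x / L$ then yields $\abs{E(\mu, x) - E(\nu, x)} = \abs{\int_{\overline\Omega}\phi_x\diff(\mu - \nu)} \leq L\, W_1(\mu, \nu)$. Combining the two estimates through the triangle inequality shows that $E$ is Lipschitz on the product space, and composing with the globally Lipschitz $g$ gives that $K$ is Lipschitz as well.

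The computations involved are elementary; the only points requiring care, and which I would verify explicitly, are that $\phi_x$ is genuinely Lipschitz on $\overline\Omega$ with a constant \emph{uniform} in $x$ (so that the $W_1$-duality can be invoked with a single multiplicative constant), and that $\chi$, although defined on all of $\mathbb R^d$, needs only to be controlled on the bounded set $\overline\Omega - \overline\Omega$, where its boundedness is automatic. These are precisely the two places where the compactness of $\overline\Omega$ enters.
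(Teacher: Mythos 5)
Your proposal is correct and follows essentially the same approach as the paper: the bounds $K_{\min}, K_{\max}$ come from the continuity and positivity of $g$ on a compact interval containing the range of $E$, and the Lipschitz estimate is obtained by splitting the variation of $E$ into its $x$-part (Lipschitz $\chi$ against bounded $\eta$) and its $\mu$-part (the dual formulation \eqref{WassersteinDual} applied to the uniformly Lipschitz functions $y \mapsto \chi(x-y)\eta(y)$), then composing with the Lipschitz function $g$. The only difference is bookkeeping of constants: the paper uses a common Lipschitz constant $M_1$ and common bound $M_2$ for both $\chi$ and $\eta$, yielding the test-function constant $2M_1M_2$ where you have $L_\chi M_\eta + M_\chi L_\eta$.
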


\begin{proof}
Let $E: \mathcal P(\overline\Omega) \times \overline\Omega \to \mathbb R_+$ be defined by \eqref{DefiE}. Let $M_1 > 0$ be a Lipschitz constant for $\chi$ and $\eta$ and $M_2 = \max\{\max_{x \in \overline\Omega} \eta(x),\allowbreak \max_{(x, y) \in {\overline\Omega}^2} \chi(x - y)\}$. Then, for every $y \in \overline\Omega$, the function $z \mapsto \chi(y - z) \eta(z)$ is $2 M_1 M_2$-Lipschitz continuous on $\overline\Omega$ and, in particular, $z \mapsto \frac{\chi(y - z) \eta(z)}{2 M_1 M_2}$ is $1$-Lipschitz continuous on $\overline\Omega$.

Let us first show that $K \in \Lip(\mathcal P(\overline\Omega) \times \overline\Omega, \mathbb R_+)$. Take $\mu, \nu \in \mathcal P(\overline\Omega)$ and $x, y \in \overline\Omega$. Then
\begin{align*}
& E(\mu, x) - E(\nu, y) \\
{} = {} & \int_{\overline\Omega} \chi(x - z) \eta(z) \diff \mu(z) - \int_{\overline\Omega} \chi(y - z) \eta(z) \diff \nu(z) \displaybreak[0] \\
{} = {} & \int_{\overline\Omega} \left[\chi(x - z) - \chi(y - z)\right] \eta(z) \diff\mu(z) + 2 M_1 M_2 \int_{\overline\Omega} \frac{\chi(y - z) \eta(z)}{2 M_1 M_2} \diff(\mu - \nu)(z) \displaybreak[0] \\
{} \leq {} & M_1 M_2 \abs{x - y} + 2 M_1 M_2 W_1(\mu, \nu),
\end{align*}
where we use \eqref{WassersteinDual}. A similar computation starting from $E(\nu, y) - E(\mu, x)$ completes the proof of the fact that $E \in \Lip(\mathcal P(\overline\Omega) \times \overline\Omega, \mathbb R_+)$, and then $K = g \circ E \in \Lip(\mathcal P(\overline\Omega) \times \overline\Omega, \mathbb R_+)$ since $g \in \Lip(\mathbb R_+, \mathbb R_+^\ast)$.

Finally, it follows from the definition of $E$ that $E(\mu, x) \leq M_2^2$ for all $(\mu, x) \in \mathcal P(\overline\Omega) \times \overline\Omega$, and thus
\begin{align*}
K_{\min} & = \inf_{(\mu, x) \in \mathcal P(\overline\Omega) \times \overline\Omega} g \circ E(\mu, x) \geq \min_{x \in [0, M_2^2]} g(x) > 0, \displaybreak[0] \\
K_{\max} & = \sup_{(\mu, x) \in \mathcal P(\overline\Omega) \times \overline\Omega} g \circ E(\mu, x) \leq \max_{x \in [0, M_2^2]} g(x) < +\infty,
\end{align*}
as required.
\end{proof}

\section{Preliminary study of the minimal-time optimal control problem}
\label{SecOCP}

In this section, we provide several properties for the optimal control problem $\OCP(X, \Gamma, k)$. Minimal-time optimal control problems are a classical subject in the optimal control literature (see, e.g., \cite{Cannarsa2004Semiconcave, Pontryagin1962Mathematical, Clarke1990Optimization}). Most works consider the case of an autonomous control system with smooth dynamics in the Euclidean space or a smooth manifold, dealing with the non-autonomous case by a classical state-augmentation technique. Since the study of $\MFG(X, \Gamma, K)$ requires some properties of $\OCP(X, \Gamma, k)$ in less regular cases (for instance when $k$ is only Lipschitz continuous), we provide here a detailed presentation of the properties of $\OCP(X, \Gamma, k)$ in order to highlight which hypotheses are required for each result.

The proofs of classical properties for $\OCP(X, \Gamma, k)$ are provided when the particular structure of $\OCP(X, \Gamma, k)$ allows for simplifications with respect to classical proofs in the literature, and omitted otherwise. Several properties presented here are new and rely on the particular structure of $\OCP(X, \Gamma, k)$. This is the case, for instance, of the lower bound on the time variation of the value function (Proposition \ref{PropMonotoneOptimalTime} and Lemma \ref{LemmP0GreaterThanMinusOne}) and the characterization of optimal trajectories in terms of the normalized gradient presented in Section \ref{SecNormGradientAndOptimalControl}.

We start in Section \ref{SecValueFunction} by studying the value function $\varphi$ corresponding to $\OCP(X, \Gamma, k)$. We then specialize to the case of an optimal control problem on $\overline\Omega \subset \mathbb R^d$ and establish a Hamilton--Jacobi equation for $\varphi$ in Section \ref{SecHJ}, before proving further properties of optimal trajectories obtained from Pontryagin Maximum Principle in Section \ref{SecPMP}. We conclude this section by a characterization of the optimal control in terms of a normalized gradient in Section \ref{SecNormGradientAndOptimalControl}, which also considers the continuity of the normalized gradient.

\subsection{Elementary properties of the value function}
\label{SecValueFunction}

As stated in Remark \ref{RemkControlSyst}, at least in the case where $X$ is a subset of a normed vector space, admissible trajectories for some $k: \mathbb R_+ \times X \to \mathbb R_+$ can be seen as trajectories of the control system \eqref{AdmissibleIsControlSystem} and the minimization problem \eqref{EqMinimalTime}, as an optimal control problem. Inspired by this interpretation, we make use of an usual strategy in optimal control, namely that of considering the value function associated with the optimal control problem. We start by recalling the classical definition of the value function for $\OCP(X, \Gamma, k)$.

\begin{defi}
\label{DefiVarphi}
Let $(X, \Gamma, k)$ be as in Definition \ref{DefiOCP}. The \emph{value function} of $\OCP(X, \Gamma, k)$ is the function $\varphi: \mathbb R_+ \times X \to \mathbb R_+ \cup \{+\infty\}$ defined by
\begin{equation}
\label{EqDefiVarphi}
\varphi(t, x) = \inf_{\substack{\gamma \in \Adm(k) \\ \gamma(t) = x}} \tau(t, \gamma).
\end{equation}
\end{defi}

The goal of this section is to provide some properties of $\varphi$, in particular its Lipschitz continuity and the fact that it satisfies a Hamilton--Jacobi equation. We gather in the next proposition some elementary properties of $\varphi$.

\begin{prop}
\label{Prop1Varphi}
Consider the optimal control problem $\OCP(X, \Gamma, k)$ and assume that Hypotheses \ref{MainHypo}\ref{HypoXCompact}--\ref{HypoXDist} hold.
\begin{enumerate}
\item\label{PropBoundTau} There exists $M > 0$ such that, for every $(t, x) \in \mathbb R_+ \times X$, one has $\varphi(t, x) \leq M$.
\item\label{PropExistOptim} For every $(t, x) \in \mathbb R_+ \times X$, there exists an optimal trajectory $\gamma \in \Lip(\mathbb R_+, X)$ for $(k, t, x)$. In particular, the infimum in \eqref{EqDefiVarphi} is attained at $\gamma$.
\end{enumerate}
\end{prop}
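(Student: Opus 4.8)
For part~\ref{PropBoundTau}, the plan is to exhibit, for each starting configuration, a single admissible curve reaching $\Gamma$ in a time controlled uniformly by the diameter of $X$. I would fix $(t, x) \in \mathbb R_+ \times X$, pick any $\bar x \in \Gamma$ (possible since $\Gamma \neq \emptyset$), and invoke Hypothesis~\ref{MainHypo}\ref{HypoXDist} to obtain $T \in [0, D\,\dist(x, \bar x)]$ together with a unit-speed curve $\sigma \colon [0, T] \to X$ joining $x$ to $\bar x$. Reparametrizing $\sigma$ at the constant speed $K_{\min}$ and concatenating it with the constant curves equal to $x$ on $[0, t]$ and to $\bar x$ afterwards yields $\gamma \in \Lip(\mathbb R_+, X)$ with $\abs{\dot\gamma} \leq K_{\min} \leq k$, hence $\gamma \in \Adm(k)$, such that $\gamma(t) = x$ and $\gamma(t + T/K_{\min}) = \bar x \in \Gamma$. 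Therefore $\varphi(t, x) \leq \tau(t, \gamma) \leq T/K_{\min} \leq D\,\dist(x, \bar x)/K_{\min} \leq D\,\diam(X)/K_{\min}$, and since $X$ is compact the constant $M := D\,\diam(X)/K_{\min}$ is finite and independent of $(t, x)$.

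For part~\ref{PropExistOptim} I would run the direct method. Fixing $(t, x)$ and writing $\tau^\ast = \varphi(t, x)$, which is finite by part~\ref{PropBoundTau}, I take a minimizing sequence $\gamma_n \in \Adm(k)$ with $\gamma_n(t) = x$ and $\tau(t, \gamma_n) \to \tau^\ast$. Since $\abs{\dot\gamma_n} \leq k \leq K_{\max}$ almost everywhere, each $\gamma_n$ is $K_{\max}$-Lipschitz, so $\gamma_n \in \Lip_{K_{\max}}(X)$; as $X$ is compact, Arzelà--Ascoli provides a subsequence (not relabeled) converging in $\mathcal C_X$, that is, uniformly on compact sets, to some $\gamma \in \Lip_{K_{\max}}(X)$, and $\gamma(t) = x$ by pointwise convergence.

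The technical core, and the step I expect to be the main obstacle, is to show that the limit $\gamma$ is itself $k$-admissible: working in a general metric space I cannot use the control formulation~\eqref{AdmissibleIsControlSystem}, so I would argue through the integral inequality $\dist(\gamma_n(s_1), \gamma_n(s_2)) \leq \int_{s_1}^{s_2} k(r, \gamma_n(r))\,\diff r$, valid for absolutely continuous curves, and pass to the limit using uniform convergence on $[s_1, s_2]$ together with the continuity of $k$, obtaining $\dist(\gamma(s_1), \gamma(s_2)) \leq \int_{s_1}^{s_2} k(r, \gamma(r))\,\diff r$ for all $s_1 \leq s_2$. Dividing by $s_2 - s_1$ and letting $s_2 \searrow s_1$, the continuity of $r \mapsto k(r, \gamma(r))$ then yields $\abs{\dot\gamma}(r) \leq k(r, \gamma(r))$ at every point where the metric derivative exists, hence almost everywhere, so $\gamma \in \Adm(k)$.

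It remains to verify optimality and the normalization required by Definition~\ref{DefiOCP}. Since $\tau(t, \gamma_n) \to \tau^\ast < \infty$, for large $n$ we have $\gamma_n(t + \tau(t, \gamma_n)) \in \Gamma$, the infimum defining $\tau$ being attained because $\Gamma$ is closed. Setting $s_n = \tau(t, \gamma_n) \to \tau^\ast$, the uniform Lipschitz bound and the uniform convergence give $\gamma_n(t + s_n) \to \gamma(t + \tau^\ast)$, and closedness of $\Gamma$ forces $\gamma(t + \tau^\ast) \in \Gamma$, so $\tau(t, \gamma) \leq \tau^\ast$; the reverse inequality is immediate since $\gamma$ is an admissible competitor with $\gamma(t) = x$, whence $\tau(t, \gamma) = \tau^\ast$. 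Finally I would replace $\gamma$ by the curve equal to $x$ on $[0, t]$, to $\gamma$ on $[t, t + \tau^\ast]$, and to $\gamma(t + \tau^\ast)$ on $[t + \tau^\ast, +\infty)$; this modification remains in $\Adm(k)$, preserves $\tau(t, \cdot) = \tau^\ast$, and satisfies all the constancy and arrival conditions, thereby producing an element of $\Opt(k, t, x)$.
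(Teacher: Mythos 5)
Your proof is correct and follows exactly the approach the paper indicates: the paper only sketches this result, noting that the bound in part \ref{PropBoundTau} can be taken as $M = D\,\diam(X)/K_{\min}$ (precisely your constant, obtained the same way from Hypothesis \ref{MainHypo}\ref{HypoXDist} and the lower speed bound $K_{\min}$) and that part \ref{PropExistOptim} follows from compactness of minimizing sequences, which is your direct-method argument via Arzelà--Ascoli on $\Lip_{K_{\max}}(X)$. Your handling of the limit curve's admissibility through the integral inequality $\dist(\gamma(s_1),\gamma(s_2)) \leq \int_{s_1}^{s_2} k(r,\gamma(r))\,\diff r$ and the final normalization of the curve outside $[t, t+\tau^\ast]$ correctly fills in the details the paper leaves to the reader.
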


The proof of Proposition \ref{Prop1Varphi} follows from standard techniques: the upper bound $M$ in \ref{PropBoundTau} can be taken as $M = \frac{D \diam(X)}{K_{\min}}$, where $\diam(X) = \sup_{x, y \in X}\dist(x, y) < +\infty$ is the diameter of $X$, and the existence of an optimal trajectory proven using compactness of minimizing sequences.

We now turn to the proof of Lipschitz continuity of $\varphi$. We first show, in Proposition \ref{PropTauLipT}, that $\varphi$ is Lipschitz continuous with respect to $t$, uniformly with respect to $x$, before completing the proof of Lipschitz continuity of $\varphi$ in Proposition \ref{PropTauLip}.

\begin{prop}
\label{PropTauLipT}
Consider the optimal control problem $\OCP(X, \Gamma, k)$ and assume that Hypotheses \ref{MainHypo}\ref{HypoXCompact}--\ref{HypoXDist} hold. Then the map $t \mapsto \varphi(t, x)$ is Lipschitz continuous, uniformly with respect to $x \in X$.
\end{prop}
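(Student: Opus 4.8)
The plan is to fix $x \in X$ and compare the two starting times $t$ and $t' = t + h$ with $h > 0$, aiming to bound both $\varphi(t', x) - \varphi(t, x)$ and its opposite by $Ch$, where $C$ depends only on $K_{\min}$, the Lipschitz constant $L$ of $k$ (Hypothesis \ref{MainHypo}\ref{Hypok1Lip}), and the uniform bound $M$ from Proposition \ref{Prop1Varphi}\ref{PropBoundTau} — crucially \emph{not} on $x$. First I take an optimal trajectory $\gamma \in \Opt(k, t, x)$, which exists by Proposition \ref{Prop1Varphi}\ref{PropExistOptim}: it satisfies $\gamma(t) = x$, $\abs{\dot\gamma}(s) \leq k(s, \gamma(s))$ for a.e.\ $s$, and reaches $\Gamma$ at time $t + \tau$ with $\tau = \varphi(t,x) \leq M$. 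I then build a competitor for the problem started at $(t', x)$ by following the \emph{same geometric path} at a slightly reduced rate, setting $\beta(s) = \gamma(\rho(s))$ for $s \geq t'$, with $\rho$ increasing, $\rho(t') = t$, chosen so that $\beta \in \Adm(k)$ (and extended constantly before $t'$ and after arrival).

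The admissibility of $\beta$ reduces to a differential inequality for $\rho$. By the chain rule for metric derivatives of a Lipschitz monotone reparametrization, $\abs{\dot\beta}(s) = \rho'(s)\,\abs{\dot\gamma}(\rho(s)) \leq \rho'(s)\,k(\rho(s), \beta(s))$ a.e. Using that $k$ is $L$-Lipschitz and bounded below by $K_{\min}$, one has $k(\rho(s), \beta(s)) \leq k(s, \beta(s)) + L\abs{s - \rho(s)} \leq k(s, \beta(s))\,(1 + \tfrac{L}{K_{\min}}\abs{s - \rho(s)})$, so it suffices to impose $\rho'(s)\,(1 + \tfrac{L}{K_{\min}}\abs{s-\rho(s)}) \leq 1$ to guarantee $\abs{\dot\beta}(s) \leq k(s, \beta(s))$.

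The heart of the argument is controlling the resulting time-lag. Setting $w(s) = s - \rho(s)$, so $w(t') = h$, the choice $\rho'(s) = (1 + \tfrac{L}{K_{\min}} w(s))^{-1}$ meets the constraint and gives $w'(s) = \tfrac{L}{K_{\min}} w(s)\,(1 + \tfrac{L}{K_{\min}} w(s))^{-1} \leq \tfrac{L}{K_{\min}} w(s)$; by Grönwall, $w(s) \leq h\,e^{(L/K_{\min})(s - t')}$. The curve $\beta$ reaches $\Gamma$ at the instant $s^\ast$ with $\rho(s^\ast) = t + \tau$, that is $s^\ast = t + \tau + w(s^\ast)$, whence $\tau(t', \beta) = s^\ast - t' = \tau + w(s^\ast) - h$. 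Since $\rho' \leq 1$ and $\rho'$ is bounded below on the relevant range, $s^\ast - t'$ is controlled by the horizon bound $M$, so $w(s^\ast) \leq h\,e^{2LM/K_{\min}}$ for $h$ small; therefore $\varphi(t', x) \leq \tau(t', \beta) \leq \varphi(t, x) + C h$. The reverse inequality follows from the symmetric construction (slowing an optimal trajectory for $(k, t', x)$ so as to start at $t$), and since $C$ is independent of $x$ the local Lipschitz estimate propagates to a global one on $\mathbb R_+$ (as $\varphi$ is bounded), yielding Lipschitz continuity of $t \mapsto \varphi(t, x)$ uniform in $x$.

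The main obstacle is precisely the \emph{time-dependence} of the speed bound $k$: a naive time-shift $\gamma(\cdot - h)$ of the optimal trajectory is not admissible, and a uniform slowdown re-introduces a time mismatch between $k(s, \cdot)$ and the bound actually felt by the shifted curve. A priori this mismatch accumulates along the trajectory; the key is to absorb it through a self-consistent reparametrization whose lag $w$ obeys a linear differential inequality, so that the uniform horizon bound $M$ keeps $w = O(h)$ uniformly in $x$.
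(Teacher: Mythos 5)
Your proposal follows essentially the same route as the paper's proof: for the hard inequality $\varphi(t+h,x)-\varphi(t,x)\leq Ch$, both arguments build a competitor by reparametrizing in time an optimal trajectory for $(k,t,x)$, and both control the resulting time lag by Gronwall over a horizon bounded by the uniform bound $M$ of Proposition \ref{Prop1Varphi}\ref{PropBoundTau}. The differences are cosmetic: the paper's reparametrization ODE uses the exact ratio $k(s,\gamma_1(\phi(s)))/k(\phi(s),\gamma_1(\phi(s)))$ where yours uses the worst-case autonomous factor $\bigl(1+\tfrac{L}{K_{\min}}w\bigr)^{-1}$, and for the reverse inequality the paper simply waits at $x$ during $[t,t+h]$ and then follows an optimal trajectory for $(k,t+h,x)$, which is simpler than your symmetric reparametrization (though that works too).

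The one step you must tighten is the bound $w(s^\ast)\leq h\,e^{2LM/K_{\min}}$. As written it is circular: you justify ``$s^\ast-t'$ is controlled by $M$'' by saying $\rho'$ is bounded below on the relevant range, but a lower bound on $\rho'$ is equivalent to an upper bound on $w$ there, which is precisely what you are trying to prove. The circle can be broken by a continuation argument (if $w$ ever reached the value $1$ before $s^\ast$, Gronwall would force the elapsed time to be at least $\tfrac{K_{\min}}{L}\ln(1/h)$, while the time available before exit is at most $\tau+1\leq M+1$; contradiction for $h<e^{-L(M+1)/K_{\min}}$), after which your ``local Lipschitz $+$ boundedness of $\varphi$'' patch gives the global statement. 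But the paper's device is cleaner and avoids both the bootstrap and the smallness of $h$: never evaluate the lag at the competitor's exit time $s^\ast$. Instead, admissibility of $\beta$ gives $t'+\varphi(t',x)\leq s^\ast$, hence $\rho\bigl(t'+\varphi(t',x)\bigr)\leq t+\tau$ by monotonicity of $\rho$; since $\varphi(t',x)\leq M$ \emph{a priori}, Gronwall applied at the time $t'+\varphi(t',x)$ yields $w\bigl(t'+\varphi(t',x)\bigr)\leq h\,e^{LM/K_{\min}}$, and unwinding $w=s-\rho(s)$ gives directly $\varphi(t',x)-\varphi(t,x)\leq h\bigl(e^{LM/K_{\min}}-1\bigr)$ for all $h>0$, uniformly in $x$.
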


\begin{proof}
Let $t_1, t_2 \in \mathbb R_+$ and assume, with no loss of generality, that $t_1 \leq t_2$. We prove the result by showing that one has both
\begin{equation}
\label{TauLipTIneq1}
\varphi(t_1, x) - \varphi(t_2, x) \leq t_2 - t_1
\end{equation}
and
\begin{equation}
\label{TauLipTIneq2}
\varphi(t_2, x) - \varphi(t_1, x) \leq C (t_2 - t_1)
\end{equation}
for some constant $C > 0$ independent of $t_1$, $t_2$, and $x$.

Let us first prove \eqref{TauLipTIneq1}. Let $\gamma_2 \in \Opt(k, t_2, x)$. Define $\gamma_1 \in \Lip(\mathbb R_+, X)$ by
\[
\gamma_1(s) = 
\begin{dcases*}
x, & if $s \leq t_2$, \\
\gamma_2(s), & if $s > t_2$.
\end{dcases*}
\]
Then $\gamma_1(t) = x$ for $t \in [0, t_1]$, $\gamma_1(t) = \gamma_2(t) = \gamma_2(t_2 + \varphi(t_2, x)) \in \Gamma$ for $t > t_2 + \varphi(t_2, x)$, $\abs{\dot\gamma_1}(s) = 0$ for $s < t_2$, and $\abs{\dot\gamma_1}(s) \leq k (s, \gamma_1(s))$ for almost every $s > t_2$, which proves that $\gamma_1 \in \Adm(k)$ and $\varphi(t_1, x) \leq \tau(t_1, \gamma_1) \leq t_2 - t_1 + \varphi(t_2, x)$, yielding \eqref{TauLipTIneq1}.

We now turn to the proof of \eqref{TauLipTIneq2}. Let $M > 0$ be as in Proposition \ref{Prop1Varphi}\ref{PropBoundTau} and take $\gamma_1 \in \Opt(k, t_1, x)$. Notice that, thanks to Hypothesis \ref{MainHypo}\ref{Hypok1Lip}, the map
\[
\mathbb R_+ \times \mathbb R_+ \ni (t, s) \mapsto \frac{k(t, \gamma_1(s))}{k(s, \gamma_1(s))}
\]
is lower bounded by $\frac{K_{\min}}{K_{\max}}$, upper bounded by $\frac{K_{\max}}{K_{\min}}$, and globally Lipschitz continuous. Let $M_0 > 0$ be a Lipschitz constant for this map. Let $\phi: [t_2, +\infty) \to \mathbb R$ be the unique function satisfying
\[
\left\{
\begin{aligned}
\dot\phi(t) & = \frac{k(t, \gamma_1(\phi(t)))}{k(\phi(t), \gamma_1(\phi(t)))} \\
\phi(t_2) & = t_1.
\end{aligned}
\right.
\]
Notice that $\phi$ is strictly increasing and maps $[t_2, +\infty)$ onto $[t_1, +\infty)$, its inverse $\phi^{-1}$ being defined on $[t_1, +\infty)$. Let $\gamma_2: \mathbb R_+ \to X$ be given by
\[
\gamma_2(s) = 
\begin{dcases*}
x, & if $s \leq t_2$, \\
\gamma_1(\phi(s)), & if $s > t_2$.
\end{dcases*}
\]
Then $\gamma_2 \in \Lip(\mathbb R_+, X)$, $\gamma_2(t) = x$ for every $t \in [0, t_2]$, $\gamma_2(t) = \gamma_2(\phi^{-1}(t_1 + \varphi(t_1, x))) \in \Gamma$ for every $t \geq \phi^{-1}(t_1 + \varphi(t_1, x))$, $\abs{\dot\gamma_2}(s) = 0$ whenever $0 < s < t_2$ or $s > \phi^{-1}(t_1 + \varphi(t_1, x))$, and, for almost every $s \geq t_2$, one has $\abs{\dot\gamma_2}(s) = \abs{\dot\phi(s)} \abs{\dot\gamma_1}(\phi(s)) \leq k(s, \gamma_2(s))$. Hence $\gamma_2 \in \Adm(k)$ and $t_2 + \varphi(t_2, x) \leq \phi^{-1}(t_1 + \varphi(t_1, x))$, i.e., $\phi(t_2 + \varphi(t_2, x)) \leq t_1 + \varphi(t_1,\allowbreak x)$. Yet, for every $\sigma \geq 0$,
\[
\phi(t_2 + \sigma) - (t_2 + \sigma) = t_1 - t_2 + \int_{t_2}^{t_2 + \sigma} \left[\frac{k(s, \gamma_1(\phi(s)))}{k(\phi(s), \gamma_1(\phi(s)))} - 1\right] \diff s,
\]
and thus
\[
\abs{\phi(t_2 + \sigma) - (t_2 + \sigma)} \leq \abs{t_1 - t_2} + M_0 \int_{t_2}^{t_2 + \sigma} \abs{\phi(s) - s} \diff s,
\]
which yields, by Gronwall's inequality, that $\abs{\phi(t_2 + \sigma) - (t_2 + \sigma)} \leq \abs{t_1 - t_2} e^{M_0 \sigma}$. Then
\[
\abs{\phi(t_2 + \varphi(t_2, x)) - (t_2 + \varphi(t_2, x))} \leq \abs{t_1 - t_2} e^{M M_0},
\]
which proves that
\[
t_1 + \varphi(t_1, x) \geq \phi(t_2 + \varphi(t_2, x)) \geq t_2 + \varphi(t_2, x) - e^{M M_0}(t_2 - t_1),
\]
and thus
\[
\varphi(t_2, x) - \varphi(t_1, x) \leq (e^{M M_0} - 1)(t_2 - t_1),
\]
which concludes the proof of \eqref{TauLipTIneq2}.
\end{proof}

\begin{prop}
\label{PropTauLip}
Consider the optimal control problem $\OCP(X, \Gamma, k)$ and assume that Hypotheses \ref{MainHypo}\ref{HypoXCompact}--\ref{HypoXDist} hold. Then $\varphi$ is Lipschitz continuous on $\mathbb R_+ \times X$.
\end{prop}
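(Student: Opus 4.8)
The plan is to combine the temporal Lipschitz continuity already established in Proposition~\ref{PropTauLipT} with a \emph{spatial} Lipschitz estimate, gluing the two together by the elementary inequality
\[
\abs{\varphi(t_1, x_1) - \varphi(t_2, x_2)} \leq \abs{\varphi(t_1, x_1) - \varphi(t_1, x_2)} + \abs{\varphi(t_1, x_2) - \varphi(t_2, x_2)}.
\]
The second term is controlled directly by Proposition~\ref{PropTauLipT}, so the whole task reduces to proving that $x \mapsto \varphi(t, x)$ is Lipschitz continuous with a constant independent of $t$.

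To obtain the spatial estimate I would fix $t \in \mathbb R_+$ and $x, y \in X$ and build an admissible competitor for the infimum defining $\varphi(t, y)$ by first steering from $y$ to $x$ and then following an optimal trajectory issued from $x$. Using Hypothesis~\ref{MainHypo}\ref{HypoXDist} (applied to the pair $y, x$, and recalling $\dist(y,x) = \dist(x,y)$), choose a unit-speed curve joining $y$ to $x$ of duration $T \leq D\dist(x, y)$; reparametrizing it to constant speed $K_{\min}$ produces a curve whose metric derivative equals $K_{\min}$, which is admissible since $k \geq K_{\min}$ by Hypothesis~\ref{MainHypo}\ref{Hypok1Lip}, and which joins $y$ to $x$ in time $T/K_{\min} \leq D\dist(x, y)/K_{\min}$. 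Concatenating this steering curve (keeping the trajectory constant on $[0,t]$) with an element of $\Opt(k, t + T/K_{\min}, x)$ yields a curve $\tilde\gamma \in \Adm(k)$ with $\tilde\gamma(t) = y$ that matches continuously at the junction points $y$ and $x$.

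The main obstacle, and the only place where the argument is not a routine concatenation, is the \emph{time-dependence} of $k$: after reaching $x$ at time $t + T/K_{\min}$, the cheapest way to continue to $\Gamma$ costs $\varphi(t + T/K_{\min}, x)$ rather than $\varphi(t, x)$. This is precisely what forces the earlier Proposition~\ref{PropTauLipT} into the proof. The construction gives
\[
\varphi(t, y) \leq \tau(t, \tilde\gamma) \leq \frac{T}{K_{\min}} + \varphi\!\left(t + \frac{T}{K_{\min}}, x\right),
\]
and bounding the last term by $\varphi(t, x) + L\, T/K_{\min}$, where $L$ is the temporal Lipschitz constant from Proposition~\ref{PropTauLipT}, together with $T \leq D\dist(x,y)$, yields $\varphi(t, y) - \varphi(t, x) \leq \frac{D(1+L)}{K_{\min}}\dist(x, y)$. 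Exchanging the roles of $x$ and $y$ gives the spatial Lipschitz bound with the same constant, and feeding both estimates into the triangle inequality above establishes the joint Lipschitz continuity of $\varphi$ on $\mathbb R_+ \times X$.
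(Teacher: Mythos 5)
Your proof is correct and takes essentially the same approach as the paper: both arguments build an admissible competitor by steering between the two points at speed $K_{\min}$ along the curve from Hypothesis \ref{MainHypo}\ref{HypoXDist}, concatenate it with an optimal trajectory, and invoke Proposition \ref{PropTauLipT} to absorb the resulting time mismatch. The only difference is bookkeeping: the paper shifts the departure time backward via $\sigma_2 = \max\{t_2, T\}$ so that the arrival at $x_2$ occurs at a prescribed time, whereas you let the arrival time slide forward and correct afterwards with the temporal Lipschitz bound, which avoids the max but produces the same form of constant $D(M+1)/K_{\min}$.
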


\begin{proof}
Let $D$ be as in Hypothesis \ref{MainHypo}\ref{HypoXDist}, $x_1, x_2 \in X$, and $t_1, t_2 \in \mathbb R_+$. According to Hypothesis \ref{MainHypo}\ref{HypoXDist}, there exist $T_0 \in [0, D\dist(x_1, x_2)]$ and $\gamma_0 \in \Lip([0, T_0], X)$ such that $\gamma_0(0) = x_1$, $\gamma_0(T_0) = x_2$, and $\abs{\dot\gamma_0}(s) = 1$ for almost every $s \in [0, T_0]$. Denote by $M > 0$ the Lipschitz constant of the map $t \mapsto \varphi(t, x)$, which is independent of $x \in X$ according to Proposition \ref{PropTauLipT}.

Set $T = \frac{T_0}{K_{\min}}$ and $\sigma_2 = \max\{t_2, T\}$. Let $\gamma_2 \in \Opt(k, \sigma_2, x_2)$ and define $\gamma_1 \in \Lip(\mathbb R_+, X)$ by
\[
\gamma_1(t) = 
\begin{dcases*}
x_1, & if $t \leq \sigma_2 - T$, \\
\gamma_0(K_{\min}(t + T - \sigma_2)), & if $\sigma_2 - T < t \leq \sigma_2$, \\
\gamma_2(t), & if $t > \sigma_2$.
\end{dcases*}
\]
Then $\gamma_1(t) = x_1$ for every $t \in [0, \sigma_2 - T]$, $\gamma_1(t) = \gamma_1(\sigma_2 + \varphi(\sigma_2, x_2)) \in \Gamma$ for every $t \geq \sigma_2 + \varphi(\sigma_2, x_2)$, and $\abs{\dot\gamma_1}(t) \leq k(t, \gamma_1(t))$ for almost every $t \geq 0$, which proves that $\gamma_1 \in \Adm(k)$ and $\varphi(\sigma_2 - T, x_1) \leq T + \varphi(\sigma_2, x_2)$. Hence, using Proposition \ref{PropTauLipT}, one obtains that
\begin{align*}
 & \varphi(t_1, x_1) - \varphi(t_2, x_2) \\
{} \leq {} & M \abs{t_1 - \sigma_2 + T} + T + M \abs{\sigma_2 - t_2} \displaybreak[0] \\
{} \leq {} & M \abs{t_1 - t_2} + M \abs{t_2 - \sigma_2 + T} + M\abs{\sigma_2 - t_2} + T = M \abs{t_1 - t_2} + (M + 1) T \displaybreak[0] \\
{} = {} & M \abs{t_1 - t_2} + \frac{M+1}{K_{\min}}T_0 \leq M \abs{t_1 - t_2} + D \frac{M+1}{K_{\min}}\dist(x_1, x_2).
\end{align*}
One can bound $\varphi(t_2, x_2) - \varphi(t_1, x_1)$ in exactly the same manner by exchanging the roles of $(t_1, x_1)$ and $(t_2, x_2)$ and replacing $\gamma_0$ by $\gamma_0(T_0 - \cdot)$.
\end{proof}

Another important property of the value function is presented in the next proposition, and is useful for providing a lower bound on the time derivative of the value function when it exists.

\begin{prop}
\label{PropMonotoneOptimalTime}
Consider the optimal control problem $\OCP(X, \Gamma, k)$ and assume that Hypotheses \ref{MainHypo}\ref{HypoXCompact}--\ref{HypoXDist} hold. There exists $c > 0$ such that, for every $x \in X$ and $t_1, t_2 \in \mathbb R_+$ with $t_1 \neq t_2$, one has
\begin{equation}
\label{DtTauQGreaterThanMinusOne}
\frac{\varphi(t_2, x) - \varphi(t_1, x)}{t_2 - t_1} \geq c-1.
\end{equation}
In particular, if $t_1 < t_2$, then $t_1 + \varphi(t_1, x) < t_2 + \varphi(t_2, x)$.
\end{prop}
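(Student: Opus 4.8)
The plan is to establish the equivalent inequality $\varphi(t_1, x) - \varphi(t_2, x) \le (1-c)(t_2 - t_1)$ for $t_1 < t_2$ (the case $t_1 > t_2$ follows by relabeling, and the displayed quotient bound is then immediate). This sharpens \eqref{TauLipTIneq1}, which already gives the bound with $c = 0$: the improvement must come from exploiting the interval $[t_1, t_2]$ to make genuine progress toward $\Gamma$ rather than merely waiting at $x$ as in the proof of \eqref{TauLipTIneq1}.

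Fix $t_1 < t_2$, write $\delta = t_2 - t_1$, and let $\gamma_2 \in \Opt(k, t_2, x)$, so that $\gamma_2$ reaches $\Gamma$ at time $T_2 = t_2 + \varphi(t_2, x)$. Mirroring the reparametrization used in the proof of \eqref{TauLipTIneq2}, I would let $\psi : [t_1, +\infty) \to [t_2, +\infty)$ solve
\[
\dot\psi(t) = \frac{k(t, \gamma_2(\psi(t)))}{k(\psi(t), \gamma_2(\psi(t)))}, \qquad \psi(t_1) = t_2,
\]
which is well-defined, strictly increasing, and onto, since its right-hand side lies in $[K_{\min}/K_{\max}, K_{\max}/K_{\min}]$ and is Lipschitz in $\psi$. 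Setting $\gamma_1(t) = x$ for $t \le t_1$, $\gamma_1(t) = \gamma_2(\psi(t))$ for $t_1 \le t \le t^\ast$, and $\gamma_1(t) = \gamma_2(T_2)$ for $t \ge t^\ast$, where $t^\ast = \psi^{-1}(T_2)$, one checks exactly as in \eqref{TauLipTIneq2} that $\abs{\dot\gamma_1}(t) = \dot\psi(t)\abs{\dot\gamma_2}(\psi(t)) \le k(t, \gamma_1(t))$, so that $\gamma_1 \in \Adm(k)$ with $\gamma_1(t_1) = x$, whence $\varphi(t_1, x) \le t^\ast - t_1$.

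The heart of the matter is a lower bound on $g(t) := \psi(t) - t$, which quantifies how far $\gamma_1$ runs ahead of the schedule of $\gamma_2$. One has $g(t_1) = \delta > 0$ and, letting $L$ be a Lipschitz constant of $k$ and using $k \ge K_{\min}$ from Hypothesis \ref{MainHypo}\ref{Hypok1Lip},
\[
\abs{g'(t)} = \left\lvert \frac{k(t, \gamma_2(\psi(t))) - k(\psi(t), \gamma_2(\psi(t)))}{k(\psi(t), \gamma_2(\psi(t)))} \right\rvert \le \frac{L}{K_{\min}} \abs{g(t)},
\]
so Gronwall's inequality gives $g(t) \ge \delta\, e^{-C(t - t_1)}$ with $C = L/K_{\min}$; in particular $g$ stays positive. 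Since $\dot\psi \ge K_{\min}/K_{\max}$ and $\psi$ need only increase by $\varphi(t_2, x) \le M$ to reach $T_2$, with $M$ as in Proposition \ref{Prop1Varphi}\ref{PropBoundTau}, the arrival time obeys $t^\ast - t_1 \le \frac{K_{\max}}{K_{\min}} M =: M'$, a bound \emph{independent} of $t_1, t_2, x$. Evaluating the Gronwall estimate at $t^\ast$ then yields $g(t^\ast) \ge \delta\, e^{-C M'}$.

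Finally, since $\psi(t^\ast) = T_2$ we have $t^\ast = T_2 - g(t^\ast)$, and therefore
\[
t_1 + \varphi(t_1, x) \le t^\ast = t_2 + \varphi(t_2, x) - g(t^\ast) \le t_2 + \varphi(t_2, x) - \delta\, e^{-CM'}.
\]
Setting $c = e^{-CM'} > 0$, this rearranges to $\varphi(t_1, x) - \varphi(t_2, x) \le (1 - c)(t_2 - t_1)$, i.e.\ \eqref{DtTauQGreaterThanMinusOne}, while the same chain of inequalities gives $t_1 + \varphi(t_1, x) < t_2 + \varphi(t_2, x)$, the ``in particular'' assertion. The one delicate point, and the reason the constant $c$ is uniform, is that $t^\ast - t_1$ must be controlled independently of the gap $\delta$; this is precisely where the two-sided speed bounds $K_{\min} \le k \le K_{\max}$ enter, keeping the exponential factor in the Gronwall lower bound from degenerating.
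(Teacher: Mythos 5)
Your proposal is correct and follows essentially the same strategy as the paper's proof: reparametrize an optimal trajectory for $(k, t_2, x)$ via the ODE $\dot\psi(t) = k(t, \gamma_2(\psi(t)))/k(\psi(t), \gamma_2(\psi(t)))$ with $\psi(t_1) = t_2$ to produce an admissible competitor starting from $(t_1, x)$, then use a Gronwall-type estimate on $\psi(t) - t$ to show the gain over $[t_1,t_2]$ cannot degenerate, yielding a uniform constant $c > 0$. The only differences are cosmetic: you evaluate the Gronwall lower bound at $t^\ast = \psi^{-1}(t_2 + \varphi(t_2,x))$ using the horizon bound $t^\ast - t_1 \leq K_{\max} M / K_{\min}$, whereas the paper evaluates it at $t_1 + \varphi(t_1, x) \leq t_1 + M$ (and obtains positivity of $\psi(t)-t$ by ODE comparison rather than from the Gronwall bound itself), producing a slightly different but equally valid constant $c$.
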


The last statement of the proposition means that, if two optimal trajectories start at the same point $x$ on different times, the one which started sooner will arrive first at $\Gamma$.

\begin{proof}
It suffices to prove \eqref{DtTauQGreaterThanMinusOne} in the case $t_2 > t_1$, the other case being obtained by exchanging the role of $t_1$ and $t_2$. We then assume from now on that $t_2 > t_1$. Notice also that, if \eqref{DtTauQGreaterThanMinusOne} holds, then, for $t_2 > t_1$, one has $\varphi(t_2, x) - \varphi(t_1, x) \geq (c-1) (t_2 - t_1) > -(t_2 - t_1)$, which yields $t_1 + \varphi(t_1, x) < t_2 + \varphi(t_2, x)$.

Let $\gamma_2 \in \Opt(k, t_2, x)$ and $\phi: \mathbb [t_1, +\infty) \to \mathbb R$ be the unique function satisfying
\[
\left\{
\begin{aligned}
\dot\phi(t) & = \frac{k(t, \gamma_2(\phi(t)))}{k(\phi(t), \gamma_2(\phi(t)))} \\
\phi(t_1) & = t_2.
\end{aligned}
\right.
\]
Notice that $\phi$ is strictly increasing and maps $[t_1, +\infty)$ onto $[t_2, +\infty)$, its inverse $\phi^{-1}$ being defined on $[t_2, +\infty)$. Moreover, since $\psi(t) = t$ is a solution of $\dot\psi(t) = \frac{k(t, \gamma_2(\psi(t)))}{k(\psi(t), \gamma_2(\psi(t)))}$ and $\phi(t_1) = t_2 > t_1 = \psi(t_1)$, one obtains that $\phi(t) > \psi(t) = t$ for every $t \in \mathbb R_+$.

Let $M > 0$ be as in Proposition \ref{Prop1Varphi}\ref{PropBoundTau} and $M_0 > 0$ be a Lipschitz constant for $k$. Notice that, for every $t \in \mathbb R_+$, one has
\[
\dot\phi(t) - 1 = \frac{k(t, \gamma_2(\phi(t))) - k(\phi(t), \gamma_2(\phi(t)))}{k(\phi(t), \gamma_2(\phi(t)))}.
\]
Fix $t_0 \in [t_1, t_1 + M]$. Then, for every $t \in [t_1, t_0]$,
\[
\phi(t_0) - t_0 = \phi(t) - t + \int_t^{t_0} \frac{k(s, \gamma_2(\phi(s))) - k(\phi(s), \gamma_2(\phi(s)))}{k(\phi(s), \gamma_2(\phi(s)))} \diff s,
\]
which yields
\[
\abs{\phi(t) - t} \leq \abs{\phi(t_0) - t_0} + \frac{M_0}{K_{\min}}\int_t^{t_0} \abs{\phi(s) - s} \diff s,
\]
and thus, by Gronwall's inequality, for every $t \in [t_1, t_0]$,
\[
\abs{\phi(t) - t} \leq \abs{\phi(t_0) - t_0} e^{\frac{M_0}{K_{\min}} (t_0 - t)}.
\]
In particular, using the fact that $\phi(t) > t$ for every $t \in \mathbb R_+$, one obtains from the above, setting $t_0 = t_1 + \varphi(t_1, x) \in [t_1, t_1 + M]$ and $t = t_1$, that
\begin{equation}
\label{LowerBoundVarphiMinusT}
\phi(t_1 + \varphi(t_1, x)) - \left(t_1 + \varphi(t_1, x)\right) \geq c (t_2 - t_1),
\end{equation}
where $c = e^{-\frac{M M_0}{K_{\min}}}$.

Let $\gamma_1 \in \Lip(\mathbb R_+, X)$ be defined by $\gamma_1(t) = \gamma_2(\phi(t))$ for $t > t_1$ and $\gamma_1(t) = x$ for $t \leq t_1$. Then, for almost every $t \in [t_1, +\infty)$, one has $\abs{\dot\gamma_1}(t) = \abs{\dot\phi(t)} \abs{\dot\gamma_2}(\phi(t)) \leq k(t, \gamma_1(t))$, which proves that $\gamma_1 \in \Adm(k)$. Moreover, $\gamma_1(t_1) = x$ and $\gamma_1(\phi^{-1}(t_2 + \varphi(t_2, x))) \in \Gamma$, which proves that the first exit time after $t_1$ of $\gamma_1$ satisfies $\tau(t_1, \gamma_1) \leq \phi^{-1}(t_2 + \varphi(t_2, x))$, and hence $\varphi(t_1, x) \leq \phi^{-1}(t_2 + \varphi(t_2, x)) - t_1$. Thus $\phi(t_1 + \varphi(t_1,\allowbreak x)) \allowbreak \leq t_2 + \varphi(t_2, x)$, and, using \eqref{LowerBoundVarphiMinusT}, we get that $t_1 + \varphi(t_1, x) + c(t_2 - t_1) \leq t_2 + \varphi(t_2, x)$, yielding \eqref{DtTauQGreaterThanMinusOne}.
\end{proof}

We gather in the next result two further properties of the value function and of optimal trajectories; the proof is straightforward and thus omitted here.

\begin{prop}
\label{Prop2Varphi}
Consider the optimal control problem $\OCP(X, \Gamma, k)$ and assume that Hypotheses \ref{MainHypo}\ref{HypoXCompact}--\ref{HypoXDist} hold. Let $(t_0, x_0) \in \mathbb R_+ \times X$ and $\gamma \in \Opt(k, t_0, x_0)$.
\begin{enumerate}
\item\label{PropFlowPreservesOptimality} For every $h \in [0, \varphi(t_0,\allowbreak x_0)]$, one has $\varphi(t_0 + h, \gamma(t_0 + h)) + h = \varphi(t_0, x_0)$.
\item\label{LemmOptimalAllAlong} Let $t_1 \in [t_0, t_0 + \varphi(t_0, x_0)]$ and consider the trajectory $\widetilde\gamma: \mathbb R_+ \to X$ defined by $\widetilde\gamma(t) = \gamma(t)$ for $t > t_1$ and $\widetilde\gamma(t) = \gamma(t_1)$ for $t \leq t_1$. Then $\widetilde\gamma \in \Opt(k, t_1, \gamma(t_1))$.
\end{enumerate}
\end{prop}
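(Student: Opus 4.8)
The plan is to read both assertions as instances of the dynamic programming (Bellman) optimality principle for the minimal-time problem: I would first establish \ref{PropFlowPreservesOptimality} directly and then deduce \ref{LemmOptimalAllAlong} from it. Throughout, write $T = \varphi(t_0, x_0) = \tau(t_0, \gamma)$ for the optimal time, so that, by the definition of $\Opt(k, t_0, x_0)$, the curve $\gamma$ meets $\Gamma$ for the first time after $t_0$ exactly at the instant $t_0 + T$ and stays there afterwards.

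For \ref{PropFlowPreservesOptimality}, I would fix $h \in [0, T]$ and prove the two inequalities separately. For the upper bound $\varphi(t_0 + h, \gamma(t_0 + h)) \leq T - h$, I would exhibit the competitor $\beta \in \Adm(k)$ that waits at $\gamma(t_0 + h)$ up to time $t_0 + h$ and then follows $\gamma$, i.e.\ $\beta(s) = \gamma(t_0 + h)$ for $s \leq t_0 + h$ and $\beta(s) = \gamma(s)$ for $s > t_0 + h$. The constant and the $\gamma$-pieces each satisfy the speed constraint and match at the junction, so $\beta \in \Adm(k)$ with $\beta(t_0 + h) = \gamma(t_0 + h)$, and it reaches $\Gamma$ at $t_0 + T$, giving $\tau(t_0 + h, \beta) = T - h$ and hence the bound. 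For the reverse inequality I would argue by contradiction: if $\varphi(t_0 + h, \gamma(t_0 + h)) < T - h$, choose an admissible $\alpha$ with $\alpha(t_0 + h) = \gamma(t_0 + h)$ and $\tau(t_0 + h, \alpha) < T - h$, then concatenate $\gamma$ on $[t_0, t_0 + h]$ with $\alpha$ afterwards (the pieces match at $t_0$ and at $t_0 + h$) to obtain an admissible curve starting at $(t_0, x_0)$ whose exit time is at most $h + \tau(t_0 + h, \alpha) < T$, contradicting the minimality of $\gamma$ in \eqref{EqMinimalTime}. Combining the two inequalities yields $\varphi(t_0 + h, \gamma(t_0 + h)) = T - h$, which is the stated identity.

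For \ref{LemmOptimalAllAlong}, I would simply verify the defining conditions of $\Opt(k, t_1, \gamma(t_1))$ from Definition \ref{DefiOCP} for $\widetilde\gamma$. Admissibility and the requirement $\widetilde\gamma(t) = \gamma(t_1)$ on $[0, t_1]$ are immediate from the construction, since the initial piece is constant, the tail equals the admissible curve $\gamma$, and they match at $t_1$. Because $t_1 \leq t_0 + T$ and $\gamma$ first meets $\Gamma$ after $t_0$ only at $t_0 + T$, the curve $\widetilde\gamma$ reaches $\Gamma$ for the first time after $t_1$ at $t_0 + T$ and remains there, so that $\tau(t_1, \widetilde\gamma) = t_0 + T - t_1 < +\infty$ and the stay-at-arrival condition holds. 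Optimality, namely $\tau(t_1, \widetilde\gamma) = \varphi(t_1, \gamma(t_1))$, is then exactly part \ref{PropFlowPreservesOptimality} applied with $h = t_1 - t_0 \in [0, T]$, which gives $\varphi(t_1, \gamma(t_1)) = T - (t_1 - t_0) = t_0 + T - t_1 = \tau(t_1, \widetilde\gamma)$.

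The argument is entirely elementary, and I do not expect any genuine analytic obstacle. The only points requiring care are the bookkeeping of the admissibility conditions at the concatenation junctions and the normalization conventions of Definition \ref{DefiOCP} (trajectories held constant before their start time and after their arrival), which is consistent with the authors' remark that the proof is straightforward.
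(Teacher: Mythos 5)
Your proof is correct, and since the paper explicitly omits the proof of this proposition as ``straightforward,'' there is nothing to contrast it with: your argument (the competitor-waiting curve for the upper bound, the concatenation-and-contradiction for the lower bound, and the reduction of \ref{LemmOptimalAllAlong} to \ref{PropFlowPreservesOptimality} with $h = t_1 - t_0$) is precisely the standard dynamic programming argument the authors had in mind. The bookkeeping you flag — admissibility at concatenation junctions and the conventions of Definition \ref{DefiOCP} on curves being constant before the start time and after arrival — is indeed the only content, and you handle it correctly.
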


\subsection{Hamilton--Jacobi equation}
\label{SecHJ}

In this section, we collect further results on the value function $\varphi$ under the additional assumption that Hypothesis \ref{MainHypo}\ref{HypoXOverlineOmega} holds. Before establishing the main result of this section, Theorem \ref{MainTheoHJ}, which provides a Hamilton--Jacobi equation for $\varphi$, we recall the definition of superdifferential of a function and obtain a lower bound on the time component of any vector on the superdifferential of $\varphi$ as consequence of Proposition \ref{PropMonotoneOptimalTime}.

\begin{defi}\label{DefiSuperdifferential}
Let $A \subset \mathbb R^d$, $w: A \to \mathbb R$, and $x \in A$. The \emph{superdifferential} of $w$ at $x$ is the set $D^+ w(x)$ defined by
\[
D^+ w(x) = \left\{p \in \mathbb R^d \midsuchthat \limsup_{y \to x} \frac{w(y) - w(x) - p \cdot (y - x)}{\abs{y - x}} \leq 0\right\}.
\]
\end{defi}

\begin{lemm}
\label{LemmP0GreaterThanMinusOne}
Consider the optimal control problem $\OCP(X, \Gamma, k)$ and assume that Hypotheses \ref{MainHypo}\ref{HypoXCompact}--\ref{HypoXOverlineOmega} hold. There exists $c > 0$ such that, for every $(t_0, x_0) \in \mathbb R_+^\ast \times \overline\Omega$, and $(p_0, p_1) \in D^+ \varphi(t_0,\allowbreak x_0)$, one has $p_0 \geq c - 1$.
\end{lemm}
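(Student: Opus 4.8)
The plan is to extract the claimed lower bound on the time-component $p_0$ of a superdifferential vector directly from the monotonicity estimate \eqref{DtTauQGreaterThanMinusOne} established in Proposition \ref{PropMonotoneOptimalTime}, using the same constant $c = e^{-M M_0/K_{\min}} > 0$. The key observation is that the superdifferential controls the first-order behavior of $\varphi$ in every direction simultaneously, so in particular along the purely temporal direction, and this is precisely where Proposition \ref{PropMonotoneOptimalTime} gives information.

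Concretely, fix $(t_0, x_0) \in \mathbb R_+^\ast \times \overline\Omega$ and let $(p_0, p_1) \in D^+\varphi(t_0, x_0) \subset \mathbb R \times \mathbb R^d$. First I would specialize the defining limsup in Definition \ref{DefiSuperdifferential} to the sequence of points $y = (t_0 - s, x_0)$ with $s \searrow 0$, which is legitimate since $t_0 > 0$ guarantees these points stay in $\mathbb R_+ \times \overline\Omega$ for $s$ small. Along this one-dimensional approach the increment $y - (t_0, x_0) = (-s, 0)$ has norm $s$ and satisfies $(p_0, p_1)\cdot(y - (t_0,x_0)) = -p_0 s$, so the superdifferential inequality becomes
\[
\limsup_{s \searrow 0} \frac{\varphi(t_0 - s, x_0) - \varphi(t_0, x_0) + p_0 s}{s} \leq 0.
\]
Rearranging, this forces
\[
p_0 \leq \liminf_{s \searrow 0} \frac{\varphi(t_0, x_0) - \varphi(t_0 - s, x_0)}{s}.
\]
This is the wrong-signed bound, so instead I would approach from above, taking $y = (t_0 + s, x_0)$ with $s \searrow 0$; then the increment is $(s, 0)$ of norm $s$ with $(p_0,p_1)\cdot(s,0) = p_0 s$, and the superdifferential condition yields
\[
\limsup_{s \searrow 0} \frac{\varphi(t_0 + s, x_0) - \varphi(t_0, x_0) - p_0 s}{s} \leq 0,
\]
which rearranges to
\[
p_0 \geq \liminf_{s \searrow 0} \frac{\varphi(t_0 + s, x_0) - \varphi(t_0, x_0)}{s}.
\]
Now I apply Proposition \ref{PropMonotoneOptimalTime} with $t_1 = t_0$ and $t_2 = t_0 + s$: since $t_1 < t_2$, the difference quotient $\frac{\varphi(t_0 + s, x_0) - \varphi(t_0, x_0)}{s}$ is at least $c - 1$ for every $s > 0$, hence its $\liminf$ as $s \searrow 0$ is at least $c - 1$, and therefore $p_0 \geq c - 1$.

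I do not anticipate a serious obstacle here, since the statement is essentially an infinitesimal reading of the already-proven finite-difference inequality; the only point requiring a little care is the bookkeeping of signs and the choice of the correct one-sided approach direction, so that the superdifferential inequality produces a lower bound on $p_0$ rather than an upper bound. The fact that $t_0 > 0$ is used precisely to ensure that the temporal perturbations $(t_0 \pm s, x_0)$ remain in the domain $\mathbb R_+ \times \overline\Omega$ of $\varphi$ as $s \searrow 0$; restricting the admissible comparison points $y$ to this half-line is valid because the $\limsup$ in the definition of $D^+\varphi$ is taken over all $y \to (t_0, x_0)$, so any sub-family of approach directions inherits the same inequality.
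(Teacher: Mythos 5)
Your proposal is correct and follows essentially the same route as the paper: both restrict the superdifferential inequality to the purely temporal direction $(t_0 + s, x_0)$ with $s \searrow 0$ and then invoke the finite-difference bound \eqref{DtTauQGreaterThanMinusOne} from Proposition \ref{PropMonotoneOptimalTime} to conclude $p_0 \geq c - 1$. The only cosmetic difference is that the paper phrases the restriction as a lower bound on the supremum over the full neighborhood rather than passing to a one-dimensional $\limsup$, which changes nothing of substance.
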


\begin{proof}
Let $c > 0$ be as in Proposition \ref{PropMonotoneOptimalTime}. Since $(p_0, p_1) \in D^+\varphi(t_0, x_0)$, one has
\begin{equation}
\label{P0P1InDPlusVarphi}
\limsup_{\substack{t \to t_0 \\ x \to x_0}} \frac{\varphi(t, x) - \varphi(t_0, x_0) - p_0(t - t_0) - p_1 \cdot (x - x_0)}{\left(\abs{t - t_0}^2 + \abs{x - x_0}^2\right)^{1/2}} \leq 0.
\end{equation}
For $h > 0$ small, one has, using Proposition \ref{PropMonotoneOptimalTime},
\begin{multline*}
\sup_{\substack{t \in (t_0 - h, t_0 + h) \\ x \in B(x_0, h) \\ (t, x) \neq (t_0, x_0)}} \frac{\varphi(t, x) - \varphi(t_0, x_0) - p_0(t - t_0) - p_1 \cdot (x - x_0)}{\left(\abs{t - t_0}^2 + \abs{x - x_0}^2\right)^{1/2}} \\ \geq \sup_{t \in (t_0, t_0 + h)} \frac{\varphi(t, x_0) - \varphi(t_0, x_0) - p_0(t - t_0)}{t - t_0} \geq c - 1 - p_0,
\end{multline*}
and thus, taking the limit as $h \searrow 0$, one obtains from \eqref{P0P1InDPlusVarphi} that $c - 1 - p_0 \leq 0$.
\end{proof}

Our next result provides a Hamilton--Jacobi equation for $\varphi$. Its proof is based on classical techniques on optimal control and is omitted here (see, e.g., \cite[Chapter IV, Proposition 2.3]{Bardi1997Optimal}).

\begin{theo}
\label{MainTheoHJ}
Consider the optimal control problem $\OCP(X, \Gamma, k)$, assume that Hypotheses \ref{MainHypo}\ref{HypoXCompact}\allowbreak--\ref{HypoXOverlineOmega} hold, and let $\Omega$ be as in Hypothesis \ref{MainHypo}\ref{HypoXOverlineOmega} and $\varphi$ be the value function from Definition \ref{DefiVarphi}. Consider the Hamilton--Jacobi equation on $\mathbb R_+ \times \overline\Omega$
\begin{equation}
\label{EqHJB}
-\partial_t \varphi(t, x) + \abs{\nabla_x \varphi(t, x)} k(t, x) - 1 = 0.
\end{equation}
Then $\varphi$ is a viscosity subsolution of \eqref{EqHJB} on $\mathbb R_+ \times \Omega$, a viscosity supersolution of \eqref{EqHJB} on $\mathbb R_+ \times (\overline\Omega \setminus \Gamma)$, and satisfies $\varphi(t, x) = 0$ for $(t, x) \in \mathbb R_+ \times \Gamma$.
\end{theo}

Proposition \ref{PropMonotoneOptimalTime} yields a lower bound on the time derivative of $\varphi$, which can be used to obtain information on the gradient of $\varphi$ and on the optimal control of optimal trajectories thanks to the Hamilton--Jacobi equation \eqref{EqHJB}.

\begin{coro}
\label{Coro1Varphi}
Consider the optimal control problem $\OCP(X, \Gamma, k)$, assume that Hypotheses \ref{MainHypo}\allowbreak\ref{HypoXCompact}\allowbreak--\ref{HypoXOverlineOmega} hold, and let $\Omega$ be as in Hypothesis \ref{MainHypo}\ref{HypoXOverlineOmega} and $(t_0, x_0) \in \mathbb R_+ \times \overline\Omega$.
\begin{enumerate}
\item\label{CoroGradNotZero} If $x_0 \notin \Gamma$ and $\varphi$ is differentiable at $(t_0, x_0)$, then $\partial_t \varphi(t, x) > -1$ and $\nabla_x \varphi(t, x) \neq 0$.

\item\label{CoroAlmostVelocityField} Let $\gamma \in \Opt(k, t_0, x_0)$ and assume that $t \in [t_0, t_0 + \varphi(t_0,\allowbreak x_0))$ is such that $\gamma$ is differentiable at $t$, $\gamma(t) \in \Omega$, and $\varphi$ is differentiable at $(t, \gamma(t))$. Then
\begin{equation}
\label{NablaTauQIsOptimalControl}
\dot\gamma(t) = -k(t, \gamma(t)) \frac{\nabla_x \varphi(t, \gamma(t))}{\abs{\nabla_x \varphi(t, \gamma(t))}}.
\end{equation}
\end{enumerate}
\end{coro}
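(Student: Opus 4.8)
The plan is to exploit the fact that, at a point where $\varphi$ is differentiable, the viscosity sub- and supersolution properties of Theorem \ref{MainTheoHJ} collapse to classical (in)equalities, since the superdifferential $D^+\varphi(t_0,x_0)$ reduces to the singleton $\{(\partial_t\varphi(t_0,x_0), \nabla_x\varphi(t_0,x_0))\}$ and this same vector is the unique subgradient. Combining this with the lower bound on the time variation of $\varphi$ coming from Proposition \ref{PropMonotoneOptimalTime} (equivalently Lemma \ref{LemmP0GreaterThanMinusOne}) will give both parts.

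For part \ref{CoroGradNotZero}, I would first obtain $\partial_t\varphi(t_0,x_0) > -1$ directly: since $\varphi$ is differentiable in $t$ at $(t_0,x_0)$, passing to the limit $t \to t_0$ in the difference quotient of Proposition \ref{PropMonotoneOptimalTime} (with $x = x_0$) yields $\partial_t\varphi(t_0,x_0) \geq c - 1 > -1$. To get $\nabla_x\varphi(t_0,x_0) \neq 0$, I note that $x_0 \notin \Gamma$ places $(t_0,x_0)$ in the region $\mathbb R_+ \times (\overline\Omega\setminus\Gamma)$ where $\varphi$ is a viscosity supersolution; since $\varphi$ is differentiable at $(t_0,x_0)$, the supersolution test may be applied with its unique gradient, giving $-\partial_t\varphi(t_0,x_0) + \abs{\nabla_x\varphi(t_0,x_0)}\, k(t_0,x_0) - 1 \geq 0$. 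Using $\partial_t\varphi(t_0,x_0) > -1$ and $k(t_0,x_0) \geq K_{\min} > 0$ then forces $\abs{\nabla_x\varphi(t_0,x_0)} > 0$.

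For part \ref{CoroAlmostVelocityField}, I would first check that the right-hand side is well defined. By Proposition \ref{Prop2Varphi}\ref{PropFlowPreservesOptimality}, along the optimal trajectory one has $\varphi(s,\gamma(s)) = \varphi(t_0,x_0) - (s-t_0)$, which is strictly positive for $s < t_0 + \varphi(t_0,x_0)$; since $\varphi$ vanishes on $\Gamma$, this gives $\gamma(t) \notin \Gamma$, and part \ref{CoroGradNotZero} then yields $\nabla_x\varphi(t,\gamma(t)) \neq 0$. Differentiating the identity $\varphi(s,\gamma(s)) = \varphi(t_0,x_0) - (s-t_0)$ at $s=t$ by the chain rule—legitimate since $\varphi$ is differentiable at $(t,\gamma(t))$ and $\gamma$ is differentiable at $t$—gives $\partial_t\varphi(t,\gamma(t)) + \nabla_x\varphi(t,\gamma(t)) \cdot \dot\gamma(t) = -1$. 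On the other hand, since $\gamma(t) \in \Omega$, both the subsolution and the supersolution properties hold at $(t,\gamma(t))$, so equation \eqref{EqHJB} is satisfied classically there: $\partial_t\varphi(t,\gamma(t)) + 1 = \abs{\nabla_x\varphi(t,\gamma(t))}\, k(t,\gamma(t))$. Substituting one obtains $\nabla_x\varphi(t,\gamma(t)) \cdot \dot\gamma(t) = -\abs{\nabla_x\varphi(t,\gamma(t))}\, k(t,\gamma(t))$.

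To conclude, I would invoke admissibility $\abs{\dot\gamma(t)} \leq k(t,\gamma(t))$ together with Cauchy--Schwarz, $\nabla_x\varphi\cdot\dot\gamma \geq -\abs{\nabla_x\varphi}\,\abs{\dot\gamma} \geq -\abs{\nabla_x\varphi}\,k$, all evaluated at $(t,\gamma(t))$. Since the two ends of this chain coincide by the previous step, both inequalities are equalities, which forces simultaneously $\abs{\dot\gamma(t)} = k(t,\gamma(t))$ and $\dot\gamma(t)$ antiparallel to $\nabla_x\varphi(t,\gamma(t))$; this is exactly \eqref{NablaTauQIsOptimalControl}. The main obstacle I anticipate is not any single hard estimate but the bookkeeping required to turn the viscosity (in)equalities into a classical identity at the differentiable point—carefully justifying that the sub- and superdifferentials both reduce to $\{\nabla\varphi\}$, and that $\gamma(t) \in \Omega$ is precisely what licenses the simultaneous use of the subsolution and supersolution properties of Theorem \ref{MainTheoHJ}.
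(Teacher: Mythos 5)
Your proof is correct and follows essentially the same route as the paper's: part \ref{CoroGradNotZero} via the difference quotient of Proposition \ref{PropMonotoneOptimalTime} plus the supersolution inequality at a point of differentiability, and part \ref{CoroAlmostVelocityField} by differentiating the identity of Proposition \ref{Prop2Varphi}\ref{PropFlowPreservesOptimality} along the trajectory, comparing with the pointwise Hamilton--Jacobi equation, and concluding from the equality case of Cauchy--Schwarz (the paper phrases this as $\dot\gamma(t)=k(t,\gamma(t))u$ with $u\in\overline B_d$, which is the same argument). Your explicit verification that $\gamma(t)\notin\Gamma$ (via positivity of $\varphi$ before the arrival time), needed since under Hypothesis \ref{MainHypo}\ref{HypoXOverlineOmega} the set $\Gamma$ may meet $\Omega$, is a point the paper leaves implicit and is a welcome addition.
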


\begin{proof}
To prove \ref{CoroGradNotZero}, notice first that, for $(t_0, x_0) \in \mathbb R_+ \times (\overline\Omega \setminus \Gamma)$ at which $\varphi$ is differentiable, it follows from \eqref{DtTauQGreaterThanMinusOne} that $\partial_t \varphi(t_0, x_0) \geq c - 1 > -1$. Since $\varphi$ is a viscosity supersolution of \eqref{EqHJB} on $\mathbb R_+ \times (\overline\Omega \setminus \Gamma)$, then
\[
-\partial_t \varphi(t_0, x_0) + \abs{\nabla_x \varphi(t_0, x_0)} k(t_0, x_0) - 1 \geq 0
\]
(see, e.g., \cite[Corollary I.6]{Crandall1983Viscosity}). Since $\partial_t \varphi(t_0, x_0) > -1$ and $k(t_0, x_0) \geq K_{\min} > 0$, one obtains that $\abs{\nabla_x \varphi(t_0, x_0)} > 0$, i.e., $\nabla_x \varphi(t_0, x_0) \neq 0$.

In order to prove \ref{CoroAlmostVelocityField}, notice that, since $\gamma\in \Adm(k)$ and $\gamma$ is differentiable at $t$, we can write $\dot\gamma(t) = k(t, \gamma(t)) u$ for a certain $u \in \overline B_d$. Applying Proposition \ref{Prop2Varphi}\ref{PropFlowPreservesOptimality}, one obtains, for small $h$,
\[
\varphi(t + h, \gamma(t + h)) + h = \varphi(t, \gamma(t)).
\]
Differentiating with respect to $h$ at $h = 0$ yields
\[
\partial_t \varphi(t, \gamma(t)) + \nabla_x \varphi(t, \gamma(t)) \cdot \dot\gamma(t) + 1 = 0.
\]
On the other hand, since $\varphi$ is differentiable at $(t, \gamma(t))$, \eqref{EqHJB} holds pointwisely at  $(t, \gamma(t))$ (see, e.g., \cite[Corollary I.6]{Crandall1983Viscosity}). Then, comparing the two expressions, we obtain
\[
\nabla_x \varphi(t, \gamma(t)) \cdot \dot\gamma(t) + \abs{\nabla_x \varphi(t, \gamma(t))} k(t, \gamma(t)) = 0.
\]
Using $\dot\gamma(t) = k(t, \gamma(t)) u$ and $k(t, \gamma(t)) > 0$, we get
\[
\nabla_x \varphi(t, \gamma(t)) \cdot u = - \abs{\nabla_x \varphi(t, \gamma(t))}.
\]
Since $u \in \overline B_d$ and, by \ref{CoroGradNotZero}, $\nabla_x \varphi(t, \gamma(t)) \neq 0$, this implies that
\[
u = -\frac{\nabla_x \varphi(t, \gamma(t))}{\abs{\nabla_x \varphi(t, \gamma(t))}},
\]
and thus \eqref{NablaTauQIsOptimalControl} holds.
\end{proof}

Notice that, since $\varphi$ is Lipschitz continuous, one obtains as a consequence of Corollary \ref{Coro1Varphi}\ref{CoroGradNotZero} that $\partial_t \varphi(t, x) > -1$ and $\nabla_x \varphi(t, x) \not = 0$ for almost every $(t, x) \in \mathbb R_+ \times \Omega$.

\begin{defi}
\label{DefiOptControl}
Consider the optimal control problem $\OCP(X, \Gamma, k)$ and assume that Hypothesis \ref{MainHypo}\ref{HypoXOverlineOmega} holds. Given $(t_0, x_0) \in \mathbb R_+ \times \overline\Omega$ and $\gamma \in \Opt(k, t_0, x_0)$, we say that a measurable function $u: \mathbb R_+ \to \overline B_d$ is an \emph{optimal control} associated with $\gamma$ if $\dot\gamma(t) = k(t, \gamma(t)) u(t)$ for almost every $t \in \mathbb R_+$.
\end{defi}

In terms of Definition \ref{DefiOptControl}, Corollary \ref{Coro1Varphi}\ref{CoroAlmostVelocityField} states that any optimal control $u$ associated with an optimal trajectory $\gamma$ satisfies $u(t) = - \frac{\nabla_x \varphi(t, \gamma(t))}{\abs{\nabla_x \varphi(t, \gamma(t))}}$ whenever $\gamma$ is differentiable at $t \in [t_0, t_0 + \varphi(t_0, x_0))$, $\gamma(t) \in \Omega$, and $\varphi$ is differentiable at $(t, \gamma(t))$. Even though $\varphi$ and $\gamma$ are both Lipschitz continuous, and hence differentiable almost everywhere, $\varphi$ may be nowhere differentiable along a given optimal trajectory when $d \geq 2$, and thus Corollary \ref{Coro1Varphi}\ref{CoroAlmostVelocityField} is not sufficient to characterize the optimal control for every optimal trajectory.

\subsection{Consequences of Pontryagin Maximum Principle}
\label{SecPMP}

As a first step towards providing a characterization of the optimal control associated with an optimal trajectory $\gamma \in \Opt(k, t_0, x_0)$, we apply Pontryagin Maximum Principle to $\OCP(X, \Gamma, k)$ to obtain a relation between the optimal control and the costate from Pontryagin Principle and deduce a differential inclusion for the optimal control. To do so, we will assume the more restrictive Hypothesis \ref{MainHypo}\ref{HypoExitOnPartialOmega} on the target set $\Gamma$. 
Notice that, under Hypothesis \ref{MainHypo}\ref{HypoXOverlineOmega}, $\OCP(\overline\Omega, \Gamma, k)$ is an optimal control problem with the state constraint $\gamma(t) \in \overline\Omega$ for every $t \in \mathbb R_+$, but such a state constraint becomes redundant when one assumes that $\Gamma = \partial\Omega$, since optimal trajectories starting at $\overline\Omega$ stop as soon as they reach the target set $\partial\Omega$, meaning that they will automatically always remain in $\overline\Omega$.

Even though versions of Pontryagin Maximum Principle for non-smooth dynamics and state constraints are available \cite[Theorem 5.2.3]{Clarke1990Optimization}, as well as techniques for adapting the unconstrained maximum principle to the constrained case \cite{Cannarsa2008Regularity}, which could be used to study $\OCP(\overline\Omega, \Gamma, k)$ without Hypothesis \ref{MainHypo}\ref{HypoExitOnPartialOmega}, we prefer to state its conclusions under Hypothesis \ref{MainHypo}\ref{HypoExitOnPartialOmega} for simplicity since this assumption will be needed in the sequel in Section \ref{SecNormGradientAndOptimalControl}. However, notice the need for non-smooth statements of the Pontryagin Maximum Principle (which involve differential inclusions), the reason being that we are not assuming    Hypothesis \ref{MainHypo}\ref{HypokGlobC11}. 

In the next result, $\pi_t: \mathbb R \times \mathbb R^d \to \mathbb R$ and $\pi_x: \mathbb R \times \mathbb R^d \to \mathbb R^d$ denote the canonical projections onto the factors of the product $\mathbb R \times \mathbb R^d$.

\begin{prop}
\label{PropPMPTauQ}
Consider the optimal control problem $\OCP(X, \Gamma, k)$, assume that Hypotheses \ref{MainHypo}\ref{HypoXCompact}\allowbreak--\ref{HypokGlobLip} hold, and let $\Omega$ be as in Hypothesis \ref{MainHypo}\ref{HypoXOverlineOmega}. Let $(t_0, x_0) \in \mathbb R_+ \times \overline\Omega$, $\gamma \in \Opt(k, t_0, x_0)$, $T = \varphi(t_0, x_0)$, and $u: \mathbb R_+ \to \overline B_d$ be a measurable optimal control associated with $\gamma$. Then there exist $\lambda \in \{0, 1\}$ and absolutely continuous functions $p: [t_0, t_0 + T] \to \mathbb R^d$ and $h: [t_0, t_0 + T] \to \mathbb R$ such that
\begin{enumerate}
\item\label{PMPAdjointEq} For almost every $t \in [t_0, t_0 + T]$,
\begin{equation}
\label{PMPSystHP}
\left\{
\begin{aligned}
\dot h(t) & \in   \abs{p(t)} \pi_t \gengrad k(t, \gamma(t)), \\
\dot p(t) & \in - \abs{p(t)} \pi_x \gengrad k(t, \gamma(t)).
\end{aligned}
\right.
\end{equation}

\item\label{PMPOptimalU} One has
\[
u(t) = \frac{p(t)}{\abs{p(t)}}
\]
almost everywhere on $\left\{t \in [t_0, t_0 + T] \midsuchthat p(t) \neq 0\right\}$.

\item\label{PMPhHamiltonian} For almost every $t \in [t_0, t_0 + T]$, one has $h(t) = \abs{p(t)} k(t, \gamma(t)) - \lambda$, and $h(t_0 + T) = 0$.

\item\label{PMPTransverseP} $-p(t_0 + T) \in N_{\partial\Omega}(\gamma(t_0 + T))$.

\item\label{PMPNotIdenticallyZero} $\lambda + \max_{t \in [t_0, t_0 + T]} \abs{p(t)} > 0$.
\end{enumerate}
\end{prop}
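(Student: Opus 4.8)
The plan is to apply a nonsmooth version of the Pontryagin Maximum Principle for free-final-time, minimal-time problems with a nonsmooth (merely Lipschitz) dynamics and a terminal constraint, such as \cite[Theorem 5.2.3]{Clarke1990Optimization}. First I would recast $\OCP(X, \Gamma, k)$ as a genuine optimal control problem in the sense of Remark \ref{RemkControlSyst}: the state is $\gamma(t) \in \R^d$, the control is $u(t) \in \overline B_d$, the dynamics is $\dot\gamma(t) = k(t, \gamma(t)) u(t)$, and the cost to be minimized is the final time, so that the running cost is $1$ and the terminal cost is $0$. Because $k$ is only Lipschitz (Hypothesis \ref{MainHypo}\ref{HypokGlobLip}, not \ref{HypokGlobC11}), the classical smooth maximum principle does not apply directly and one genuinely needs the Clarke-generalized-gradient version, which produces the differential inclusions in \eqref{PMPSystHP} rather than ordinary differential equations. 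The Hamiltonian for this problem is $H(t, x, p, u) = \abs{p}\, k(t, x)\, (p/\abs{p}) \cdot u$ when $p \neq 0$; more precisely, with costate $p$ and the control entering linearly through $\dot\gamma = k(t,x) u$, the pointwise Hamiltonian is $p \cdot (k(t,x) u) = k(t,x)\, (p \cdot u)$.

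The key steps, in order, are as follows. I would introduce the augmented costate $(h, p)$, where $p$ is the costate associated with the state $\gamma$ and $h$ tracks the Hamiltonian along the trajectory (exploiting the autonomous-after-time-augmentation structure that makes $h$ absolutely continuous). The adjoint inclusion from the nonsmooth PMP reads $(\dot h(t), -\dot p(t)) \in \gengrad_{(t,x)}\bigl[\abs{p(t)} k(t, \gamma(t))\bigr]$ along the optimal trajectory; splitting the generalized gradient across the two factors and using that $\abs{p(t)}$ is a fixed scalar at each time yields exactly \eqref{PMPSystHP}, namely $\dot h(t) \in \abs{p(t)}\, \pi_t \gengrad k(t, \gamma(t))$ and $\dot p(t) \in -\abs{p(t)}\, \pi_x \gengrad k(t, \gamma(t))$, which is item \ref{PMPAdjointEq}. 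For item \ref{PMPOptimalU}, the maximum condition requires $u(t)$ to maximize $k(t, \gamma(t))\, (p(t) \cdot u)$ over $u \in \overline B_d$; since $k > 0$, whenever $p(t) \neq 0$ the unique maximizer is $u(t) = p(t)/\abs{p(t)}$. Item \ref{PMPhHamiltonian} is the standard value-of-the-Hamiltonian identity: $h(t) = \abs{p(t)}\, k(t, \gamma(t)) - \lambda$ is the maximized Hamiltonian minus the running cost multiplier $\lambda$ (which equals $1$ in the normal case), and the free-final-time transversality condition forces the Hamiltonian to vanish at the terminal time, giving $h(t_0 + T) = 0$. Item \ref{PMPTransverseP} is the transversality condition at the endpoint: since the only terminal constraint is $\gamma(t_0 + T) \in \Gamma = \partial\Omega$ (here Hypothesis \ref{MainHypo}\ref{HypoExitOnPartialOmega} is used to avoid a genuine state constraint, as emphasized in the text preceding the statement), the costate must lie in the normal cone to $\partial\Omega$ at the exit point, i.e.\ $-p(t_0 + T) \in N_{\partial\Omega}(\gamma(t_0 + T))$. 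Finally, item \ref{PMPNotIdenticallyZero} is the nontriviality conclusion of the PMP, asserting that $(\lambda, p)$ does not vanish identically.

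Two points deserve care and I expect the main obstacle to lie there. The first is the sign and normalization conventions: different references orient the costate and the adjoint equation oppositely, so I would fix conventions so that $\dot\gamma = k u$ with $u = p/\abs{p}$ is consistent with the velocity-field characterization $\dot\gamma(t) = -k(t,\gamma(t))\, \nabla_x\varphi/\abs{\nabla_x\varphi}$ from Corollary \ref{Coro1Varphi}\ref{CoroAlmostVelocityField}; concretely this means $p$ plays the role of $-\nabla_x\varphi$ up to positive scaling, which also makes the transversality condition \ref{PMPTransverseP} consistent with $\varphi \equiv 0$ on $\partial\Omega$ (so $\nabla_x\varphi$ points along the outward normal). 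The second, and genuinely the harder, point is handling the possibility that $p(t) = 0$ on a set of positive measure: when $p$ vanishes the maximum condition no longer pins down $u$, which is exactly why item \ref{PMPOptimalU} is stated only on $\{p \neq 0\}$, and why the nontriviality \ref{PMPNotIdenticallyZero} must be kept in the combined form $\lambda + \max_t \abs{p(t)} > 0$ rather than as a statement about $p$ alone. I would invoke the precise nondegeneracy and terminal-cone statements of the nonsmooth PMP to guarantee \ref{PMPNotIdenticallyZero} and to ensure that the generalized-gradient inclusions are the correct weak form replacing the classical adjoint ODE; verifying that the hypotheses of \cite[Theorem 5.2.3]{Clarke1990Optimization} are met (Lipschitz dynamics, compact control set $\overline B_d$, smooth terminal manifold $\partial\Omega$) is the bookkeeping that makes the whole argument rigorous.
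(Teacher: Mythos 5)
Your proposal takes essentially the same route as the paper, whose entire proof is a one-line remark: invoke the nonsmooth Pontryagin principle of \cite[Theorem 5.2.3]{Clarke1990Optimization} after the classical state augmentation $z = (t, \gamma)$ that renders the system autonomous --- precisely the structure you describe, with the adjoint differential inclusions projected onto the two factors, the maximum condition pinning down $u = p(t)/\abs{p(t)}$ only on $\{p \neq 0\}$, the vanishing of the augmented Hamiltonian at the free final time, the endpoint transversality in $N_{\partial\Omega}$, and the nontriviality of $(\lambda, p)$. The only imprecision is your closing reference to a ``smooth terminal manifold'': under the standing hypotheses $\partial\Omega$ is merely a closed set, and Clarke's theorem requires nothing more, the cone in item \ref{PMPTransverseP} being the nonsmooth normal cone of Section \ref{SecNotations}.
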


Proposition \ref{PropPMPTauQ} can be obtained from \cite[Theorem 5.2.3]{Clarke1990Optimization} using a classical technique of state augmentation to transform \eqref{AdmissibleIsControlSystem} into an autonomous control system on the augmented state variable $z = (t, \gamma)$.

Notice that Proposition \ref{PropPMPTauQ}\ref{PMPOptimalU} characterizes the optimal control in terms of the costate $p$ whenever the costate is non-zero. Our next result states that this happens everywhere on $[t_0, t_0 + T]$.

\begin{lemm}
\label{LemmPNeqZero}
Consider the optimal control problem $\OCP(X, \Gamma, k)$, assume that Hypotheses \ref{MainHypo}\ref{HypoXCompact}\allowbreak--\ref{HypokGlobLip} hold, and let $\Omega$, $(t_0, x_0)$, $\gamma$, $T$, $\lambda$, $p$, and $h$ be as in the statement of Proposition \ref{PropPMPTauQ}. Then $p(t) \neq 0$ for every $t \in [t_0, t_0 + T]$.
\end{lemm}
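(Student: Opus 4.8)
The plan is to argue by contradiction using the differential inclusion structure of the adjoint equation \eqref{PMPSystHP} together with the transversality and nontriviality conditions. First I would observe that the costate equation for $p$ in \eqref{PMPSystHP} has the form $\dot p(t) \in -\abs{p(t)}\,\pi_x \gengrad k(t, \gamma(t))$, and since $k$ is Lipschitz continuous (Hypothesis \ref{MainHypo}\ref{HypokGlobLip}), the generalized gradient $\gengrad k$ is bounded on the compact set traversed by $\gamma$. Hence there is a constant $L > 0$ such that $\abs{\dot p(t)} \leq L\,\abs{p(t)}$ for almost every $t \in [t_0, t_0 + T]$. This is precisely a linear Gronwall-type bound on $\abs{p}$: applying Gronwall's inequality both forward and backward in time, one obtains that if $p(t_\ast) = 0$ for some single $t_\ast \in [t_0, t_0 + T]$, then $\abs{p(t)} \leq \abs{p(t_\ast)} e^{L\abs{t - t_\ast}} = 0$ for all $t$, so $p$ vanishes identically on the whole interval.

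The second step is to rule out the case $p \equiv 0$. Suppose $p(t) = 0$ for every $t \in [t_0, t_0 + T]$. By the nontriviality condition \ref{PMPNotIdenticallyZero}, $\lambda + \max_t \abs{p(t)} > 0$, so $p \equiv 0$ forces $\lambda = 1$. But then the Hamiltonian relation \ref{PMPhHamiltonian} gives $h(t) = \abs{p(t)}\,k(t, \gamma(t)) - \lambda = 0 \cdot k(t,\gamma(t)) - 1 = -1$ for almost every $t$, which contradicts the terminal condition $h(t_0 + T) = 0$ in the same statement \ref{PMPhHamiltonian}. Therefore $p$ cannot vanish identically, and combined with the first step this shows $p(t) \neq 0$ for every $t \in [t_0, t_0 + T]$.

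I expect the main technical point to be justifying the Gronwall estimate rigorously for a differential \emph{inclusion} rather than an equation: one must know that $p$ is absolutely continuous (which is given in Proposition \ref{PropPMPTauQ}) and that any measurable selection $\dot p(t)$ from $-\abs{p(t)}\,\pi_x \gengrad k(t, \gamma(t))$ inherits the bound $\abs{\dot p(t)} \leq L \abs{p(t)}$, using the fact that $\gengrad k(t, \gamma(t))$ lies in a ball of radius equal to the Lipschitz constant of $k$. Once the pointwise bound $\abs{\dot p(t)} \leq L\abs{p(t)}$ is in hand, the rest is a standard application of Gronwall to the scalar absolutely continuous function $t \mapsto \abs{p(t)}$. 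The only mild subtlety is that $t \mapsto \abs{p(t)}$ is absolutely continuous with $\bigl|\frac{d}{dt}\abs{p(t)}\bigr| \leq \abs{\dot p(t)}$ almost everywhere, which is elementary but should be noted.

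In summary, the proof is short: derive $\abs{\dot p} \leq L \abs{p}$ from the boundedness of $\gengrad k$ and Hypothesis \ref{MainHypo}\ref{HypokGlobLip}; conclude by Gronwall that a single zero of $p$ forces $p \equiv 0$; and then eliminate $p \equiv 0$ by playing the terminal condition $h(t_0 + T) = 0$ against the Hamiltonian identity $h = \abs{p} k - \lambda$ and the nontriviality condition \ref{PMPNotIdenticallyZero}. The crucial structural ingredient that makes this work is that the dynamics are \emph{linear and homogeneous} in $p$ (the right-hand side of the $\dot p$ inclusion scales with $\abs{p}$), which is special to the multiplicative form $\dot\gamma = k(t,\gamma) u$ of the control system \eqref{AdmissibleIsControlSystem}.
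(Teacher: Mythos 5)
Your proposal is correct and follows essentially the same route as the paper: a Gronwall estimate derived from the boundedness of $\pi_x \gengrad k$ (via the Lipschitz constant of $k$) shows that a single zero of $p$ forces $p \equiv 0$, and then the identity $h = \abs{p}k - \lambda$, the terminal condition $h(t_0+T)=0$, and the nontriviality condition \ref{PMPNotIdenticallyZero} yield a contradiction. The only cosmetic difference is the order of the final contradiction (you fix $\lambda = 1$ from nontriviality and contradict $h(t_0+T)=0$, while the paper deduces $\lambda = 0$ from $h$ and contradicts nontriviality), which uses exactly the same ingredients.
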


\begin{proof}
Let $M$ be a Lipschitz constant for $k$ and $\beta: [t_0, t_0 + T] \to \mathbb R^d$ be a measurable function such that $\beta(t) \in \pi_x \gengrad k(t, \gamma(t))$ and $\dot p(t) = - \abs{p(t)} \beta(t)$ for almost every $t \in [t_0, t_0 + T]$. Then $\abs{\beta(t)} \leq M$ for almost every $t \in [t_0, t_0 + T]$ (see, e.g., \cite[Proposition 2.1.2]{Clarke1990Optimization}) and thus, for every $t, t_1 \in [t_0, t_0 + T]$,
\[
\abs{p(t)} \leq \abs{p(t_1)} + M \int_{\min\{t_1, t\}}^{\max\{t_1, t\}} \abs{p(s)} \diff s.
\]
Hence, by Gronwall's inequality, for every $t, t_1 \in [t_0, t_0 + T]$,
\[
\abs{p(t)} \leq \abs{p(t_1)} e^{M \abs{t - t_1}}.
\]
One then concludes that, if there exists $t_1 \in [t_0, t_0 + T]$ such that $p(t_1) = 0$, then $p(t) = 0$ for every $t \in [t_0, t_0 + T]$. Thus, by Proposition \ref{PropPMPTauQ}\ref{PMPhHamiltonian}, one has $h(t) = -\lambda$ for every $t \in [t_0, t_0 + T]$, and, since $h(t_0 + T) = 0$, it follows that $\lambda = 0$, contradicting Proposition \ref{PropPMPTauQ}\ref{PMPNotIdenticallyZero}. Thus $p(t) \neq 0$ for every $t \in [t_0, t_0 + T]$.
\end{proof}

Combining Lemma \ref{LemmPNeqZero} with the differential inclusion for $p$ from \eqref{PMPSystHP}, one obtains the following differential inclusion for $u$.

\begin{coro}
\label{CoroSystGammaU}
Consider the optimal control problem $\OCP(X, \Gamma, k)$, assume that Hypotheses \ref{MainHypo}\ref{HypoXCompact}\allowbreak--\ref{HypokGlobLip} hold, and let $\Omega$, $(t_0, x_0)$, $\gamma$, $T$, and $u$ be as in the statement of Proposition \ref{PropPMPTauQ}. Then $\gamma \in \mathcal C^{1, 1}([t_0,\allowbreak t_0 + T], \mathbb R^d)$, $u \in \Lip([t_0,\allowbreak t_0 + T], \mathbb S^{d-1})$, and $(\gamma, u)$ solves the system
\begin{equation}
\label{SystGammaU}
\left\{
\begin{aligned}
\dot \gamma(t) & = k(t, \gamma(t)) u(t), \\
\dot u(t) & \in - \Proj^\perp_{u(t)} \pi_x \gengrad k(t, \gamma(t)),
\end{aligned}
\right.
\end{equation}
where, for $x \in \mathbb S^{d-1}$, $\Proj^\perp_{x}: \mathbb R^d \to T_{x} \mathbb S^{d-1}$ is the projection of $v$ onto the tangent space to $x$ of $\mathbb S^{d-1}$, defined for $v \in \mathbb R^d$ by $\Proj^\perp_{x} v = v - (x \cdot v) x$.
\end{coro}

\begin{proof}
Let $p$ be as in the statement of Proposition \ref{PropPMPTauQ}. Thanks to Lemma \ref{LemmPNeqZero}, it follows from Proposition \ref{PropPMPTauQ}\ref{PMPOptimalU} that $u(t) = \frac{p(t)}{\abs{p(t)}}$, and in particular $u$ is absolutely continuous and takes values in $\mathbb S^{d-1}$ for every $t \in [t_0, t_0 + T]$. Let $\beta: [t_0, t_0 + T] \to \mathbb R^d$ be a measurable function such that $\beta(t) \in \pi_x \gengrad k(t, \gamma(t))$ and $\dot p(t) = - \abs{p(t)} \beta(t)$ for almost every $t \in [t_0, t_0 + T]$. Then, for almost every $t \in [t_0, t_0 + T]$,
\begin{equation*}
\dot u(t) = \frac{\dot p(t) \abs{p(t)} - \frac{p(t) \cdot \dot p(t)}{\abs{p(t)}} p(t)}{\abs{p(t)}^2} = - \beta(t) + \left[u(t) \cdot \beta(t)\right] u(t),
\end{equation*}
which yields the differential inclusion for $u$ in \eqref{SystGammaU}. Since $k$ is Lipschitz continuous, one obtains that $\abs{\dot u(t)}$ is bounded for almost every $t \in [t_0, t_0 + T]$, and thus $u$ is Lipschitz continuous. It follows from the differential equation on $\gamma$ in \eqref{SystGammaU} that $\dot\gamma$ is Lipschitz continuous on $[t_0, t_0 + T]$, and hence $\gamma \in \mathcal C^{1, 1}([t_0,\allowbreak t_0 + T], \mathbb R^d)$.
\end{proof}

Corollary \ref{CoroSystGammaU} states that every optimal trajectory satisfies, together with its optimal control, \eqref{SystGammaU}. However, given $(t_0, x_0) \in \mathbb R_+ \times \overline\Omega$, $u_0 \in \mathbb S^{d-1}$, a solution $(\gamma, u)$ of \eqref{SystGammaU} with initial condition $\gamma(t_0) = x_0$ and $u(t_0) = u_0$ may not yield an optimal trajectory. In order to understand when a solution of \eqref{SystGammaU} is an optimal trajectory, we introduce the following definition.

\begin{defi}\label{DefiOptDirections}
Let $(t_0, x_0) \in \mathbb R_+ \times \Omega$. We define the set $\mathcal U_{(t_0, x_0)}$ of \emph{optimal directions} at $(t_0, x_0)$ as the set of all $u_0 \in \mathbb S^{d-1}$ such that there exists a solution $(\gamma, u)$ of \eqref{SystGammaU} with $\gamma(t_0) = x_0$ and $u(t_0) = u_0$ satisfying $\gamma \in \Opt(k, t_0, x_0)$.
\end{defi}

Thanks to Proposition \ref{Prop1Varphi} and Corollary \ref{CoroSystGammaU}, $\mathcal U_{(t_0, x_0)}$ is non-empty. Our next result shows that, along an optimal trajectory $\gamma$, $\mathcal U_{(t, \gamma(t))}$ is a singleton, except possibly at its initial and final points.

\begin{prop}
\label{PropSingletonAfterStartingTime}
Consider the optimal control problem $\OCP(X, \Gamma, k)$ and assume that Hypotheses \ref{MainHypo}\ref{HypoXCompact}\allowbreak--\ref{HypokGlobLip} hold. Let $(t_0, x_0) \in \mathbb R_+ \times \Omega$, $\gamma \in \Opt(k, t_0, x_0)$. Then, for every $t \in (t_0, t_0 + \varphi(t_0, x_0))$, $\mathcal U_{(t, \gamma(t))}$ contains exactly one element.
\end{prop}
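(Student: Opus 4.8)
The plan is to show that any optimal direction at an interior point $(t,\gamma(t))$ is forced to equal the direction of the trajectory $\gamma$ itself, so that the a priori set $\mathcal{U}_{(t,\gamma(t))}$ collapses to a single point. Write $T=\varphi(t_0,x_0)$ and $x=\gamma(t)$, and let $u\colon[t_0,t_0+T]\to\mathbb{S}^{d-1}$ be the optimal control of $\gamma$, so that $\gamma\in\mathcal{C}^{1,1}$ and $\dot\gamma(s)=k(s,\gamma(s))u(s)$ by Corollary \ref{CoroSystGammaU}. First I would record two preliminary facts. Since $t<t_0+T$, Proposition \ref{Prop2Varphi}\ref{PropFlowPreservesOptimality} gives $\varphi(t,x)=T-(t-t_0)>0$, so $\gamma$ has not yet reached $\Gamma=\partial\Omega$ and hence $x\in\Omega$; this uses Hypothesis \ref{MainHypo}\ref{HypoExitOnPartialOmega} and the openness of the interval. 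Moreover, by Proposition \ref{Prop2Varphi}\ref{LemmOptimalAllAlong} the tail of $\gamma$ restarted at time $t$ is optimal for $(k,t,x)$, so $u(t)\in\mathcal{U}_{(t,x)}$ and the set is at least non-empty.

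The heart of the argument is uniqueness, which I would obtain by a concatenation and a regularity comparison. Let $u_0\in\mathcal{U}_{(t,x)}$ be arbitrary and let $(\beta,v)$ be a solution of \eqref{SystGammaU} with $\beta(t)=x$, $v(t)=u_0$ and $\beta\in\Opt(k,t,x)$. I would glue the head of $\gamma$ to $\beta$, defining $\tilde\gamma(s)=\gamma(s)$ for $s\le t$ and $\tilde\gamma(s)=\beta(s)$ for $s>t$. This curve is continuous at $t$, belongs to $\Adm(k)$, is constant on $[0,t_0]$, and—because the head never meets $\Gamma$ (as $\varphi(s,\gamma(s))>0$ for $s\in[t_0,t]$) while $\beta$ first meets it at time $t+\varphi(t,x)$—has first exit time $\tau(t_0,\tilde\gamma)=t+\varphi(t,x)-t_0=T$, using the identity above. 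Hence $\tilde\gamma\in\Opt(k,t_0,x_0)$.

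The key step is then to apply Corollary \ref{CoroSystGammaU} to $\tilde\gamma$: as an optimal trajectory it is of class $\mathcal{C}^{1,1}$ on the whole interval $[t_0,t_0+T]$, so $\dot{\tilde\gamma}$ is continuous across the interior time $t$. Its left derivative is $\dot\gamma(t)=k(t,x)u(t)$ and its right derivative is $\dot\beta(t)=k(t,x)u_0$; continuity forces $k(t,x)u(t)=k(t,x)u_0$, and since $k(t,x)\ge K_{\min}>0$ this yields $u_0=u(t)$. As $u_0$ was arbitrary, $\mathcal{U}_{(t,x)}=\{u(t)\}$, which is the claim.

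I expect the main obstacle to be this last junction argument: the uniqueness rests entirely on the $\mathcal{C}^1$ regularity of optimal trajectories \emph{across} interior times, which is precisely what Corollary \ref{CoroSystGammaU} supplies, together with the verification that the concatenation is genuinely optimal (via Proposition \ref{Prop2Varphi}). This picture also makes transparent why the endpoints are excluded: at $t=t_0$ the head is merely a constant segment imposing no velocity matching—indeed several directions may legitimately leave $x_0$—and at $t=t_0+T$ the trajectory has already stopped on $\partial\Omega$, so there is no forward branch to compare.
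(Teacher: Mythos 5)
Your proof is correct and follows essentially the same route as the paper's: both establish non-emptiness via Proposition \ref{Prop2Varphi}\ref{LemmOptimalAllAlong}, then take an arbitrary $u_0 \in \mathcal U_{(t,\gamma(t))}$, glue the head of $\gamma$ to the corresponding optimal branch, check that the concatenation is itself optimal for $(k,t_0,x_0)$, and invoke Corollary \ref{CoroSystGammaU} to get $\mathcal C^{1,1}$ regularity across the junction, forcing the two directions to coincide. The only cosmetic difference is that the paper concludes from the continuity of the glued \emph{control} $\widehat u$, while you conclude from the continuity of $\dot{\tilde\gamma}$ and divide by $k(t,\gamma(t)) \geq K_{\min} > 0$; these are equivalent.
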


\begin{proof}
In order to simplify the notations, set $T = \varphi(t_0, x_0)$, fix $t_1 \in (t_0, t_0 + T)$ and let $x_1 = \gamma(t_1)$. Let $u$ be an optimal control associated with $\gamma$. By Corollary \ref{CoroSystGammaU}, $u \in \Lip([t_0, t_0 + T], \mathbb S^{d-1})$. Thanks to Proposition \ref{Prop2Varphi}\ref{LemmOptimalAllAlong}, $u(t_1) \in \mathcal U_{(t_1, x_1)}$; suppose to have another element $u_1 \in \mathcal U_{(t_1, x_1)}$: we will prove that $u_1 = u(t_1)$.

Since $u_1 \in \mathcal U_{(t_1, x_1)}$, there exists a solution $(\widetilde\gamma, \widetilde u)$ of \eqref{SystGammaU} with $\widetilde\gamma(t_1) = x_1$, $\widetilde u(t_1) = u_1$, and $\widetilde\gamma \in \Opt(k, t_1, x_1)$. Let $(\widehat\gamma, \widehat u)$ be defined on $\mathbb R_+$ by
\[
\widehat\gamma(t) = 
\begin{dcases*}
\gamma(t), & if $t < t_1$, \\
\widetilde\gamma(t), & if $t \geq t_1$,
\end{dcases*} \qquad
\widehat u(t) = 
\begin{dcases*}
u(t), & if $t < t_1$, \\
\widetilde u(t), & if $t \geq t_1$.
\end{dcases*}
\]
We can see that $\widehat\gamma \in \Opt(k, t_0, x_0)$ and $\widehat u$ is an optimal control associated with $\widehat\gamma$. Hence, by Corollary \ref{CoroSystGammaU}, $\widehat\gamma \in \mathcal C^{1, 1}([t_0, t_0 + T], \mathbb R^d)$ and $\widehat u \in \Lip([t_0, t_0 + T], \mathbb S^{d-1})$. In particular, $\widehat u$ is continuous, which proves that $u(t_1) = \widetilde u(t_1) = u_1$ and that $u(t_1)$ is the unique element of $\mathcal U_{(t_1, x_1)}$.
\end{proof}

\subsection{Normalized gradient and the optimal control}
\label{SecNormGradientAndOptimalControl}

In this section, we are interested in the additional properties one gets when assuming Hypothesis \ref{MainHypo}\ref{HypokGlobC11}, the main goal being to characterize the optimal control $u$ in a similar way to Corollary \ref{Coro1Varphi}\ref{CoroAlmostVelocityField} but for all optimal trajectories and all times. A first result obtained from the extra regularity from Hypothesis \ref{MainHypo}\ref{HypokGlobC11} is the following immediate consequence of Corollary \ref{CoroSystGammaU}.

\begin{coro}
\label{CoroSystGammaURegular}
Consider the optimal control problem $\OCP(X, \Gamma, k)$, assume that Hypotheses \ref{MainHypo}\ref{HypoXCompact}\allowbreak--\ref{HypokGlobC11} hold, and let $\Omega$, $(t_0, x_0)$, $\gamma$, $T$, and $u$ be as in the statement of Proposition \ref{PropPMPTauQ}. Then $\gamma \in \mathcal C^{2, 1}([t_0,\allowbreak t_0 + T], \mathbb R^d)$, $u \in \mathcal C^{1, 1}([t_0,\allowbreak t_0 + T], \mathbb S^{d-1})$, and $(\gamma, u)$ solves the system
\begin{equation}
\label{SystGammaURegular}
\left\{
\begin{aligned}
\dot \gamma(t) & = k(t, \gamma(t)) u(t), \\
\dot u(t) & = - \Proj^\perp_{u(t)} \nabla_x k(t, \gamma(t)).
\end{aligned}
\right.
\end{equation}
\end{coro}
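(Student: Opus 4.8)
The plan is to exploit the fact that the stronger regularity of $k$ collapses Clarke's generalized gradient to a single point, turning the differential inclusion of Corollary \ref{CoroSystGammaU} into a genuine differential equation, and then to run a short bootstrap to gain the extra order of differentiability. First I would observe that Hypothesis \ref{MainHypo}\ref{HypokGlobC11} makes $k$ continuously differentiable, so that by the standard characterization of the generalized gradient of a $C^1$ function (see, e.g., \cite[Chapter 2]{Clarke1990Optimization}) one has $\gengrad k(t, x) = \{\nabla k(t, x)\}$ for every $(t, x)$, and in particular $\pi_x \gengrad k(t, \gamma(t)) = \{\nabla_x k(t, \gamma(t))\}$. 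Substituting this singleton into the inclusion $\dot u(t) \in -\Proj^\perp_{u(t)} \pi_x \gengrad k(t, \gamma(t))$ from Corollary \ref{CoroSystGammaU} immediately produces the second equation of \eqref{SystGammaURegular}, while the first equation for $\gamma$ is simply inherited unchanged.

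It then remains to upgrade the regularity from $\gamma \in \mathcal C^{1, 1}$, $u \in \Lip$ (as provided by Corollary \ref{CoroSystGammaU}) to $\gamma \in \mathcal C^{2, 1}$, $u \in \mathcal C^{1, 1}$, and for this I would bootstrap using \eqref{SystGammaURegular} itself. Since $k \in \mathcal C^{1, 1}$, the map $\nabla_x k$ is Lipschitz continuous; composing it with the Lipschitz curve $t \mapsto (t, \gamma(t))$ yields a Lipschitz map $t \mapsto \nabla_x k(t, \gamma(t))$. Because the projection $(x, v) \mapsto \Proj^\perp_x v = v - (x \cdot v) x$ is polynomial, hence smooth, and $u$ is Lipschitz with values in the compact sphere $\mathbb S^{d-1}$, the right-hand side $t \mapsto -\Proj^\perp_{u(t)} \nabla_x k(t, \gamma(t))$ of the second equation in \eqref{SystGammaURegular} is Lipschitz in $t$. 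Consequently $\dot u$ is Lipschitz, i.e.\ $u \in \mathcal C^{1, 1}([t_0, t_0 + T], \mathbb S^{d-1})$.

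Finally, I would feed this improved regularity of $u$ back into the first equation $\dot\gamma(t) = k(t, \gamma(t)) u(t)$. The scalar map $t \mapsto k(t, \gamma(t))$ is differentiable with derivative $\partial_t k(t, \gamma(t)) + \nabla_x k(t, \gamma(t)) \cdot \dot\gamma(t)$; since $\partial_t k$ and $\nabla_x k$ are Lipschitz (as $k \in \mathcal C^{1, 1}$) and $\dot\gamma$ is Lipschitz (as $\gamma \in \mathcal C^{1, 1}$), this derivative is a sum of products of bounded Lipschitz functions, hence Lipschitz, so $t \mapsto k(t, \gamma(t))$ lies in $\mathcal C^{1, 1}$. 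As $u \in \mathcal C^{1, 1}$ has just been established, the product $\dot\gamma(t) = k(t, \gamma(t)) u(t)$ is in $\mathcal C^{1, 1}$, and therefore $\gamma \in \mathcal C^{2, 1}([t_0, t_0 + T], \mathbb R^d)$.

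I do not expect any substantial obstacle, the result being essentially a regularity upgrade: the only point requiring a little care is the verification that $t \mapsto k(t, \gamma(t))$ is genuinely $\mathcal C^{1, 1}$ rather than merely Lipschitz, which follows from the chain rule together with the stability of the Lipschitz property under products and compositions of bounded Lipschitz functions. Everything else is a direct consequence of the reduction $\gengrad k = \{\nabla k\}$ valid for the $C^1$ map $k$, and of the two-step bootstrap above.
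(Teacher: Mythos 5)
Your proof is correct and follows exactly the route the paper intends: the paper states this corollary as an immediate consequence of Corollary \ref{CoroSystGammaU}, the point being precisely that Hypothesis \ref{MainHypo}\ref{HypokGlobC11} collapses $\gengrad k$ to the singleton $\{\nabla k\}$, turning the inclusion \eqref{SystGammaU} into the equation \eqref{SystGammaURegular}, after which the regularity upgrade is the two-step bootstrap you describe. Your write-up simply makes explicit the details (continuity of the right-hand side promoting the a.e.\ identity for $\dot u$ to a genuine $\mathcal C^1$ statement, and stability of $\mathcal C^{1,1}$ under products and compositions) that the paper leaves to the reader.
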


Thanks to the extra regularity assumption from Hypothesis \ref{MainHypo}\ref{HypokGlobC11} and to Hypothesis \ref{MainHypo}\ref{HypoPartialOmegaESP}, one may also use classical results in optimal control to conclude that $\varphi$ is a semiconcave function.

\begin{prop}
\label{PropVarphiSemiconcave}
Consider the optimal control problem $\OCP(X, \Gamma, k)$ and assume that Hypotheses \ref{MainHypo}\ref{HypoXCompact}\allowbreak--\ref{HypoPartialOmegaESP} hold. Then $\varphi$ is semiconcave in $\mathbb R_+ \times \Omega$.
\end{prop}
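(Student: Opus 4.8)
The plan is to realize $\varphi$ as the value function of a minimum-time problem whose target satisfies a uniform \emph{interior} sphere condition, and then to run the classical trajectory-comparison argument for the semiconcavity of minimum-time functions, exploiting the regularity of optimal trajectories already established. First I would recast the exit problem: reaching $\Gamma = \partial\Omega$ from inside $\overline\Omega$ is the same as reaching the closed set $\mathcal T = \mathbb{R}^d \setminus \Omega$, and Hypothesis~\ref{MainHypo}\ref{HypoPartialOmegaESP} is exactly the statement that $\mathcal T$ enjoys a uniform interior sphere condition of radius $r$, i.e.\ through each $y \in \partial\Omega$ passes a ball $\overline B_d(c, r) \subset \mathcal T$ touching $\partial\Omega$ at $y$. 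The key geometric consequence is that the Euclidean distance $d_{\mathcal T}$ to $\mathcal T$ is semiconcave on $\Omega$ with constant of order $1/r$: at any $y_0 \in \Omega$, if $w_0 \in \partial\Omega$ realizes the distance and $\overline B_d(c,r) \subset \mathcal T$ is the interior ball at $w_0$, then $y \mapsto \abs{y - c} - r$ is a smooth function touching $d_{\mathcal T}$ from above at $y_0$ with Hessian bounded above by $I/(d_{\mathcal T}(y_0)+r) \leq I/r$, which is precisely a linear-modulus semiconcavity bound.

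After augmenting the state with time, the dynamics $(t,x) \mapsto (1, k(t,x)u)$ are of class $\mathcal C^{1,1}$ by Hypothesis~\ref{MainHypo}\ref{HypokGlobC11}, so by Corollary~\ref{CoroSystGammaURegular} every optimal pair $(\gamma,u)$ belongs to $\mathcal C^{2,1} \times \mathcal C^{1,1}$ and solves a smooth ODE; in particular the flow of the control system depends on the initial datum in a $\mathcal C^{1,1}$ fashion and admits second-order Taylor expansions with Lipschitz remainder. To prove semiconcavity on $\mathbb{R}_+ \times \Omega$ it then suffices to establish a quadratic estimate $\varphi(\zeta + \xi) + \varphi(\zeta - \xi) - 2\varphi(\zeta) \leq C\abs{\xi}^2$ for $\zeta = (t_0, x_0)$ and $\xi = (s, z)$ small. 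Fix an optimal trajectory $\gamma$ issued from $\zeta$ with control $u$, set $T = \varphi(t_0, x_0)$ and $y^\ast = \gamma(t_0 + T) \in \partial\Omega$, and note that by the transversality condition of Proposition~\ref{PropPMPTauQ}\ref{PMPTransverseP} together with $p \neq 0$ from Lemma~\ref{LemmPNeqZero}, the arrival velocity is normal to $\partial\Omega$ and outward-pointing, with normal speed $\sigma \geq K_{\min} > 0$; hence arrival is transversal.

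I would then build comparison trajectories from the two perturbed data $\zeta \pm \xi$ by propagating the same control $u(\cdot)$ through the flow: at elapsed time $T$ they reach spatial endpoints $y_\pm$ close to $y^\ast$, satisfying $y_+ + y_- - 2 y^\ast = O(\abs{\xi}^2)$ and $y_+ - y_- = O(\abs{\xi})$ by the $\mathcal C^{1,1}$ dependence on initial conditions. Since $y_\pm$ need not lie on $\partial\Omega$, each trajectory is prolonged toward its nearest boundary point at maximal speed, incurring an extra time $\delta_\pm$ (and if $y_\pm$ has already left $\Omega$, the exit occurred before $T$ and one simply sets $\delta_\pm \leq 0$); because the continuation covers a distance $O(\abs{\xi})$ at speed close to $\sigma$, one gets $\delta_\pm \leq d_{\mathcal T}(y_\pm)/\sigma + O(\abs{\xi}^2)$, the distance-realizing segment staying in $\overline\Omega$. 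Summing the total exit times $\varphi(\zeta \pm \xi) \leq T + \delta_\pm$ and using $\varphi(\zeta) = T$ exactly, the constant terms cancel and there remains $\delta_+ + \delta_- \leq \big(d_{\mathcal T}(y_+) + d_{\mathcal T}(y_-)\big)/\sigma + O(\abs{\xi}^2)$; applying the semiconcavity of $d_{\mathcal T}$ at $y_\pm$ (whose midpoint is $y^\ast + O(\abs{\xi}^2)$, where $d_{\mathcal T}$ vanishes) bounds $d_{\mathcal T}(y_+) + d_{\mathcal T}(y_-)$ by $C\abs{\xi}^2$, yielding the desired estimate. The lower bound on the time variation of $\varphi$ from Proposition~\ref{PropMonotoneOptimalTime} and Lemma~\ref{LemmP0GreaterThanMinusOne} is used to treat the perturbation in the time variable consistently and to keep arrival times monotone.

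The delicate step is this endpoint analysis: turning the interior sphere property of $\mathcal T$ into the \emph{one-sided} bound $d_{\mathcal T}(y_+) + d_{\mathcal T}(y_-) \leq C\abs{\xi}^2$ with the correct sign, and checking that the prolonged comparison trajectories remain admissible and cross $\partial\Omega$ transversally for $\xi$ small. Once transversality (from the Pontryagin transversality condition) and the $1/r$-concavity estimate for $d_{\mathcal T}$ are in place, everything else is a routine Taylor expansion of the flow combined with the cancellations above. Since these are exactly the hypotheses under which semiconcavity of minimum-time functions is classical — dynamics of class $\mathcal C^{1,1}$, uniform interior sphere condition on the target, and uniform small-time controllability guaranteed by $k \geq K_{\min} > 0$ — the cleanest route is to invoke the corresponding result of Cannarsa and Sinestrari after this verification.
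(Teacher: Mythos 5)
Your high-level route coincides with the paper's proof, which is very short: recast the problem as reaching the closed set $\mathcal T=\mathbb R^d\setminus\Omega$, observe that Hypothesis \ref{MainHypo}\ref{HypoPartialOmegaESP} is a uniform interior sphere condition for $\mathcal T$, augment the state with time so that Hypothesis \ref{MainHypo}\ref{HypokGlobC11} yields autonomous $\mathcal C^{1,1}$ dynamics, and invoke Cannarsa--Sinestrari's semiconcavity theorem for minimum-time functions (Theorem 8.2.7 of their book). The paper is careful about two points you pass over: that theorem is stated on all of $\mathbb R\times\mathbb R^d$, so the problem must first be extended to negative initial times (the value function $\widetilde\varphi$, which coincides with $\varphi$ on $\mathbb R_+\times\overline\Omega$); and the theorem gives \emph{local} semiconcavity, which is upgraded to semiconcavity on all of $\mathbb R\times\Omega$ via the uniform exit-time bound of Proposition \ref{Prop1Varphi}\ref{PropBoundTau}. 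If your closing sentence — verify the hypotheses and cite the theorem — is taken as the proof, then with these two repairs it is exactly the paper's argument.

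The self-contained comparison argument you sketch, however, fails at precisely the step you call delicate, and in the form you state it the needed bound is simply false: one cannot have $\dist_{\mathcal T}(y_+)+\dist_{\mathcal T}(y_-)\le C\abs{\xi}^2$ in general. Take $\partial\Omega$ flat near $y^\ast$, say $\Omega=\{x \suchthat x_d>0\}$ locally, so $\dist_{\mathcal T}(y)=\max(y_d,0)$, and suppose the linearized endpoint displacement has a normal component $a\sim\abs{\xi}$; then $y_\pm=y^\ast\pm(0,\dots,0,a)+O(\abs{\xi}^2)$ and $\dist_{\mathcal T}(y_+)+\dist_{\mathcal T}(y_-)=a+O(\abs{\xi}^2)$, which is of order $\abs{\xi}$, not $\abs{\xi}^2$. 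The obstruction is that the unsigned distance is \emph{not} semiconcave across $\partial\Omega$ (it has a convex kink there, like $\max(\cdot,0)$), so the midpoint cancellation you invoke is unavailable exactly in the case where one comparison trajectory exits before elapsed time $T$ — the case you dispose of by setting $\delta_\pm\le 0$. That crude bound throws away the time gained by the early exit and leaves only a first-order estimate, i.e.\ Lipschitz continuity rather than semiconcavity. The correct bookkeeping uses the \emph{signed} distance $b$ (equal to $\dist_{\mathcal T}$ on $\Omega$ and to $-\dist_{\overline\Omega}$ outside): if $\overline B_d(c,r)\subset\mathcal T$ is the interior ball with $\abs{y^\ast-c}=r$, then $b(y)\le\abs{y-c}-r$ for \emph{all} $y$ (on $\Omega$ by the ball inclusion, on $B_d(c,r)$ because $\overline\Omega$ avoids that ball, elsewhere by sign), with equality at $y^\ast$; this single smooth majorant yields $b(y_+)+b(y_-)\le C\abs{\xi}^2$. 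Correspondingly, one must prove via the transversality of Proposition \ref{PropPMPTauQ}\ref{PMPTransverseP} that $\delta_\pm=b(y_\pm)/\sigma+O(\abs{\xi}^2)$ \emph{with its sign}, so that early exits are credited with negative $\delta$. With that, $\delta_++\delta_-\le C\abs{\xi}^2$ and your argument closes; without it, it does not.
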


\begin{proof}
This is a classical fact and we will use \cite[Theorem 8.2.7]{Cannarsa2004Semiconcave}. This Theorem is stated on the whole time-space $\R\times\R^d$, and requires an autonomous control system. 

Hence, we will set
$\mathcal K = \mathbb R^d \setminus \Omega$: $\mathcal K$ is closed and, due to the uniform exterior sphere property of $\Omega$, $\mathcal K$ satisfies a uniform interior sphere property. Consider the optimal control problem of reaching $\mathcal K$ in minimal time with dynamics \eqref{AdmissibleIsControlSystem}, starting at time $t \in \mathbb R$ from a point $x \in \mathbb R^d$, and denote by $\widetilde\varphi$ its value function (the only difference from $\OCP(\overline\Omega, \partial\Omega, k)$ is that we allow negative initial times). Of course, $\widetilde\varphi$ and $\varphi$ coincide on $\mathbb R_+ \times \overline\Omega$.

We rewrite \eqref{AdmissibleIsControlSystem} as an autonomous control system on the variable $z = (t, \gamma)$ and apply \cite[Theorem 8.2.7]{Cannarsa2004Semiconcave} to the corresponding autonomous optimal control problem with target set $\mathbb R \times \mathcal K$ to conclude that $\widetilde\varphi$ is locally semiconcave on $\mathbb R \times \Omega$. Notice that, since one has an upper bound on the exit time for all optimal trajectories thanks to Proposition \ref{Prop1Varphi}\ref{PropBoundTau} (and its immediate generalization to the optimal control problem for $\widetilde\varphi$), it follows from the proof of \cite[Theorem 8.2.7]{Cannarsa2004Semiconcave} that $\widetilde\varphi$ is semiconcave on $\mathbb R \times \Omega$. Hence, $\varphi$ is semiconcave on $\mathbb R_+ \times \Omega$.
\end{proof}

\begin{defi} Let $A \subset \mathbb R^d$, $B \subset \mathbb R$.
\begin{enumerate}
\item Let $w: A \to \mathbb R$ and $x \in A$. We say that $p \in \mathbb R^d$ is a \emph{reachable gradient} of $w$ at $x$ if there exists a sequence $(x_n)_{n \in \mathbb N}$ in $A \setminus \{x\}$ such that, for every $n \in \mathbb N$, $w$ is differentiable at $x_n$, and $x_n \to x$ and $D w(x_n) \to p$ as $n \to \infty$. The set of all reachable gradients of $w$ at $x$ is denoted by $D^\ast w(x)$.

\item Let $w: B \times A \to \mathbb R$ and $(t, x) \in B \times A$. We say that $\omega \in \mathbb S^{d-1}$ is the \emph{normalized gradient} of $w$ with respect to $x$ at $(t, x)$ if $D^+ w(t, x)$ is non-empty and, for every $(p_0, p_1) \in D^+ w(t, x)$, one has $p_1 \neq 0$ and $\frac{p_1}{\abs{p_1}} = \omega$. In this case, $\omega$ is denoted by $\widehat{\nabla w}(t, x)$\label{WidehatNabla}.
\end{enumerate}
\end{defi}

Recall that, for a semiconcave function $w$ defined on an open set $A \subset \mathbb R^d$, for every $x \in A$, one has $D^+ w(x) = \conv D^\ast w(x) = \gengrad w(x)$ and these sets are non-empty (see, e.g., \cite[Proposition 3.3.4 and Theorem 3.3.6]{Cannarsa2004Semiconcave}). Here, $\conv B$ denotes the convex hull of $B$.

\begin{theo}
\label{TheoNormGradIffSingleton}
Consider the optimal control problem $\OCP(X, \Gamma, k)$ and assume that Hypotheses \ref{MainHypo}\ref{HypoXCompact}\allowbreak--\ref{HypoPartialOmegaESP} hold. Let $(t_0, x_0) \in \mathbb R_+^\ast \times \Omega$. Then $\varphi$ admits a normalized gradient at $(t_0, x_0)$ if and only if $\mathcal U_{(t_0, x_0)}$ contains only one element. In this case, $\mathcal U_{(t_0, x_0)} = \{-\widehat{\nabla\varphi}(t_0, x_0)\}$.
\end{theo}

\begin{proof}
Notice first that, thanks to Proposition \ref{PropVarphiSemiconcave}, $\varphi$ is semiconcave in $\mathbb R_+ \times \Omega$. Assume that $\varphi$ admits a normalized gradient at $(t_0, x_0)$ and take $u_0 \in \mathcal U_{(t_0, x_0)}$. Let $(\gamma, u)$ be a solution of \eqref{SystGammaU} with $\gamma(t_0) = x_0$, $u(t_0) = u_0$, and $\gamma \in \Opt(k, t_0, x_0)$. By Proposition \ref{Prop2Varphi}\ref{PropFlowPreservesOptimality}, for every $h \in [0, \varphi(t_0, x_0)]$, one has
\[
\varphi(t_0 + h, \gamma(t_0 + h)) + h = \varphi(t_0, x_0).
\]
Let $\psi: [0, \varphi(t_0, x_0)] \to \mathbb R_+ \times \overline\Omega$ be given by $\psi(h) = (t_0 + h, \gamma(t_0 + h))$. Then $\varphi \circ \psi(h) + h = \varphi(t_0, x_0)$. Since $h \mapsto \varphi(t_0, x_0) - h$ is differentiable on $[0, \varphi(t_0, x_0)]$, with derivative equal to $-1$, one obtains that $\varphi \circ \psi$ is differentiable on $[0, \varphi(t_0, x_0)]$, with $(\varphi \circ \psi)^\prime(h) = -1$ for every $h \in [0, \varphi(t_0, x_0)]$. Moreover, $\psi \in \mathcal C^1([0, \varphi(t_0, x_0)], \mathbb R_+ \times \overline\Omega)$, with $\psi^\prime(h) = (1, \dot\gamma(t_0 + h))$. Hence, by the chain rule for generalized gradients (see, e.g., \cite[Theorem 2.3.10]{Clarke1990Optimization}), one has, for every $h \in (0, \varphi(t_0, x_0))$,
\[
\{-1\} = \gengrad (\varphi \circ \psi)(h) \subset \gengrad \varphi(t_0 + h, \gamma(t_0 + h)) \cdot (1, \dot\gamma(t_0 + h)).
\]
Thus there exist $(p_{0, h}, p_{1, h}) \in \gengrad \varphi(t_0 + h, \gamma(t_0 + h))$ such that
\begin{equation}
\label{EqP0P1WithH}
-1 = p_{0, h} + p_{1, h} \cdot \dot\gamma(t_0 + h).
\end{equation}
Notice moreover that, since $\varphi$ is semiconcave, one has $(p_{0, h}, p_{1, h}) \in D^+ \varphi(t_0 + h, \gamma(t_0 + h))$. Since $\varphi$ is Lipschitz continuous, the family $((p_{0, h}, p_{1, h}))_{h \in (0, \varphi(t_0, x_0))}$ is bounded, and thus there exists a sequence $(h_n)_{n \in \mathbb N}$ with $h_n \to 0$ as $n \to \infty$ and a point $(p_0, p_1) \in \mathbb R \times \mathbb R^d$ such that, as $n \to \infty$, $p_{0, h_n} \to p_0$ and $p_{1, h_n} \to p_1$. Moreover, $(p_0, p_1) \in D^+\varphi(t_0, x_0)$ (see, e.g., \cite[Proposition 3.3.4(a)]{Cannarsa2004Semiconcave}). One also has that $\dot\gamma(t_0 + h) = k(t_0 + h, \gamma(t_0 + h)) u(t_0 + h)$ and thus, taking $h = h_n$ in \eqref{EqP0P1WithH} and letting $n \to \infty$, one obtains that
\begin{equation}
\label{EqP0P1}
-1 = p_0 + p_1 \cdot u_0 k(t_0, x_0).
\end{equation}
Letting $c > 0$ be as in Lemma \ref{LemmP0GreaterThanMinusOne}, one obtains that $-p_1 \cdot u_0 k(t_0, x_0) \geq c$, and thus, in particular, $p_1 \neq 0$. Since $\varphi$ admits a normalized gradient at $(t_0, x_0)$, one then concludes that $\frac{p_1}{\abs{p_1}} = \widehat{\nabla\varphi}(t_0, x_0)$.

Now, since $(p_0, p_1) \in D^+\varphi(t_0, x_0)$, there exists a neighborhood $V$ of $(t_0, x_0)$ in $\mathbb R_+ \times \Omega$ and a function $\phi \in \mathcal C^1(V, \mathbb R)$ such that $\phi(t_0, x_0) = \varphi(t_0, x_0)$, $\phi(t, x) \geq \varphi(t, x)$ for every $(t, x) \in V$, and $p_0 = \partial_t \phi(t_0, x_0)$, $p_1 = \nabla_x \phi(t_0, x_0)$. Since $\varphi$ is a viscosity subsolution of \eqref{EqHJB} on $\mathbb R_+ \times \Omega$, one concludes that
\[
-p_0 + \abs{p_1} k(t_0, x_0) - 1 \leq 0.
\]
Combining the above inequality with \eqref{EqP0P1}, one obtains that
\[
\abs{p_1} k(t_0, x_0) + p_1 \cdot u_0 k(t_0, x_0) \leq 0,
\]
i.e., $p_1 \cdot u_0 \leq - \abs{p_1}$. Since $u_0 \in \mathbb S^{d-1}$, this means that $u_0 = - \frac{p_1}{\abs{p_1}} = - \widehat{\nabla\varphi}(t_0, x_0)$. Since this holds for every $u_0 \in \mathcal U_{(t_0, x_0)}$, one concludes that the unique element in $\mathcal U_{(t_0, x_0)}$ is $- \widehat{\nabla\varphi}(t_0, x_0)$.

Assume now that $\mathcal U_{(t_0, x_0)}$ contains only one element, which we denote by $u_0$. Since $D^+ \varphi(t_0, x_0)\allowbreak = \conv D^\ast \varphi(t_0, x_0)$ is non-empty, $D^\ast \varphi(t_0, x_0) \neq \emptyset$. Take $p = (p_0, p_1) \in D^\ast \varphi(t_0, x_0)$. Hence there exists a sequence $((t_{0, n}, x_{0, n}))_{n \in \mathbb N}$ such that $\varphi$ is differentiable at $(t_{0, n}, x_{0, n})$ for every $n \in \mathbb N$ and $t_{0, n} \to t_0$, $x_{0, n} \to x_0$, and $D \varphi(t_{0, n}, x_{0, n}) \to p$ as $n \to \infty$. For $n \in \mathbb N$, write $(p_{0, n}, p_{1, n}) = D \varphi(t_{0, n}, x_{0, n})$, and let $\gamma_n \in \Opt(k, t_{0, n}, x_{0, n})$ and $u_n$ be an optimal control associated with $\gamma_n$. By Proposition \ref{PropMonotoneOptimalTime} and Corollary \ref{Coro1Varphi}, there exists $c > 0$ such that, for every $n \in \mathbb N$, $\abs{p_{1, n}} \geq c$ and $u_n(t_{0, n}) = - \frac{p_{1, n}}{\abs{p_{1, n}}}$ In particular, $\abs{p_1} \geq c$, and thus $p_1 \neq 0$. Using Corollary \ref{CoroSystGammaURegular}, one obtains that $(\gamma_n, u_n)$ solves
\[
\left\{
\begin{aligned}
\dot\gamma_n(t) & = k(t, \gamma_n(t)) u_n(t), & & t \in [t_{0, n}, t_{0, n} + \varphi(t_{0, n}, x_{0, n})], \\
\dot u_n(t) & = -\Proj^\perp_{u_n(t)} \nabla_x k(t, \gamma_n(t)), & & t \in [t_{0, n}, t_{0, n} + \varphi(t_{0, n}, x_{0, n})], \\
\gamma_n(t_{0, n}) & = x_{0, n}, \\
u_n(t_{0, n}) & = -\frac{p_{1, n}}{\abs{p_{1, n}}}.
\end{aligned}
\right.
\]
We modify $u_n$ outside of the interval $[t_{0, n}, t_{0, n} + \varphi(t_{0, n}, x_{0, n})]$ by setting $u_n(t) = u_n(t_{0, n})$ for $t \in [0, t_{0, n})$ and $u_n(t) = u_n(t_{0, n} +\allowbreak \varphi(t_{0, n},\allowbreak x_{0, n}))$ for $t > t_{0, n} + \varphi(t_{0, n},\allowbreak x_{0, n})$. We have $\gamma_n \in \Lip_{K_{\max}}(\overline\Omega)$, $u_n \in \Lip_M(\mathbb S^{d-1})$, where $M$ is a Lipschitz constant for $k$, and thus, by Arzelà--Ascoli Theorem, there exist subsequences, which we still denote by $(\gamma_n)_{n \in \mathbb N}$ and $(u_n)_{n \in \mathbb N}$ for simplicity, and elements $\gamma \in \Lip_{K_{\max}}(\overline\Omega)$, $u \in \Lip_M(\mathbb S^{d-1})$ such that $\gamma_n \to \gamma$ and $u_n \to u$ uniformly on compact time intervals.

Since $\varphi$ is Lipschitz continuous, one has $\varphi(t_{0, n}, x_{0, n}) \to \varphi(t_0, x_0)$ as $n \to \infty$. Since $\gamma_n(t) = \gamma_n(t_{0, n})$ and $u_n(t) = u_n(t_{0, n})$ for $t \in [0, t_{0, n}]$ and $\gamma_n(t) = \gamma_n(t_{0, n} + \varphi(t_{0, n}, x_{0, n}))$ and $u_n(t) = u_n(t_{0, n} + \varphi(t_{0, n}, x_{0, n}))$ for $t \geq t_{0, n} + \varphi(t_{0, n}, x_{0, n})$, one obtains that $\gamma(t) = \gamma(t_0)$ and $u(t) = u(t_0)$ for $t \in [0, t_0]$ and $\gamma(t) = \gamma(t_0 + \varphi(t_0, x_0))$, $u(t) = u(t_0 + \varphi(t_0, x_0))$ for $t \geq t_0 + \varphi(t_0, x_0)$. Since $\gamma_n(t_{0, n} + \varphi(t_{0, n}, x_{0, n})) \allowbreak \in \partial\Omega$, one also obtains that $\gamma(t_0 + \varphi(t_0, x_0)) \in \partial\Omega$, and in particular its exit time satisfies $\tau(t_0, \gamma) \leq \varphi(t_0, x_0)$. From $\dot\gamma_n= k(\cdot, \gamma_n) u_n$, the local uniform convergence of $(\gamma_n)_{n \in \mathbb N}$ and $(u_n)_{n \in \mathbb N}$ provides, at the limit, $\dot\gamma = k(\cdot, \gamma) u$. Hence $\gamma \in \Opt(k, t_0, x_0)$ and $u$ is an optimal control associated with $\gamma$.

Since $u(t_0) = \lim_{n \to \infty} u_n(t_{0, n}) = \lim_{n \to \infty} \left(-\frac{p_{1, n}}{\abs{p_{1, n}}}\right) = -\frac{p_1}{\abs{p_1}}$, one obtains that $-\frac{p_1}{\abs{p_1}} \in \mathcal U_{(t_0, x_0)}$, and thus, by assumption, $-\frac{p_1}{\abs{p_1}} = u_0$. We have thus proved that, for every $p = (p_0, p_1) \in D^\ast \varphi(t_0, x_0)$, one has $p_1 \neq 0$ and $-\frac{p_1}{\abs{p_1}} = u_0$. Since $D^+ \varphi(t_0, x_0) = \conv D^\ast \varphi(t_0, x_0)$, one immediately checks that $p_1 \neq 0$ and $-\frac{p_1}{\abs{p_1}} = u_0$ for every $p = (p_0, p_1) \in D^+ \varphi(t_0, x_0)$, which proves that $\varphi$ admits a normalized gradient at $(t_0, x_0)$ and $- \widehat{\nabla\varphi}(t_0, x_0) = u_0$, i.e., $\mathcal U_{(t_0, x_0)} = \{-\widehat{\nabla\varphi}(t_0, x_0)\}$.
\end{proof}

As an immediate consequence of Proposition \ref{PropSingletonAfterStartingTime} and Theorem \ref{TheoNormGradIffSingleton}, one obtains the following characterization of the optimal control.

\begin{coro}
\label{CoroDiffEqnWithNormGrad}
Consider the optimal control problem $\OCP(X, \Gamma, k)$ and assume that Hypotheses \ref{MainHypo}\ref{HypoXCompact}\allowbreak--\ref{HypoPartialOmegaESP} hold. Let $(t_0, x_0) \in \mathbb R_+ \times \Omega$, $\gamma \in \Opt(k, t_0, x_0)$, and $u$ be the optimal control associated with $\gamma$. Then, for every $t \in (t_0, t_0 + \varphi(t_0, x_0))$, $\varphi$ admits a normalized gradient at $(t, \gamma(t))$ and $u(t) = - \widehat{\nabla\varphi}(t, \gamma(t))$, i.e.,
\[
\dot\gamma(t) = - k(t, \gamma(t)) \widehat{\nabla\varphi}(t, \gamma(t)).
\]
\end{coro}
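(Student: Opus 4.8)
The plan is to obtain the statement directly by combining Proposition \ref{PropSingletonAfterStartingTime} and Theorem \ref{TheoNormGradIffSingleton}, the only genuine point to check being that the value $u(t)$ of the optimal control actually belongs to the set $\mathcal U_{(t, \gamma(t))}$ of optimal directions, so that the singleton identification from Theorem \ref{TheoNormGradIffSingleton} pins it down.

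First I would fix $t \in (t_0, t_0 + \varphi(t_0, x_0))$ and set $x = \gamma(t)$. Since $t$ lies strictly below the first exit time $t_0 + \varphi(t_0, x_0)$, the definition of $\tau(t_0, \gamma)$ gives $\gamma(t) \notin \Gamma = \partial\Omega$; as $\gamma$ takes values in $\overline\Omega$, this yields $x = \gamma(t) \in \Omega$. Moreover $t > t_0 \geq 0$, so $(t, x) \in \mathbb R_+^\ast \times \Omega$, which is exactly the setting of Theorem \ref{TheoNormGradIffSingleton}. Proposition \ref{PropSingletonAfterStartingTime} then guarantees that $\mathcal U_{(t, x)}$ is a singleton, and Theorem \ref{TheoNormGradIffSingleton} converts this into the two desired conclusions: $\varphi$ admits a normalized gradient at $(t, x)$, and $\mathcal U_{(t, x)} = \{-\widehat{\nabla\varphi}(t, x)\}$.

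It then remains to identify $u(t)$ with the unique element of $\mathcal U_{(t, x)}$. For this I would invoke Proposition \ref{Prop2Varphi}\ref{LemmOptimalAllAlong} with $t_1 = t$: the trajectory $\widetilde\gamma$ equal to $\gamma(t)$ on $[0, t]$ and coinciding with $\gamma$ afterwards lies in $\Opt(k, t, x)$, and, together with $u$, it solves the system \eqref{SystGammaU} (by Corollary \ref{CoroSystGammaU}) with $\widetilde\gamma(t) = x$ and control value $u(t)$ at time $t$. By Definition \ref{DefiOptDirections} this exactly says $u(t) \in \mathcal U_{(t, x)}$, whence $u(t) = -\widehat{\nabla\varphi}(t, x)$. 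Recalling from Corollary \ref{CoroSystGammaU} that $\gamma \in \mathcal C^{1, 1}$, so that $\dot\gamma(t) = k(t, \gamma(t)) u(t)$ holds for every such $t$, the claimed identity $\dot\gamma(t) = -k(t, \gamma(t)) \widehat{\nabla\varphi}(t, \gamma(t))$ follows at once.

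I do not expect a serious obstacle here: the two quoted results do essentially all the work, and the sole delicate step is the membership $u(t) \in \mathcal U_{(t, \gamma(t))}$. This is precisely where the preservation of optimality along the trajectory, Proposition \ref{Prop2Varphi}\ref{LemmOptimalAllAlong}, is needed to produce an admissible competitor realizing $u(t)$ as an optimal direction in the sense of Definition \ref{DefiOptDirections}.
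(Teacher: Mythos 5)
Your proposal is correct and follows exactly the paper's route: the paper derives Corollary \ref{CoroDiffEqnWithNormGrad} as an immediate consequence of Proposition \ref{PropSingletonAfterStartingTime} and Theorem \ref{TheoNormGradIffSingleton}, which is precisely your combination. The one step you elaborate --- that $u(t) \in \mathcal U_{(t, \gamma(t))}$ via Proposition \ref{Prop2Varphi}\ref{LemmOptimalAllAlong} and Corollary \ref{CoroSystGammaU} --- is the same argument already used inside the paper's proof of Proposition \ref{PropSingletonAfterStartingTime}, so you have simply made explicit what the paper leaves implicit.
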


Corollary \ref{CoroDiffEqnWithNormGrad} can be seen as an improved version of Corollary \ref{Coro1Varphi}\ref{CoroAlmostVelocityField}, where we now characterize the optimal control for every optimal trajectory and every time, replacing the normalization of the gradient of $\varphi$ by its normalized gradient.

Notice that, combining Corollaries \ref{CoroSystGammaURegular} and \ref{CoroDiffEqnWithNormGrad}, we obtain that $t \mapsto \widehat{\nabla\varphi}(t, \gamma(t))$ is $\mathcal C^{1, 1}$ for every optimal trajectory $\gamma$, as long as $\gamma(t) \in \Omega$ and $t$ is larger than the initial time of $\gamma$. However, this provides no information on the regularity of $(t, x) \mapsto \widehat{\nabla\varphi}(t, x)$. We are now interested in proving continuity of this map in a set where $\widehat{\nabla\varphi}$ is defined.

Set
\begin{equation}
\label{DefiUpsilon}
\Upsilon = \{(t, x) \in \mathbb R_+^\ast \times \Omega \mid \exists t_0 \in [0, t),\; \exists x_0 \in \Omega,\; \exists \gamma \in \Opt(k, t_0, x_0) \text{ s.t.\ } \gamma(t) = x\}.
\end{equation}
This set (which can be easily seen to be a countable union of compact sets, and in particular a Borel measurable set) contains all points $(t, x) \in \mathbb R_+^\ast \times \Omega$ which are not starting points of optimal trajectories. In particular, it follows from Corollary \ref{CoroDiffEqnWithNormGrad} that $\widehat{\nabla\varphi}(t, x)$ exists for every $(t, x) \in \Upsilon$.

\begin{prop}
\label{PropNormGradContinuous}
Consider the optimal control problem $\OCP(X, \Gamma, k)$ and assume that Hypotheses \ref{MainHypo}\ref{HypoXCompact}\allowbreak--\ref{HypoPartialOmegaESP} hold. Then the normalized gradient $\widehat{\nabla\varphi}$ of $\varphi$ is continuous on $\Upsilon$.
\end{prop}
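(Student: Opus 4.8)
The plan is to argue by contradiction, reducing everything to a compactness argument for optimal trajectories that closely mirrors the limiting argument already carried out in the second half of the proof of Theorem \ref{TheoNormGradIffSingleton}. First note that $\widehat{\nabla\varphi}$ is indeed defined on all of $\Upsilon$: if $(t,x) \in \Upsilon$ then, by definition, there is an optimal trajectory reaching $x$ at time $t$ coming from an earlier time $t_0 < t$, and since $x \in \Omega$ the time $t$ is strictly before arrival, so Proposition \ref{PropSingletonAfterStartingTime} shows $\mathcal U_{(t,x)}$ is a singleton and Theorem \ref{TheoNormGradIffSingleton} gives $\mathcal U_{(t,x)} = \{-\widehat{\nabla\varphi}(t,x)\}$. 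Now suppose $\widehat{\nabla\varphi}$ is not continuous at some $(t,x) \in \Upsilon$. Then there is a sequence $(t_n, x_n) \to (t,x)$ in $\Upsilon$ and, by compactness of $\mathbb S^{d-1}$, after passing to a subsequence we may assume $\widehat{\nabla\varphi}(t_n, x_n) \to \omega$ for some $\omega \in \mathbb S^{d-1}$ with $\omega \neq \widehat{\nabla\varphi}(t,x)$. The entire goal is then to prove that $-\omega \in \mathcal U_{(t,x)}$, since this forces $-\omega = -\widehat{\nabla\varphi}(t,x)$ and yields the contradiction.

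The next step is to realize each $(t_n, x_n)$ by a concrete optimal trajectory. Since $(t_n, x_n) \in \Upsilon$ with $x_n \in \Omega$, there exist $t_{0,n} < t_n$, $x_{0,n} \in \Omega$, and $\gamma_n \in \Opt(k, t_{0,n}, x_{0,n})$ with $\gamma_n(t_n) = x_n$, and $t_n$ lies strictly inside the interval $(t_{0,n}, t_{0,n} + \varphi(t_{0,n}, x_{0,n}))$ because $x_n \in \Omega$. By Proposition \ref{Prop2Varphi}\ref{LemmOptimalAllAlong} the trajectory obtained by freezing $\gamma_n$ on $[0, t_n]$ belongs to $\Opt(k, t_n, x_n)$; I denote it again by $\gamma_n$ and let $u_n$ be the associated optimal control. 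Corollary \ref{CoroDiffEqnWithNormGrad} gives $u_n(t_n) = -\widehat{\nabla\varphi}(t_n, x_n) \to -\omega$, and Corollary \ref{CoroSystGammaURegular} shows that $(\gamma_n, u_n)$ solves \eqref{SystGammaURegular} on $[t_n, t_n + \varphi(t_n, x_n)]$, with $\gamma_n$ being $K_{\max}$-Lipschitz and $u_n$ being $M$-Lipschitz, where $M$ is a Lipschitz constant of $k$. Extending $\gamma_n$ and $u_n$ to be constant before $t_n$ and after the arrival time, I invoke Arzelà--Ascoli to extract a further subsequence along which $\gamma_n \to \gamma$ and $u_n \to u$ uniformly on compact time intervals, with $\gamma \in \Lip_{K_{\max}}(\overline\Omega)$ and $u \in \Lip_M(\mathbb S^{d-1})$.

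It then remains to identify the limit as the desired optimal trajectory. Using $t_n \to t$, the uniform Lipschitz bounds, and uniform convergence, one gets $\gamma(t) = x$ and $u(t) = -\omega$; passing to the limit in \eqref{SystGammaURegular}, which is legitimate since $k$ and $\nabla_x k$ are continuous by Hypothesis \ref{MainHypo}\ref{HypokGlobC11}, shows that $(\gamma, u)$ solves \eqref{SystGammaURegular} with $\dot\gamma = k(\cdot, \gamma)\, u$, so in particular $\gamma \in \Adm(k)$. The decisive point is optimality of $\gamma$: because $\varphi$ is Lipschitz continuous (Proposition \ref{PropTauLip}), one has $\varphi(t_n, x_n) \to \varphi(t,x)$, hence the arrival points $\gamma_n(t_n + \varphi(t_n, x_n)) \in \partial\Omega$ converge to $\gamma(t + \varphi(t,x))$, which therefore lies in the closed set $\Gamma = \partial\Omega$. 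This gives $\tau(t, \gamma) \le \varphi(t,x)$, while the reverse inequality is the very definition of $\varphi$, so $\tau(t,\gamma) = \varphi(t,x)$ and, together with the freezing conventions, $\gamma \in \Opt(k, t, x)$ with optimal control $u$ satisfying $u(t) = -\omega$. Thus $-\omega \in \mathcal U_{(t,x)} = \{-\widehat{\nabla\varphi}(t,x)\}$, whence $\omega = \widehat{\nabla\varphi}(t,x)$, contradicting the choice of $\omega$ and proving continuity.

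The main obstacle is exactly this optimality of the limiting trajectory: one must combine the uniform convergence of the $\gamma_n$ with the continuity of $\varphi$ to guarantee that the limit curve reaches $\partial\Omega$ precisely at the minimal time $t + \varphi(t,x)$, rather than later. Everything else is a routine Arzelà--Ascoli compactness step of the same type already used in the proof of Theorem \ref{TheoNormGradIffSingleton}, the only genuinely new bookkeeping being the reduction, via Proposition \ref{Prop2Varphi}\ref{LemmOptimalAllAlong} and Corollary \ref{CoroDiffEqnWithNormGrad}, to trajectories that \emph{start} at the points $(t_n, x_n)$ with initial direction $-\widehat{\nabla\varphi}(t_n, x_n)$.
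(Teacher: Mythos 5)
Your proof is correct, and its compactness core (realizing each $(t_n,x_n)$ by an optimal trajectory, extracting a limit via Arzelà--Ascoli, and showing that the limit is again optimal with control $u$ satisfying $u(t)=-\omega$) is the same as in the paper's proof. Where you genuinely diverge is in the final identification step. You restart the trajectories at $(t_n,x_n)$ via Proposition \ref{Prop2Varphi}\ref{LemmOptimalAllAlong}, so that the limit curve \emph{starts} at $(t,x)$ with initial direction $-\omega$; this gives $-\omega\in\mathcal U_{(t,x)}$, and you conclude from the singleton property (Proposition \ref{PropSingletonAfterStartingTime}, applicable precisely because $(t,x)\in\Upsilon$) combined with the equivalence of Theorem \ref{TheoNormGradIffSingleton}, which pins down $\mathcal U_{(t,x)}=\{-\widehat{\nabla\varphi}(t,x)\}$. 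The paper instead keeps the trajectories anchored at their original starting points $(t_{0,n},x_{0,n})$, so its limit curve $\gamma$ only \emph{passes through} $(t_\ast,x_\ast)$; it then glues $\gamma$ (after $t_\ast$) to an optimal trajectory $\gamma_\ast$ arriving at $(t_\ast,x_\ast)$, observes that the concatenation is itself optimal, and uses the $\mathcal C^{1,1}$ regularity of optimal controls (Corollary \ref{CoroSystGammaURegular}) to get continuity of the glued control across the junction time, whence $u(t_\ast)=u_\ast(t_\ast)=-\widehat{\nabla\varphi}(t_\ast,x_\ast)$. Your route avoids this concatenation-and-regularity trick at the price of invoking the ``singleton $\Rightarrow$ normalized gradient'' direction of Theorem \ref{TheoNormGradIffSingleton} directly at the limit point; both arguments rest on the same underlying results, and yours is arguably the more streamlined.

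One small wrinkle in your write-up: Corollary \ref{CoroDiffEqnWithNormGrad} identifies the control only on the \emph{open} interval $(t_0,t_0+\varphi(t_0,x_0))$, so applied to your restarted trajectory, whose initial time is $t_n$, it says nothing at $t=t_n$ itself. The fix is the one you already set up: apply the corollary to the original trajectory $\gamma_n\in\Opt(k,t_{0,n},x_{0,n})$, for which $t_n$ is interior, to obtain $u_n(t_n)=-\widehat{\nabla\varphi}(t_n,x_n)$; since the restarted trajectory has the same (Lipschitz, hence continuous) control on $[t_n,+\infty)$, the identity carries over. This is a matter of citation order, not a genuine gap.
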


\begin{proof}
Let $(t_\ast, x_\ast) \in \Upsilon$ and $((t_n, x_n))_{n \in \mathbb N}$ be a sequence in $\Upsilon$ converging to $(t_\ast, x_\ast)$ as $n \to \infty$. For $n \in \mathbb N$, let $t_{0, n} \in [0, t_n)$, $x_{0, n} \in \Omega$ and $\gamma_n \in \Opt(k, t_{0, n}, x_{0, n})$ be such that $\gamma_n(t_n) = x_n$. Let $u_n$ be an optimal control associated with $\gamma_n$, and modify $u_n$ outside of $[t_{0, n}, t_{0, n} + \varphi(t_{0, n}, x_{0, n})]$ for $u_n$ to be constant on $[0, t_{0, n}]$ and on $[t_{0, n} + \varphi(t_{0, n}, x_{0, n}), +\infty)$. Notice that, since $x_n \in \Omega$, one has $t_n \in (t_{0, n}, t_{0, n} + \varphi(t_{0, n}, x_{0, n}))$ and, by Corollary \ref{CoroDiffEqnWithNormGrad}, $u_n(t_n) = -\widehat{\nabla\varphi}(t_n, x_n)$. We want to prove that $u_n(t_n) \to -\widehat{\nabla\varphi}(t_\ast, x_\ast)$ as $n \to \infty$. Since $(u_n(t_n))_{n \in \mathbb N}$ is bounded, it is enough to prove that every convergent subsequence of $(u_n(t_n))_{n \in \mathbb N}$ converges to $-\widehat{\nabla\varphi}(t_\ast, x_\ast)$ as $n \to \infty$. 

Assume that $(u_{n_k}(t_{n_k}))_{k \in \mathbb N}$ is a converging subsequence of $(u_n(t_n))_{n \in \mathbb N}$, and denote by $u_\ast$ its limit. Up to subsequences (using Arzelà--Ascoli Theorem together with Corollary \ref{CoroSystGammaU}), we can assume that $\gamma_{n_k} $ and $u_{n_k} $ also converge, uniformly on every compact time interval, and that $(t_{0, n_k})_{k \in \mathbb N}$ and $(x_{0, n_k})_{k \in \mathbb N}$ also converge. Let $\gamma = \lim_{k \to \infty} \gamma_{n_k}$, $u = \lim_{k \to \infty} u_{n_k}$, $t_0 = \lim_{k \to \infty} t_{0, n_k}$, and $x_0 = \lim_{k \to \infty} x_{0, k}$.

Since $\gamma_{n}(t) = x_{0, n}$ for $t \in [0, t_{0, n}]$, $u_n(t) = u_n(t_{0, n})$ for $t \in [0, t_{0, n}]$, $\gamma_n(t) = \gamma_n(t_{0, n} + \varphi(t_{0, n}, x_{0, n}))$ for $t \geq t_{0, n} + \varphi(t_{0, n}, x_{0, n})$, and $u_n(t) = u_n(t_{0, n} + \varphi(t_{0, n}, x_{0, n}))$ for $t \geq t_{0, n} + \varphi(t_{0, n}, x_{0, n})$, one obtains that $\gamma(t) = x_{0}$ for $t \in [0, t_{0}]$, $u(t) = u(t_{0})$ for $t \in [0, t_{0}]$, $\gamma(t) = \gamma(t_{0} + \varphi(t_{0}, x_{0}))$ for $t \geq t_{0} + \varphi(t_{0}, x_{0})$, and $u(t) = u(t_{0} + \varphi(t_{0}, x_{0}))$ for $t \geq t_{0} + \varphi(t_{0}, x_{0})$, which proves in particular that $\tau(t_0, \gamma) \leq \varphi(t_0, x_0)$, and one will obtain that $\gamma \in \Opt(k,\allowbreak t_0, x_0)$ as soon as one proves that $\gamma \in \Adm(k)$. This can be done exactly as in the proof of Theorem \ref{TheoNormGradIffSingleton}, also obtaining that $u$ is an optimal control associated with $\gamma$. Notice moreover that $\gamma(t_\ast) = \lim_{k \to \infty} \gamma_{n_k} (t_{n_k}) = \lim_{k \to \infty} x_{n_k} = x_\ast$.

Since $(t_\ast, x_\ast) \in \Upsilon$, there exist $t_{0, \ast} \in [0, t_\ast)$, $x_{0, \ast} \in \Omega$, and $\gamma_\ast \in \Opt(k, t_{0, \ast}, x_{0, \ast})$ such that $\gamma_\ast(t_\ast) = x_\ast$. Let $u_\ast$ be an optimal control associated with $\gamma_\ast$, assumed to be constant on $[0, t_{0, \ast}]$ and on $[t_{0, \ast}+ \varphi(t_{0, \ast}, x_{0, \ast}), +\infty)$. Since $x_\ast \in \Omega$, one has $t_\ast \in (t_{0, \ast}, t_{0, \ast} + \varphi(t_{0, \ast}, x_{0, \ast}))$, and one also has, from Corollary \ref{CoroDiffEqnWithNormGrad}, that $u_\ast(t_\ast) = -\widehat{\nabla\varphi}(t_\ast,\allowbreak x_\ast)$. Define $\widetilde\gamma: \mathbb R_+ \to \overline\Omega$ and $\widetilde u: \mathbb R_+ \to \mathbb S^{d-1}$ for $t \in \mathbb R_+$ by
\[
\widetilde\gamma(t) = 
\begin{dcases*}
\gamma_\ast(t), & if $t < t_{\ast}$, \\
\gamma(t), & if $t \geq t_\ast$,
\end{dcases*} \qquad \widetilde u(t) = 
\begin{dcases*}
u_\ast(t), & if $t < t_{\ast}$, \\
u(t), & if $t \geq t_\ast$.
\end{dcases*}
\]
Since $\gamma_\ast \in \Opt(k, t_{0, \ast}, x_{0, \ast})$, $\gamma \in \Opt(k, t_0, x_0)$, and $\gamma_\ast(t_\ast) = \gamma(t_\ast) = x_\ast$, one obtains that $\widetilde\gamma \in \Opt\allowbreak(k, \allowbreak t_{0, \ast}, \allowbreak x_{0, \ast})$, and $\widetilde u$ is an optimal control associated with $\widetilde\gamma$. Hence, by Corollary \ref{CoroSystGammaURegular}, the restrictions of $\widetilde\gamma$ and $\widetilde u$ to $[t_{0, \ast}, t_{0, \ast} + \varphi(t_{0, \ast}, x_{0, \ast})]$, still denoted by $\widetilde\gamma$ and $\widetilde u$ for simplicity, satisfy $\widetilde\gamma \in \mathcal C^{2, 1}([t_{0, \ast}, t_{0, \ast} + \varphi(t_{0, \ast}, x_{0, \ast})], \overline\Omega)$ and $\widetilde u \in \mathcal C^{1, 1}([t_{0, \ast}, t_{0, \ast} + \varphi(t_{0, \ast}, x_{0, \ast})], \mathbb S^{d-1})$. In particular, since $t_\ast \in (t_{0, \ast}, t_{0, \ast} + \varphi(t_{0, \ast}, x_{0, \ast}))$, $\widetilde u$ is continuous at $t_\ast$, which finally proves that $-\widehat{\nabla\varphi}(t_\ast, x_\ast) = u_\ast(t_\ast) = u(t_\ast) = \lim_{k \to \infty} u_{n_k}(t_{n_k}) = u_\ast$.
\end{proof}

\section{Existence of equilibria for minimal-time MFG}
\label{SecExistence}

After the preliminary study of the optimal control problem $\OCP(X, \Gamma, k)$ from Section \ref{SecOCP}, we finally turn to our main problem, the mean field game $\MFG(X, \Gamma, K)$. This section is concerned with the existence of equilibria for $\MFG(X, \Gamma, K)$. We shall need here only some elementary results on the value function from Section \ref{SecValueFunction}, and our main result is the following.

\begin{theo}
\label{MainTheoExist}
Consider the mean field game $\MFG(X, \Gamma, K)$ and assume that Hypotheses \ref{MainHypo}\ref{HypoXCompact}, \ref{HypoXDist}, and \ref{HypoK2Lip} hold. Then, for every $m_0 \in \mathcal P(X)$, there exists an equilibrium $Q \in \mathcal P(\mathcal C_X)$ for $\MFG(X,\allowbreak \Gamma,\allowbreak K)$ with initial condition $m_0$.
\end{theo}

The remainder of the section provides the proof of Theorem \ref{MainTheoExist}, which is decomposed in several steps. First, for a fixed $Q \in \mathcal P(\mathcal C_X)$, we consider in Section \ref{SecExist-Lipschitz} the optimal control problem $\OCP(X, \Gamma, k)$ where $k$ is given by $k(t, x) = K({e_t}_{\#} Q, x)$, proving first that $k$ is Lipschitz continuous, in order to be able to apply the results from Section \ref{SecValueFunction}, and then that the value function $\varphi$ is also Lipschitz continuous with respect to $Q$. Such properties are then used in Section \ref{SecExist-MeasurableSelection} to prove that one may choose an optimal trajectory for each starting point in a measurable way. Finally, Section \ref{SecExist-FixedPoint} characterizes equilibria of $\MFG(X, \Gamma, K)$ as fixed points of a suitable multi-valued function and concludes the proof of Theorem \ref{MainTheoExist} by the application of a suitable fixed point theorem.

\subsection{Lipschitz continuity of the value function}
\label{SecExist-Lipschitz}

Notice that, under Hypothesis \ref{MainHypo}\ref{HypoK2Lip}, any equilibrium $Q$ of $\MFG(X, \Gamma, K)$ should be concentrated on $K_{\max}$-Lipschitz continuous trajectories. Define
\[
\mathcal Q\label{DefiMathcalQ} = \{Q \in \mathcal P(\mathcal C_X) \mid Q(\Lip_{K_{\max}}(X)) = 1\}.
\]
Notice that $\mathcal Q$ is convex, and one can prove by standard arguments that it is also tight and closed, obtaining, as a consequence of Prohorov's Theorem (see, e.g., \cite[Chapter 1, Theorem 5.1]{Billingsley1999Convergence}), that $\mathcal Q$ is compact. We now prove that the push-forward of a measure $Q \in \mathcal Q$ by the evaluation map $e_t$ is Lipschitz continuous in both variables $t$ and $Q$.

\begin{prop}
\label{PropEtQLip}
Assume that Hypothesis \ref{MainHypo}\ref{HypoXCompact} holds.
\begin{enumerate}
\item\label{etQLipT} For every $Q \in \mathcal Q$, the map $\mathbb R_+ \ni t \mapsto {e_t}_\# Q \in \mathcal P(X)$ is $K_{\max}$-Lipschitz continuous.
\item\label{etQLipQ} For every $t \in \mathbb R_+$, the map $\mathcal P(\mathcal C_X) \ni Q \mapsto {e_t}_\# Q \in \mathcal P(X)$ is $2^{t+1}$-Lipschitz continuous.
\end{enumerate}
\end{prop}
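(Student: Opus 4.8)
The key tool throughout is the dual formulation \eqref{WassersteinDual} of $W_1$: to bound $W_1(\mu, \nu)$ it suffices to bound $\int_X \phi \diff(\mu - \nu)$ uniformly over all $1$-Lipschitz functions $\phi$. Since both push-forwards are measures on $X$, I will rewrite each such integral as an integral over $\mathcal C_X$ against $Q$ (or against $Q_1 - Q_2$) using the change-of-variables identity $\int_X \phi \diff({e_t}_\# Q) = \int_{\mathcal C_X} \phi(\gamma(t)) \diff Q(\gamma)$, and then estimate the resulting integrand pointwise in $\gamma$.

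For part \ref{etQLipT}, fix $Q \in \mathcal Q$ and $s, t \in \mathbb R_+$. For any $1$-Lipschitz $\phi: X \to \mathbb R$ I would write
\[
\int_X \phi \diff({e_t}_\# Q) - \int_X \phi \diff({e_s}_\# Q) = \int_{\mathcal C_X} \bigl[\phi(\gamma(t)) - \phi(\gamma(s))\bigr] \diff Q(\gamma).
\]
Because $Q$-almost every $\gamma$ is $K_{\max}$-Lipschitz and $\phi$ is $1$-Lipschitz, the integrand is bounded by $\dist(\gamma(t), \gamma(s)) \leq K_{\max}\abs{t - s}$ for $Q$-almost every $\gamma$; integrating against the probability measure $Q$ preserves this bound. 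Taking the supremum over all $1$-Lipschitz $\phi$ and invoking \eqref{WassersteinDual} yields $W_1({e_t}_\# Q, {e_s}_\# Q) \leq K_{\max}\abs{t - s}$, which is the claim.

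For part \ref{etQLipQ}, fix $t \in \mathbb R_+$ and $Q_1, Q_2 \in \mathcal P(\mathcal C_X)$. For any $1$-Lipschitz $\phi: X \to \mathbb R$ I would again use the change of variables to get
\[
\int_X \phi \diff({e_t}_\# Q_1 - {e_t}_\# Q_2) = \int_{\mathcal C_X} \phi(\gamma(t)) \diff(Q_1 - Q_2)(\gamma),
\]
and recognize the right-hand side as testing $Q_1 - Q_2$ against the function $\gamma \mapsto \phi(e_t(\gamma)) = \phi(\gamma(t))$ on $\mathcal C_X$. To exploit \eqref{WassersteinDual} on $\mathcal P(\mathcal C_X)$ I must compare the Lipschitz constant of this test function, with respect to the metric $\dist_{\mathcal C_X}$, to that of $\phi$. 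The main point — and the only nonroutine step — is the estimate on the evaluation map: since $\dist_{\mathcal C_X}(\gamma_1, \gamma_2) = \sum_{n \geq 1} 2^{-n} \sup_{[0,n]} \dist(\gamma_1, \gamma_2)$ and the summand with index $n = \lceil t \rceil$ already dominates $\dist(\gamma_1(t), \gamma_2(t))$, one gets $\dist(\gamma_1(t), \gamma_2(t)) \leq 2^{\lceil t \rceil} \dist_{\mathcal C_X}(\gamma_1, \gamma_2) \leq 2^{t+1} \dist_{\mathcal C_X}(\gamma_1, \gamma_2)$. Hence $\gamma \mapsto \phi(\gamma(t))$ is $2^{t+1}$-Lipschitz on $\mathcal C_X$, so $\frac{1}{2^{t+1}}\phi(e_t(\cdot))$ is $1$-Lipschitz; applying \eqref{WassersteinDual} on $\mathcal C_X$ bounds the integral by $2^{t+1} W_1(Q_1, Q_2)$, and taking the supremum over $\phi$ gives $W_1({e_t}_\# Q_1, {e_t}_\# Q_2) \leq 2^{t+1} W_1(Q_1, Q_2)$. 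The only care needed is in reconciling the constant: the clean bound comes out with $2^{\lceil t\rceil}$, which is at most $2^{t+1}$, so the stated constant holds a fortiori.
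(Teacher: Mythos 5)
Your proposal is correct and takes essentially the same approach as the paper: part (a) via the dual formulation \eqref{WassersteinDual} together with the fact that $Q$-almost every curve is $K_{\max}$-Lipschitz, and part (b) via the observation that the definition of $\dist_{\mathcal C_X}$ makes $e_t$ a $2^{n}$-Lipschitz map for any integer $n \geq \max\{t,1\}$, hence $2^{t+1}$-Lipschitz. The only difference is cosmetic: you spell out the standard fact that the push-forward under an $L$-Lipschitz map is $L$-Lipschitz for $W_1$, which the paper leaves implicit.
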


\begin{proof}
To prove \ref{etQLipT}, take $t, s \in \mathbb R_+$. By \eqref{WassersteinDual} and using the fact that $Q(\Lip_{K_{\max}}(X)) = 1$, we have
\begin{align*}
W_1({e_t}_{\#} Q, {e_s}_{\#} Q) & = \sup_{\substack{\phi: X \to \mathbb R \\ \text{1-Lipschitz}}} \int_X \phi(x) \diff({e_t}_{\#} Q - {e_s}_{\#} Q)(x) \\
 & = \sup_{\substack{\phi: X \to \mathbb R \\ \text{1-Lipschitz}}} \int_{\Lip_{K_{\max}}(X)} (\phi(\gamma(t)) - \phi(\gamma(s))) \diff Q(\gamma) \leq K_{\max} \abs{t - s},
\end{align*}
and thus $t \mapsto {e_t}_{\#} Q$ is $K_{\max}$-Lipschitz continuous.

In order to prove \ref{etQLipQ}, we just need to observe that, due to the definition of $\dist_{\mathcal C_X}$, the evaluation map $e_t$ is $2^{n}$-Lipschitz continuous for every $n\in\mathbb N$ with $n\geq t$. Hence, this map is $2^{t+1}$-Lipschitz continuous and this implies  \ref{etQLipQ}.
\end{proof}

As an immediate consequence of Proposition \ref{PropEtQLip}, we obtain the following.

\begin{coro}
\label{CorokFromK}
Let $Q \in \mathcal Q$ and assume that Hypothesis \ref{MainHypo}\ref{HypoXCompact} holds and $K: \mathcal P(X) \times X \to \mathbb R_+$ satisfies Hypothesis \ref{MainHypo}\ref{HypoK2Lip}. Define $k: \mathbb R_+ \times X \to \mathbb R_+$ by $k(t, x) = K({e_t}_{\#} Q, x)$. Then $k$ satisfies Hypothesis \ref{MainHypo}\ref{Hypok1Lip}, i.e., $k$ is Lipschitz continuous on $\mathbb R_+ \times X$ and $k(t, x) \in [K_{\min}, K_{\max}]$ for every $(t, x) \in \mathbb R_+ \times X$.
\end{coro}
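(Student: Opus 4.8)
The plan is to read off both conclusions directly from the hypothesis on $K$ and from Proposition \ref{PropEtQLip}, since $k$ is by construction the composition of the map $t \mapsto {e_t}_\# Q$ with $K$. First I would dispose of the two-sided bound: since Hypothesis \ref{MainHypo}\ref{HypoK2Lip} guarantees $K(\mu, x) \in [K_{\min}, K_{\max}]$ for \emph{every} $(\mu, x) \in \mathcal P(X) \times X$, specializing to $\mu = {e_t}_\# Q$ immediately gives $k(t, x) = K({e_t}_\# Q, x) \in [K_{\min}, K_{\max}]$ for all $(t, x) \in \mathbb R_+ \times X$, with no further work.

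For the Lipschitz bound, I would chain two Lipschitz estimates. Let $L$ denote a Lipschitz constant for $K$ on the product $\mathcal P(X) \times X$ (equipped with the sum, or equivalently the maximum, of the distance $W_1$ and the distance $\dist$). Then for $(t, x), (s, y) \in \mathbb R_+ \times X$ I would write
\[
\abs{k(t, x) - k(s, y)} = \abs{K({e_t}_\# Q, x) - K({e_s}_\# Q, y)} \leq L\left(W_1({e_t}_\# Q, {e_s}_\# Q) + \dist(x, y)\right).
\]
Now, since $Q \in \mathcal Q$, Proposition \ref{PropEtQLip}\ref{etQLipT} provides $W_1({e_t}_\# Q, {e_s}_\# Q) \leq K_{\max}\abs{t - s}$, and substituting this yields
\[
\abs{k(t, x) - k(s, y)} \leq L\left(K_{\max}\abs{t - s} + \dist(x, y)\right),
\]
which is precisely the Lipschitz continuity of $k$ on $\mathbb R_+ \times X$ (with constant $L\max\{K_{\max}, 1\}$ relative to the product metric). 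Together with the bound of the previous paragraph, this establishes Hypothesis \ref{MainHypo}\ref{Hypok1Lip}.

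There is essentially no hard step here; the only point requiring minor care is the bookkeeping of the product metric on $\mathcal P(X) \times X$, i.e.\ interpreting the Lipschitz constant of $K$ consistently with whichever convention (sum versus maximum) is used to metrize the product, since these differ only by a harmless factor. The substantive content — that the flow $t \mapsto {e_t}_\# Q$ of a measure concentrated on $K_{\max}$-Lipschitz curves moves at $W_1$-speed at most $K_{\max}$ — has already been isolated in Proposition \ref{PropEtQLip}\ref{etQLipT}, so this corollary is genuinely immediate.
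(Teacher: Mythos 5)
Your proof is correct and follows exactly the route the paper intends: the paper presents this corollary as an immediate consequence of Proposition \ref{PropEtQLip}, namely the bound $k(t,x) \in [K_{\min}, K_{\max}]$ read off directly from Hypothesis \ref{MainHypo}\ref{HypoK2Lip} and the Lipschitz estimate obtained by composing the Lipschitz continuity of $K$ with the $K_{\max}$-Lipschitz continuity of $t \mapsto {e_t}_\# Q$. Your bookkeeping of the product metric is the only detail the paper leaves implicit, and you handle it correctly.
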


In particular, when $k$ is constructed from $K$ as in Corollary \ref{CorokFromK}, the results of Section \ref{SecValueFunction} apply to $k$. Notice that the value function $\varphi$ from Definition \ref{DefiVarphi} depends on $Q$ through $k$. In the sequel, whenever needed, we make this dependence explicit in the notation of the value function by writing it as $\varphi_Q$.

\begin{prop}
\label{PropTauLipQ}
Consider the mean field game $\MFG(X, \Gamma, K)$ and assume that Hypotheses \ref{MainHypo}\ref{HypoXCompact}, \ref{HypoXDist}, and \ref{HypoK2Lip} hold. Let $(t, x) \in \mathbb R_+ \times X$. Then the map $\mathcal Q \ni Q \mapsto \varphi_Q(t, x)$ is Lipschitz continuous in $\mathcal Q$, its Lipschitz constant being independent of $x$ and a non-decreasing function of $t$.
\end{prop}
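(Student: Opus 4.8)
The plan is to fix two measures $Q_1, Q_2 \in \mathcal Q$ and estimate $\abs{\varphi_{Q_1}(t, x) - \varphi_{Q_2}(t, x)}$ in terms of $W_1(Q_1, Q_2)$, the Wasserstein distance on $\mathcal P(\mathcal C_X)$ associated with $\dist_{\mathcal C_X}$. Writing $k_i(s, y) = K({e_s}_\# Q_i, y)$ for $i = 1, 2$, which are Lipschitz continuous by Corollary \ref{CorokFromK}, the argument splits into two parts: first, controlling how far the two speed functions $k_1, k_2$ are from each other, and second, a stability estimate showing that close speeds produce close minimal times. The second part reuses the time-reparametrization technique already employed in the proofs of Propositions \ref{PropTauLipT} and \ref{PropMonotoneOptimalTime}.

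For the first part, I would combine the Lipschitz continuity of $K$ from Hypothesis \ref{MainHypo}\ref{HypoK2Lip} with Proposition \ref{PropEtQLip}\ref{etQLipQ}: denoting by $L_K$ a Lipschitz constant for $K$, for every $s \in \mathbb R_+$ and $y \in X$,
\begin{align*}
\abs{k_1(s, y) - k_2(s, y)} & = \abs{K({e_s}_\# Q_1, y) - K({e_s}_\# Q_2, y)} \\
& \leq L_K\, W_1({e_s}_\# Q_1, {e_s}_\# Q_2) \leq L_K\, 2^{s+1} W_1(Q_1, Q_2).
\end{align*}
The exponential factor $2^{s+1}$ is harmless provided the times $s$ relevant for a trajectory starting at time $t$ remain in a bounded interval, which is precisely where the $t$-dependence of the final Lipschitz constant originates.

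For the second part, let $\gamma \in \Opt(k_2, t, x)$ and define a time change $\phi\colon [t, +\infty) \to [t, +\infty)$ by
\[
\dot\phi(s) = \frac{k_1(s, \gamma(\phi(s)))}{k_2(\phi(s), \gamma(\phi(s)))}, \qquad \phi(t) = t,
\]
so that $\widetilde\gamma(s) = \gamma(\phi(s))$, extended to equal $x$ for $s \leq t$, satisfies $\abs{\dot{\widetilde\gamma}}(s) = \dot\phi(s)\,\abs{\dot\gamma}(\phi(s)) \leq k_1(s, \widetilde\gamma(s))$, i.e.\ $\widetilde\gamma \in \Adm(k_1)$, and reaches $\Gamma$ at the physical time $\phi^{-1}(t + \varphi_{Q_2}(t, x))$. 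Hence $\varphi_{Q_1}(t, x) \leq \phi^{-1}(t + \varphi_{Q_2}(t, x)) - t$, and it remains to bound $\abs{\phi(\sigma) - \sigma}$. Writing $\dot\phi(s) - 1$ as a quotient and splitting its numerator $k_1(s, \gamma(\phi(s))) - k_2(\phi(s), \gamma(\phi(s)))$ into $[k_1 - k_2](s, \gamma(\phi(s)))$ plus $[k_2(s, \gamma(\phi(s))) - k_2(\phi(s), \gamma(\phi(s)))]$, the first term is controlled by the speed estimate above and the second by the Lipschitz continuity of $k_2$ in time; Gronwall's inequality then yields $\abs{\phi(\sigma) - \sigma} \leq C(t)\, W_1(Q_1, Q_2)$ on the relevant interval. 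To obtain the claimed $t$-dependence, I would first record the a priori bound $\sigma^\ast - t \leq \frac{K_{\max}}{K_{\min}}\varphi_{Q_2}(t, x) \leq \frac{K_{\max}}{K_{\min}}M$ for the exit time $\sigma^\ast = \phi^{-1}(t + \varphi_{Q_2}(t, x))$, using $\dot\phi \geq K_{\min}/K_{\max}$ and the uniform bound $M$ from Proposition \ref{Prop1Varphi}\ref{PropBoundTau}. All relevant times then lie in $[t, t + \frac{K_{\max}}{K_{\min}}M]$, the factor $2^{s+1}$ is bounded by $2^{t + (K_{\max}/K_{\min})M + 1}$, and the resulting $C(t)$ is independent of $x$ and grows like $2^t$, hence is non-decreasing in $t$. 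Exchanging the roles of $Q_1$ and $Q_2$ then gives the two-sided bound.

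I expect the main obstacle to be the bookkeeping in the second part: the speed constraint is genuinely time-dependent, so one cannot simply rescale the trajectory uniformly in time, and the time change $\phi$ must be introduced precisely to absorb both the discrepancy between $k_1$ and $k_2$ and the discrepancy between the values of $k_2$ at times $s$ and $\phi(s)$. Securing the a priori bound on $\sigma^\ast$ \emph{before} invoking Gronwall, so as to tame the exponential factor $2^{s+1}$, is the delicate step that guarantees the Lipschitz constant depends on $t$ only through a non-decreasing function and is uniform in $x$.
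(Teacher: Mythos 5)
Your proposal is correct and takes essentially the same route as the paper's proof: the same time-reparametrization $\phi$ defined by an ODE with $\dot\phi \in [K_{\min}/K_{\max}, K_{\max}/K_{\min}]$, the same splitting of $k_1(s,\cdot) - k_2(\phi(s),\cdot)$ into a $2^{s+1} W_1(Q_1,Q_2)$ term (via Proposition \ref{PropEtQLip}) and a term proportional to $\abs{\phi(s)-s}$, and the same Gronwall argument on a time interval bounded using $M$ from Proposition \ref{Prop1Varphi}\ref{PropBoundTau}. The only cosmetic difference is directional: the paper reparametrizes an optimal trajectory for $Q_1$ into an admissible one for $Q_2$ and evaluates $\phi$ directly at $t + \varphi_{Q_2}(t,x) \leq t + M$, whereas you transport an optimal trajectory for $Q_2$ into one admissible for $Q_1$ and control $\phi^{-1}$, which is why you need the additional (correct) a priori bound $\sigma^\ast - t \leq \frac{K_{\max}}{K_{\min}} M$ before invoking Gronwall.
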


\begin{proof}
Consider $\OCP(X, \Gamma, k)$ with $k$ defined from $K$ as in Corollary \ref{CorokFromK}. Let $M > 0$ be as in Proposition \ref{Prop1Varphi}\ref{PropBoundTau}. Let $Q_1, Q_2 \in \mathcal Q$. Let $\gamma_1 \in \Opt(Q_1, t, x)$ and define $\phi: \mathbb [t, +\infty) \to \mathbb R$ as the unique function satisfying
\begin{equation}
\label{IVP-Varphi}
\left\{
\begin{aligned}
\dot\phi(s) & = \frac{K({e_{s}}_{\#} Q_2, \gamma_1(\phi(s)))}{K({e_{\phi(s)}}_{\#} Q_1, \gamma_1(\phi(s)))}, \\
\phi(t) & = t.
\end{aligned}
\right.
\end{equation}
Similarly to the proof of Proposition \ref{PropTauLipT}, $\phi$ is a well-defined $\mathcal C^1$ function whose derivative takes values in $\left[\frac{K_{\min}}{K_{\max}}, \frac{K_{\max}}{K_{\min}}\right]$. Moreover, $\phi$ is strictly increasing and maps $[t, +\infty)$ onto itself, its inverse $\phi^{-1}$ being defined on $[t, +\infty)$. Define $\gamma_2: \mathbb R_+ \to X$ by
\[
\gamma_2(s) = 
\begin{dcases*}
x, & if $s \leq t$, \\
\gamma_1(\phi(s)), & if $s > t$.
\end{dcases*}
\]
Then $\gamma_2 \in \Lip(\mathbb R_+, X)$, $\gamma_2(s) = x$ for every $s \leq t$, $\gamma_2(s) = \gamma_2(\phi^{-1}(t + \varphi_{Q_1}(t, x))) \in \Gamma$ for every $s \geq \phi^{-1}(t + \varphi_{Q_1}(t, x))$, and $\abs{\dot\gamma_2}(s) \leq K({e_s}_{\#} Q_2, \gamma_2(s))$ for almost every $s \geq t$. Thus $\gamma_2 \in \Adm(Q_2)$ and $t + \varphi_{Q_2}(t, x) \leq \phi^{-1}(t + \varphi_{Q_1}(t, x))$, i.e., $\phi(t + \varphi_{Q_2}(t, x)) \leq t + \varphi_{Q_1}(t, x)$.

By Hypothesis \ref{MainHypo}\ref{HypoK2Lip} and Proposition \ref{PropEtQLip}, there exists $M_0 > 0$, independent of $(t, x)$, such that, for every $s \in \mathbb R_+$,
\[
\abs{K({e_{s}}_{\#} Q_2, \gamma_1(\phi(s))) - K({e_{\phi(s)}}_{\#} Q_1, \gamma_1(\phi(s)))} \leq M_0 \abs{\phi(s) - s} + 2^{s+1} M_0 W_1(Q_1, Q_2).
\]
One has, from \eqref{IVP-Varphi},
\[
\phi(s) - s = \int_t^s \left[\frac{K({e_{\sigma}}_{\#} Q_2, \gamma_1(\phi(\sigma)))}{K({e_{\phi(\sigma)}}_{\#} Q_1, \gamma_1(\phi(\sigma)))} - 1\right] \diff\sigma,
\]
and thus, for every $s \in [t, t + M]$,
\[
\abs{\phi(s) - s} \leq \frac{1}{K_{\min}}\left[2^{t+M+1} M M_0 W_1(Q_1, Q_2) + M_0 \int_{t}^s \abs{\phi(\sigma) - \sigma} \diff\sigma\right],
\]
which yields, by Gronwall's inequality, that, for every $s \in [t, t + M]$,
\[\abs{\phi(s) - s} \leq C_t W_1(Q_1, Q_2),\]
where $C_t$ is non-decreasing on $t$. We then conclude that
\[
t + \varphi_{Q_2}(t, x) \leq \phi(t + \varphi_{Q_2}(t, x)) + C_t W_1(Q_1, Q_2) \leq t + \varphi_{Q_1}(t, x) + C_t W_1(Q_1, Q_2),
\]
and finally
\[
\varphi_{Q_2}(t, x) - \varphi_{Q_1}(t, x) \leq C_t W_1(Q_1, Q_2).
\]
By exchanging $Q_1$ and $Q_2$ in the above inequality, one obtains the Lipschitz continuity of $Q \mapsto \varphi_Q(t, x)$ with Lipschitz constant $C_t$.
\end{proof}

\subsection{Measurable selection of optimal trajectories}
\label{SecExist-MeasurableSelection}

Proposition \ref{Prop1Varphi}\ref{PropExistOptim} and Corollary \ref{CorokFromK} imply that, given $Q \in \mathcal Q$, for every $x \in X$, there exists an optimal trajectory $\gamma$ for $(Q, 0, x)$. Such a trajectory depends on $x$, and one may thus construct a map that, to each point $x \in X$, associates some optimal trajectory for $(Q, 0, x)$. The main result of this section, Proposition \ref{PropMeasSelect}, asserts that it is possible to construct such a map in a measurable way.

For $Q \in \mathcal Q$, let $G_Q$ be the graph of the set-valued map $x \mapsto \Opt(Q, 0, x)$, given by
\begin{equation}
\label{GraphGamma}
G_Q = \{(x, \gamma) \in X \times \mathcal C_X \suchthat \gamma \in \Opt(Q, 0, x)\}.
\end{equation}

\begin{lemm}
\label{LemmClosedGraph}
Consider the mean field game $\MFG(X, \Gamma, K)$ and assume that Hypotheses \ref{MainHypo}\ref{HypoXCompact}, \ref{HypoXDist}, and \ref{HypoK2Lip} hold. Let $Q \in \mathcal Q$. Then the graph $G_Q$ is compact.
\end{lemm}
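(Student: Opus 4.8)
The plan is to realize $G_Q$ as a closed subset of a compact set. First I would observe that every $\gamma$ with $\gamma \in \Opt(Q, 0, x)$ is $k$-admissible, hence $\abs{\dot\gamma} \leq k(\cdot, \gamma) \leq K_{\max}$ almost everywhere, so that $\gamma \in \Lip_{K_{\max}}(X)$. Consequently $G_Q \subset X \times \Lip_{K_{\max}}(X)$, and this ambient set is compact: $X$ is compact by Hypothesis \ref{MainHypo}\ref{HypoXCompact}, and $\Lip_{K_{\max}}(X)$ is compact in $\mathcal C_X$ by Arzelà--Ascoli. Since $X \times \mathcal C_X$ is metrizable, it then suffices to prove that $G_Q$ is sequentially closed, after which compactness follows because a closed subset of a compact set is compact.

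So I would take a sequence $((x_n, \gamma_n))_{n \in \mathbb N}$ in $G_Q$ converging to some $(x, \gamma) \in X \times \mathcal C_X$, i.e.\ $x_n \to x$ in $X$ and $\gamma_n \to \gamma$ uniformly on compact time intervals, and show $\gamma \in \Opt(Q, 0, x)$. The limit $\gamma$ is again $K_{\max}$-Lipschitz, hence absolutely continuous, and passing to the limit in $\gamma_n(0) = x_n$ gives $\gamma(0) = x$. The key technical point is the stability of admissibility. Since we are not assuming $X \subset \mathbb R^d$, I cannot use the control formulation of Remark \ref{RemkControlSyst} and must work metrically: writing $k(t, y) = K({e_t}_\# Q, y)$, admissibility of $\gamma_n$ is equivalent to $\dist(\gamma_n(s), \gamma_n(t)) \leq \int_s^t k(\tau, \gamma_n(\tau)) \diff\tau$ for all $0 \leq s \leq t$ (using $\dist(\gamma_n(s), \gamma_n(t)) \leq \int_s^t \abs{\dot\gamma_n}(\tau) \diff\tau$, see \cite[Theorem 1.1.2]{Ambrosio2005Gradient}). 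The left-hand sides converge by continuity of $\dist$, and the right-hand sides converge by continuity of $k$, the uniform convergence $\gamma_n \to \gamma$, and dominated convergence (with $k \leq K_{\max}$); this yields the same integral inequality for $\gamma$, whence $\abs{\dot\gamma}(t) \leq k(t, \gamma(t))$ for almost every $t$ by letting $s \to t$, i.e.\ $\gamma \in \Adm(Q)$.

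It remains to control the exit time. By Corollary \ref{CorokFromK} the function $k$ satisfies Hypothesis \ref{MainHypo}\ref{Hypok1Lip}, so Proposition \ref{PropTauLip} applies and $x \mapsto \varphi_Q(0, x)$ is continuous. Setting $T_n = \tau(0, \gamma_n) = \varphi_Q(0, x_n)$ and $T = \varphi_Q(0, x)$, optimality of $\gamma_n$ gives $T_n \to T$, and each $\gamma_n$ satisfies $\gamma_n(t) = \gamma_n(T_n) \in \Gamma$ for $t \geq T_n$. Using the uniform Lipschitz bound one has $\dist(\gamma_n(T_n), \gamma(T)) \leq K_{\max}\abs{T_n - T} + \dist(\gamma_n(T), \gamma(T)) \to 0$, so $\gamma(T) \in \Gamma$ because $\Gamma$ is closed; moreover, for any fixed $t > T$ one has $T_n < t$ for $n$ large, whence $\gamma(t) = \lim_n \gamma_n(t) = \lim_n \gamma_n(T_n) = \gamma(T)$, so $\gamma$ is constant equal to $\gamma(T) \in \Gamma$ on $[T, +\infty)$. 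In particular $\tau(0, \gamma) \leq T < +\infty$, while $\tau(0, \gamma) \geq \varphi_Q(0, x) = T$ since $\gamma \in \Adm(Q)$ with $\gamma(0) = x$; thus $\tau(0, \gamma) = T$ and all the defining conditions of $\Opt(Q, 0, x)$ hold, giving $(x, \gamma) \in G_Q$.

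I expect the main obstacle to be the stability of admissibility under limits in the purely metric setting, which forces the integral reformulation above rather than a direct weak-limit argument on velocities; the exit-time bookkeeping is then a routine consequence of the continuity of $\varphi_Q$ and the closedness of $\Gamma$.
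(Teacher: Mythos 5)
Your proof is correct and follows essentially the same route as the paper's: show that $G_Q$ is (sequentially) closed and contained in the compact set $X \times \Lip_{K_{\max}}(X)$, using the continuity of $\varphi_Q$ for the exit-time bookkeeping, and conclude since a closed subset of a compact set is compact. The only difference is that you make explicit, via the integral inequality $\dist(\gamma(s), \gamma(t)) \leq \int_s^t k(\tau, \gamma(\tau)) \diff\tau$, the passage to the limit in the admissibility constraint, a step the paper simply asserts ``passes to the limit''.
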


\begin{proof}
Let us first prove that $G_Q$ is a closed subset of $X \times \mathcal C_X$. Let $((x_n, \gamma_n))_{n \in \mathbb N}$ be a sequence in $G_Q$ such that, as $n \to \infty$, $x_n \to x$ for some $x \in X$ and $\gamma_n \to \gamma$ for some $\gamma \in \mathcal C_X$ (uniformly on compact time intervals). Since $\gamma_n \in \Opt(Q, 0, x_n)$, one has that $\gamma_n \in \Lip_{K_{\max}}(X)$, and thus, since $\Lip_{K_{\max}}(X)$ is closed, one has $\gamma \in \Lip_{K_{\max}}(X)$.

Since $\gamma_n(0) = x_n$, it follows that $\gamma(0) = x$. The continuity of the value function $\varphi_Q$ also implies that $\varphi_Q(0, x_n) \to \varphi_Q(0, x)$ as $n \to \infty$. Since $\gamma_n(\varphi_Q(0, x_n)) \in \Gamma$ and $\Gamma$ is closed, one concludes that $\gamma(\varphi_Q(0, x)) \in \Gamma$. Moreover, if $t \in \mathbb R_+$ is such that $t > \varphi_Q(0, x) $, then $t > \varphi_Q(0, x_n)$ for $n$ large enough, and thus $\gamma_n(t) = \gamma_n(\varphi_Q(0, x_n))$ for $n$ large enough, which shows that $\gamma(t) = \gamma(\varphi_Q(0, x))$ for every $t > \varphi_Q(0, x)$. Notice, in particular, that the exit time of $\gamma$ satisfies $\tau(0, \gamma) \leq \varphi_Q(0, x)$, and thus, to conclude that $\gamma \in \Opt(Q, 0, x)$, one is only left to prove that $\gamma \in \Adm(Q)$.

For $n \in \mathbb N$, since $\gamma_n \in \Adm(Q)$, one has $\abs{\dot\gamma_n}(t) \leq K({e_t}_{\#} Q, \gamma_n(t))$, a condition which passes to the limit as $n\to \infty$, providing $\abs{\dot\gamma}(t) \leq K({e_t}_{\#} Q, \gamma(t))$ for almost every $t \in \mathbb R_+$. Hence $\gamma \in \Adm(Q)$, which concludes the proof that $G_Q$ is closed.

Since $\Opt(Q, 0, x) \subset \Lip_{K_{\max}}(X)$ for every $x \in X$, one has $G_Q \subset X \times \Lip_{K_{\max}}(X)$ and, since $X \times \Lip_{K_{\max}}(X)$ is compact and $G_Q$ is closed, one concludes that $G_Q$ is compact.
\end{proof}

As a consequence of Lemma \ref{LemmClosedGraph} and the fact that $\Opt(Q, 0, x)$ is non-empty for every $x \in X$, one obtains the following measurable selection of optimal trajectories immediately from von Neumann--Aumann Selection Theorem (see, e.g., \cite[Theorem III.22]{Castaing1977Convex} or \cite[Theorem 3]{SainteBeuve1974Extension}).

\begin{prop}
\label{PropMeasSelect}
Consider the mean field game $\MFG(X, \Gamma, K)$ and assume that Hypotheses \ref{MainHypo}\ref{HypoXCompact}, \ref{HypoXDist}, and \ref{HypoK2Lip} hold. Let $Q \in \mathcal Q$, $m_0 = {e_0}_\# Q \in \mathcal P(X)$, and $\widehat{\mathfrak B}$ be the completion of the Borel $\sigma$-algebra of $X$ with respect to $m_0$. Then there exists a function $\pmb\gamma_Q: X \to \mathcal C_X$ such that $\pmb\gamma_Q(x) \in \Opt(Q, 0, x)$ for every $x \in X$ and $\pmb\gamma_Q$ is measurable with respect to $\widehat{\mathfrak B}$ in $X$ and to the Borel $\sigma$-algebra in $\mathcal C_X$.
\end{prop}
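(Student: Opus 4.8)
The plan is to read this statement as nothing more than an application of a measurable selection theorem to the set-valued map $F: X \rightrightarrows \mathcal C_X$ given by $F(x) = \Opt(Q, 0, x)$; the actual work is only to check that the hypotheses of such a theorem are met. First I would fix the ambient structure: $X$ is compact metric and $\mathcal C_X$ is Polish, so $F$ maps a space carrying the finite Borel measure $m_0$ into a Polish target, which is exactly the framework of the von Neumann--Aumann theorem quoted in \cite[Theorem III.22]{Castaing1977Convex} and \cite[Theorem 3]{SainteBeuve1974Extension}.

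Next I would verify the two structural requirements of that theorem. For non-emptiness of the values, I would apply Corollary \ref{CorokFromK} to the function $k(t, x) = K({e_t}_\# Q, x)$, so that Hypotheses \ref{MainHypo}\ref{HypoXCompact}--\ref{HypoXDist} hold for $\OCP(X, \Gamma, k)$, and then invoke Proposition \ref{Prop1Varphi}\ref{PropExistOptim} to get $\Opt(Q, 0, x) \neq \emptyset$ for every $x \in X$. For measurability of the graph, I would use Lemma \ref{LemmClosedGraph}: $G_Q$ is compact, hence closed, hence a Borel subset of $X \times \mathcal C_X$. Since $X$ and $\mathcal C_X$ are both separable metric (indeed second countable), the Borel $\sigma$-algebra of the product coincides with the tensor product $\mathfrak B \otimes \mathcal B(\mathcal C_X)$, where $\mathfrak B$ denotes the Borel $\sigma$-algebra of $X$; thus $G_Q$ lies in the product $\sigma$-algebra, which is the measurability requirement of the selection theorem.

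Finally I would invoke the theorem itself: a non-empty-valued map from a measurable space into a Polish space whose graph belongs to the product $\sigma$-algebra admits a selection measurable with respect to the completion of the source $\sigma$-algebra against any chosen finite measure. Taking that measure to be $m_0$ produces a function $\pmb\gamma_Q: X \to \mathcal C_X$ with $\pmb\gamma_Q(x) \in \Opt(Q, 0, x)$ for every $x$ and measurable from $(X, \widehat{\mathfrak B})$ to $(\mathcal C_X, \mathcal B(\mathcal C_X))$, which is precisely the claim.

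There is no substantial obstacle here --- the statement is genuinely a corollary of Lemma \ref{LemmClosedGraph} together with the existence result Proposition \ref{Prop1Varphi}\ref{PropExistOptim} --- so the only point that deserves attention is the measurability bookkeeping. The completion $\widehat{\mathfrak B}$ appears because the general von Neumann--Aumann theorem delivers selections that are measurable only after completing the source $\sigma$-algebra with respect to a finite measure, the underlying projection-type arguments producing a priori only universally measurable objects; this level of measurability is exactly what the subsequent pushforward constructions in Section \ref{SecExistence} require, so no sharper statement is needed.
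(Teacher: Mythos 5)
Your proposal is correct and follows essentially the same route as the paper: non-emptiness of $\Opt(Q,0,x)$ via Corollary \ref{CorokFromK} and Proposition \ref{Prop1Varphi}\ref{PropExistOptim}, closedness (indeed compactness) of the graph $G_Q$ from Lemma \ref{LemmClosedGraph}, and then the von Neumann--Aumann selection theorem cited from the very same references. The additional bookkeeping you spell out --- that the Borel $\sigma$-algebra of the product of two separable metric spaces coincides with the tensor product of the Borel $\sigma$-algebras, and that the completion $\widehat{\mathfrak B}$ enters because the selection theorem only yields measurability after completing with respect to $m_0$ --- is precisely the detail the paper leaves implicit in its ``immediately from'' citation.
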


\subsection{Fixed point formulation for equilibria}
\label{SecExist-FixedPoint}

To prove Theorem \ref{MainTheoExist}, we reformulate the question of the existence of an equilibrium for $\MFG(X, \Gamma, K)$ into a question of the existence of a fixed point for a set-valued map. Let $m_0 \in \mathcal P(X)$ and define
\[\mathcal Q_{m_0} = \{Q \in \mathcal Q \mid {e_0}_\# Q = m_0\}.\]

\begin{lemm}
\label{LemmQm0}
Assume that Hypothesis \ref{MainHypo}\ref{HypoXCompact} holds. For every $m_0 \in \mathcal P(X)$, $\mathcal Q_{m_0}$ is non-empty, convex, and compact.
\end{lemm}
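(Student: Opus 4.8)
The plan is to treat the three properties in turn, getting non-emptiness from an explicit construction, convexity from linearity, and compactness for free from the already-established compactness of $\mathcal Q$ together with the continuity of $Q \mapsto {e_0}_\# Q$. First I would prove non-emptiness by exhibiting a concrete element. Consider the map $\iota: X \to \mathcal C_X$ sending a point $x$ to the constant curve $\gamma_x$ given by $\gamma_x(t) = x$ for all $t \in \mathbb R_+$. This map is an isometric embedding for the distance $\dist_{\mathcal C_X}$, since for constant curves one has $\dist_{\mathcal C_X}(\gamma_x, \gamma_y) = \dist(x, y) \sum_{n=1}^\infty 2^{-n} = \dist(x, y)$; in particular $\iota$ is continuous, hence Borel measurable. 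Each $\gamma_x$ is $0$-Lipschitz, so $\iota(X) \subset \Lip_{K_{\max}}(X)$. Setting $Q = \iota_\# m_0$, one gets $Q(\Lip_{K_{\max}}(X)) = 1$, so $Q \in \mathcal Q$; and since $e_0 \circ \iota = \mathrm{id}_X$ one has ${e_0}_\# Q = m_0$, whence $Q \in \mathcal Q_{m_0}$.

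Convexity is immediate: for $Q_1, Q_2 \in \mathcal Q_{m_0}$ and $\lambda \in [0, 1]$, the measure $\lambda Q_1 + (1 - \lambda) Q_2$ still assigns full mass to $\Lip_{K_{\max}}(X)$ and still pushes forward to $m_0$ under $e_0$, both of these maps $Q \mapsto Q(\Lip_{K_{\max}}(X))$ and $Q \mapsto {e_0}_\# Q$ being affine in $Q$. For compactness I would use that $\mathcal Q$ is compact (as recalled just before the statement) and show that $\mathcal Q_{m_0}$ is a closed subset of $\mathcal Q$. By Proposition \ref{PropEtQLip}\ref{etQLipQ} applied with $t = 0$, the map $\mathcal P(\mathcal C_X) \ni Q \mapsto {e_0}_\# Q \in \mathcal P(X)$ is $2$-Lipschitz continuous for $W_1$, hence continuous; therefore $\mathcal Q_{m_0}$, being the intersection of $\mathcal Q$ with the preimage of the closed singleton $\{m_0\}$, is closed in $\mathcal Q$. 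A closed subset of a compact set is compact, which concludes the argument.

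None of the steps presents a genuine obstacle; the only points requiring a little care are verifying that the constant-curve construction indeed lands in $\mathcal Q$ and projects to $m_0$ under $e_0$, and noting that the $W_1$-topology placed on $\mathcal P(\mathcal C_X)$ (legitimate since $\mathcal C_X$ is Polish and bounded for $\dist_{\mathcal C_X}$) is exactly the one for which Proposition \ref{PropEtQLip}\ref{etQLipQ} delivers the continuity used in the compactness step. Both follow directly from the definitions.
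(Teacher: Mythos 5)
Your proof is correct and follows essentially the same route as the paper: non-emptiness via the pushforward of $m_0$ under the constant-curve embedding, convexity directly from the definition, and compactness by noting that $\mathcal Q_{m_0}$ is closed in the compact set $\mathcal Q$ thanks to the continuity of $Q \mapsto {e_0}_\# Q$ from Proposition \ref{PropEtQLip}. The extra details you supply (the isometry computation for constant curves and the explicit Lipschitz constant) are fine but not needed beyond what the paper's argument already uses.
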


\begin{proof}
The convexity of $\mathcal Q_{m_0}$ follows immediately from its definition. To prove that it is non-empty, denote by $\psi: X \to \mathcal C_X$ the function that associates with each $x \in X$ the curve $\psi(x) \in \mathcal C_X$ which remains at $x$ for every time $t \geq 0$. Then $\psi_\# m_0 \in \mathcal Q_{m_0}$, and hence $\mathcal Q_{m_0}$ is non-empty. One also has that $\mathcal Q_{m_0}$ is closed in $\mathcal Q$, since, by Proposition \ref{PropEtQLip}, $Q \mapsto {e_0}_\# Q$ is continuous. In particular, since $\mathcal Q_{m_0} \subset \mathcal Q$ and $\mathcal Q$ is compact, one obtains that $\mathcal Q_{m_0}$ is compact.
\end{proof}

We now define a set-valued map $F_{m_0}: \mathcal Q_{m_0} \rightrightarrows \mathcal Q_{m_0}$ by setting, for $Q \in \mathcal Q_{m_0}$,
\[
F_{m_0}(Q) = \left\{\widetilde Q \in \mathcal Q_{m_0} \midsuchthat \widetilde Q\text{-almost every } \gamma \in \mathcal C_X \text{ satisfies }\gamma \in \Opt(Q, 0, \gamma(0))\right\}.
\]
By the definition of equilibria for $\MFG(X, \Gamma, K)$, $Q \in \mathcal Q_{m_0}$ is an equilibrium for $\MFG(X,\allowbreak \Gamma,\allowbreak K)$ with initial condition $m_0$ if and only if $Q \in F_{m_0}(Q)$. Introduce the set $\OOpt(Q)$ of all optimal trajectories for $Q$ starting at time $0$, i.e.,
\begin{equation}
\label{EqDefiOOpt}
\OOpt(Q) = \bigcup_{x \in X}\Opt(Q, 0, x).
\end{equation}
Notice that $\OOpt(Q)$ is compact as the projection into $\mathcal C_X$ of the compact set $G_Q$ from \eqref{GraphGamma}. The set $F_{m_0}(Q)$ can be rewritten in terms of $\OOpt(Q)$ as
\begin{equation}
\label{DefiFQ2}
F_{m_0}(Q) = \left\{\widetilde Q \in \mathcal Q_{m_0} \midsuchthat \widetilde Q(\OOpt(Q)) = 1\right\}.
\end{equation}
Let us provide some properties of the set-valued map $F_{m_0}$.

\begin{lemm}
\label{LemmFQNonEmptyAndCompact}
Consider the mean field game $\MFG(X, \Gamma, K)$ and assume that Hypotheses \ref{MainHypo}\ref{HypoXCompact}, \ref{HypoXDist}, and \ref{HypoK2Lip} hold. Then, for every $Q \in \mathcal Q_{m_0}$, $F_{m_0}(Q)$ is non-empty and compact.
\end{lemm}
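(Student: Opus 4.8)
The plan is to establish the two assertions separately: non-emptiness will be obtained by pushing forward $m_0$ through a measurable selection of optimal trajectories, while compactness will follow by showing that $F_{m_0}(Q)$ is closed inside the compact set $\mathcal Q_{m_0}$ provided by Lemma \ref{LemmQm0}.

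For non-emptiness, I would invoke the measurable selection $\pmb\gamma_Q: X \to \mathcal C_X$ from Proposition \ref{PropMeasSelect}, which satisfies $\pmb\gamma_Q(x) \in \Opt(Q, 0, x)$ for every $x \in X$, and set $\widetilde Q = (\pmb\gamma_Q)_\# m_0$. First one checks that this push-forward defines a genuine Borel probability measure on $\mathcal C_X$: since $\pmb\gamma_Q$ is measurable with respect to the $m_0$-completion $\widehat{\mathfrak B}$ of the Borel $\sigma$-algebra of $X$, the preimage of any Borel subset of $\mathcal C_X$ is $m_0$-measurable, so $\widetilde Q$ is well defined. Because each $\pmb\gamma_Q(x)$ is an optimal trajectory starting at $x$ at time $0$, one has $e_0 \circ \pmb\gamma_Q = \mathrm{id}_X$, whence ${e_0}_\# \widetilde Q = m_0$; moreover $\pmb\gamma_Q(x) \in \Opt(Q, 0, x) \subset \Lip_{K_{\max}}(X)$, so $\widetilde Q \in \mathcal Q_{m_0}$, and $\pmb\gamma_Q(x) \in \OOpt(Q)$ gives $\widetilde Q(\OOpt(Q)) = 1$. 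By the reformulation \eqref{DefiFQ2}, this shows $\widetilde Q \in F_{m_0}(Q)$.

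For compactness, I would rely on the fact, recalled just before the statement, that $\OOpt(Q)$ is a compact, hence closed, subset of $\mathcal C_X$. Since $\mathcal C_X$ is bounded, convergence in the Wasserstein distance $W_1$ on $\mathcal P(\mathcal C_X)$ coincides with weak convergence, so by the Portmanteau theorem the map $\mu \mapsto \mu(\OOpt(Q))$ is upper semicontinuous on $\mathcal P(\mathcal C_X)$. Consequently, if $(\widetilde Q_n)_{n \in \mathbb N}$ is a sequence in $F_{m_0}(Q)$ converging to some $\widetilde Q$, then $\widetilde Q(\OOpt(Q)) \geq \limsup_{n} \widetilde Q_n(\OOpt(Q)) = 1$, so $\widetilde Q(\OOpt(Q)) = 1$. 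As $\mathcal Q_{m_0}$ is closed, $\widetilde Q$ also lies in $\mathcal Q_{m_0}$, hence in $F_{m_0}(Q)$; thus $F_{m_0}(Q)$ is a closed subset of the compact set $\mathcal Q_{m_0}$ and is therefore compact.

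The argument is essentially routine once the preceding results are in place, and I do not expect a serious obstacle. The two points deserving some care are purely measure-theoretic: verifying that the push-forward by the merely $\widehat{\mathfrak B}$-measurable map $\pmb\gamma_Q$ yields a well-defined Borel measure, and justifying that $W_1$-convergence on $\mathcal P(\mathcal C_X)$ is indeed weak convergence so that the Portmanteau upper semicontinuity applies — this last point uses the boundedness of $(\mathcal C_X, \dist_{\mathcal C_X})$.
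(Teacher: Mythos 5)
Your proof is correct and follows essentially the same route as the paper: non-emptiness via the push-forward of $m_0$ (completed) through the measurable selection $\pmb\gamma_Q$ of Proposition \ref{PropMeasSelect}, and compactness by showing $F_{m_0}(Q)$ is closed in the compact set $\mathcal Q_{m_0}$ using the Portmanteau upper semicontinuity on the closed set $\OOpt(Q)$. The additional measure-theoretic remarks you flag (well-definedness of the push-forward under $\widehat{\mathfrak B}$-measurability, and the equivalence of $W_1$-convergence with weak convergence on the bounded space $\mathcal C_X$) are exactly the points the paper handles implicitly.
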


\begin{proof}
Let $\pmb\gamma_Q: X \to \mathcal C_X$ be the function from Proposition \ref{PropMeasSelect}. Write $\widehat m_0$ for the completion of the measure $m_0$, which is defined in the $\sigma$-algebra $\widehat{\mathfrak B}$ from Proposition \ref{PropMeasSelect}. Let $\widetilde Q = {\pmb\gamma_Q}_{\#} \widehat m_0$, which is well-defined thanks to Proposition \ref{PropMeasSelect}.

Notice that $\widetilde Q \in \mathcal Q_{m_0}$. Indeed, $\widetilde Q\left(\Lip_{K_{\max}}(X)\right) = \widehat m_0\left(\pmb\gamma_Q^{-1}\left(\Lip_{K_{\max}}(X)\right)\right) = \widehat m_0(X) = 1$, since $\pmb\gamma_Q(x) \in \Opt(Q, 0, x) \subset \Lip_{K_{\max}}(X)$ for every $x \in X$. Moreover, for every Borel set $B$ in $X$, one has ${e_0}_\# \widetilde Q(B) = \widetilde Q\left(e_0^{-1}(B)\right) = \widehat m_0 \left(\pmb\gamma_Q^{-1}\left(e_0^{-1}(B)\right)\right) = \widehat m_0(B) = m_0(B)$ since $e_0 \circ \pmb\gamma_Q$ is the identity map in $X$ and $\widehat m_0$ and $m_0$ coincide on Borel sets. Hence $\widetilde Q \in \mathcal Q_{m_0}$.

Since $\pmb\gamma_Q(X) \subset \OOpt(Q)$, one has $\widetilde Q(\OOpt(Q)) \geq \widetilde Q(\pmb\gamma_Q(X)) = \widehat m_0(X) = 1$, which proves that $\widetilde Q \in F_{m_0}(Q)$, and thus $F_{m_0}(Q)$ is non-empty.

Let us now prove that $F_{m_0}(Q)$ is compact. Since $F_{m_0}(Q) \subset \mathcal Q_{m_0}$ and $\mathcal Q_{m_0}$ is compact, it suffices to prove that $F_{m_0}(Q)$ is closed in $\mathcal Q_{m_0}$. Let $(\widetilde Q_n)_{n \in \mathbb N}$ be a sequence in $F_{m_0}(Q)$ converging as $n \to \infty$ to some $\widetilde Q \in \mathcal Q_{m_0}$. Since $\OOpt(Q)$ is closed in $\mathcal C_X$, one obtains (see, e.g., \cite[Chapter 1, Theorem 2.1]{Billingsley1999Convergence}) that
\[
\widetilde Q(\OOpt(Q)) \geq \limsup_{n \to \infty} \widetilde Q_n(\OOpt(Q)) = 1,
\]
which proves that $\widetilde Q(\OOpt(Q)) = 1$, and thus $\widetilde Q \in F_{m_0}(Q)$. Hence $F_{m_0}(Q)$ is closed.
\end{proof}

Recall that, for $A, B$ two topological spaces, a set-valued map $H: A \rightrightarrows B$ is said to be \emph{upper semi-continuous} if, for every open set $W \subset B$, the set $\{x \in A \suchthat H(x) \subset W\}$ is open in $A$.

\begin{lemm}
\label{LemmPmbGammaUSC}
Consider the mean field game $\MFG(X, \Gamma, K)$ and assume that Hypotheses \ref{MainHypo}\ref{HypoXCompact}, \ref{HypoXDist}, and \ref{HypoK2Lip} hold. Let $m_0 \in \mathcal P(X)$ and $\OOpt: \mathcal Q_{m_0} \rightrightarrows \Lip_{K_{\max}}(X)$ be the set-valued map defined for $Q \in \mathcal Q_{m_0}$ by \eqref{EqDefiOOpt}. Then $\OOpt$ is upper semi-continuous.
\end{lemm}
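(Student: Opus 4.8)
The plan is to establish that the set-valued map $\OOpt$ has closed graph and then to invoke the classical fact that a set-valued map with values in a compact metric space and closed graph is upper semi-continuous. Since $\mathcal Q_{m_0}$ (compact by Lemma \ref{LemmQm0}) and $\Lip_{K_{\max}}(X)$ (compact by Arzelà--Ascoli) are both compact metric spaces, working with sequences is legitimate throughout, and closedness and openness can be tested sequentially.

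The heart of the argument is the closed graph property: if $(Q_n)_{n \in \mathbb N}$ is a sequence in $\mathcal Q_{m_0}$ with $Q_n \to Q$, and $\gamma_n \in \OOpt(Q_n)$ with $\gamma_n \to \gamma$ in $\Lip_{K_{\max}}(X)$, then $\gamma \in \OOpt(Q)$. Writing $x_n = \gamma_n(0)$, so that $\gamma_n \in \Opt(Q_n, 0, x_n)$, and $x = \gamma(0) = \lim_n x_n$, the goal is to show $\gamma \in \Opt(Q, 0, x)$. This follows the structure of the proof of Lemma \ref{LemmClosedGraph}, the new difficulty being that the underlying measure now varies with $n$. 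This variation is controlled by two continuity facts. Setting $k_n(t, x) = K({e_t}_\# Q_n, x)$ and $k(t, x) = K({e_t}_\# Q, x)$, Proposition \ref{PropEtQLip}\ref{etQLipQ} together with the Lipschitz continuity of $K$ from Hypothesis \ref{MainHypo}\ref{HypoK2Lip} yields $\abs{k_n(t, x) - k(t, x)} \leq \mathrm{Lip}(K)\, 2^{t+1}\, W_1(Q_n, Q)$, so $k_n \to k$ uniformly on every compact time interval; and combining Proposition \ref{PropTauLipQ} with the uniform $x$-Lipschitz continuity of $\varphi_Q$ from Proposition \ref{PropTauLip} gives $\varphi_{Q_n}(0, x_n) \to \varphi_Q(0, x)$.

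With these at hand I would verify the defining properties of an optimal trajectory for $(Q, 0, x)$. For admissibility, starting from $\dist(\gamma_n(s), \gamma_n(t)) \leq \int_s^t k_n(\tau, \gamma_n(\tau)) \diff\tau$ and passing to the limit via the uniform convergence $k_n \to k$ and the uniform bound $K_{\max}$ (dominated convergence) gives $\dist(\gamma(s), \gamma(t)) \leq \int_s^t k(\tau, \gamma(\tau)) \diff\tau$ for all $s \leq t$, hence $\abs{\dot\gamma}(t) \leq k(t, \gamma(t))$ for almost every $t$ and $\gamma \in \Adm(Q)$. For arrival, since $\gamma_n(\varphi_{Q_n}(0, x_n)) \in \Gamma$, the closedness of $\Gamma$ together with $\varphi_{Q_n}(0, x_n) \to \varphi_Q(0, x)$ gives $\gamma(\varphi_Q(0, x)) \in \Gamma$, so $\tau(0, \gamma) \leq \varphi_Q(0, x)$; the reverse inequality is immediate from $\gamma \in \Adm(Q)$, $\gamma(0) = x$, and the definition of $\varphi_Q$, so $\gamma$ realizes the minimal time. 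The condition that $\gamma$ remain stopped after its arrival passes to the limit exactly as in Lemma \ref{LemmClosedGraph}. This yields $\gamma \in \Opt(Q, 0, x) \subset \OOpt(Q)$ and hence the closed graph. The main obstacle is precisely this passage of the minimal-time property to the limit while $Q$ varies, which is what the joint continuity of $\varphi_Q$ and the uniform convergence of the speed bound $k_n$ are designed to resolve.

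Finally, I would deduce upper semi-continuity. Let $W \subset \Lip_{K_{\max}}(X)$ be open; to show that $\{Q \in \mathcal Q_{m_0} \suchthat \OOpt(Q) \subset W\}$ is open it suffices to show its complement is closed in $\mathcal Q_{m_0}$. If $Q_n \to Q$ with $\OOpt(Q_n) \not\subset W$, choose $\gamma_n \in \OOpt(Q_n) \setminus W$; since $\Lip_{K_{\max}}(X)$ is compact, a subsequence of $(\gamma_n)_{n \in \mathbb N}$ converges to some $\gamma$ in the closed set $\Lip_{K_{\max}}(X) \setminus W$, and the closed graph established above gives $\gamma \in \OOpt(Q) \setminus W$, so $\OOpt(Q) \not\subset W$. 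This proves that $\OOpt$ is upper semi-continuous.
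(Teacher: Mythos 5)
Your proof is correct and follows essentially the same route as the paper's: establish the closed-graph property via the convergence $\varphi_{Q_n}(0,\gamma_n(0)) \to \varphi_Q(0,\gamma(0))$ (Propositions \ref{PropTauLip} and \ref{PropTauLipQ}) and the passage to the limit of the admissibility constraint, then conclude upper semi-continuity from compactness of $\Lip_{K_{\max}}(X)$. The only differences are cosmetic: you spell out the limit of the admissibility condition through its integral formulation and prove the closed-graph-implies-u.s.c.\ step by hand, where the paper simply cites \cite[Proposition 1.4.8]{Aubin2009Set}.
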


\begin{proof}
Since $\OOpt(Q)$ is a non-empty compact subset of the compact set $\Lip_{K_{\max}}(X)$ for every $Q \in \mathcal Q_{m_0}$, it suffices to show that the graph of $\OOpt$ is closed (see, e.g., \cite[Proposition 1.4.8]{Aubin2009Set}).

Let $(Q_n)_{n \in \mathbb N}$ be a sequence in $\mathcal Q_{m_0}$ with $Q_n \to Q$ as $n \to \infty$ for some $Q \in \mathcal Q_{m_0}$ and $(\gamma_n)_{n \in \mathbb N}$ be a sequence in $\Lip_{K_{\max}}(X)$ with $\gamma_n \in \OOpt(Q_n)$ for every $n \in \mathbb N$ and $\gamma_n \to \gamma$ as $n \to \infty$ for some $\gamma \in \mathcal C_X$. Notice that $\gamma \in \Lip_{K_{\max}}(X)$ since $\Lip_{K_{\max}}(X)$ is closed. Since $\gamma_n \in \OOpt(Q_n)$, then $\gamma_n(t) = \gamma_n(\varphi_{Q_n}(0, \gamma_n(0))) \in \Gamma$ for $t \geq \varphi_{Q_n}(0, \gamma_n(0))$ and $\abs{\dot\gamma_n}(t) \leq K({e_t}_{\#} Q_n, \gamma_n(t))$ for almost every $t \geq 0$.

By Propositions \ref{PropTauLip} and \ref{PropTauLipQ}, one has that $\varphi_{Q_n}(0, \gamma_n(0)) \to \varphi_Q(0, \gamma(0))$ as $n \to \infty$, which implies that $\gamma(t) = \gamma(\varphi_Q(0, \gamma(0))) \in \Gamma$ for every $t \geq \varphi_Q(0, \gamma(0))$. In particular, the exit time of $\gamma$ satisfies $\tau(0, \gamma) \leq \varphi_Q(0, \gamma(0))$. The condition $\abs{\dot\gamma_n}(t) \leq K({e_t}_{\#} Q_n, \gamma_n(t))$ passes to the limit as $n \to \infty$, proving that $\gamma \in \Adm(Q)$. Hence $\gamma \in \Opt(Q, 0, \gamma(0)) \subset \OOpt(Q)$, and thus the graph of $\OOpt$ is closed.
\end{proof}

\begin{lemm}
\label{LemmFUSC}
Consider the mean field game $\MFG(X, \Gamma, K)$ and assume that Hypotheses \ref{MainHypo}\ref{HypoXCompact}, \ref{HypoXDist}, and \ref{HypoK2Lip} hold. Let $m_0 \in \mathcal P(X)$. Then the set-valued map $F_{m_0}: \mathcal Q_{m_0} \rightrightarrows \mathcal Q_{m_0}$ defined in \eqref{DefiFQ2} is upper semi-continuous.
\end{lemm}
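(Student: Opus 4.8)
The plan is to reduce the claim to the closedness of the graph of $F_{m_0}$. Since each value $F_{m_0}(Q)$ is a compact subset of the compact metric space $\mathcal Q_{m_0}$ (Lemmas \ref{LemmFQNonEmptyAndCompact} and \ref{LemmQm0}), upper semi-continuity is equivalent to the graph being closed (see, e.g., \cite[Proposition 1.4.8]{Aubin2009Set}, as already used in the proof of Lemma \ref{LemmPmbGammaUSC}). Concretely, I would take sequences $Q_n \to Q$ and $\widetilde Q_n \to \widetilde Q$ in $\mathcal Q_{m_0}$ with $\widetilde Q_n \in F_{m_0}(Q_n)$ for every $n$, and aim to show $\widetilde Q \in F_{m_0}(Q)$; by \eqref{DefiFQ2} this amounts to proving $\widetilde Q(\OOpt(Q)) = 1$, knowing that $\widetilde Q_n(\OOpt(Q_n)) = 1$ for each $n$.

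The central ingredient will be Lemma \ref{LemmPmbGammaUSC}: since $\OOpt \colon \mathcal Q_{m_0} \rightrightarrows \Lip_{K_{\max}}(X)$ is upper semi-continuous with compact values into the compact space $\Lip_{K_{\max}}(X)$, its graph $G = \{(Q', \gamma) \in \mathcal Q_{m_0} \times \Lip_{K_{\max}}(X) \mid \gamma \in \OOpt(Q')\}$ is closed. I would then exploit this closed graph by lifting the problem to the compact product space $\mathcal Q_{m_0} \times \Lip_{K_{\max}}(X)$. The argument I favor is the following product-measure trick. Set $R_n = \delta_{Q_n} \otimes \widetilde Q_n \in \mathcal P(\mathcal Q_{m_0} \times \Lip_{K_{\max}}(X))$. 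Because $\widetilde Q_n$ is concentrated on $\OOpt(Q_n) = \{\gamma \mid (Q_n, \gamma) \in G\}$, one has $R_n(G) = \widetilde Q_n(\OOpt(Q_n)) = 1$. Recalling that $W_1$ metrizes weak convergence on the bounded Polish space $\mathcal C_X$, the convergences $Q_n \to Q$ and $\widetilde Q_n \to \widetilde Q$ give $R_n \to \delta_Q \otimes \widetilde Q$ weakly. Since $G$ is closed, the Portmanteau theorem (see, e.g., \cite[Chapter 1, Theorem 2.1]{Billingsley1999Convergence}) yields $(\delta_Q \otimes \widetilde Q)(G) \geq \limsup_{n \to \infty} R_n(G) = 1$. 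Disintegrating against the first marginal $\delta_Q$, one has $(\delta_Q \otimes \widetilde Q)(G) = \widetilde Q(\{\gamma \mid (Q, \gamma) \in G\}) = \widetilde Q(\OOpt(Q))$, whence $\widetilde Q(\OOpt(Q)) = 1$ and $\widetilde Q \in F_{m_0}(Q)$, proving that the graph is closed.

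The main obstacle is conceptual rather than computational: a weak limit of measures each concentrated on a different target set $\OOpt(Q_n)$ need not be concentrated on the limit set $\OOpt(Q)$ unless these sets vary upper semi-continuously, and it is precisely the closed-graph property from Lemma \ref{LemmPmbGammaUSC} that supplies this control. An alternative to the product-measure argument, avoiding the lift to the product space, is to use upper semi-continuity of $\OOpt$ directly: for every open neighborhood $W$ of $\OOpt(Q)$ one has $\OOpt(Q_n) \subset W$ for $n$ large, so $\widetilde Q_n(W) = 1$, and applying Portmanteau to the closed set $\overline W$ gives $\widetilde Q(\overline W) = 1$; letting $W$ shrink to $\OOpt(Q)$ through the closed $\varepsilon$-neighborhoods and using continuity of $\widetilde Q$ from above then yields $\widetilde Q(\OOpt(Q)) = 1$.
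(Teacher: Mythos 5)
Your proposal is correct, and your main (product-measure) argument takes a genuinely different route from the paper's, while the ``alternative'' you sketch in your last paragraph is essentially the paper's own proof. Both proofs make the same reduction --- compact values in the compact space $\mathcal Q_{m_0}$, so upper semi-continuity is equivalent to closedness of the graph --- and both hinge on Lemma \ref{LemmPmbGammaUSC}. The paper then exploits upper semi-continuity of $\OOpt$ through shrinking closed neighborhoods: it sets $V_k = \{\gamma \in \mathcal C_X \suchthat \dist(\gamma, \OOpt(Q)) \leq 1/k\}$, uses USC to get $\OOpt(Q_n) \subset V_k$ for $n$ large, deduces $\widetilde Q_n(V_k) = 1$, passes to the limit by Portmanteau on each closed $V_k$, and concludes $\widetilde Q(\OOpt(Q)) = \lim_{k \to \infty} \widetilde Q(V_k) = 1$ by continuity from above. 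Your argument instead uses the closedness of the graph $G$ of $\OOpt$ directly: lifting to $R_n = \delta_{Q_n} \otimes \widetilde Q_n$ on the compact product space, you apply Portmanteau once to the closed set $G$ and then disintegrate against the first marginal. This buys a shorter chain of limits --- no $\varepsilon$-neighborhood approximation, no continuity-from-above step --- at the price of invoking joint weak convergence $\delta_{Q_n} \otimes \widetilde Q_n \to \delta_Q \otimes \widetilde Q$, which you assert but do not justify; it is a standard fact, and immediate here by compactness: for $F$ continuous on the compact product, $\int F(Q_n, \gamma) \diff\widetilde Q_n(\gamma) - \int F(Q, \gamma) \diff\widetilde Q(\gamma)$ splits into a term bounded by $\sup_\gamma \abs{F(Q_n, \gamma) - F(Q, \gamma)} \to 0$ (uniform continuity of $F$) and a term vanishing by $\widetilde Q_n \to \widetilde Q$. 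Either way, the essential input is the same closed-graph property of $\OOpt$ from Lemma \ref{LemmPmbGammaUSC}, so the two proofs are of comparable depth; yours packages the limiting argument more compactly.
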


\begin{proof}
Using Lemma \ref{LemmFQNonEmptyAndCompact} and \cite[Proposition 1.4.8]{Aubin2009Set}, it suffices to show that the graph of $F_{m_0}$ is closed. Let $(Q_n)_{n \in \mathbb N}$ be a sequence in $\mathcal Q_{m_0}$ with $Q_n \to Q$ as $n \to \infty$ for some $Q \in \mathcal Q_{m_0}$ and $(\widetilde Q_n)_{n \in \mathbb N}$ be a sequence in $\mathcal Q_{m_0}$ with $\widetilde Q_n \in F_{m_0}(Q_n)$ for every $n \in \mathbb N$ and $\widetilde Q_n \to \widetilde Q$ as $n \to \infty$ for some $\widetilde Q \in \mathcal Q_{m_0}$.

For $k \in \mathbb N^\ast$, let $V_k = \left\{\gamma \in \mathcal C_X \midsuchthat\dist(\gamma, \OOpt(Q)) \leq \frac{1}{k}\right\}$. Since $V_k$ is a neighborhood of $\OOpt(Q)$, it follows from Lemma \ref{LemmPmbGammaUSC} that there exists a neighborhood $W_k$ of $Q$ in $\mathcal Q_{m_0}$ such that $\OOpt(\widehat Q) \subset V_k$ for every $\widehat Q \in W_k$. Since $Q_n \to Q$ as $n \to \infty$, there exists $N_k \in \mathbb N$ such that, for every $n \geq N_k$, one has $Q_n \in W_k$, and thus $\OOpt(Q_n) \subset V_k$. Since $\widetilde Q_n(\OOpt(Q_n)) = 1$, one obtains that $\widetilde Q_n(V_k) = 1$ for every $n \geq N_k$. Since $\widetilde Q_n \to \widetilde Q$ as $n \to \infty$ and $V_k$ is closed, it follows that $\widetilde Q(V_k) \geq \limsup_{n \to \infty} \widetilde Q_n(V_k) = 1$, and thus $\widetilde Q(V_k) = 1$. Since this holds for every $k \in \mathbb N^\ast$ and $(V_k)_{k \in \mathbb N^\ast}$ is a non-increasing family of sets with $\bigcap_{k \in \mathbb N^\ast} V_k = \OOpt(Q)$, one concludes that $\widetilde Q(\OOpt(Q)) = \lim_{k \to \infty} \widetilde Q(V_k) = 1$. Hence $\widetilde Q \in F_{m_0}(Q)$, which proves that the graph of $F_{m_0}$ is closed.
\end{proof}

We can now conclude the proof of Theorem \ref{MainTheoExist}.

\begin{proof}[Proof of Theorem \ref{MainTheoExist}]
By Lemmas \ref{LemmQm0}, \ref{LemmFQNonEmptyAndCompact}, and \ref{LemmFUSC}, $\mathcal Q_{m_0}$ is convex, $F_{m_0}: \mathcal Q_{m_0} \rightrightarrows \mathcal Q_{m_0}$ is upper semi-continuous, and $F_{m_0}(Q)$ is non-empty, compact, and convex for every $Q \in \mathcal Q_{m_0}$. This means that $F_{m_0}$ is a Kakutani map (see, e.g., \cite[\S 7, Definition 8.1]{Granas2003Fixed}), and hence, thanks to Kakutani fixed point theorem (see, e.g., \cite[\S 7, Theorem 8.6]{Granas2003Fixed}), it admits a fixed point, i.e., an element $Q \in \mathcal Q_{m_0}$ such that $Q \in F_{m_0}(Q)$, which means that $Q$ is an equilibrium for $\MFG(X, \Gamma, K)$ with initial condition $m_0$.
\end{proof}

\section{The MFG system}
\label{SecCharacterization}

In most of the works on mean field games, such as the classical references \cite{Lasry2006JeuxI, Lasry2006JeuxII, Lasry2007Mean, Huang2006Large}, equilibria are characterized as solutions of a system of partial differential equations: the time evolution of the distribution of agents $m: \mathbb R_+ \to \mathcal P(X)$ satisfies a certain continuity equation, coupled with a Hamilton--Jacobi equation characterizing the value function $\varphi$ of the optimal control problem solved by each agent. This section provides such a characterization for the mean field game $\MFG(X, \Gamma, K)$:

\begin{theo}
\label{MainTheoCharact}
Consider the mean field game $\MFG(X, \Gamma, K)$ and assume that Hypotheses \ref{MainHypo}\ref{HypoXCompact}, \ref{HypoXDist}--\ref{HypoExitOnPartialOmega}, and \ref{HypoPartialOmegaESP}--\ref{HypoKConvolution} hold. Let $m_0 \in \mathcal P(\overline\Omega)$, $Q \in \mathcal Q$ be an equilibrium of $\MFG(\overline\Omega, \partial\Omega, K)$ with initial condition $m_0$, $k$ be defined from $K$ and $Q$ as in Corollary \ref{CorokFromK}, $\varphi$ be the value function defined in \eqref{EqDefiVarphi}, and $m = \mu^Q$. Then $(m, \varphi)$ solve the \emph{MFG system}
\begin{equation}
\label{SystMFG}
\left\{
\begin{aligned}
& \partial_t m(t, x) - \diverg\left[m(t, x) K(m_t, x) \widehat{\nabla\varphi}(t, x)\right] = 0, & \quad & (t, x) \in \mathbb R_+^\ast \times \Omega, \\
& -\partial_t \varphi(t, x) + \abs{\nabla_x \varphi(t, x)} K(m_t, x) - 1 = 0, & & (t, x) \in \mathbb R_+ \times \Omega, \\
& m(0, \cdot) = m_0, \\
& \varphi(t, x) = 0, & & (t, x) \in \mathbb R_+ \times \partial\Omega,
\end{aligned}
\right.
\end{equation}
where the first and second equations are satisfied, respectively, in the sense of distributions and in the viscosity sense.
\end{theo}

Recall that $\mu^Q: \mathbb R_+ \to \mathcal P(X)$ is the measure defined from $Q$ by $\mu^Q_t = {e_t}_{\#} Q$. System \eqref{SystMFG} is composed of a continuity equation on $m$ and a Hamilton--Jacobi equation on the value function $\varphi$. The Hamilton--Jacobi equation on $\varphi$ and its boundary condition follow immediately from Theorem \ref{MainTheoHJ} and Corollary \ref{CorokFromK}, and need only Hypotheses \ref{MainHypo}\ref{HypoXCompact}, \ref{HypoXDist}, \ref{HypoXOverlineOmega}, and \ref{HypoK2Lip}.

Concerning the continuity equation on $m$, it is clear that $m$ satisfies an equation of the form $\partial_t m + \diverg(m v) = 0$ for some velocity field $v$ (Proposition \ref{PropEtQLip}\ref{etQLipT} can be easily adapted to show that $t \mapsto {e_t}_{\#} Q$ is Lipschitz continuous with respect to the $W_p$ Wasserstein distance in $\mathcal P(X)$ for every $p \geq 1$, and thus one can apply \cite[Theorem 8.3.1]{Ambrosio2005Gradient}). The point of Theorem \ref{MainTheoCharact} is to identify this velocity field as $v = - K \widehat{\nabla\varphi}$.

This identification is possible thanks to Corollary \ref{CoroDiffEqnWithNormGrad} when $k$ is constructed from $K$ and $Q$ as in Corollary \ref{CorokFromK}. However, for a function $K$ satisfying Hypothesis \ref{MainHypo}\ref{HypoK2Lip}, $k$ does not necessarily satisfies Hypothesis \ref{MainHypo}\ref{HypokGlobC11}, which is needed for applying Corollary \ref{CoroDiffEqnWithNormGrad}. The goal of this section is to prove that, if $K$ satisfies the more restrictive Hypothesis \ref{MainHypo}\ref{HypoKConvolution} and $Q$ is an equilibrium of $\MFG(X, \Gamma, K)$, then $k$ satisfies Hypothesis \ref{MainHypo}\ref{HypokGlobC11}, and thus Corollary \ref{CorokFromK} applies. We shall also prove that the measures $m_t$ are concentrated on the set $\Upsilon$ introduced in \eqref{DefiUpsilon}, on which $\widehat{\nabla\varphi}$ is continuous.

\begin{prop}
\label{PropK0C11}
Consider the mean field game $\MFG(X, \Gamma, K)$ and assume that Hypotheses \ref{MainHypo}\ref{HypoXCompact}, \ref{HypoXDist}, \ref{HypoExitOnPartialOmega}, and \ref{HypoKConvolution} hold. Let $Q$ be an equilibrium of $\MFG(X, \Gamma, K)$ and define $k: \mathbb R_+ \times \mathbb R^d \to \mathbb R_+$ by $k(t, x) = K({e_t}_{\#} Q, x)$. Then $k \in \mathcal C^{1, 1}(\mathbb R_+ \times \mathbb R^d, \mathbb R_+)$ and it admits an extension to a $\mathcal C^{1, 1}$ function on $\mathbb R \times \mathbb R^d$. 
\end{prop}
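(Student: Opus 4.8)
The plan is to reduce the statement to a regularity property holding uniformly along $Q$-almost every optimal trajectory, and then to average. Since $Q$ is an equilibrium, $Q$-almost every $\gamma \in \mathcal C_X$ is optimal for $(k, 0, \gamma(0))$, where by Corollary \ref{CorokFromK} the map $k$ satisfies Hypothesis \ref{MainHypo}\ref{Hypok1Lip}. Fixing a Lipschitz extension of $k$ to $\R \times \R^d$ (Hypothesis \ref{MainHypo}\ref{HypokGlobLip}, which does not affect optimal trajectories since, by \ref{HypoExitOnPartialOmega}, they stay in $\overline\Omega$), Corollary \ref{CoroSystGammaU} applies and gives, for each such $\gamma$, that $\gamma \in \mathcal C^{1,1}([0, T_\gamma], \R^d)$ with $T_\gamma = \varphi(0, \gamma(0))$ and $\dot\gamma = k(\cdot, \gamma) u$ with $u$ Lipschitz of constant the Lipschitz constant $M$ of $k$; in particular $\abs{\dot\gamma} \le K_{\max}$ and $\dot\gamma$ is Lipschitz on $[0, T_\gamma]$ with a constant independent of $\gamma$, while $\dot\gamma \equiv 0$ on $[T_\gamma, +\infty)$ by Remark \ref{RemkOptimalTrajectoriesRemainStoppedAfterFinalTime}. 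Writing $k(t,x) = g(e(t,x))$ with $e(t,x) = \int_{\mathcal C_X} \Phi_\gamma(t,x) \diff Q(\gamma)$ and $\Phi_\gamma(t,x) = \chi(x - \gamma(t)) \eta(\gamma(t))$ (using \eqref{DefiE} and $\mu^Q_t = {e_t}_\# Q$), and since $g \in \mathcal C^{1,1}$ while $e$ takes values in the fixed compact interval $[0, \norm{\chi}_\infty \norm{\eta}_\infty]$, it suffices to prove $e \in \mathcal C^{1,1}(\R_+ \times \R^d)$; for this it is enough to show that the family $\{\Phi_\gamma\}$ is bounded in $\mathcal C^{1,1}(\R_+ \times \R^d)$ uniformly in $\gamma$, since averaging preserves a common bound on the function, its gradient, and the Lipschitz constant of its gradient.

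The $x$-regularity of $\Phi_\gamma$ is immediate: $\nabla_x \Phi_\gamma(t,x) = \eta(\gamma(t)) \nabla\chi(x - \gamma(t))$ is bounded and Lipschitz in $x$ uniformly, since $\chi \in \mathcal C^{1,1}$ and $\eta$ is bounded. The heart of the proof, and its main obstacle, is the regularity in $t$ and the behavior of $\partial_t \Phi_\gamma$ at the arrival time $T_\gamma$, where $\gamma$ has a velocity kink (from $\dot\gamma(T_\gamma^-) \neq 0$ to $\dot\gamma \equiv 0$). On $[0, T_\gamma]$ one has
\[
\partial_t \Phi_\gamma(t,x) = \dot\gamma(t) \cdot \bigl[-\eta(\gamma(t)) \nabla\chi(x - \gamma(t)) + \chi(x - \gamma(t)) \nabla\eta(\gamma(t))\bigr],
\]
a product of bounded Lipschitz functions (using $\gamma, \dot\gamma \in \mathcal C^{0,1}$ and $\chi, \eta \in \mathcal C^{1,1}$), hence Lipschitz on $[0, T_\gamma]$ with a constant independent of $\gamma$; on $[T_\gamma, +\infty)$ it vanishes. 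The kink would destroy continuity of $\partial_t \Phi_\gamma$ at $T_\gamma$ were it not for the assumption in Hypothesis \ref{MainHypo}\ref{HypoKConvolution} that $\eta$ and $\nabla\eta$ vanish on $\partial\Omega$: since $\gamma(T_\gamma) \in \partial\Omega$, the bracket vanishes at $t = T_\gamma$, so both one-sided limits of $\partial_t \Phi_\gamma$ equal $0$. This cancellation is moreover quantitative: letting $L_\eta$ be a Lipschitz constant for $\nabla\eta$, one gets $\abs{\nabla\eta(\gamma(t))} \le L_\eta \dist_{\partial\Omega}(\gamma(t))$ and, by integration along the segment to a nearest boundary point, $\abs{\eta(\gamma(t))} \le \tfrac12 L_\eta \dist_{\partial\Omega}(\gamma(t))^2$, while $\dist_{\partial\Omega}(\gamma(t)) \le \abs{\gamma(t) - \gamma(T_\gamma)} \le K_{\max}(T_\gamma - t)$. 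Hence $\abs{\partial_t \Phi_\gamma(t,x)} \le C (T_\gamma - t)$ for $t \le T_\gamma$, with $C$ independent of $\gamma$ and $x$, which together with $\partial_t \Phi_\gamma \equiv 0$ for $t \ge T_\gamma$ yields Lipschitz continuity of $\partial_t \Phi_\gamma(\cdot, x)$ across $T_\gamma$ as well, with a uniform constant.

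It then remains to assemble these estimates. The mixed bounds — $\partial_t \Phi_\gamma$ Lipschitz in $x$ (differentiate the bracket in $x$, with $\dot\gamma$ bounded) and $\nabla_x \Phi_\gamma$ Lipschitz in $t$ (differentiate $\eta(\gamma(t)) \nabla\chi(x - \gamma(t))$ in $t$) — both follow from the same product-of-bounded-Lipschitz argument with constants uniform in $\gamma$; separate Lipschitz continuity of each component in each variable, with uniform constants, gives joint Lipschitz continuity of $\nabla \Phi_\gamma = (\partial_t \Phi_\gamma, \nabla_x \Phi_\gamma)$, so that $\{\Phi_\gamma\}$ is uniformly bounded in $\mathcal C^{1,1}(\R_+ \times \R^d)$. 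Differentiating under the integral sign (justified by these uniform bounds via dominated convergence) then gives $e \in \mathcal C^{1,1}(\R_+ \times \R^d)$, whence $k = g \circ e \in \mathcal C^{1,1}(\R_+ \times \R^d)$ because $g \in \mathcal C^{1,1}$ on the compact range of $e$. Finally, to produce the extension to $\R \times \R^d$, I would reflect $k$ across the flat boundary $\{0\} \times \R^d$ by the first-order reflection $\tilde k(t,x) = 3 k(-t, x) - 2 k(-2t, x)$ for $t < 0$, which matches the value and the $t$-derivative at $t = 0$ and preserves the global bounds on $k$, $\nabla k$, and the Lipschitz constant of $\nabla k$, thereby yielding the desired $\mathcal C^{1,1}$ function on $\R \times \R^d$.
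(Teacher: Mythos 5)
Your core regularity argument is essentially the paper's own: both proofs write $k = g \circ E$ with $E(\mu^Q_t, x) = \int_{\mathcal C_X} \chi(x - \gamma(t))\,\eta(\gamma(t)) \diff Q(\gamma)$, invoke Corollary \ref{CoroSystGammaU} (after a Lipschitz extension of $k$, legitimate since optimal curves stay in $\overline\Omega$ under Hypothesis \ref{MainHypo}\ref{HypoExitOnPartialOmega}) to get that $Q$-almost every curve is $\mathcal C^{1,1}$ up to its arrival time with uniform constants, differentiate under the integral sign, and use the hypothesis that $\eta$ and $\nabla\eta$ vanish on $\partial\Omega$ to make the integrand differentiable in $t$ across the velocity kink at the arrival time $T_\gamma$. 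Your quantitative form of this cancellation --- $\abs{\nabla\eta(\gamma(t))} \leq L_\eta \dist_{\partial\Omega}(\gamma(t))$, $\abs{\eta(\gamma(t))} \leq \tfrac12 L_\eta \dist_{\partial\Omega}(\gamma(t))^2$, hence $\abs{\partial_t \Phi_\gamma(t,x)} \leq C(T_\gamma - t)$ --- is a clean sharpening of the paper's qualitative statement, and the assembly into uniform $\mathcal C^{1,1}$ bounds for the family $\{\Phi_\gamma\}$ is sound.

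Where you genuinely diverge is the extension to negative times, and there your argument has a concrete flaw. The paper never reflects $k$: it extends each optimal curve linearly backward in time, $\widetilde\gamma(t) = \gamma(0) + t\dot\gamma(0)$ for $t<0$, pushes $Q$ forward to a measure $\widetilde Q$ on $\mathcal C(\mathbb R, \mathbb R^d)$, and sets $\widetilde k(t,x) = K({e_t}_{\#} \widetilde Q, x) = g(E_0(t,x))$; the extension is then automatically of the same ``$g$ composed with a convolution-type functional'' form, so it takes values in $g\left(\left[0, \norm{\chi}_\infty \norm{\eta}_\infty\right]\right)$, a compact subset of $(0,+\infty)$. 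Your Hestenes reflection $\tilde k(t,x) = 3k(-t,x) - 2k(-2t,x)$ does produce a $\mathcal C^{1,1}$ function on $\mathbb R \times \mathbb R^d$ matching $k$ and $\partial_t k$ at $t=0$, so it proves the statement as literally written; but your claim that it ``preserves the global bounds on $k$'' is false: from $k \in [K_{\min}, K_{\max}]$ one only gets $\tilde k \geq 3K_{\min} - 2K_{\max}$, which is negative whenever $K_{\max} > \tfrac32 K_{\min}$. This matters because the purpose of Proposition \ref{PropK0C11} is to place $k$ under Hypothesis \ref{MainHypo}\ref{HypokGlobC11}, which requires the extension to be $\mathbb R_+$-valued so that the results of Section \ref{SecNormGradientAndOptimalControl} apply. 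The defect is repairable within your scheme: by Lipschitz continuity one has $\tilde k \geq K_{\min}/2$ on a strip $[-\delta, 0] \times \mathbb R^d$, and one can then interpolate, via a smooth cutoff in $t$, between $\tilde k$ on $\{t \geq -\delta\}$ and the constant $K_{\min}$ on $\{t \leq -2\delta\}$; alternatively, adopt the paper's curve-extension, which yields positivity for free.
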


\begin{proof}
Notice that, as in Corollary \ref{CorokFromK}, one has $k \in \Lip(\mathbb R_+ \times \mathbb R^d, \mathbb R_+)$, and one may extend $k$ to a Lipschitz continuous function $\widehat k$ defined on $\mathbb R \times \mathbb R^d$ by setting $\widehat k(t, x) = k(0, x)$ for $t < 0$. This allows one to apply Corollary \ref{CoroSystGammaU} with $\widehat k$ to obtain in particular that, for every $\gamma \in \OOpt(Q)$, one has $\gamma \in \mathcal C^{1, 1}([0, \varphi(0, \gamma(0))], \mathbb R^d)$. We extend each curve $\gamma \in \OOpt(Q)$ to a curve $\widetilde\gamma$ in $\mathcal C^{1, 1}((-\infty, \varphi(0, \gamma(0))], \mathbb R^d)$ by setting $\widetilde\gamma(t) = \gamma(0) + t \dot\gamma(0)$ for $t < 0$, and we denote by $\widetilde Q$ the measure on $\mathcal C(\mathbb R, \mathbb R^d)$ defined as the pushforward of $Q$ by such extension operator. We define $\widetilde k(t, x) = K({e_t}_{\#} \widetilde Q, x)$, which is defined on $\mathbb R \times \mathbb R^d$ and coincides with $k$ on $\mathbb R_+ \times \mathbb R^d$.

Let $E: \mathcal P(\overline\Omega) \times \mathbb R^d \to \mathbb R_+$ be as in Hypothesis \ref{MainHypo}\ref{HypoKConvolution}. Notice that $E$ can be easily extended to a function defined on $\mathcal P(\mathbb R^d) \times \mathbb R^d$, that we still denote by $E$. We define $E_0: \mathbb R \times \mathbb R^d \to \mathbb R_+$ by setting $E_0(t, x) = E({e_t}_{\#} \widetilde Q, x)$ for every $(t, x) \in \mathbb R \times \mathbb R^d$. Notice that, for every $(t, x) \in \mathbb R \times \mathbb R^d$, one has
\[
\nabla_x E_0 (t, x) = \int_{\mathbb R^d} \nabla \chi(x - y) \eta(y) \diff {e_t}_{\#}\widetilde Q (y),
\]
and in particular, proceeding as in the proof of Proposition \ref{PropFConvolution} and using Proposition \ref{PropEtQLip}, we obtain that $\nabla_x E_0$ is locally Lipschitz continuous on $\mathbb R \times \mathbb R^d$.
Let us consider now $\partial_t E_0$. One has
\[
E_0(t, x) = \int_{\mathbb R^d} \chi(x - y) \eta(y) \diff {e_t}_{\#} \widetilde Q(y) = \int_{\mathcal C(\mathbb R, \mathbb R^d)} \chi(x - \widetilde\gamma(t)) \eta(\widetilde\gamma(t)) \diff \widetilde Q(\widetilde\gamma).
\]
For given $x \in \mathbb R^d$, since $\widetilde Q$ is concentrated on curves $\widetilde\gamma$ belonging to $\mathcal C^{1, 1}((-\infty, \varphi(0, \widetilde\gamma(0))], \mathbb R^d)$ and constant for $t \geq \varphi(0, \widetilde\gamma(0))$, then, for $\widetilde Q$-almost every $\widetilde\gamma$, the function $t \mapsto \chi(x - \widetilde\gamma(t)) \eta(\widetilde\gamma(t))$ is differentiable everywhere, except possibly at $t = \varphi(0, \widetilde\gamma(0))$, with
\begin{equation}
\label{DerivativeE0Time}
\frac{\diff}{\diff t} \left[\chi(x - \widetilde\gamma(t)) \eta(\widetilde\gamma(t))\right] = -\nabla\chi(x - \widetilde\gamma(t)) \cdot \dot{\widetilde\gamma}(t) \eta(\widetilde\gamma(t)) + \chi(x - \widetilde\gamma(t)) \nabla\eta(\widetilde\gamma(t)) \cdot \dot{\widetilde\gamma}(t).
\end{equation}
Since $\eta(x) = 0$ and $\nabla\eta(x) = 0$ for $x \in \partial\Omega$ and $\widetilde\gamma(t) \in \partial\Omega$ for $t = \varphi(0, \widetilde\gamma(0))$, one can also prove that the above function is differentiable at $t = \varphi(0, \widetilde\gamma(0))$. Moreover, thanks to \eqref{DerivativeE0Time}, its derivative is Lipschitz continuous and upper bounded, and thus $\partial_t E_0(t, x)$ exists, with
\[
\partial_t E_0(t, x) = \int_{\mathcal C(\mathbb R, \mathbb R^d)} \left[-\nabla\chi(x - \widetilde\gamma(t)) \cdot \dot{\widetilde\gamma}(t) \eta(\widetilde\gamma(t)) + \chi(x - \widetilde\gamma(t)) \nabla\eta(\widetilde\gamma(t)) \cdot \dot{\widetilde\gamma}(t)\right] \diff \widetilde Q(\widetilde\gamma),
\] 
and one immediately verifies using the previous assumptions that $\partial_t E_0$ is Lipschitz continuous in $\mathbb R \times \mathbb R^d$. Together with the corresponding property for $\nabla_x E_0$, we obtain that $E_0 \in \mathcal C^{1, 1}(\mathbb R \times \mathbb R^d, \mathbb R_+)$. Since $g \in \mathcal C^{1, 1}(\mathbb R_+, \mathbb R_+^\ast)$, we conclude that $\widetilde k \in \mathcal C^{1, 1}(\mathbb R \times \mathbb R^d, \mathbb R_+)$.
\end{proof}

Proposition \ref{PropK0C11} allows one to apply all the results of Section \ref{SecOCP}, and in particular Corollary \ref{CoroDiffEqnWithNormGrad}, to the optimal control problem $\OCP(\overline\Omega, \partial\Omega, k)$ when $k$ is constructed from $K$ as in Proposition \ref{PropK0C11}. To conclude the proof of Theorem \ref{MainTheoCharact}, we show that the set where $\widehat{\nabla\varphi}$ is discontinuous has $m_t$ measure zero.

\begin{prop}
\label{PropMtConcentratedOnUpsilon}
Consider the mean field game $\MFG(X, \Gamma, K)$ and assume that Hypotheses \ref{MainHypo}\ref{HypoXCompact}, \ref{HypoXDist}, \ref{HypoExitOnPartialOmega}, and \ref{HypoKConvolution} hold. Let $Q \in \mathcal Q$ be an equilibrium of $\MFG(\overline\Omega, \partial\Omega, K)$, $m = \mu^Q$, $k$ be defined from $K$ and $Q$ as in Proposition \ref{PropK0C11}, and $\Upsilon_t$ be defined for $t \geq 0$ via $\Upsilon_t = \{x \in \Omega \mid (t, x) \in \Upsilon\}$. Then, for every $t >0$, $m_t(\Omega\setminus \Upsilon_t)=0$.
\end{prop}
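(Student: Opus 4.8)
The plan is to translate the statement into one about the equilibrium measure $Q$ through the identity $m_t = {e_t}_\# Q$, so that $m_t(\Omega \setminus \Upsilon_t) = Q(e_t^{-1}(\Omega \setminus \Upsilon_t))$, and then to show that the event $\{\gamma : \gamma(t) \in \Omega \setminus \Upsilon_t\}$ is $Q$-negligible. Since $\Upsilon$ is Borel (a countable union of compact sets, as noted right after its definition) and $e_t$ is continuous, this event is Borel and the $t$-section $\Upsilon_t$ is Borel as well, so the reduction is legitimate.

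The key observation is that, because $Q$ is an equilibrium of $\MFG(\overline\Omega, \partial\Omega, K)$, $Q$-almost every curve $\gamma$ lies in $\Opt(k, 0, \gamma(0))$, and such a curve is its own witness for membership in $\Upsilon_t$. Concretely, I would fix one such $\gamma$ with $\gamma(t) \in \Omega$ and check the requirements of the definition \eqref{DefiUpsilon} with the choices $t_0 = 0$, $x_0 = \gamma(0)$, and the trajectory $\gamma$ itself: one has $t_0 = 0 \in [0, t)$ because $t > 0$, one has $\gamma \in \Opt(k, t_0, x_0)$ by the equilibrium property, and $\gamma(t)$ is the prescribed point. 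The only part that requires an actual argument is that $x_0 = \gamma(0)$ really lies in $\Omega$, and not merely in $\overline\Omega$.

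For this last point I would invoke Hypothesis \ref{MainHypo}\ref{HypoExitOnPartialOmega}, i.e.\ $\Gamma = \partial\Omega$, together with the convention (Definition \ref{DefiOCP} and Remark \ref{RemkOptimalTrajectoriesRemainStoppedAfterFinalTime}) that an optimal curve remains frozen at its arrival position. If $\gamma(0) \in \partial\Omega = \Gamma$, then the first exit time $\tau(0, \gamma)$ equals $0$, so $\gamma(s) = \gamma(0) \in \Gamma$ for all $s \geq 0$; in particular $\gamma(t) \in \partial\Omega$, contradicting $\gamma(t) \in \Omega$. Since $\gamma(0) \in X = \overline\Omega$, this forces $\gamma(0) \in \Omega$. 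The same reasoning incidentally shows $t < \varphi(0, \gamma(0))$, so that $t$ sits strictly between the start and arrival times, consistent with the heuristic description of $\Upsilon$.

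Assembling these steps, every $Q$-optimal $\gamma$ with $\gamma(t) \in \Omega$ satisfies $\gamma(t) \in \Upsilon_t$, so $\{\gamma : \gamma(t) \in \Omega \setminus \Upsilon_t\}$ is contained in the $Q$-null set of curves that fail to be optimal for $(k, 0, \gamma(0))$, whence $m_t(\Omega \setminus \Upsilon_t) = 0$. I do not anticipate a serious obstacle: the argument is essentially bookkeeping once one notices that the equilibrium curves all start at time $0$, strictly before any positive $t$, so that each interior point visited at a positive time is a non-starting point of its own carrying trajectory. The only place demanding genuine care is the boundary convention guaranteeing $\gamma(0) \in \Omega$ when $\gamma(t) \in \Omega$, which is precisely where Hypothesis \ref{MainHypo}\ref{HypoExitOnPartialOmega} is used.
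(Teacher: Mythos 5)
Your proposal is correct and takes essentially the same approach as the paper: the paper's (much terser) proof just observes that $Q(\OOpt(Q)) = 1$ and that, by the definition \eqref{DefiUpsilon} of $\Upsilon$, the set $\{\gamma \in \OOpt(Q) \suchthat \gamma(t) \in \Omega \setminus \Upsilon_t\}$ is empty for $t > 0$, each optimal curve being its own witness exactly as you argue. Your write-up merely makes explicit the two points the paper leaves implicit, namely the measurability bookkeeping and the use of $\Gamma = \partial\Omega$ together with the freezing convention to rule out $\gamma(0) \in \partial\Omega$ when $\gamma(t) \in \Omega$.
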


\begin{proof}
Notice that, since $Q$ is an equilibrium of $\MFG(\overline\Omega, \partial\Omega, K)$, then $Q(\OOpt(Q)) = 1$. It follows easily from the definition of $\Upsilon$ that, for $t > 0$, $\{\gamma \in \OOpt(Q) \mid \gamma(t) \in \Omega \setminus \Upsilon_t\} = \emptyset$, and then $m_t (\Omega\setminus \Upsilon_t) = Q(\{\gamma \in \OOpt(Q) \mid \gamma(t) \in \Omega\setminus\Upsilon_t\})=Q(\emptyset)=0$.
\end{proof}


Thanks to Propositions \ref{PropK0C11} and \ref{PropMtConcentratedOnUpsilon}, one can easily obtain from Corollary \ref{CoroDiffEqnWithNormGrad} and Proposition \ref{PropNormGradContinuous} that $m$ satisfies the continuity equation in \eqref{SystMFG}. For the sake of completeness, we detail the proof of this fact.

\begin{proof}[Proof of Theorem \ref{MainTheoCharact}]
The Hamilton--Jacobi equation on $\varphi$ and its boundary condition follow immediately from Theorem \ref{MainTheoHJ} and Corollary \ref{CorokFromK}, and the initial condition on $m$ follows immediately from its definition. One is then left to show that $m$ satisfies the continuity equation in \eqref{SystMFG}.

By Proposition \ref{PropNormGradContinuous}, $\widehat{\nabla\varphi}$ is continuous on the set $\Upsilon$ defined in \eqref{DefiUpsilon}; moreover, it can easily be extended into a Borel-measurable function defined everywhere on $\mathbb R_+ \times \overline\Omega$, that we will still denote by $\widehat{\nabla\varphi}$.

Let $\phi \in \mathcal C_{\mathrm c}^{\infty}(\mathbb R_+^\ast \times \Omega, \mathbb R)$ be a test function. Take $\gamma \in \OOpt(Q)$ and $x_0 \in \overline\Omega$ such that $\gamma \in \Opt(Q,\allowbreak 0, x_0)$ and let $T = \varphi(0, x_0)$. 
By Corollary \ref{CoroDiffEqnWithNormGrad}, one has $\dot\gamma(t) = - K(m_t, \gamma(t)) \widehat{\nabla\varphi}(t, \gamma(t))$
for every $t \in (0, T)$. Hence, for every $t \in (0, T)$,
\begin{equation}
\label{EqToIntegrate}
\frac{\diff}{\diff t}\left[\phi(t, \gamma(t))\right] = \partial_t\phi(t, \gamma(t)) - \nabla_x \phi(t, \gamma(t)) \cdot \widehat{\nabla\varphi}(t, \gamma(t)) K(m_t, \gamma(t)).
\end{equation}
Notice that the right-hand side of \eqref{EqToIntegrate} is a continuous function of $(t, \gamma) \in \mathbb R_+^\ast \times \OOpt(Q)$. The only non-trivial term is $\nabla_x \phi(t, \gamma(t)) \cdot\widehat{\nabla\varphi}(t, \gamma(t))$. Take $(\widetilde t, \widetilde \gamma) \in \mathbb R_+^\ast \times \OOpt(Q)$. If $\widetilde\gamma(\widetilde t) \in \partial\Omega$ the continuity at $(\widetilde t, \widetilde \gamma)$ is guaranteed by the fact that $\phi$ is compactly supported on $\mathbb R_+^\ast \times \Omega$. Otherwise, if $\widetilde\gamma(\widetilde t) \in \Omega$, let $(t_n, \gamma_n)_{n \in \mathbb N}$ be a sequence with $t_n \to \widetilde t$ and $\gamma_n \to \widetilde \gamma$ in $\mathcal C_{\overline\Omega}$ as $n \to \infty$. For $n$ large enough, we have $\gamma_n(t_n) \in \Omega$ and $t_n > 0$. Since $\gamma_n \in \OOpt(Q)$, there exists $x_{0, n} \in \overline\Omega$ such that $\gamma_n \in \Opt(Q, 0, x_{0, n})$. Let $\widetilde x_0 = \widetilde \gamma(0) = \lim_{n \to \infty} \gamma_n(0) = \lim_{n \to \infty} x_{0, n}$ and notice that $\widetilde \gamma \in \Opt(Q, 0, \widetilde x_0)$. Since $\widetilde \gamma(\widetilde t) \in \Omega$ and $\gamma_n(t_n) \in \Omega$, one has $\widetilde x_{0} \in \Omega$ and $x_{0, n} \in \Omega$. Thus $(\widetilde t, \widetilde \gamma(\widetilde t)), (t_n, \gamma_n(t_n)) \in \Upsilon$, and then, using Proposition \ref{PropNormGradContinuous}, one concludes that $ \lim_{n \to \infty} \widehat{\nabla\varphi}(t_n, \gamma_n(t_n)) = \widehat{\nabla\varphi}(\widetilde t, \widetilde \gamma(\widetilde t))$, yielding the desired continuity.

Thanks to the continuity of its right-hand side in $\mathbb R_+ \times \OOpt(Q)$, one can integrate \eqref{EqToIntegrate} over this set, yielding
\begin{multline*}
\int_0^{+\infty} \int_{\OOpt(Q)} \frac{\diff}{\diff t}\left[\phi(t, \gamma(t))\right] \diff Q(\gamma) \diff t = \int_0^{+\infty} \int_{\OOpt(Q)} \partial_t \phi(t, \gamma(t)) \diff Q(\gamma) \diff t \\ {} - \int_0^{+\infty} \int_{\OOpt(Q)} \nabla_x \phi(t, \gamma(t)) \cdot \widehat{\nabla\varphi}(t, \gamma(t)) K(m_t, \gamma(t)) \diff Q(\gamma) \diff t.
\end{multline*}
Since $\varphi$ is compactly supported, the left-hand side of the above equality is zero. Moreover, 
using Proposition \ref{PropMtConcentratedOnUpsilon} and the facts that $\gamma(t) = e_t(\gamma)$ and $m_t = {e_t}_{\#} Q$, one finally deduces that, for every $\phi \in \mathcal C^\infty_{\mathrm c}(\mathbb R_+^\ast \times \Omega, \mathbb R)$,
\[
\int_0^{+\infty} \int_{\Omega} \partial_t \phi(t, x) \diff m_t(x) \diff t = \int_0^{+\infty} \int_{\Omega} \nabla_x \phi(t, x) \cdot \widehat{\nabla\varphi}(t, x) K(m_t, x) \diff m_t(x) \diff t,
\]
which is precisely the weak formulation of the continuity equation in \eqref{SystMFG}.
\end{proof}

\begin{remk}
Theorem \ref{MainTheoCharact} shows that, if $Q$ is an equilibrium of $\MFG(\overline\Omega, \partial\Omega, K)$, then $m = \mu^Q$ and the corresponding value function $\varphi$ solve the MFG system \eqref{SystMFG}. One may also prove the converse statement, i.e., that any solution $(m, \varphi)$ to \eqref{SystMFG} comes from an equilibrium $Q$ of $\MFG(\overline\Omega, \partial\Omega, K)$. Let us provide a brief idea of such a proof.

Using classical techniques in optimal control (cf., e.g., \cite[Chapter IV, Corollary 4.3]{Bardi1997Optimal}), one may prove that any viscosity solution $\varphi$ of the Hamilton--Jacobi equation in \eqref{SystMFG} satisfying also the corresponding boundary condition is the value function of $\OCP(\overline\Omega, \partial\Omega, k)$, where $k$ is obtained from $K$ and $m_t$ as in Corollary \ref{CorokFromK}. Moreover, using the superposition principle (cf., e.g., \cite[Theorem 3.2]{Ambrosio2008Transport}), one may prove that any solution $m$ in the sense of distributions of the continuity equation in \eqref{SystMFG} is a superposition solution, i.e., $m = \mu^Q$ for some $Q \in \mathcal P(\mathcal C_{\overline\Omega})$ concentrated on the solutions of $\dot\gamma(t) = - K(m_t, \gamma(t)) \widehat{\nabla\varphi}(t, \gamma(t))$. Finally, one may again use classical techniques in optimal control to show that, since $\varphi$ is the value function of $\OCP(\overline\Omega, \partial\Omega, k)$, any such $\gamma$ is necessarily an optimal trajectory of $\OCP(\overline\Omega, \partial\Omega, k)$, implying that $Q$ is an equilibrium of $\MFG(\overline\Omega, \partial\Omega, K)$, as required.
\end{remk}

\section{Examples}
\label{SecExamples}

\subsection{Mean field game on a segment}
\label{SecMFGDimOne}

Consider the mean field game $\MFG(X, \Gamma, K)$ with $\Omega = (0, 1) \subset \mathbb R$, $X = \overline\Omega$, $\Gamma = \partial\Omega = \{0, 1\}$, and $K$ satisfying the assumptions from Hypothesis \ref{MainHypo}\ref{HypoKConvolution}. This situation can be interpreted as the movement of agents on a long corridor, with one exit at each end, and where we assume that agents have no preference on which exit they take. We first remark that the equilibrium of such mean field game consists on agents on a certain segment $(0, \varkappa)$ going left and agents on $(\varkappa, 1)$ going right, the position $\varkappa$ of the splitting point depending on the initial distribution of agents $m_0$.

\begin{prop}
\label{PropDimensionOneSplit}
Let $m_0 \in \mathcal P([0, 1])$, $K$ satisfy Hypothesis \ref{MainHypo}\ref{HypoKConvolution}, $Q \in \mathcal Q$ be an equilibrium of $\MFG([0,\allowbreak 1],\allowbreak \{0, 1\},\allowbreak K)$ with initial condition $m_0$, and $m = \mu^Q$. Then there exists $\varkappa \in (0, 1)$ such that, for every $\gamma \in \OOpt(Q)$, $\gamma$ satisfies, for every $t \in [0, \varphi(0, \gamma(0)))$,
\begin{equation}
\label{Dimension1-Direction}
\begin{dcases*}
\dot\gamma(t) = - K(m_t, \gamma(t)), & if $\gamma(0) \in (0, \varkappa)$, \\
\dot\gamma(t) = K(m_t, \gamma(t)), & if $\gamma(0) \in (\varkappa, 1)$.
\end{dcases*}
\end{equation}
\end{prop}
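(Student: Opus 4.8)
The plan is to reduce the two-exit problem to two single-exit minimal-time problems and then to exploit the fact that in dimension one the unit sphere $\mathbb{S}^0 = \{-1, +1\}$ is discrete. Denote by $\psi_0$ and $\psi_1$ the value functions (Definition \ref{DefiVarphi}) of the minimal-time problems $\OCP([0,1], \{0\}, k)$ and $\OCP([0,1], \{1\}, k)$, and set $T_0 = \psi_0(0, \cdot)$, $T_1 = \psi_1(0, \cdot)$. Both problems satisfy Hypotheses \ref{MainHypo}\ref{HypoXCompact}--\ref{HypoXDist}, so every result of Section \ref{SecValueFunction} applies to them; moreover $\varphi(0, x) = \min\{T_0(x), T_1(x)\}$, since an admissible curve exits $\{0,1\}$ precisely when it first reaches $0$ or first reaches $1$.

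First I would prove that $T_0$ is strictly increasing and $T_1$ strictly decreasing on $[0,1]$. Fix $x < y$ and let $\gamma_y$ be optimal for the reach-$0$ problem started at $y$ at time $0$; since $\gamma_y(0) = y > x$ and $\gamma_y(T_0(y)) = 0 < x$, the first hitting time $s = \inf\{t : \gamma_y(t) = x\}$ lies in $(0, T_0(y))$. The restriction of $\gamma_y$ to $[s, +\infty)$ is admissible for the reach-$0$ problem started at $x$ at time $s$ and reaches $0$ at time $T_0(y)$, whence $\psi_0(s, x) \leq T_0(y) - s$. Because $K$ is genuinely time-dependent one cannot shift this tail back to time $0$, and this is exactly where Proposition \ref{PropMonotoneOptimalTime}, applied to the reach-$0$ problem, is decisive: it gives $T_0(x) = \psi_0(0, x) < s + \psi_0(s, x) \leq T_0(y)$. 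The argument for $T_1$ is symmetric, and continuity of $T_0, T_1$ follows from Proposition \ref{PropTauLip}.

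Next I would locate the splitting point. The function $f = T_0 - T_1$ is continuous and strictly increasing, with $f(0) = -T_1(0) < 0$ and $f(1) = T_0(1) > 0$, since $T_0(0) = T_1(1) = 0$ while $T_1(0), T_0(1) \geq 1/K_{\max} > 0$ (distance $1$ travelled at speed at most $K_{\max}$). Hence $f$ admits a unique zero $\varkappa \in (0,1)$, with $f < 0$ on $(0, \varkappa)$ and $f > 0$ on $(\varkappa, 1)$; equivalently $\varphi(0, x) = T_0(x) < T_1(x)$ for $x \in (0, \varkappa)$ and $\varphi(0, x) = T_1(x) < T_0(x)$ for $x \in (\varkappa, 1)$.

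Finally, take $\gamma \in \OOpt(Q)$ with $x_0 = \gamma(0) \in (0, \varkappa)$, the case $(\varkappa, 1)$ being symmetric. By Corollary \ref{CoroSystGammaU}, the associated optimal control $u$ belongs to $\Lip([0, \varphi(0, x_0)], \mathbb{S}^0)$; since $\mathbb{S}^0 = \{-1, +1\}$ is discrete and the interval is connected, $u$ is constant, so $\dot\gamma(t) = K(m_t, \gamma(t))\, u$ with a fixed $u \in \{-1, +1\}$. If $u = +1$ the curve is increasing and exits at $1$, so its exit time is at least $T_1(x_0) > T_0(x_0) = \varphi(0, x_0)$, contradicting $\tau(0, \gamma) = \varphi(0, x_0)$; hence $u = -1$ and $\dot\gamma(t) = -K(m_t, \gamma(t))$ for every $t \in [0, \varphi(0, x_0))$, which is \eqref{Dimension1-Direction}. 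The main obstacle is the strict monotonicity of $T_0$ and $T_1$ in the time-dependent regime, overcome by Proposition \ref{PropMonotoneOptimalTime}; the rest rests on the elementary but decisive remark that a continuous $\{-1, +1\}$-valued control on an interval cannot change sign.
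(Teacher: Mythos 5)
Your proof is correct, and it takes a genuinely different route from the paper in the key intermediate step. Both arguments obtain the constancy of the direction of motion identically: Corollary \ref{CoroSystGammaU} makes the optimal control Lipschitz with values in $\mathbb S^0 = \{-1,+1\}$, hence constant (and of full speed). But for the existence of the splitting point, the paper works with the exit times $T_l, T_r$ of the flows of $\dot\gamma = \mp K(m_t, \gamma)$ and proves $T_l' > 0 > T_r'$ through an implicit-function characterization of these exit times in terms of the flows, whereas you introduce the auxiliary single-exit problems $\OCP([0,1],\{0\},k)$ and $\OCP([0,1],\{1\},k)$, observe that $\varphi(0,\cdot) = \min\{T_0, T_1\}$, and derive \emph{strict} monotonicity of $T_0, T_1$ from the no-overtaking property of Proposition \ref{PropMonotoneOptimalTime} applied to those problems. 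This is a legitimate use: the single-exit problems satisfy Hypotheses \ref{MainHypo}\ref{HypoXCompact}--\ref{HypoXDist}, which is all that Propositions \ref{Prop1Varphi}, \ref{PropTauLip}, and \ref{PropMonotoneOptimalTime} require, and you correctly refrain from applying Corollary \ref{CoroSystGammaU} to them (it would be illegitimate there, since $\{0\} \neq \partial(0,1)$), invoking it only for the two-exit problem where $\Gamma = \partial\Omega$. Your route is more elementary: it needs only Lipschitz continuity of the value functions and no differentiation of flows, it never needs the identification of $T_0, T_1$ with the flow exit times $T_l, T_r$ (which the paper's verification implicitly relies on), and the final step is an immediate contradiction --- a trajectory starting in $(0,\varkappa)$ with $u = +1$ would exit through $1$ with exit time at least $T_1(x_0) > T_0(x_0) = \varphi(0, x_0)$. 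What the paper's approach buys in exchange is quantitative derivative information ($T_l' > 0$, $T_r' < 0$), which it reuses in the sequel, e.g.\ in the discussion surrounding Proposition \ref{PropDimensionOneCNS} and in setting up the numerical method; both arguments deliver uniqueness of $\varkappa$.
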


The fact that agents move in a constant direction is a consequence of Corollary \ref{CoroSystGammaU}, since the optimal control $u$ is Lipschitz continuous and takes values in $\{-1, 1\}$, being thus necessarily constant. Existence of $\varkappa$ can be determined by defining, for a given equilibrium $Q$, the functions $T_l, T_r: [0, 1] \to \mathbb R_+$ by setting, for $x \in [0, 1]$, $T_l(x)$ and $T_r(x)$ as the times needed for solutions of $\dot\gamma(t) = - K(m_t, \gamma(t))$ and $\dot\gamma(t) = K(m_t, \gamma(t))$, respectively, to reach the boundary when their initial condition is $\gamma(0) = x$. Thanks to a characterization of $T_l$ and $T_r$ as implicit functions in terms of the flows of such differential equations, one can prove that $T_l^\prime(x) > 0$ and $T_r^\prime(x) < 0$ for every $x \in (0, 1)$, yielding, thanks to the fact that $T_l(0) = T_r(1) = 0$, the existence of a unique $\varkappa \in (0, 1)$ such that $T_l(\varkappa) = T_r(\varkappa)$. One can then verify that such $\varkappa$ satisfies the required properties.

We consider in the sequel the particular case where all agents are concentrated at some point $\ell \in [0, 1]$ at the starting time, i.e., $m_0 = \delta_\ell$, in which one can completely describe Lagrangian equilibria. This is one of the interesting points of the non-local congested dynamics $K$, the fact that it allows to study the case of Dirac masses (local dynamics in MFG are usually not well-defined for non-absolutely continuous measures).

\begin{prop}
\label{PropQSplitsInTwoDeltas}
Let $\ell \in [0, 1]$, $K$ satisfy Hypothesis \ref{MainHypo}\ref{HypoKConvolution}, $Q \in \mathcal Q$ be an equilibrium of $\MFG([0, 1],\allowbreak \{0, 1\},\allowbreak K)$ with initial condition $\delta_\ell$, and $m = \mu^Q$. Then there exists $\alpha \in [0, 1]$ such that
\begin{equation}
\label{Dimension1-DecomposeQ}
Q = \alpha \delta_{\gamma_l} + (1 - \alpha) \delta_{\gamma_r},
\end{equation}
where $\gamma_l, \gamma_r \in \OOpt(Q)$ are the unique solutions of
\begin{equation}
\label{DefGammaLGammaR}
\dot\gamma_l(t) = 
\begin{dcases*}
- K(m_t, \gamma_l(t)), & if $0 \leq t < \varphi(0, \ell)$, \\
0, & if $t > \varphi(0, \ell)$,
\end{dcases*} \quad
\dot\gamma_r(t) = 
\begin{dcases*}
K(m_t, \gamma_r(t)), & if $0 \leq t < \varphi(0, \ell)$, \\
0, & if $t > \varphi(0, \ell)$,
\end{dcases*}
\end{equation}
with initial condition $\gamma_l(0) = \gamma_r(0) = \ell$. In particular,
\begin{equation}
\label{Dimension1-DecomposeM}
m_t = \alpha \delta_{\gamma_l(t)} + (1 - \alpha) \delta_{\gamma_r(t)}
\end{equation}
for every $t \geq 0$.
\end{prop}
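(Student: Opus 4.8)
The plan is to show that $Q$ is concentrated on exactly two curves and then read off $\alpha$. Since $Q$ is an equilibrium with initial condition $\delta_\ell$, one has ${e_0}_\# Q = \delta_\ell$, so $Q$-almost every $\gamma$ satisfies $\gamma(0) = \ell$ and $\gamma \in \Opt(Q, 0, \ell)$. The first key step is the constant-direction property already used in Proposition \ref{PropDimensionOneSplit}: applying Corollary \ref{CoroSystGammaU} in dimension one, the optimal control $u$ associated with any such $\gamma$ is Lipschitz continuous with values in $\mathbb S^0 = \{-1, 1\}$, hence constant on the connected exit interval. Thus each optimal trajectory is either leftward, solving $\dot\gamma(t) = - K(m_t, \gamma(t))$, or rightward, solving $\dot\gamma(t) = K(m_t, \gamma(t))$, until it reaches the boundary, consistently with \eqref{Dimension1-Direction}.

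The second step is a uniqueness argument. I would fix $m = \mu^Q$ once and for all; by Corollary \ref{CorokFromK} the function $k(t,x) = K(m_t, x)$ is Lipschitz continuous, so each of the two initial value problems $\dot\gamma = \mp k(t, \gamma)$ with $\gamma(0) = \ell$ admits a unique solution up to its first exit time. Extending these constantly after the exit time as prescribed in \eqref{DefGammaLGammaR} defines $\gamma_l$ and $\gamma_r$. By the constant-direction property, every leftward optimal trajectory solves the leftward problem and hence coincides with $\gamma_l$, while every rightward one coincides with $\gamma_r$. Therefore $Q$ is concentrated on the two-point set $\{\gamma_l, \gamma_r\} \subset \mathcal C_X$.

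To conclude, I would set $\alpha = Q(\{\gamma_l\})$; when $\ell \in (0,1)$ the curves $\gamma_l$ and $\gamma_r$ are distinct, so concentration on $\{\gamma_l, \gamma_r\}$ gives exactly \eqref{Dimension1-DecomposeQ}, and \eqref{Dimension1-DecomposeM} follows by pushing forward through $e_t$, using $m_t = {e_t}_\# Q$ and ${e_t}_\# \delta_{\gamma} = \delta_{\gamma(t)}$. Whichever of $\gamma_l, \gamma_r$ carries positive mass lies in $\OOpt(Q)$ by construction; the boundary cases $\ell \in \{0,1\}$ are trivial, since then $\varphi(0,\ell) = 0$ and both curves reduce to the constant curve at $\ell$.

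The main obstacle is only apparent: the curves $\gamma_l, \gamma_r$ are defined through $m = \mu^Q$, which is itself determined by $Q$, so the definition looks circular. There is in fact no circularity, because $Q$ is a fixed, given equilibrium, so $m$ is fixed and the two initial value problems are genuinely well-posed with this datum; the entire content of the argument reduces to the constant-direction property, coming from the one-dimensional structure of Corollary \ref{CoroSystGammaU}, together with Cauchy--Lipschitz uniqueness, both of which are already available.
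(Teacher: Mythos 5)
Your proposal is correct and follows essentially the same route as the paper: both arguments rest on Corollary \ref{CoroSystGammaU}, which in dimension one forces the optimal control to be a Lipschitz (hence constant) $\{-1,1\}$-valued function, so that every optimal trajectory starting at $\ell$ solves one of the two ODEs and, by Cauchy--Lipschitz uniqueness for the fixed datum $k(t,x) = K(m_t,x)$, coincides with $\gamma_l$ or $\gamma_r$, giving $Q(\{\gamma_l,\gamma_r\})=1$. Your additional remarks (the non-circularity of defining $\gamma_l,\gamma_r$ through the fixed equilibrium $m=\mu^Q$, the pushforward step for \eqref{Dimension1-DecomposeM}, and the observation that only the curve carrying positive mass need lie in $\OOpt(Q)$) are consistent refinements of the same argument rather than a different method.
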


\begin{proof}
By definition of equilibrium, one has $Q(\OOpt(Q)) = 1$. Moreover, since ${e_0}_\# Q = \delta_\ell$, one has $Q(\{\gamma \in \OOpt(Q) \mid \gamma(0) = \ell\}) = 1$. If $\gamma \in \OOpt(Q)$ is such that $\gamma(0) = \ell$, then, by Corollary \ref{CoroSystGammaU}, its associated optimal control $u$ satisfies $u \in \Lip([0, \varphi(0, \ell)], \{-1, 1\})$, implying that $u$ is constant on $[0, \varphi(0, \ell)]$. Hence, $\gamma$ solves either $\dot\gamma(t) = - K(m_t, \gamma(t))$ or $\dot\gamma(t) = K(m_t, \gamma(t))$ on $[0, \varphi(0, \ell))$ with initial condition $\gamma(0) = \ell$, and, since $\gamma \in \OOpt(Q)$, one has $\dot\gamma(t) = 0$ for $t > \varphi(0, \ell)$. Thus one has either $\gamma = \gamma_l$ or $\gamma = \gamma_r$, which proves that $\{\gamma \in \OOpt(Q) \mid \gamma(0) = \ell\} \subset \{\gamma_l, \gamma_r\}$, and then $Q(\{\gamma_l, \gamma_r\}) = 1$, yielding the existence of $\alpha \in [0, 1]$ such that \eqref{Dimension1-DecomposeQ} holds. In particular, \eqref{Dimension1-DecomposeM} also holds.
\end{proof}

\begin{remk}
\label{RemkEquilibriumNonUnique}
In general, one may have several possible equilibria. Indeed, if $\ell = \frac{1}{2}$ and $K$ is defined by $K(\mu, x) = 1$ for every $(\mu, x) \in \mathcal P([0, 1]) \times \mathbb R$, one can immediately verify that the functions $\gamma_l$, $\gamma_r$ from \eqref{DefGammaLGammaR} are given by $\gamma_l(t) = \max\left(\frac{1}{2} - t, 0\right)$ and $\gamma_r(t) = \min\left(\frac{1}{2} + t, 1\right)$ for every $t \geq 0$, and $Q = \alpha \delta_{\gamma_l} + (1 - \alpha) \delta_{\gamma_r}$ is an equilibrium for every $\alpha \in [0, 1]$.
\end{remk}

Proposition \ref{PropQSplitsInTwoDeltas} states that every equilibrium of $\MFG([0, 1], \{0, 1\}, K)$ has the form \eqref{Dimension1-DecomposeQ}. Our next result provides sufficient conditions for a measure $Q$ given by \eqref{Dimension1-DecomposeQ} to be an equilibrium of $\MFG([0, 1], \{0, 1\}, K)$.

\begin{prop}
\label{PropDimensionOneCNS}
Let $K$ be defined from $g \in \mathcal C^{1, 1}(\mathbb R_+, \mathbb R_+^\ast)$, $\chi \in \mathcal C^{1, 1}(\mathbb R, \mathbb R_+)$, and $\eta \in \mathcal C^{1, 1}(\mathbb R, \mathbb R_+)$ as in Hypothesis \ref{MainHypo}\ref{HypoKConvolution}. Assume further that $\eta(x) = 0$ for every $x \in \mathbb R \setminus (0, 1)$. Let $\ell, \alpha \in [0, 1]$ and $\widetilde\gamma_l, \widetilde\gamma_r: \mathbb R_+ \to \mathbb R$ be the unique solutions of
\begin{equation}
\label{SystTildeGamma}
\left\{
\begin{aligned}
\dot{\widetilde\gamma_l}(t) & =           - g\left[\alpha \chi(0) \eta(\widetilde\gamma_l(t)) + (1 - \alpha) \chi(\widetilde\gamma_l(t) - \widetilde\gamma_r(t)) \eta(\widetilde\gamma_r(t))\right], & \quad & t \geq 0, \\
\dot{\widetilde\gamma_r}(t) & = \hphantom{-}g\left[\alpha \chi(\widetilde\gamma_r(t) - \widetilde\gamma_l(t)) \eta(\widetilde\gamma_l(t)) + (1 - \alpha) \chi(0) \eta(\widetilde\gamma_r(t))\right], & & t \geq 0, \\
\widetilde\gamma_l(0) & = \widetilde\gamma_r(0) = \ell.
\end{aligned}
\right.
\end{equation}
Define $T_l, T_r \in \mathbb R_+$ and $\gamma_l, \gamma_r: \mathbb R_+ \to [0, 1]$ by
\begin{equation}
\label{DefiTlTr}
\begin{aligned}
T_l & = \inf\{t \geq 0 \suchthat \widetilde\gamma_l(t) = 0\}, & \qquad \gamma_l(t) & = \max(\widetilde\gamma_l(t), 0)\quad \text{ for } t \geq 0, \\
T_r & = \inf\{t \geq 0 \suchthat \widetilde\gamma_r(t) = 1\}, & \gamma_r(t) & = \min(\widetilde\gamma_r(t), 1)\quad \text{ for } t \geq 0,
\end{aligned}
\end{equation}
and let $Q \in \mathcal P(\mathcal C_{[0, 1]})$ be given by $Q = \alpha \delta_{\gamma_l} + (1 - \alpha) \delta_{\gamma_r}$. Then $Q$ is an equilibrium for $\MFG([0, 1],\allowbreak \{0, 1\},\allowbreak K)$ with initial condition $\delta_\ell$ if and only if one of the following conditions hold:
\begin{enumerate}
\item\label{QEquilDim1-CaseAlpha01} $\alpha \in (0, 1)$ and $T_l = T_r$; or
\item\label{QEquilDim1-CaseAlpha0} $\alpha = 0$ and $T_l \geq T_r$; or
\item\label{QEquilDim1-CaseAlpha1} $\alpha = 1$ and $T_l \leq T_r$.
\end{enumerate}
\end{prop}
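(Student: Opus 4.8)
The plan is to exploit the one-dimensional structure, in which a trajectory can only move towards one of the two exits $\{0, 1\}$, and to check that the pair $(\gamma_l, \gamma_r)$ produced by \eqref{SystTildeGamma}--\eqref{DefiTlTr} is consistent with the measure it generates. First I would set $m_t = \alpha \delta_{\gamma_l(t)} + (1 - \alpha) \delta_{\gamma_r(t)}$ and verify that
\begin{equation*}
\dot\gamma_l(t) = -K(m_t, \gamma_l(t)) \text{ for } t \in [0, T_l), \qquad \dot\gamma_r(t) = K(m_t, \gamma_r(t)) \text{ for } t \in [0, T_r),
\end{equation*}
both curves being constant after their respective exit times. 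By Hypothesis~\ref{MainHypo}\ref{HypoKConvolution} one has $K(m_t, x) = g[\alpha \chi(x - \gamma_l(t)) \eta(\gamma_l(t)) + (1 - \alpha) \chi(x - \gamma_r(t)) \eta(\gamma_r(t))]$, so the identity is immediate while both curves stay in $(0, 1)$, since then $\gamma_l = \widetilde\gamma_l$, $\gamma_r = \widetilde\gamma_r$, and $\chi(\gamma_l - \gamma_l) = \chi(0)$ matches \eqref{SystTildeGamma}. The crucial point is the extra assumption that $\eta$ vanishes outside $(0, 1)$: once one curve, say $\gamma_r$, has exited, one has $\eta(\gamma_r(t)) = \eta(1) = 0$ and likewise $\eta(\widetilde\gamma_r(t)) = 0$ because $\widetilde\gamma_r(t) > 1$, so the coupling term carrying $\chi(\widetilde\gamma_l - \widetilde\gamma_r)$ disappears simultaneously from the field $K(m_t, \cdot)$ and from the right-hand side of \eqref{SystTildeGamma}, keeping them equal. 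This shows that $\gamma_l, \gamma_r$ move at speed at most $K_{\max}$, hence lie in $\Lip_{K_{\max}}([0,1])$, so that $Q \in \mathcal Q$ and $\gamma_l, \gamma_r \in \Adm(Q)$; in particular, by Corollary~\ref{CorokFromK} the associated $k(t, x) = K(m_t, x)$ is Lipschitz continuous.

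Next I would identify the value function. As $k$ is Lipschitz, Corollary~\ref{CoroSystGammaU} applies and, in dimension one, the optimal control of any optimal trajectory takes values in $\mathbb S^0 = \{-1, 1\}$ and is Lipschitz, hence constant. Thus every optimal trajectory issued from $(0, \ell)$ is monotone and travels at maximal speed, so it is either the maximal-speed leftward motion, which is the unique solution $\gamma_l$ of $\dot\gamma = -K(m_t, \gamma)$ reaching $0$ at time $T_l$, or the maximal-speed rightward motion $\gamma_r$ reaching $1$ at time $T_r$. Since travelling more slowly, or reversing, can only postpone arrival, we get $\varphi(0, \ell) = \min\{T_l, T_r\}$, and consequently a curve $\gamma \in \{\gamma_l, \gamma_r\}$ arriving at $\{0, 1\}$ at time $T \in \{T_l, T_r\}$ belongs to $\Opt(Q, 0, \ell)$ if and only if $T = \min\{T_l, T_r\}$.

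Finally I would translate the equilibrium condition. By Definition~\ref{DefiEquilibriumMFG}, $Q$ is an equilibrium with initial condition $\delta_\ell$ if and only if $Q$-almost every curve is optimal, i.e.\ if and only if every atom of $Q = \alpha \delta_{\gamma_l} + (1 - \alpha) \delta_{\gamma_r}$ carrying positive mass is optimal. If $\alpha \in (0, 1)$, both $\gamma_l$ and $\gamma_r$ must be optimal, which by the criterion above forces $T_l = \min\{T_l, T_r\} = T_r$, and conversely $T_l = T_r$ makes both optimal; this is case~\ref{QEquilDim1-CaseAlpha01}. If $\alpha = 0$ then $Q = \delta_{\gamma_r}$ and the condition reduces to the optimality of $\gamma_r$, i.e.\ $T_r = \min\{T_l, T_r\}$, equivalently $T_l \geq T_r$, which is case~\ref{QEquilDim1-CaseAlpha0}; the case $\alpha = 1$ is symmetric and gives case~\ref{QEquilDim1-CaseAlpha1}. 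I expect the main obstacle to be the self-consistency of the first paragraph: over any time interval on which exactly one of the two curves has already reached the boundary, one must check carefully that the truncation $\gamma_l = \max(\widetilde\gamma_l, 0)$, $\gamma_r = \min(\widetilde\gamma_r, 1)$ together with the vanishing of $\eta$ outside $(0, 1)$ keeps the self-generated field $K(m_t, \cdot)$ exactly equal to the coefficient appearing in \eqref{SystTildeGamma}, so that $\widetilde\gamma_l$ and $\widetilde\gamma_r$ really do represent the two optimal-direction candidates in the equilibrium field.
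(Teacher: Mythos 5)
Your proof is correct, but the decisive step --- the lower bound $\varphi_Q(0,\ell) \geq \min\{T_l,T_r\}$, i.e.\ that no admissible curve can beat $\gamma_l$ to $0$ or $\gamma_r$ to $1$ --- is obtained by a different route than the paper's. The paper proves the converse implication by an elementary order argument: every $Q$-admissible curve $\gamma$ with $\gamma(0)=\ell$ satisfies $\gamma_l(t) \leq \gamma(t) \leq \gamma_r(t)$ for all $t \geq 0$ (scalar ODE comparison, since $\gamma_l,\gamma_r$ move at the maximal admissible speed in each direction), so no admissible curve reaches $0$ before $T_l$ nor $1$ before $T_r$; the forward implication is then read off from Proposition~\ref{PropQSplitsInTwoDeltas}. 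You instead run both implications through the Pontryagin classification: by Corollary~\ref{CoroSystGammaU}, in dimension one any optimal control is a Lipschitz map into $\mathbb S^0=\{-1,1\}$, hence constant, so by uniqueness for $\dot\gamma = \pm K(m_t,\gamma)$ the only possible optimal trajectories from $(0,\ell)$ are $\gamma_l$ and $\gamma_r$; this is exactly the argument by which the paper proves Proposition~\ref{PropQSplitsInTwoDeltas}, here re-derived for your specific candidate $Q$. Both routes are sound: the paper's comparison is more elementary (no Pontryagin machinery to invoke), while yours classifies all optimal trajectories and treats the two implications symmetrically. Two further remarks. First, a genuine merit of your write-up is the explicit consistency check that the truncated curves solve $\dot\gamma_l = -K(m_t,\gamma_l)$ and $\dot\gamma_r = K(m_t,\gamma_r)$ with $m_t = \alpha\delta_{\gamma_l(t)}+(1-\alpha)\delta_{\gamma_r(t)}$, the hypothesis $\eta \equiv 0$ outside $(0,1)$ killing the coupling term once a curve has exited; the paper leaves this verification implicit, and it is needed in both proofs. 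Second, to invoke Corollary~\ref{CoroSystGammaU} you should note that $k(t,x)=K(m_t,x)$, which Corollary~\ref{CorokFromK} gives as Lipschitz on $\mathbb R_+\times[0,1]$, must be extended to a Lipschitz function on $\mathbb R\times\mathbb R$ as required by Hypothesis~\ref{MainHypo}\ref{HypokGlobLip} (e.g.\ constant in time for $t<0$, as done in the proof of Proposition~\ref{PropK0C11}); this is routine but formally required.
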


When $Q$ is an equilibrium, the fact that one of the conditions \ref{QEquilDim1-CaseAlpha01}--\ref{QEquilDim1-CaseAlpha1} is satisfied can be deduced from Proposition \ref{PropQSplitsInTwoDeltas}. Conversely, to prove that $Q$ is an equilibrium when one of \ref{QEquilDim1-CaseAlpha01}--\ref{QEquilDim1-CaseAlpha1} is satisfied, it suffices to show that $\gamma_l$ and $\gamma_r$ are optimal trajectories, which can be easily done after observing that any other $Q$-admissible trajectory $\gamma$ must satisfy $\gamma_l(t) \leq \gamma(t) \leq \gamma_r(t)$ for every $t \geq 0$.

For fixed $\ell \in [0, 1]$, one can consider $T_l$ and $T_r$ defined in \eqref{DefiTlTr} as functions of $\alpha \in [0, 1]$. Notice that, denoting by $\widetilde\gamma_l(\cdot; \alpha)$ and $\widetilde\gamma_r(\cdot; \alpha)$ the solutions of \eqref{SystTildeGamma} for a given $\alpha \in [0, 1]$, $T_l(\alpha)$ and $T_r(\alpha)$ can be characterized by the equations $\widetilde\gamma_l(T_l(\alpha); \alpha) = 0$ and $\widetilde\gamma_r(T_r(\alpha); \alpha) = 1$, and it follows from the implicit function theorem that $T_l$ and $T_r$ belong to $C^1([0, 1], \mathbb R_+)$ (after a suitable extension to a neighborhood of $[0, 1]$).

Given $\ell \in [0, 1]$, Proposition \ref{PropDimensionOneCNS} allows one to numerically approximate equilibria of $\MFG([0,\allowbreak 1],\allowbreak \{0, 1\},\allowbreak K)$ with initial condition $\delta_\ell$. Indeed, one can obtain approximations for $T_l(0)$, $T_r(0)$, $T_l(1)$, $T_r(1)$ by numerically integrating \eqref{SystTildeGamma} with $\alpha = 0$ and $\alpha = 1$ and check if conditions \ref{QEquilDim1-CaseAlpha0} or \ref{QEquilDim1-CaseAlpha1} from Proposition \ref{PropDimensionOneCNS} are satisfied. If this is the case, then one has found an equilibrium of $\MFG([0, 1],\allowbreak \{0, 1\},\allowbreak K)$. Otherwise, one has $T_l(0) < T_r(0)$ and $T_l(1) > T_r(1)$, and one can thus find $\alpha \in (0, 1)$ such that $T_l(\alpha) = T_r(\alpha)$ by using standard numerical methods.

This simulation has been done by approximating the solution of \eqref{SystTildeGamma} using an explicit Euler method with time step $\Delta t = 10^{-4}$. We have chosen $g$, $\chi$, and $\eta$ as
\begin{gather}
\label{SimulGChi}
g(x) = \frac{1}{1 + \left(\frac{2x}{15}\right)^4}, \qquad \chi(x) = 
\begin{dcases*}
\frac{1}{2 \varepsilon}\left[1 + \cos\left(\frac{\pi x}{\varepsilon}\right)\right], & if $\abs{x} < \varepsilon$, \\
0, & otherwise,
\end{dcases*} \displaybreak[0] \\ \eta(x) = 
\begin{dcases*}
\frac{1}{2}\left[1 - \cos\left(\frac{\pi\dist(x, \{0, 1\})}{\varepsilon}\right)\right], & if $\dist(x, \{0, 1\}) < \varepsilon$ and $x \in [0, 1]$, \\
1, & if $\dist(x, \{0, 1\}) \geq \varepsilon$ and $x \in [0, 1]$, \\
0, & if $x \in \mathbb R \setminus [0, 1]$,
\end{dcases*}\notag
\end{gather}
where $\varepsilon = \frac{1}{10}$. With this choice, $\eta$ is a smooth function equal to $1$ on $[\varepsilon, 1 - \varepsilon]$ and vanishing outside of the interval $[0, 1]$, allowing one to not take into account in $K$ agents who already left the domain $[0, 1]$ and discounting agents who are close to leaving. The convolution kernel $\chi$ is smooth and vanishes outside of $[-\varepsilon, \varepsilon]$, meaning that an agent only takes into account in their congestion other agents at a distance at most $\varepsilon$. Finally, the function $g$ is decreasing, meaning that higher concentrations of agents yield slower velocities. Its precise form allows one to have an important variation on the velocities for the range of agent concentrations for this problem. The functions $T_l$ and $T_r$ are represented in Figure \ref{FigTlTr} for $\ell = 0.2$ and $\ell = 0.4$.

\begin{figure}[ht]
\centering
\begin{tabular}{@{} >{\centering} m{0.5\textwidth} @{} >{\centering} m{0.5\textwidth} @{}}
\includegraphics[width=0.5\textwidth]{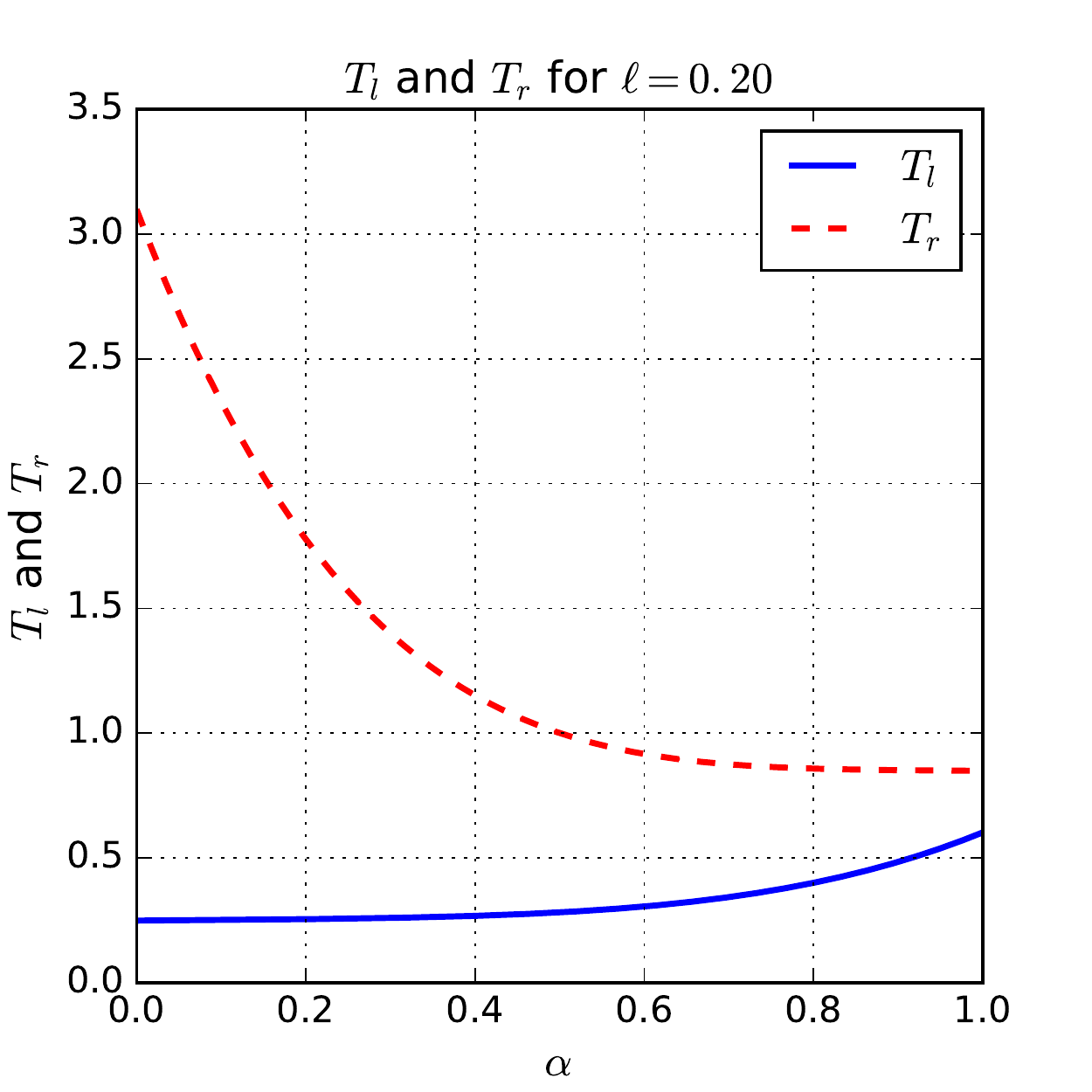} & \includegraphics[width=0.5\textwidth]{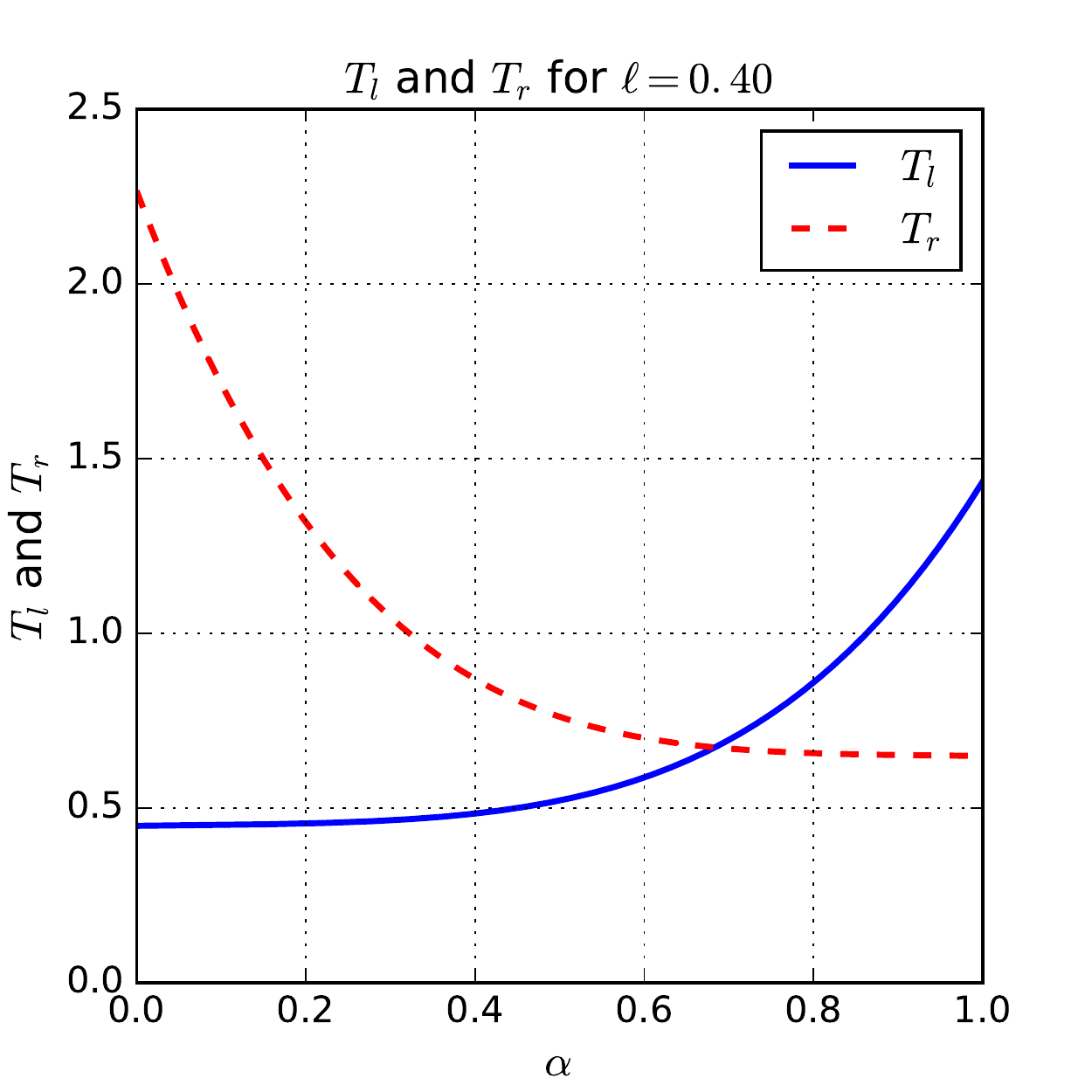} \tabularnewline
(a) & (b) \tabularnewline
\end{tabular}
\caption{Functions $T_l$ (solid line) and $T_r$ (dashed line) for (a) $\ell = 0.2$ and (b) $\ell = 0.4$.}
\label{FigTlTr}
\end{figure}

For $\ell = 0.2$, Figure \ref{FigTlTr}(a) shows that $T_l(\alpha) < T_r(\alpha)$ for every $\alpha \in [0, 1]$, and thus the unique equilibrium in this case is $Q = \delta_{\gamma_l}$, which corresponds to all agents moving left. For $\ell = 0.4$, Figure \ref{FigTlTr}(b) shows that $T_l(0) < T_r(0)$, $T_l(1) > T_r(1)$, and that there exists a unique $\alpha \approx 0.683$ such that $T_l(\alpha) = T_r(\alpha)$, giving thus a unique equilibrium where approximately $68.3 \%$ of the agents move left and $31.7 \%$ of the agents move right.

Using this method, one can compute, for each $\ell \in [0, 1]$, a value $\alpha$ for which \eqref{Dimension1-DecomposeQ} is an equilibrium, and the corresponding minimal exit time $\varphi_Q(0, \ell)$. Even though $\alpha$ need not be unique in general, as seen in Remark \ref{RemkEquilibriumNonUnique}, it seems from the simulations that, for every $\ell$, $T_l$ is increasing and $T_r$ is decreasing, and thus one has uniqueness of the equilibrium in the framework of our simulations. Figure \ref{FigAlphaT} presents the values of $\alpha$ and $\varphi_Q(0, \ell)$ at the equilibrium as functions of $\ell$ obtained from our simulations.

\begin{figure}[ht]
\centering
\begin{tabular}{@{} >{\centering} m{0.5\textwidth} @{} >{\centering} m{0.5\textwidth} @{}}
\includegraphics[width=0.5\textwidth]{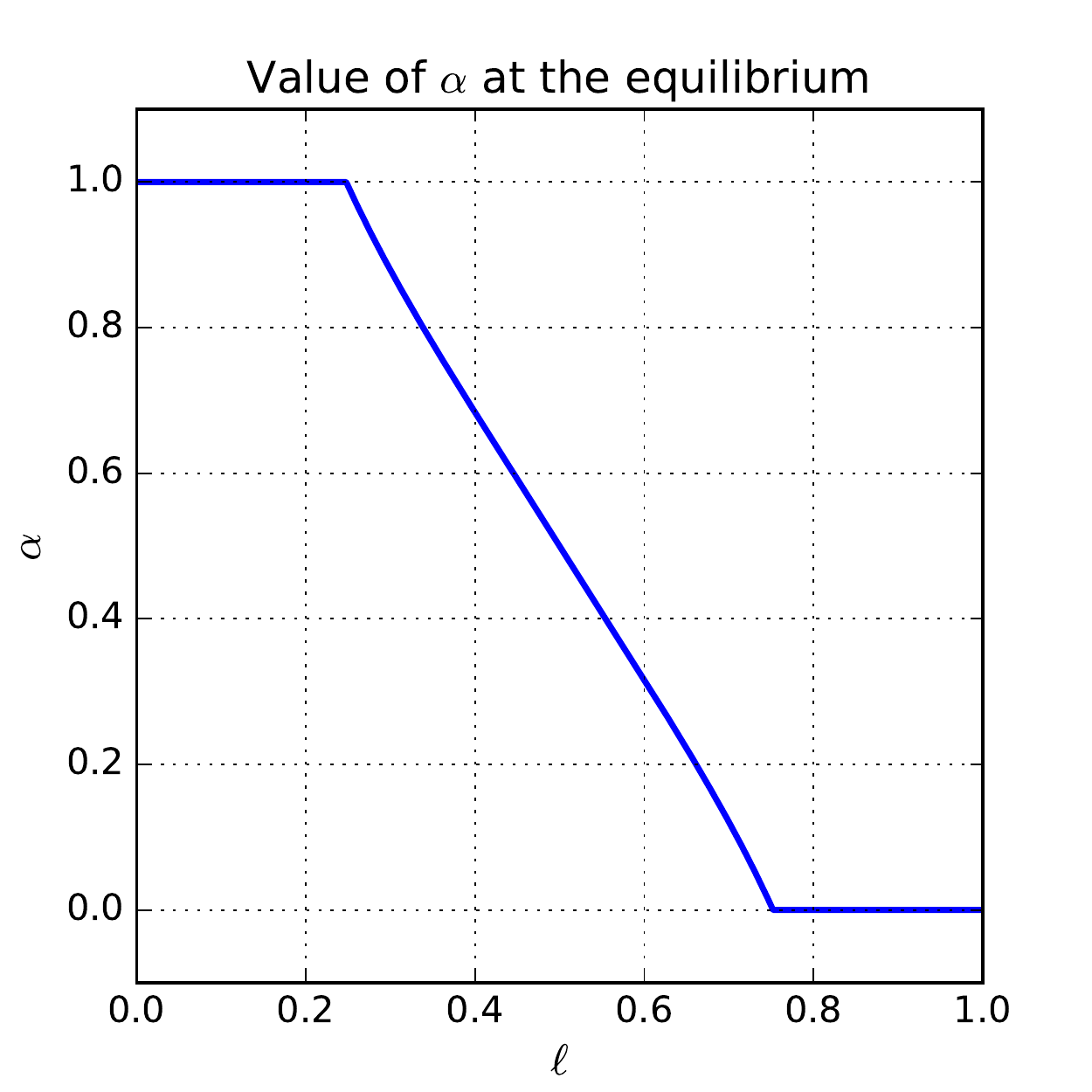} & \includegraphics[width=0.5\textwidth]{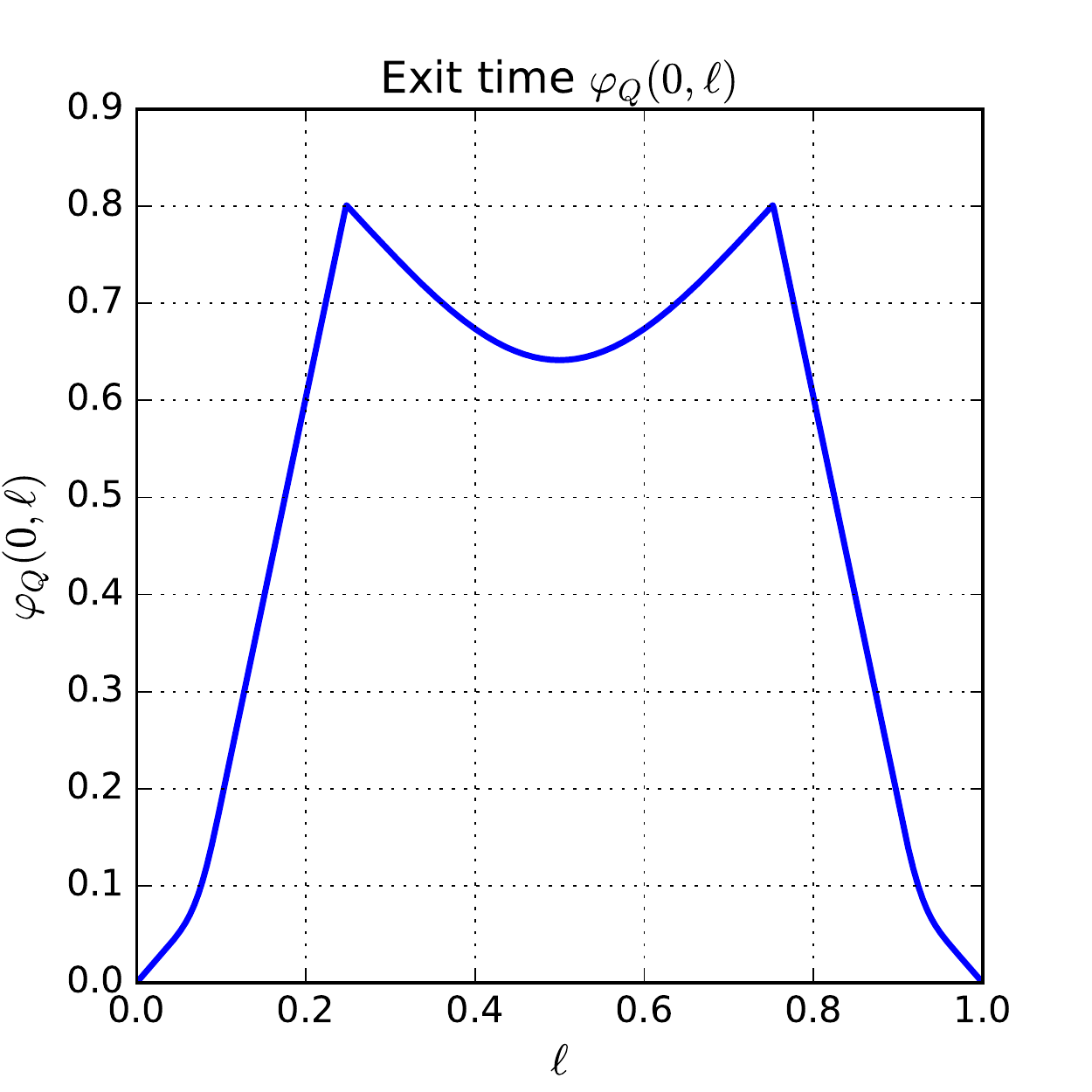} \tabularnewline
(a) & (b) \tabularnewline
\end{tabular}
\caption{Values of (a) $\alpha$ and (b) $\varphi_Q(0, \ell)$ at the equilibrium in function of $\ell$.}
\label{FigAlphaT}
\end{figure}

Figure \ref{FigAlphaT}(a) corresponds to the behavior one might intuitively expect. The represented curve is symmetric around the point $\left(\frac{1}{2}, \frac{1}{2}\right)$, which one expects since this mean field game model is symmetric with respect to the transformation $x \mapsto 1 - x$ of the interval $[0, 1]$ and by exchanging $\alpha$ and $1 - \alpha$. When $\ell$ is close to $0$, meaning that all agents are initially much closer to the exit at $0$ than to the exit at $1$, all agents move left to the exit at $0$, with a symmetric situation when $\ell$ is close to $1$. For intermediate values of $\ell$, agents split in two parts, according to Proposition \ref{PropQSplitsInTwoDeltas}, with a higher proportion of agents moving to the closer exit and a smaller proportion of agents moving to the further exit.

From Figure \ref{FigAlphaT}(b), one remarks that, close to the boundary $\{0, 1\}$, the exit time is small, increasing as one gets further away from the boundary, up to the point where agents start splitting instead of going all in the same direction. At these points where splitting starts to occur, a seemingly counter-intuitive situation happens: the further agents start from the boundary, the faster they arrive at the exit, and the points where splitting starts to occur are maxima of the exit time.

As a final remark for these simulations, notice that Figure \ref{FigAlphaT}(b) is \emph{not} the graph of the value function at time $0$, since the equilibrium $Q$ used to compute $\varphi_Q(0, \ell)$ depends on $\ell$.

\subsection{A Braess-type paradox}
\label{SecBraess}

Consider the mean field game $\MFG(X, \Gamma, K)$ where $X$ is a network whose set of vertices and edges are, respectively, $V = \{A, B, C, D\}$ and $E = \{\{A, C\},\allowbreak \{A, D\},\allowbreak \{C, D\},\allowbreak \{B, C\},\allowbreak \{B, D\}\}$, the lengths $L_e$ of the edges $e \in E$ being $L_{\{A, C\}} = L_{\{B, D\}} = 1$, $L_{\{A, D\}} = L_{\{B, C\}} = L > 1$, and $L_{\{C, D\}} = \ell \in (0, 1)$, and $\Gamma = \{B\}$. We identify each edge $e \in E$ with a segment of length $L_e$ and consider $X$ as the union of such segments with the suitable endpoints identified, endowed with its natural distance $\dist$ along the edges. This network is represented in Figure \ref{FigNetwBraess}. We are interested in numerically simulating equilibria of $\MFG(X, \Gamma, K)$ with initial condition $\delta_A$.

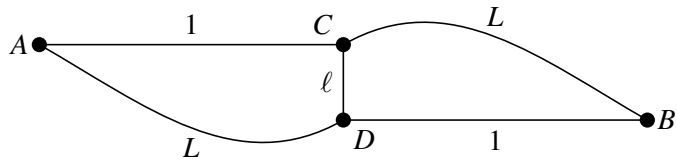
\begin{figure}[ht]
\centering
\begin{tikzpicture}
\draw[semithick] (0, -0.5) node[below right] {$D$} -- node[midway, left] {$\ell$} (0, 0.5) node[above left] {$C$} -- node[midway, above] {$1$} (-4, 0.5) node[left] {$A$} to[out=-30, in=-150] node[midway, below] {$L$} (0, -0.5) -- node[midway, below] {$1$} (4, -0.5) node[right] {$B$} to[out=150, in=30] node[midway, above] {$L$} (0, 0.5);
\fill (4, -0.5) circle[radius=0.1];
\fill (0, -0.5) circle[radius=0.1];
\fill (0,  0.5) circle[radius=0.1];
\fill (-4, 0.5) circle[radius=0.1];
\end{tikzpicture}
\caption{Network considered in the example of Section \ref{SecBraess}.}
\label{FigNetwBraess}
\end{figure}

The motivation for considering this model comes from Braess' paradox in traffic flow. The original model studied by Braess in \cite{Braess1968Paradoxon} is a static traffic flow on an oriented network similar to $X$ where agents move from $A$ to $B$ and, instead of prescribing lengths, one prescribes the travel time of an edge as an increasing affine function of its vehicular flow. The paradox consists on the fact that the total travel time from $A$ to $B$ at equilibrium may be reduced (according to how the travel times on individual edges are prescribed) when the edge $\{C, D\}$ is suppressed from the network. Similar phenomena have later been observed on other traffic flow models \cite{Taguchi1982Braess, Smith1978Road} as well as on mechanical, electrical, or computer networks \cite{Roughgarden2002How, Cohen1991Paradoxical}.

We assume in this section that $K$ is given by
\begin{equation}
\label{K-Braess}
K(\mu, x) = g\left[\int_X \chi(\dist(x, y)) \diff \mu(y)\right],
\end{equation}
with $\chi: \mathbb R_+ \to \mathbb R_+$ a smooth decreasing convolution kernel sufficiently concentrated around $0$ and $g$ a smooth decreasing function. Notice that, by Proposition \ref{PropFConvolution}, this $K$ satisfies Hypothesis \ref{MainHypo}\ref{HypoK2Lip}. Contrarily to Section \ref{SecMFGDimOne}, we provide here only an informal description of the behavior of equilibria.

Let $Q \in \mathcal P(X)$ be an equilibrium of $\MFG(X, \Gamma, K)$ with initial condition $\delta_A$ and $m = \mu^Q$. For $t > 0$, the Dirac mass at $A$ splits in two, one mass $\alpha_1 \in [0, 1]$ moving along a trajectory $\gamma_{AC}$ on the edge $\{A, C\}$, and the remaining mass $1 - \alpha_1$ moving along $\gamma_{AD}$ on the edge $\{A, D\}$. If $\gamma_{AD}$ reaches $D$ before $\gamma_{AC}$ reaches $C$, then, since $g$ is decreasing, one has $1 - \alpha_1 < \alpha_1$. Since $\dist(D, B) <\dist(C, B)$ and $1 - \alpha_1 < \alpha_1$, one concludes that the mass $1 - \alpha_1$ arriving at $D$ will reach the boundary before the mass $\alpha_1$ traveling to $C$, which contradicts the fact that $Q$ is an equilibrium. Hence, $\gamma_{AC}$ reaches $C$ before $\gamma_{AD}$ reaches $D$. Let $T_1 > 0$ be the time at which $\gamma_{AC}$ reaches $C$.

The mass $\alpha_1$ arriving at $C$ splits into a mass $\alpha_1 \alpha_2$ moving along a trajectory $\gamma_{CD}$ on the edge $\{C, D\}$ and a mass $\alpha_1 (1 - \alpha_2)$ moving along $\gamma_{CB}$ on the edge $\{B, C\}$, where $\alpha_2 \in [0, 1]$. Clearly, $\gamma_{CB}$ cannot arrive at $B$ before $\gamma_{CD}$ arrives at $D$, for otherwise $Q$ would not be an equilibrium. Let $T_2 > T_1$ be the time at which $\gamma_{CD}$ reaches $D$.

If $\gamma_{AD}$ reaches $D$ before $\gamma_{CD}$ does, then any part of mass from $\gamma_{AD}$ moving on $\{B, D\}$ would be in advance with respect to a part of the mass from $\gamma_{CD}$ moving on the same edge, and would thus arrive at $B$ before, since optimal trajectories cannot merge (see Proposition \ref{PropMonotoneOptimalTime}). Similarly, $\gamma_{CD}$ cannot reach $D$ before $\gamma_{AD}$, and thus one concludes that both must arrive at $D$ at the same time $T_2$.

At time $T_2$, one has a proportion $\alpha_1 (1 - \alpha_2)$ of the mass moving on $\gamma_{CB}$ at some point in the edge $\{B, C\}$, and a proportion $1 - \alpha_1 + \alpha_1 \alpha_2$ at the point $D$. For the latter mass, it is not optimal for any part of it to take the edges $\{A, D\}$ or $\{C, D\}$, since it would certainly arrive at $B$ after the mass moving on $\gamma_{CB}$. Hence, for $t > T_2$, the mass $1 - \alpha_1 + \alpha_1 \alpha_2$ moves along a trajectory $\gamma_{DB}$ on the edge $\{B, D\}$. Since $Q$ is an equilibrium, both masses arrive at $D$ at the same time $T > T_2$.

The previous arguments show that $m = \mu^Q$ is given by
\begin{equation}
\label{BraessExplicitMt}
m_t = 
\begin{dcases*}
\alpha_1 \delta_{\gamma_{AC}(t)} + (1 - \alpha_1) \delta_{\gamma_{AD}(t)}, & if $0 \leq t < T_1$, \\
\alpha_1 \alpha_2 \delta_{\gamma_{CD}(t)} + \alpha_1 (1 - \alpha_2) \delta_{\gamma_{CB}(t)} + (1 - \alpha_1) \delta_{\gamma_{AD}(t)}, & if $T_1 \leq t < T_2$, \\
\alpha_1 (1 - \alpha_2) \delta_{\gamma_{CB}(t)} + (1 - \alpha_1 + \alpha_1 \alpha_2) \delta_{\gamma_{DB}(t)}, & if $T_2 \leq t < T$, \\
\delta_B, & if $t \geq T$.
\end{dcases*}
\end{equation}
Notice moreover that, since agents move at maximal speed,
\begin{equation}
\label{BraessDotGamma}
\dot\gamma_i(t) = K(m_t, \gamma_i(t)) u_i(t),
\end{equation}
where $i \in \{AC, AD, CD, CB, DB\}$ and $u_i(t) \in \{-1, 1\}$, according to the direction of the movement and the orientation chosen for the edges.

The expression \eqref{BraessExplicitMt} allows one to numerically simulate the equilibrium by searching for $\alpha_1, \alpha_2 \allowbreak \in [0, 1]$ such that, when solving \eqref{BraessExplicitMt}--\eqref{BraessDotGamma}, one obtains $\gamma_{CD}(T_2) = \gamma_{AD}(T_2) = D$ and $\gamma_{DB}(T) = \gamma_{CB}(T) = B$. We describe a first method of implementing the numerical simulation, which is decomposed in two steps. As a first step, we implement a function that, for each $\alpha_2 \in [0, 1]$, finds $\alpha_1 \in [0, 1]$ such that, similarly to Proposition \ref{PropDimensionOneCNS}, one is in one of the following situations:
\begin{itemize}
\item $\alpha_1 \in (0, 1)$ and $\gamma_{CD}(T_2) = \gamma_{AD}(T_2) = D$; or
\item $\alpha_1 = 0$ and $\gamma_{AD}$ reaches $D$ before $\gamma_{CD}$; or
\item $\alpha_1 = 1$ and $\gamma_{CD}$ reaches $D$ before $\gamma_{AD}$.
\end{itemize}
The second step consists on finding $\alpha_2 \in [0, 1]$ such that, when $\alpha_1$ is computed from $\alpha_2$ using the first step, one is in one of the following situations:
\begin{itemize}
\item $\alpha_2 \in (0, 1)$ and $\gamma_{DB}(T) = \gamma_{CB}(T) = B$; or
\item $\alpha_2 = 0$ and $\gamma_{CB}$ reaches $B$ before $\gamma_{DB}$; or
\item $\alpha_2 = 1$ and $\gamma_{DB}$ reaches $B$ before $\gamma_{CB}$.
\end{itemize}
The searches for $\alpha_1$ and $\alpha_2$ in both steps are implemented using a bisection method. Notice that, in the case $\alpha_2 = 0$, the only possible equilibrium is when $\alpha_1 = \frac{1}{2}$.

A second method of implementing the numerical simulation, which is much faster, can be obtained if one is in a situation where the equilibrium is unique. Indeed, notice that, by transforming $t$ into $T - t$, one obtains an equilibrium of a mean field game on the same network with initial condition $\delta_B$ and exit at $\Gamma = \{A\}$. Up to relabeling the vertices, this is an equilibrium of $\MFG(X, \Gamma, K)$, and, by uniqueness, one concludes that $\alpha_1 = 1 - \alpha_1 + \alpha_1 \alpha_2$ and $1 - \alpha_1 = \alpha_1(1 - \alpha_2)$, yielding the relation
\begin{equation}
\label{BraessRelationAlphas}
\alpha_1 = \frac{1}{2 - \alpha_2}.
\end{equation}
Hence, the second method for the simulation consists on finding $\alpha_2 \in [0, 1]$ such that, if $\alpha_1$ is computed from $\alpha_2$ by \eqref{BraessRelationAlphas}, then, similarly to Proposition \ref{PropDimensionOneCNS}, one is in one of the following situations:
\begin{itemize}
\item $\alpha_2 \in (0, 1)$ and $\gamma_{CD}(T_2) = \gamma_{AD}(T_2) = D$; or
\item $\alpha_2 = 0$ and $\gamma_{AD}$ reaches $D$ before $\gamma_{CD}$; or
\item $\alpha_2 = 1$ and $\gamma_{CD}$ reaches $D$ before $\gamma_{AD}$.
\end{itemize}
Notice that it suffices here to guarantee that $\gamma_{CD}$ and $\gamma_{AD}$ arrive at $D$ at the same time (or one of the corresponding masses is zero), since \eqref{BraessRelationAlphas} guarantees the symmetry of the equilibrium.

We have performed this simulation for $L = 1.25$ and different values of $\ell$ on the interval $(0, 0.35]$, approximating the solutions of \eqref{BraessDotGamma} by an explicit Euler method with time step $\Delta t = 10^{-4}$. The functions $g$ and $\chi$ we chose were those from \eqref{SimulGChi} with $\varepsilon = \frac{1}{10}$. We have first selected a few values of $\ell$ and computed an equilibrium using the first method, verifying that the values of $\alpha_1$ and $\alpha_2$ at equilibrium satisfy \eqref{BraessRelationAlphas}. We have then used the second method to simulate the equilibrium for $350$ equally spaced values of $\ell$ on the interval $(0, 0.35]$. Figure \ref{FigEquilBraess} presents the results of this simulation.

\begin{figure}[ht]
\centering
\begin{tabular}{@{} >{\centering} m{0.5\textwidth} @{} >{\centering} m{0.5\textwidth} @{}}
\includegraphics[width=0.5\textwidth]{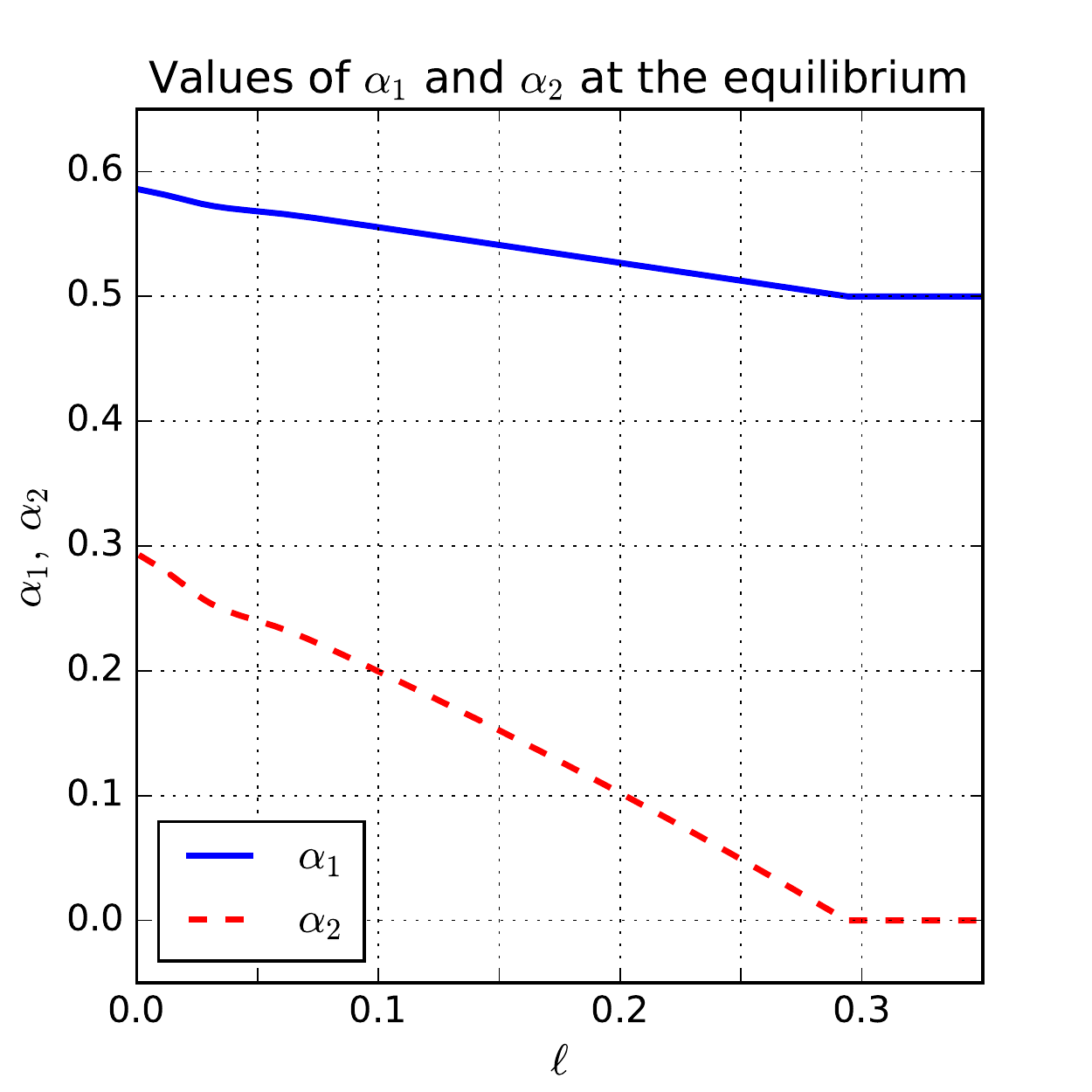} & \includegraphics[width=0.5\textwidth]{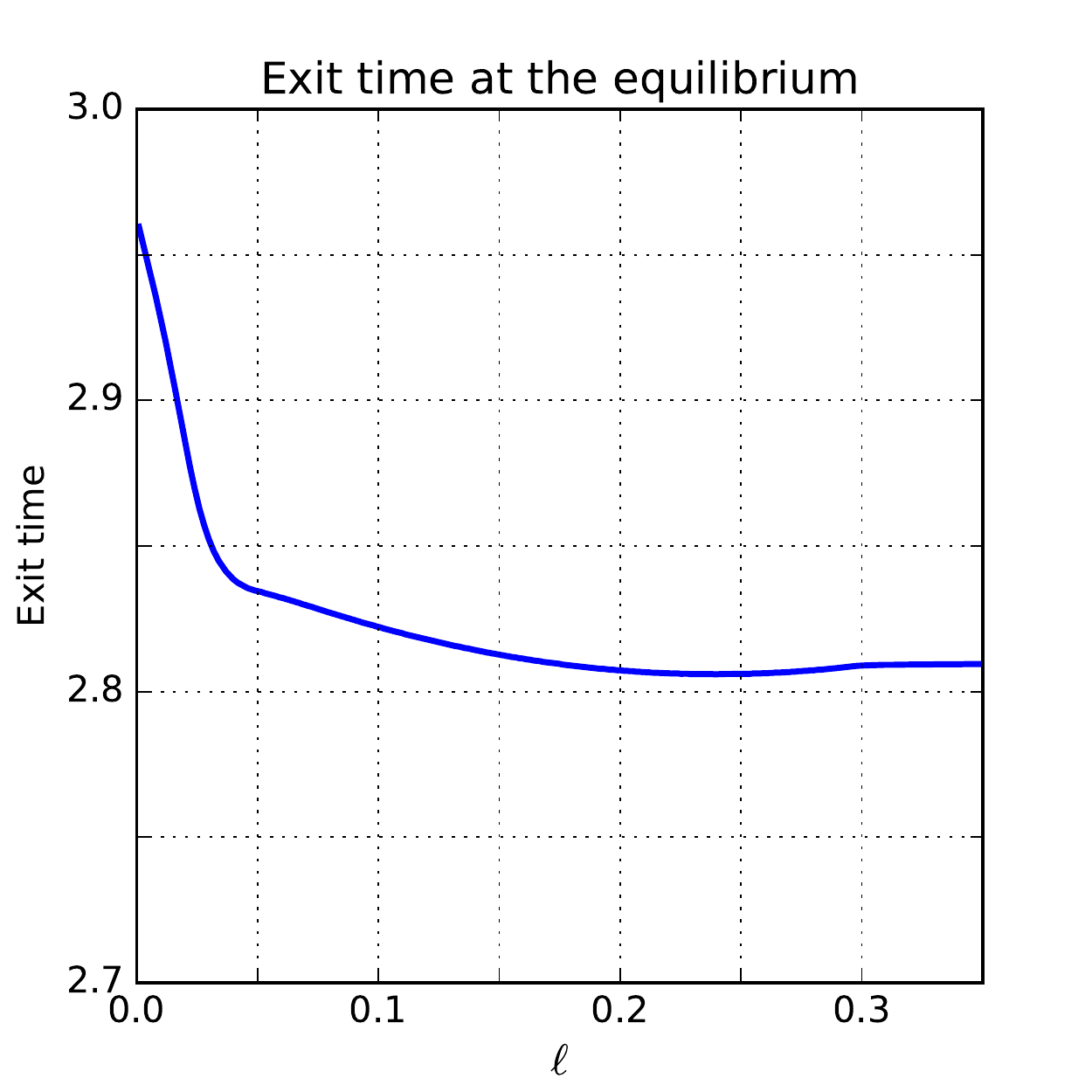} \tabularnewline
(a) & (b) \tabularnewline
\end{tabular}
\caption{Values of (a) $\alpha_1$ and $\alpha_2$ and (b) the exit time at the equilibrium in function of the length $\ell$ of the edge $CD$.}
\label{FigEquilBraess}
\end{figure}

We observe in Figure \ref{FigEquilBraess}(a) that, as expected, $\alpha_2$ decreases with $\ell$, eventually reaching zero when $\ell \approx 0.294$: as the length of $CD$ increases, fewer people will go through it, until $CD$ is so long that no agent wants to move on this edge. The value of $\alpha_1$ also decreases with $\ell$, since it increases with $\alpha_2$: when the length of $CD$ is small, people anticipate the fact that they will go through the edge $CD$ and so a higher proportion of people start by taking the edge $AC$ in order to move through $CD$ later. When $\ell \geq 0.294$, no one travels on the edge $CD$, and the equilibrium is the same as the one we would get if the edge $CD$ did not exist and we had only two edges connecting $A$ to $B$, both of length $1 + L$.

As regards the behavior of the exit time at the equilibrium, shown in Figure \ref{FigEquilBraess}(b), for $\ell \geq 0.294$, the exit time is constant, which is expected since no agents move on $CD$ and hence the presence and the length of $CD$ do not influence the equilibrium. As $\ell$ decreases, one observes a small decrease of the exit time, until it reaches its global minimum at $\ell_\ast \approx 0.239$. As $\ell$ decreases below $\ell_\ast$, the exit time increases (very sharply when $\ell < 0.05$), which may at first look counter-intuitive since decreasing $\ell$ means decreasing the length of the shortest path from $A$ to $B$, which is the sequence of edges $(AC, CD, DB)$ when $\ell < 0.25$. This exactly corresponds to a Braess-type paradox.

As in the original Braess paradox, this situation can be explained by the congestion terms. Indeed, as $\ell$ decreases below $\ell_\ast$, more agents will go through the path $CD$, which means that more agents will also move through $AC$ and $DB$, increasing congestion on those edges.

\subsection{Mean field game on a disk}

As a last example, consider the mean field game $\MFG(X, \Gamma, K)$ with $\Omega = B_d(0, 1)$, $X = \overline\Omega$, $\Gamma = \partial\Omega = \mathbb S^{d-1}$, and $K$ satisfying Hypothesis \ref{MainHypo}\ref{HypoKConvolution}. We assume further that the functions $\chi$ and $\eta$ from Hypothesis \ref{MainHypo}\ref{HypoKConvolution} are radial, i.e., there exist $\chi_0, \eta_0: \mathbb R_+ \to \mathbb R_+$ such that $\chi(x) = \chi_0(\abs{x})$ and $\eta(x) = \eta_0(\abs{x})$ for every $x \in \mathbb R^d$. When the initial condition is a Dirac mass at the center of the ball, one can obtain an explicit expression for an equilibrium, given in the following proposition.

\begin{prop}
\label{MFGBall-SymmetricEquilibrium}
Let $K$, $g$, $\chi$, and $\eta$ satisfy Hypothesis \ref{MainHypo}\ref{HypoKConvolution} and assume that there exist $\chi_0, \eta_0: \mathbb R_+ \to \mathbb R_+$ such that $\chi$ and $\eta$ are given by $\chi(x) = \chi_0(\abs{x})$ and $\eta(x) = \eta_0(\abs{x})$ for every $x \in \mathbb R^d$. Let $e \in \mathbb S^{d-1}$, $\nu$ be the normalized uniform measure on $\mathbb S^{d-1}$, and $\widetilde r: \mathbb R_+ \to \mathbb R_+$ be the unique solution of
\begin{equation}
\label{DefTildeR}
\left\{
\begin{aligned}
\dot{\widetilde r}(t) & = g\left[\eta_0(\widetilde r(t)) \int_{\mathbb S^{d-1}} \chi_0(\widetilde r(t) \abs{e - \omega}) \diff \nu(\omega)\right], \\
\widetilde r(0) & = 0.
\end{aligned}
\right.
\end{equation}
Then $\widetilde r$ does not depend on $e \in \mathbb S^{d-1}$. Define $r: \mathbb R_+ \to [0, 1]$ by $r(t) = \min\{\widetilde r(t), 1\}$ and let $R: \mathbb S^{d-1} \to \mathcal C_{\overline\Omega}$ be the function defined by $R(\omega)(t) = r(t) \omega$ for every $\omega \in \mathbb S^{d-1}$ and $t \in \mathbb R_+$. Then $Q = R_{\#} \nu$ is an equilibrium of $\MFG(X, \Gamma, K)$ with initial condition $\delta_0$, and, letting $m = \mu^Q$, $m_t$ is the normalized uniform measure on the sphere of radius $r(t)$ and centered at $0$ for every $t > 0$.
\end{prop}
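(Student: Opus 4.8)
The plan is to verify the three claims directly, the crux being the optimality of the radial curves $R(\omega)$. First I would prove the independence of $\widetilde r$ on $e$: the normalized uniform measure $\nu$ is invariant under the orthogonal group $O(d)$, which acts transitively on $\mathbb S^{d-1}$, so for $e, e' \in \mathbb S^{d-1}$ and $\mathcal R \in O(d)$ with $\mathcal R e = e'$, the change of variables $\omega = \mathcal R\omega'$ gives $\int_{\mathbb S^{d-1}} \chi_0(s\abs{e'-\omega})\diff\nu(\omega) = \int_{\mathbb S^{d-1}} \chi_0(s\abs{e - \omega'})\diff\nu(\omega')$ for every $s \ge 0$. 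Hence the right-hand side of \eqref{DefTildeR} does not depend on $e$, and since it is Lipschitz in $\widetilde r$ (because $g, \chi_0, \eta_0 \in \mathcal C^{1,1}$ and are bounded), uniqueness of solutions makes $\widetilde r$ independent of $e$. As $\dot{\widetilde r} = g[\cdots] \ge K_{\min} > 0$, the function $\widetilde r$ is strictly increasing and reaches $1$ at some finite time $T \le 1/K_{\min}$, so that $r(t) < 1$ for $t < T$ and $r(t) = 1$ for $t \ge T$.

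Next, the last assertion is immediate: $m_t = {e_t}_\# Q = (e_t \circ R)_\# \nu$ is the pushforward of $\nu$ by the dilation $\omega \mapsto r(t)\omega$, hence the normalized uniform measure on the sphere of radius $r(t)$ centered at $0$. Using this, I would compute the speed field $k(t,x) = K(m_t, x) = g[E(m_t, x)]$ with $E(m_t, x) = \eta_0(r(t)) \int_{\mathbb S^{d-1}} \chi_0(\abs{x - r(t)\omega'})\diff\nu(\omega')$, where radiality of $\eta$ was used. By the same $O(d)$-invariance of $\nu$, this integral depends on $x$ only through $\abs{x}$, so $k(t,x) = \kappa(t, \abs{x})$ for a suitable $\kappa$. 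Evaluating along $R(\omega)$, where $\abs{R(\omega)(t)} = r(t)$, and comparing with \eqref{DefTildeR} yields, for $t < T$, $\kappa(t, r(t)) = \dot{\widetilde r}(t) = \dot r(t)$; thus $R(\omega)$ is admissible and moves radially outward at exactly the maximal admissible speed, reaching $\partial\Omega$ precisely at time $T$ and remaining at $\omega \in \partial\Omega$ for $t \ge T$.

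To conclude optimality, I would show $T$ is the minimal exit time from $0$ by a comparison argument. For any $\beta \in \Adm(k)$ with $\beta(0) = 0$, the function $\rho(t) = \abs{\beta(t)}$ is Lipschitz and satisfies $\dot\rho(t) \le \abs{\dot\beta}(t) \le k(t, \beta(t)) = \kappa(t, \rho(t))$ for a.e.\ $t$, while $r$ solves $\dot r = \kappa(t, r)$ with $r(0) = \rho(0) = 0$. Since $\kappa(t, \cdot)$ is Lipschitz uniformly in $t$ (as $\chi_0, \eta_0, g$ are Lipschitz and bounded and $r$ is bounded), the differential inequality can be compared with the equation: setting $w = (\rho - r)^+$, one has $\frac{d}{dt} w \le L\, w$ a.e.\ with $w(0) = 0$, so Gronwall gives $w \equiv 0$, i.e.\ $\abs{\beta(t)} \le r(t)$ for all $t$. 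In particular $\abs{\beta(t)} \le r(t) < 1$ for $t < T$, so $\beta$ cannot reach $\Gamma = \mathbb S^{d-1}$ before $T$, whence $\tau(0, \beta) \ge T = \tau(0, R(\omega))$. Therefore $R(\omega) \in \Opt(k, 0, 0)$ for every $\omega$; since $Q = R_\# \nu$ is concentrated on $\{R(\omega)\}_\omega$ and ${e_0}_\# Q = \delta_0$ (because $r(0) = 0$), $Q$ is an equilibrium with initial condition $\delta_0$.

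The main obstacle I anticipate is the self-consistent radial reduction: one must verify both that the density $m_t$ generated by $Q$ produces, through the nonlocal operator $K$, a speed field that is genuinely radial, and that its value along the radial flow reproduces exactly the ODE \eqref{DefTildeR}, i.e.\ $\kappa(t, r(t)) = \dot r(t)$. These are symmetry computations relying on the $O(d)$-invariance of $\nu$ and the radiality of $\chi$ and $\eta$; once they are in place, the comparison principle and the exit-time conclusion are routine.
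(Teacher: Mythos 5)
Your proposal is correct and follows essentially the same route as the paper: the paper's (sketched) proof likewise reduces everything to showing that each $R(\omega)$ is an optimal trajectory, by observing that $\abs{R(\omega)(t)} = r(t)$ while $\abs{\gamma(t)} \leq r(t)$ for every admissible trajectory $\gamma$ and every $t \geq 0$. Your $O(d)$-invariance computations and the Gronwall comparison argument supply exactly the details that the paper leaves implicit.
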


As in the proof of Proposition \ref{PropDimensionOneCNS}, the main idea for the proof of Proposition \ref{MFGBall-SymmetricEquilibrium} is to show that $R(\omega)$ is an optimal trajectory for every $\omega \in \mathbb S^{d-1}$, which is done by observing that $\abs{R(\omega)(t)} = r(t)$ and $\abs{\gamma(t)} \leq r(t)$ for every admissible trajectory $\gamma$ and every $t \geq 0$.

Notice that the equilibrium from Proposition \ref{MFGBall-SymmetricEquilibrium} is not necessarily unique. Indeed, if $g$ is constant and equal to $1$, then the measure $Q_\omega$ concentrated on the curve $t \mapsto \min\{t, 1\} \omega$ is an equilibrium for every $\omega \in \mathbb S^{d-1}$.

We are now interested in the case where the initial condition is $\delta_p$ for some point $p \in B_d(0, 1)$. Notice that, when $g = 1$ and $p \neq 0$, there exists a unique equilibrium $Q$ of $\MFG(X, \Gamma, K)$ with initial condition $\delta_p$, which is concentrated on the segment joining $p$ to its closest point on the boundary with unit speed. However, when $g$ is decreasing and $\chi$ is a sufficiently concentrated radially decreasing convolution kernel, one may expect that agents will avoid congestion, concentrating on a surface evolving in time, and arriving at the boundary at a same time. Moreover, if $p$ is close to the origin, one may expect to find equilibria with closed surfaces similar to the circles from the equilibrium of Proposition \ref{MFGBall-SymmetricEquilibrium}.

Since an explicit characterization of an equilibrium similar to Proposition \ref{MFGBall-SymmetricEquilibrium} is too hard to obtain with initial condition $\delta_p$, $p \neq 0$, we validate the previous intuition of the behavior of equilibria by a numerical simulation. The idea for our simulation goes as follows.

Given an equilibrium $Q$, let $\mu$ be the measure on $\mathbb S^{d-1}$ obtained as the pushforward of $Q$ by the map that, to each optimal trajectory $\gamma$, associates the value of the corresponding optimal control at time $0$, $u(0)$. If $\mu$ is known, then one can obtain an approximation of $m_t$ using \eqref{SystGammaURegular}. Indeed, for $N \in \mathbb N$, we can approximate $\mu$ as a sum of finitely many Dirac masses, $\mu^N = \sum_{k=1}^N \mu_k \delta_{z_k}$, where $z_k \in \mathbb S^{d-1}$ and $\mu_k \in [0, 1]$, $\sum_{k = 1}^N \mu_k = 1$. One can thus compute $N$ trajectories $\gamma_1, \dotsc, \gamma_N$ and their associated optimal controls $u_1, \dotsc, u_N$ by solving \eqref{SystGammaURegular} with initial conditions $\gamma_k(0) = p$, $u_k(0) = z_k$ for $k \in \{1, \dotsc, N\}$, where we approximate $m_t$ by $m_t^N = \sum_{k=1}^N \mu_k \delta_{\gamma_k(t)}$. Since all optimal trajectories arrive at $\partial\Omega$ at the same time, one expects that the approximated trajectories $\gamma_1, \dotsc, \gamma_N$ corresponding to a weight $\mu_k > 0$ will arrive at $\partial\Omega$ at approximately the same time.

One may thus search for an equilibrium by searching for $\mu \in \mathcal P(\mathbb S^{d-1})$ such that the above approximated trajectories with positive weigth arrive at $\partial\Omega$ at the same time. This motivates the introduction of Algorithm \ref{Algo}, based on a fixed-point strategy, to compute an equilibrium for $\MFG(X, \Gamma, K)$ with initial condition $\delta_p$.

\begin{algorithm}
\caption{Algorithm used to find an equilibrium for $\MFG(\overline\Omega, \partial\Omega, K)$ with initial condition $\delta_p$.}
\label{Algo}
\centering
\tikzstyle{block} = [rectangle, rounded corners=3mm, very thick, font=\small, draw=blue!50!white, fill=blue!10!white]
\tikzstyle{decision} = [diamond, shape aspect=2.5, very thick, font=\small, draw=red!50!white, fill=red!10!white, inner sep=0mm]

\begin{tikzpicture}
\node[block] (step1) at (0, 0) {\parbox[c][\height][c]{4cm}{\centering Choose an initial measure $\mu \in \mathcal P(\mathbb S^{d-1})$ for $u(0)$.}};
\node[block] (step2) at (0, -1.8) {\parbox[c][\height][c]{4cm}{\centering Compute $N$ optimal trajectories using \eqref{SystGammaURegular}.}};
\node[decision] (test) at (0, -4.5) {\parbox[c][\height][c]{4cm}{\centering Do all trajectories with positive mass arrive at $\partial\Omega$ at the same time?}};
\node[block] (end) at (0, -7.5) {\parbox[c][\height][c]{4cm}{\centering $\mu$ yields an equilibrium.}};
\node[block] (loop) at (-5, -2.5) {\parbox[c][\height][c]{4cm}{\centering Choose another $\mu$, putting more mass on those who arrived early and less mass on those who arrived late.}};
\node at (7, 0) {};
\draw[thick, -Stealth] (step1) -- (step2);
\draw[thick, -Stealth] (step2) -- (test);
\draw[thick, -Stealth] (test) -- node[midway, right] {\small Yes} (end);
\draw[thick, -Stealth] (test) -- node[midway, below] {\small No} +(-5, 0) -- (loop);
\draw[thick, -Stealth] (loop) -- (-5, -0.7) -- (0, -0.7);
\end{tikzpicture}
\end{algorithm}

We have implemented Algorithm \ref{Algo} in dimension $d = 2$, identifying the plane $\mathbb R^2$ with the complex plane $\mathbb C$ for simplicity, and choosing $g$, $\chi_0$, and $\eta_0$ as
\begin{gather*}
g(x) = \frac{1}{1 + \left(\frac{x}{2}\right)^2}, \qquad \chi_0(x) = 
\begin{dcases*}
\frac{1}{2 \varepsilon}\left[1 + \cos\left(\frac{\pi x}{\varepsilon}\right)\right], & if $0 \leq x < \varepsilon$, \\
0, & if $x \geq \varepsilon$,
\end{dcases*} \displaybreak[0] \\ \eta_0(x) = 
\begin{dcases*}
1, & if $0 \leq x < 1 - \varepsilon$, \\
\frac{1}{2}\left[1 - \cos\left(\frac{\pi (1 - x)}{\varepsilon}\right)\right], & if $1 - \varepsilon \leq x < 1$, \\
0, & if $x \geq 1$,
\end{dcases*}
\end{gather*}
with $\varepsilon = \frac{1}{10}$. Figure \ref{FigSimulCircle} presents the results of the numerical simulations, showing the densities, with respect to the normalized uniform measure on $\mathbb S^1$, of the measures $\mu$ yielding an equilibrium for different initial conditions of the form $\delta_p$. Figure \ref{FigSimulCircleTrajectories} shows the support of the measures $m_t$ and some optimal trajectories at different times for the initial condition $\delta_{(0.1, 0)}$. Before commenting on the results shown in Figures \ref{FigSimulCircle} and \ref{FigSimulCircleTrajectories}, let us describe some details of the implementation of Algorithm \ref{Algo}.

\begin{figure}[ht]
\centering
\begin{tabular}{@{} >{\centering} m{0.5\textwidth} @{} >{\centering} m{0.5\textwidth} @{}}
\includegraphics[width=0.5\textwidth]{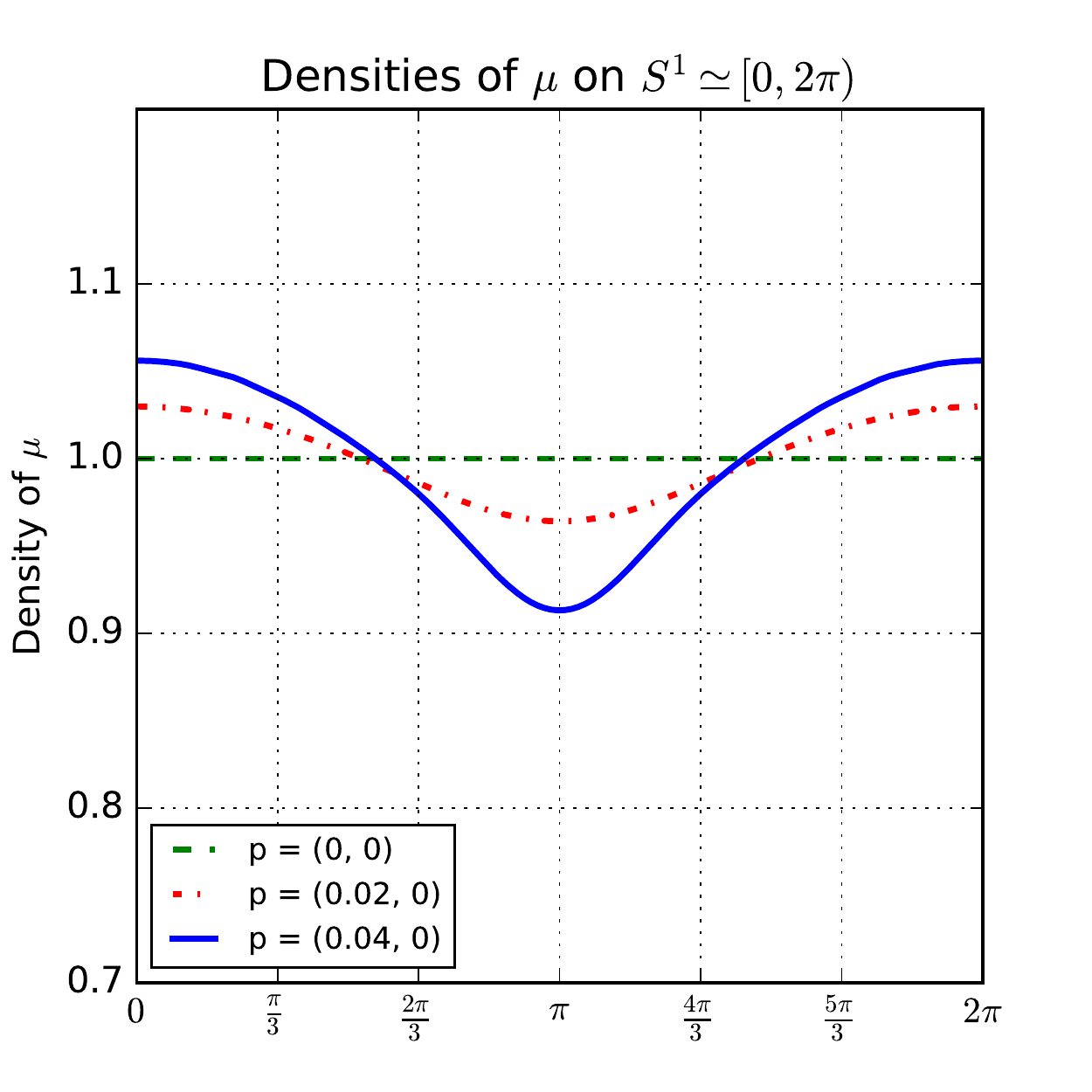} & \includegraphics[width=0.5\textwidth]{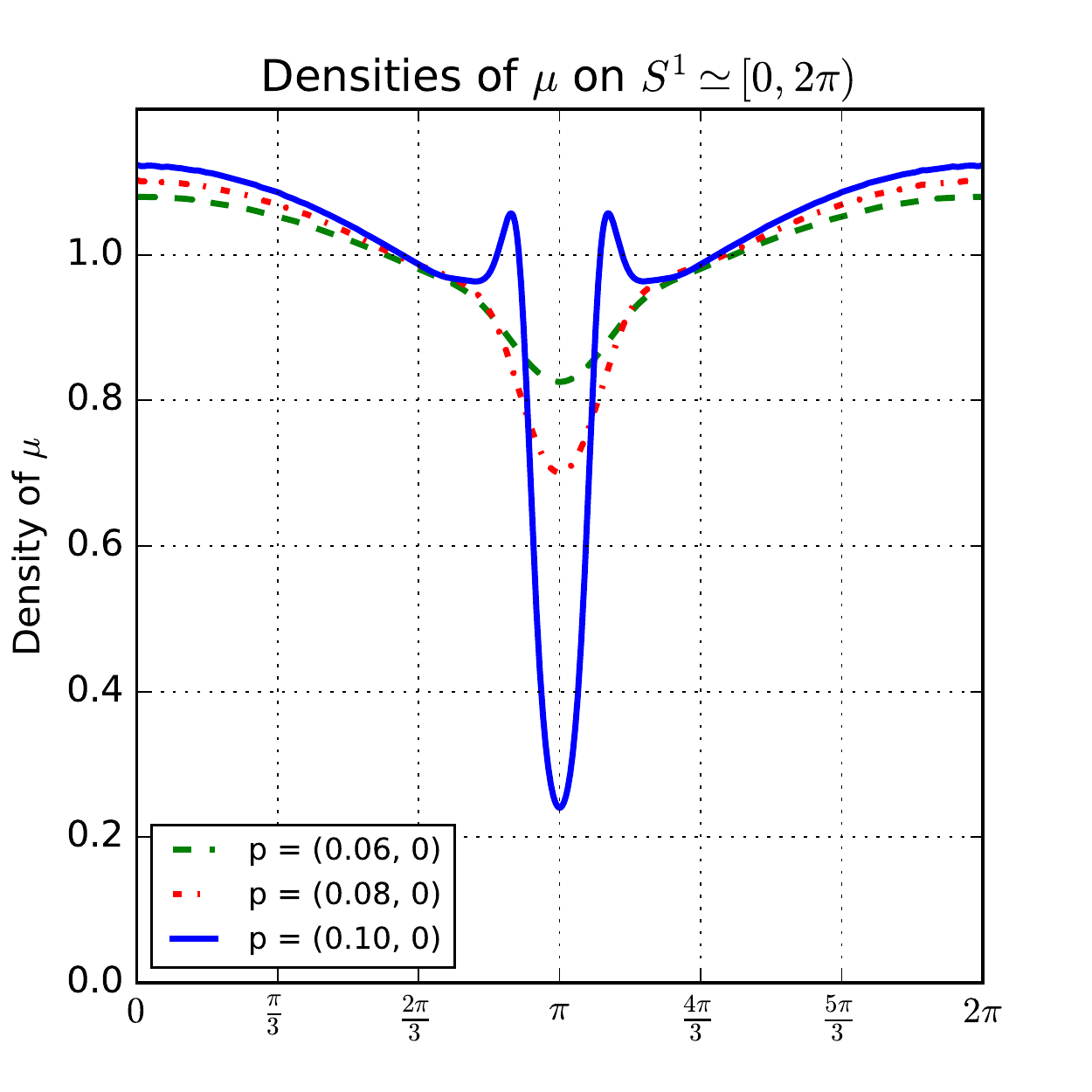} \tabularnewline
(a) & (b) \tabularnewline
\end{tabular}
\caption{Densities of the measures $\mu$ yielding an equilibrium with initial condition $\delta_p$ for (a) $p = (0, 0)$, $p = (0.02, 0)$, and $p = (0.04, 0)$, and (b) $p = (0.06, 0)$, $p = (0.08, 0)$, and $p = (0.1, 0)$. Notice that the scales of the vertical axes are not the same.}
\label{FigSimulCircle}
\end{figure}

\begin{figure}[ht]
\centering
\begin{tabular}{@{} >{\centering} m{0.5\textwidth} @{} >{\centering} m{0.5\textwidth} @{}}
\includegraphics[width=0.5\textwidth]{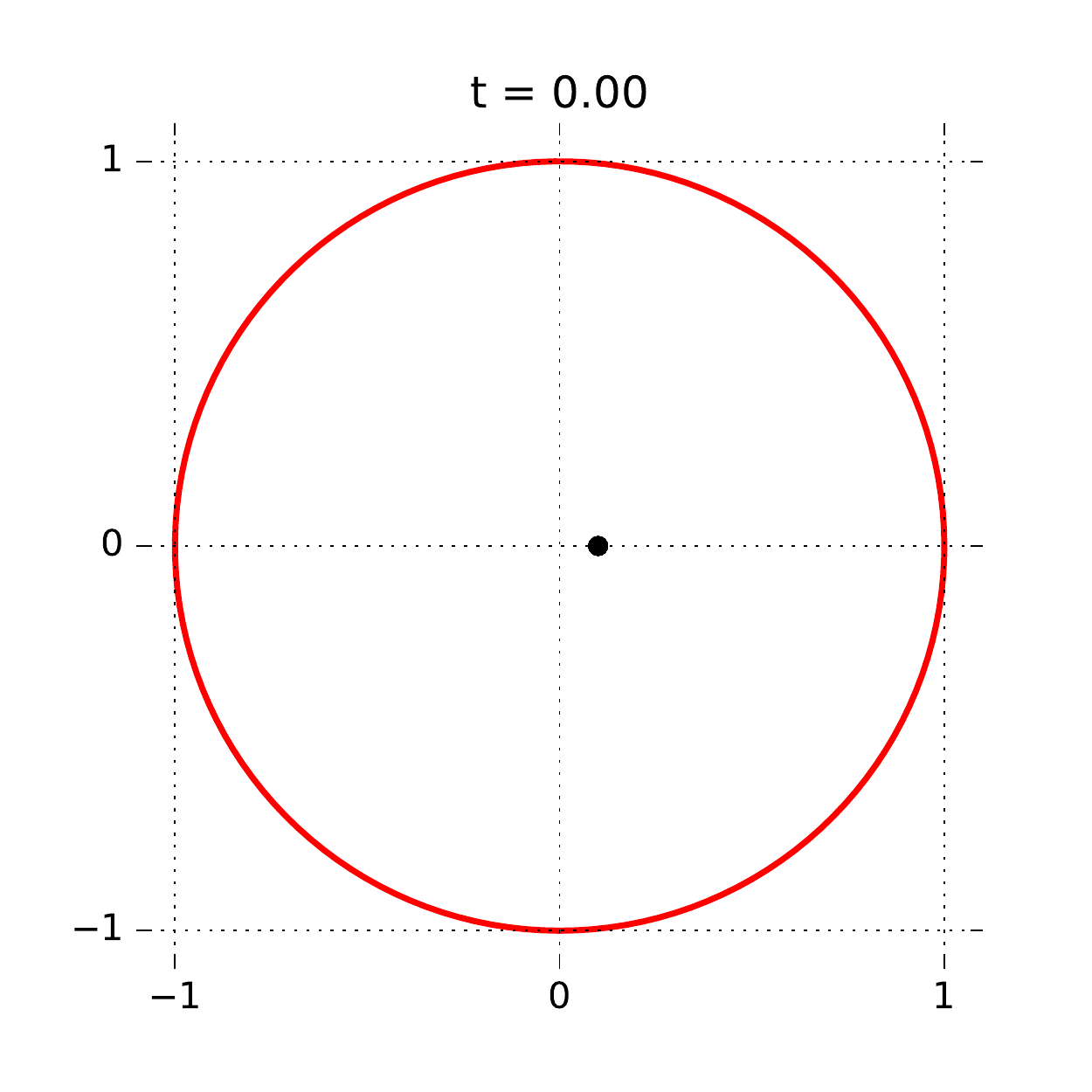} & \includegraphics[width=0.5\textwidth]{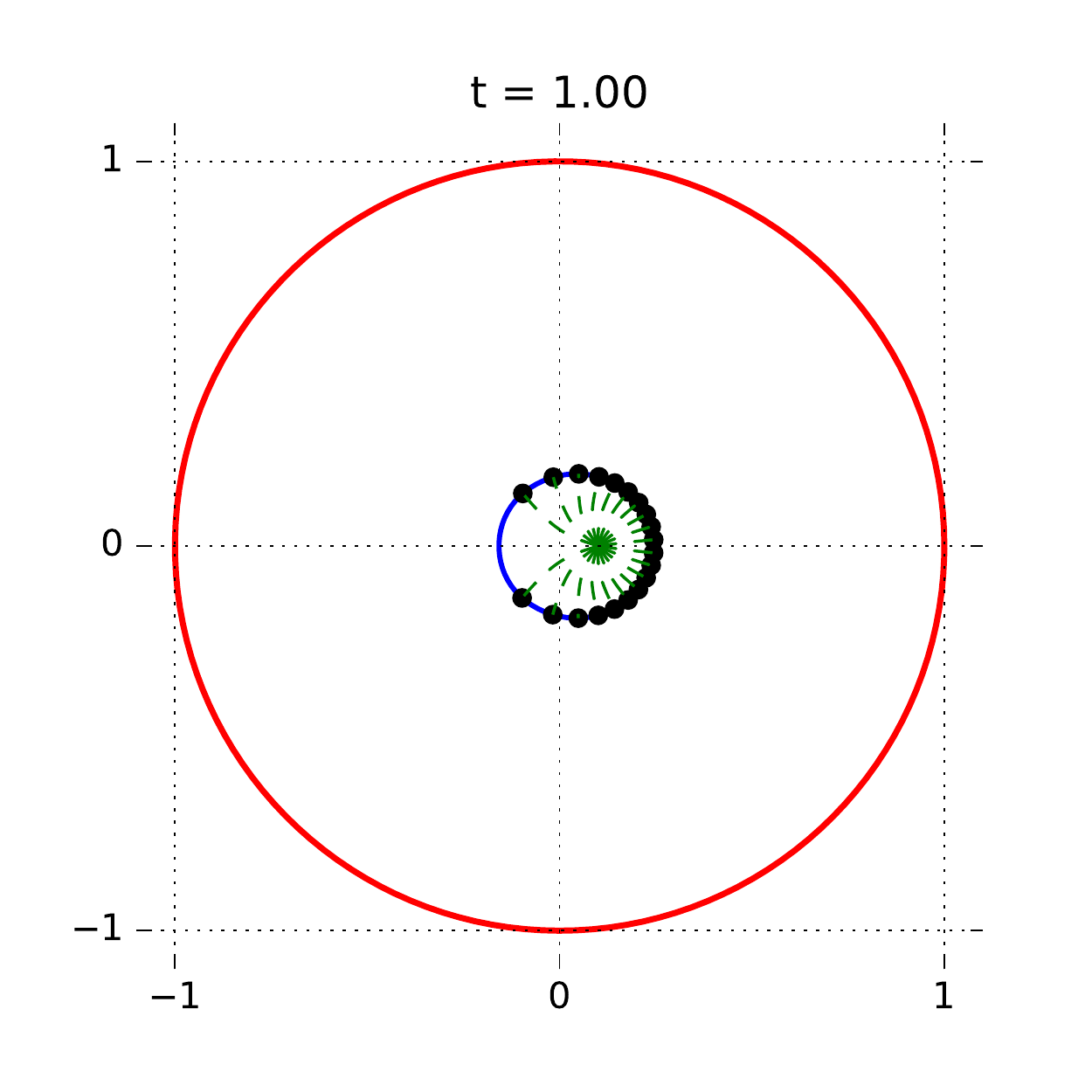} \tabularnewline
(a) & (b) \tabularnewline
\includegraphics[width=0.5\textwidth]{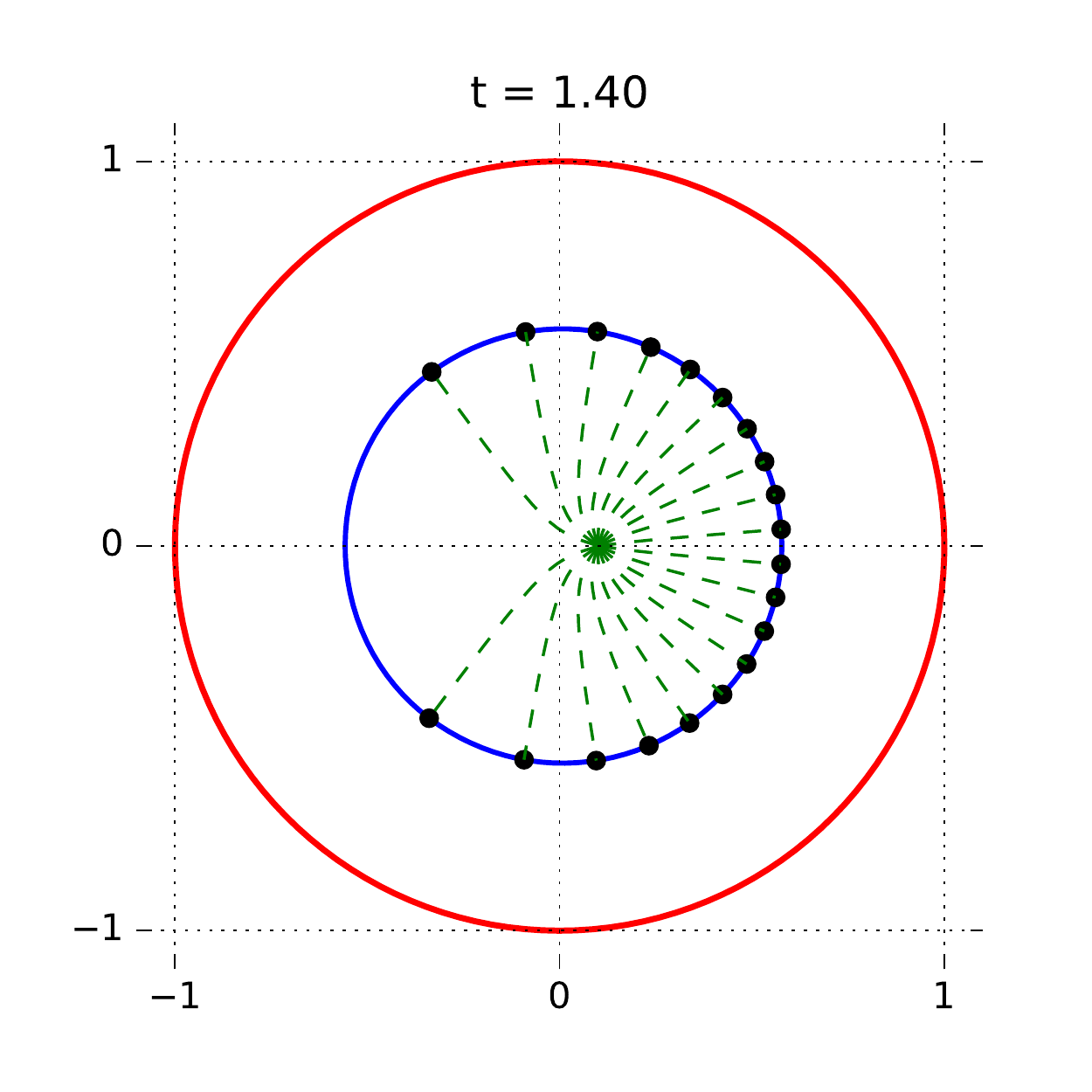} & \includegraphics[width=0.5\textwidth]{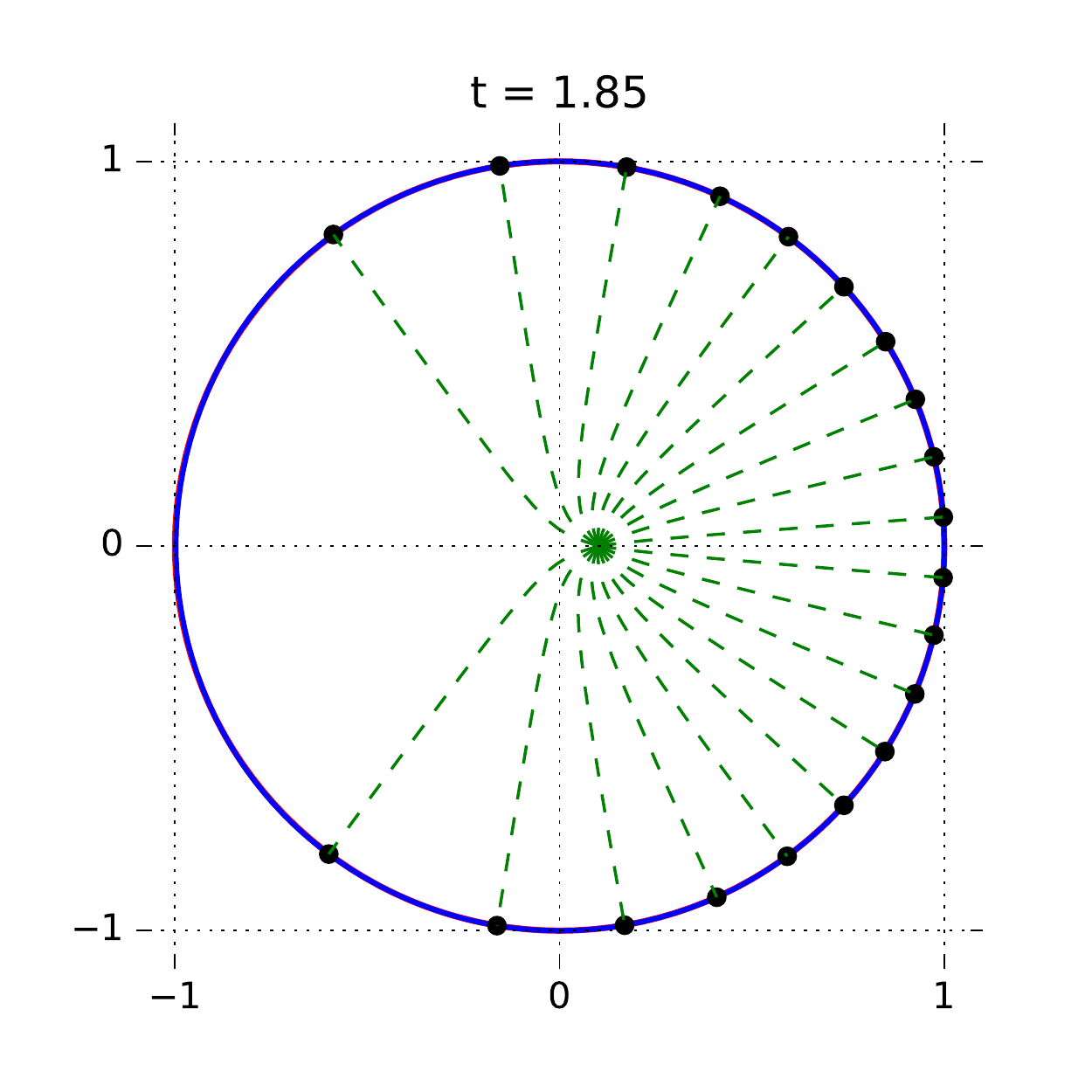} \tabularnewline
(c) & (d) \tabularnewline
\end{tabular}
\caption{Support of $m_t$ and optimal trajectories for $p = (0.1, 0)$ at times (a) $t = 0$, (b) $t = 1$, (c) $t = 1.4$, and (d) the final time $t \approx 1.85$.}
\label{FigSimulCircleTrajectories}
\end{figure}

The choice of the initial measure $\mu \in \mathcal P(\mathbb S^{d-1})$ is important: from the numerical simulations, the algorithm seems to converge only if the initial measure is sufficiently close to a measure yielding an equilibrium. When $p$ is close to $0$, a good choice, based on Proposition \ref{MFGBall-SymmetricEquilibrium}, is to take $\mu$ as the normalized uniform measure on $\mathbb S^{d-1}$. For $p$ further from $0$, a possible technique is to choose points $p_0, p_1, \dotsc, p_n$ with $p_0 = 0$, $p_n = p$, and $\abs{p_i - p_{i-1}}$ small for every $i \in \{1, \dotsc, n\}$, and apply the algorithm to compute successively an equilibrium with initial condition $\delta_{p_i}$ using as initial measure the one that yields an equilibrium with initial condition $\delta_{p_{i-1}}$ (in the spirit of the so-called {\it continuation method}).

The initial measure $\mu$ was numerically approximated by a finite sum of $N$ equally spaced Dirac masses on $\mathbb S^1$, $\mu \approx \sum_{k=1}^N \mu_k \delta_{z_k}$, where $z_k = e^{i \frac{2\pi}{N}\left(k + \frac{1}{2}\right)}$ and $\mu_k = \mu(\{e^{i\theta} \mid \frac{2\pi}{N}k \leq \theta < \frac{2\pi}{N}\left(k + 1\right)\})$ for $k \in \{1, \dotsc, N\}$. For our simulations, we chose $N = 1000$. In order to compute an equilibrium for the initial condition $\delta_p$ with $p = (0.1, 0)$, as described before, we have computed equilibria with initial conditions $\delta_{p_k}$ with $p_k = \left(\frac{k}{100}, 0\right)$ for $k \in \{1, \dotsc, 10\}$, using at each step the measure $\mu$ yielding an equilibrium for $p_k$ as an initial measure for $p_{k+1}$.

Concerning the computations of the optimal trajectories from \eqref{SystGammaURegular}, the time discretization was done by an explicit Euler method with time step $\Delta t = 10^{-2}$. The simulation is stopped at the time $T$ when the first trajectory $\gamma_k$ with positive mass reaches the boundary, and we say that all trajectories have reached the boundary at the same time if $\max_{k \in \{1, \dotsc, N\} \mid \mu_k > 0} \abs{\abs{\gamma_k(T)} - 1} \allowbreak < \zeta$, where the tolerance $\zeta$ was chosen as $\zeta = 3 \cdot 10^{-3}$.

The intuition for the construction of a new initial measure $\mu^\prime$ from $\mu$ and the final positions $(\gamma_k(T))_{k = 1}^N$ is that $\mu^\prime$ should contain less mass at the points that ended further from the boundary. To do so, our choice was to first construct a measure $\mu^{\prime\prime} = \sum_{k=1}^N \mu_{k}^{\prime\prime} \delta_{z_k}$ given by
\begin{equation}
\label{muPrimePrime}
\mu_k^{\prime\prime} = \frac{\mu_k \abs{\gamma_k(T)}}{\sum_{k=1}^N \mu_j \abs{\gamma_j(T)}},
\end{equation}
and then define $\mu^\prime = \sum_{k=1}^N \mu_k^\prime \delta_{z_k}$ by
\begin{equation}
\label{muPrime}
\mu_k^\prime = \frac{\mu_k + \mu_k^{\prime\prime}}{2}.
\end{equation}
The construction of $\mu^{\prime\prime}$ in \eqref{muPrimePrime} uses the condition $\abs{\gamma_k(T)} < 1$ for trajectories that did not arrive at the boundary as a mean to reduce the mass given to such trajectories. The renormalization will then increase the mass of trajectories that arrived first at the boundary. The average computed in \eqref{muPrime} was used in order to improve stability of the method, even though it reduces convergence speed.

Figure \ref{FigSimulCircle} shows that, as expected, the measure $\mu$ yielding an equilibrium is uniform when $p = (0, 0)$ and its density around the angle $\pi$ decreases as the $x$-component of $p$ increases, with a very sharp decrease when $p = (0.1, 0)$. Figure \ref{FigSimulCircleTrajectories} shows the behavior of the support of $m_t^N$ and 20 optimal trajectories at different times from $0$ until the final time $t \approx 1.85$. The 20 trajectories represented were chosen in such a way that the total mass of agents between any two neighbor trajectories is the same, and thus their positions at time $t$ provide an illustration of the measure $m_t^N$. One can observe that, even though $p$ is close to the origin in this case, agents are much more concentrated on the right half-ball, with a few proportion of agents moving into the left half-ball.

\bibliographystyle{abbrv}
\bibliography{Bib}

\begin{thebibliography}{10}

\bibitem{Achdou2010Mean}
Y.~Achdou and I.~Capuzzo-Dolcetta.
\newblock Mean field games: numerical methods.
\newblock {\em SIAM J. Numer. Anal.}, 48(3):1136--1162, 2010.

\bibitem{Achdou2016Convergence}
Y.~Achdou and A.~Porretta.
\newblock Convergence of a finite difference scheme to weak solutions of the
  system of partial differential equations arising in mean field games.
\newblock {\em SIAM J. Numer. Anal.}, 54(1):161--186, 2016.

\bibitem{AchPor-cong}
Y.~Achdou and A.~Porretta.
\newblock Mean field games with congestion.
\newblock {\em Ann. Inst. H. Poincar\'{e} Anal. Non Lin\'{e}aire},
  35(2):443--480, 2018.

\bibitem{Ambrosio2008Transport}
L.~Ambrosio.
\newblock Transport equation and {C}auchy problem for non-smooth vector fields.
\newblock In {\em Calculus of variations and nonlinear partial differential
  equations}, volume 1927 of {\em Lecture Notes in Math.}, pages 1--41.
  Springer, Berlin, 2008.

\bibitem{Ambrosio2005Gradient}
L.~Ambrosio, N.~Gigli, and G.~Savar{\'e}.
\newblock {\em Gradient flows in metric spaces and in the space of probability
  measures}.
\newblock Lectures in Mathematics ETH Z\"urich. Birkh\"auser Verlag, Basel,
  2005.

\bibitem{Aubin2009Set}
J.-P. Aubin and H.~Frankowska.
\newblock {\em Set-valued analysis}.
\newblock Modern Birkh\"auser Classics. Birkh\"auser Boston, Inc., Boston, MA,
  2009.
\newblock Reprint of the 1990 edition.

\bibitem{Bardi1997Optimal}
M.~Bardi and I.~Capuzzo-Dolcetta.
\newblock {\em Optimal control and viscosity solutions of
  {H}amilton-{J}acobi-{B}ellman equations}.
\newblock Systems \& Control: Foundations \& Applications. Birkh\"auser Boston,
  Inc., Boston, MA, 1997.
\newblock With appendices by Maurizio Falcone and Pierpaolo Soravia.

\bibitem{Benamou2017Variational}
J.-D. Benamou, G.~Carlier, and F.~Santambrogio.
\newblock Variational mean field games.
\newblock In {\em Active particles. {V}ol. 1. {A}d\-van\-ces in theory, models,
  and applications}, Model. Simul. Sci. Eng. Technol., pages 141--171.
  Birkh\"auser/Springer, Cham, 2017.

\bibitem{Bensoussan2017Interpretation}
A.~Bensoussan, J.~Frehse, and S.~C.~P. Yam.
\newblock On the interpretation of the {M}aster {E}quation.
\newblock {\em Stochastic Process. Appl.}, 127(7):2093--2137, 2017.

\bibitem{Bernot2009Optimal}
M.~Bernot, V.~Caselles, and J.-M. Morel.
\newblock {\em Optimal transportation networks}, volume 1955 of {\em Lecture
  Notes in Mathematics}.
\newblock Springer-Verlag, Berlin, 2009.
\newblock Models and theory.

\bibitem{Billingsley1999Convergence}
P.~Billingsley.
\newblock {\em Convergence of probability measures}.
\newblock Wiley Series in Probability and Statistics. John Wiley \& Sons, Inc.,
  New York, second edition, 1999.
\newblock A Wiley-Interscience Publication.

\bibitem{Bourbaki2007Topologie}
N.~Bourbaki.
\newblock {\em Topologie G{\'e}n{\'e}rale. Chapitres 5 {\`a} 10}.
\newblock Éléments de Mathématique. Springer, 2007.

\bibitem{Braess1968Paradoxon}
D.~Braess.
\newblock {\"U}ber ein {P}aradoxon aus der {V}erkehrsplanung.
\newblock {\em Unternehmensforschung}, 12:258--268, 1968.

\bibitem{Brenier1989Least}
Y.~Brenier.
\newblock The least action principle and the related concept of generalized
  flows for incompressible perfect fluids.
\newblock {\em J. Amer. Math. Soc.}, 2(2):225--255, 1989.

\bibitem{Burger2013Mean}
M.~Burger, M.~D. Francesco, P.~A. Markowich, and M.-T. Wolfram.
\newblock On a mean field game optimal control approach modeling fast exit
  scenarios in human crowds.
\newblock In {\em 52nd {IEEE} Conference on Decision and Control}. {IEEE}, dec
  2013.

\bibitem{Cacace2017Numerical}
S.~Cacace, F.~Camilli, and C.~Marchi.
\newblock A numerical method for mean field games on networks.
\newblock {\em ESAIM Math. Model. Numer. Anal.}, 51(1):63--88, 2017.

\bibitem{Camilli2015Model}
F.~Camilli, E.~Carlini, and C.~Marchi.
\newblock A model problem for mean field games on networks.
\newblock {\em Discrete Contin. Dyn. Syst.}, 35(9):4173--4192, 2015.

\bibitem{CannarsaExistence}
P.~Cannarsa and R.~Capuani.
\newblock Existence and uniqueness for mean field games with state constraints.
\newblock Preprint arXiv:1711.01063.

\bibitem{Cannarsa2007Lipschitz}
P.~Cannarsa and M.~Castelpietra.
\newblock Lipschitz continuity and local semiconcavity for exit time problems
  with state constraints.
\newblock {\em J. Differential Equations}, 245(3):616--636, 2008.

\bibitem{Cannarsa2008Regularity}
P.~Cannarsa, M.~Castelpietra, and P.~Cardaliaguet.
\newblock Regularity properties of attainable sets under state constraints.
\newblock In {\em Geometric control and nonsmooth analysis}, volume~76 of {\em
  Ser. Adv. Math. Appl. Sci.}, pages 120--135. World Sci. Publ., Hackensack,
  NJ, 2008.

\bibitem{Cannarsa2004Semiconcave}
P.~Cannarsa and C.~Sinestrari.
\newblock {\em Semiconcave functions, {H}amilton-{J}acobi equations, and
  optimal control}.
\newblock Progress in Nonlinear Differential Equations and their Applications,
  58. Birkh\"auser Boston, Inc., Boston, MA, 2004.

\bibitem{Cardaliaguet2013Long2}
P.~Cardaliaguet.
\newblock Long time average of first order mean field games and weak {KAM}
  theory.
\newblock {\em Dyn. Games Appl.}, 3(4):473--488, 2013.

\bibitem{CardaliaguetNotes}
P.~Cardaliaguet.
\newblock Notes on mean field games (from {P}.-{L}. {L}ions' lectures at
  {C}oll{\`e}ge de {F}rance).
\newblock Available at
  \url{https://www.ceremade.dauphine.fr/~cardaliaguet/MFG20130420.pdf}, 2013.

\bibitem{Cardaliaguet2015Weak}
P.~Cardaliaguet.
\newblock Weak solutions for first order mean field games with local coupling.
\newblock In {\em Analysis and geometry in control theory and its
  applications}, volume~11 of {\em Springer INdAM Ser.}, pages 111--158.
  Springer, Cham, 2015.

\bibitem{Cardaliaguet2017Convergence}
P.~Cardaliaguet.
\newblock The convergence problem in mean field games with local coupling.
\newblock {\em Applied Mathematics {\&} Optimization}, 76(1):177--215, jun
  2017.

\bibitem{CardaliaguetMaster}
P.~Cardaliaguet, F.~Delarue, J.-M. Lasry, and P.-L. Lions.
\newblock The master equation and the convergence problem in mean field games.
\newblock Preprint arXiv:1509.02505.

\bibitem{Cardaliaguet2013Long1}
P.~Cardaliaguet, J.-M. Lasry, P.-L. Lions, and A.~Porretta.
\newblock Long time average of mean field games with a nonlocal coupling.
\newblock {\em SIAM J. Control Optim.}, 51(5):3558--3591, 2013.

\bibitem{Cardaliaguet2016First}
P.~Cardaliaguet, A.~R. M\'esz\'aros, and F.~Santambrogio.
\newblock First order mean field games with density constraints: pressure
  equals price.
\newblock {\em SIAM J. Control Optim.}, 54(5):2672--2709, 2016.

\bibitem{Carlier2008Optimal}
G.~Carlier, C.~Jimenez, and F.~Santambrogio.
\newblock Optimal transportation with traffic congestion and {W}ardrop
  equilibria.
\newblock {\em SIAM J. Control Optim.}, 47(3):1330--1350, 2008.

\bibitem{Carlier2011Continuous}
G.~Carlier and F.~Santambrogio.
\newblock A continuous theory of traffic congestion and {W}ardrop equilibria.
\newblock {\em Zap. Nauchn. Sem. S.-Peterburg. Otdel. Mat. Inst. Steklov.
  (POMI)}, 390(Teoriya Predstavleni\u\i , Dinamicheskie Sistemy, Kombinatornye
  Metody. XX):69--91, 307--308, 2011.

\bibitem{Carlini2014Fully}
E.~Carlini and F.~J. Silva.
\newblock A fully discrete semi-{L}agrangian scheme for a first order mean
  field game problem.
\newblock {\em SIAM J. Numer. Anal.}, 52(1):45--67, 2014.

\bibitem{Carmona2014Master}
R.~Carmona and F.~Delarue.
\newblock The master equation for large population equilibriums.
\newblock In {\em Stochastic analysis and applications 2014}, volume 100 of
  {\em Springer Proc. Math. Stat.}, pages 77--128. Springer, Cham, 2014.

\bibitem{Castaing1977Convex}
C.~Castaing and M.~Valadier.
\newblock {\em Convex analysis and measurable multifunctions}.
\newblock Lecture Notes in Mathematics, Vol. 580. Springer-Verlag, Berlin-New
  York, 1977.

\bibitem{Clarke2013Functional}
F.~Clarke.
\newblock {\em Functional analysis, calculus of variations and optimal
  control}, volume 264 of {\em Graduate Texts in Mathematics}.
\newblock Springer, London, 2013.

\bibitem{Clarke1990Optimization}
F.~H. Clarke.
\newblock {\em Optimization and nonsmooth analysis}, volume~5 of {\em Classics
  in Applied Mathematics}.
\newblock Society for Industrial and Applied Mathematics (SIAM), Philadelphia,
  PA, second edition, 1990.

\bibitem{Cohen1991Paradoxical}
J.~E. Cohen and P.~Horowitz.
\newblock Paradoxical behaviour of mechanical and electrical networks.
\newblock {\em Nature}, 352\allowbreak(6337):699--701, aug 1991.

\bibitem{Crandall1983Viscosity}
M.~G. Crandall and P.-L. Lions.
\newblock Viscosity solutions of {H}amilton-{J}acobi equations.
\newblock {\em Trans. Amer. Math. Soc.}, 277(1):1--42, 1983.

\bibitem{CriPicTos}
E.~Cristiani, B.~Piccoli, and A.~Tosin.
\newblock Multiscale modeling of granular flows with application to crowd
  dynamics.
\newblock {\em Multiscale Model. Simul.}, 9(1):155--182, 2011.

\bibitem{Faure2015Crowd}
S.~Faure and B.~Maury.
\newblock Crowd motion from the granular standpoint.
\newblock {\em Math. Models Methods Appl. Sci.}, 25(3):463--493, 2015.

\bibitem{Fischer2010Fast}
S.~Fischer, H.~R\"acke, and B.~V\"ocking.
\newblock Fast convergence to {W}ardrop equilibria by adaptive sampling
  methods.
\newblock {\em SIAM J. Comput.}, 39(8):3700--3735, 2010.

\bibitem{Gomes2013Continuous}
D.~A. Gomes, J.~Mohr, and R.~R. Souza.
\newblock Continuous time finite state mean field games.
\newblock {\em Appl. Math. Optim.}, 68(1):99--143, 2013.

\bibitem{Gomes2014Mean}
D.~A. Gomes and J.~Sa\'ude.
\newblock Mean field games models---a brief survey.
\newblock {\em Dyn. Games Appl.}, 4(2):110--154, 2014.

\bibitem{Granas2003Fixed}
A.~Granas and J.~Dugundji.
\newblock {\em Fixed point theory}.
\newblock Springer Monographs in Mathematics. Springer-Verlag, New York, 2003.

\bibitem{Gueant2012New}
O.~Gu\'eant.
\newblock New numerical methods for mean field games with quadratic costs.
\newblock {\em Netw. Heterog. Media}, 7(2):315--336, 2012.

\bibitem{Gueant2015Existence}
O.~Gu\'eant.
\newblock Existence and uniqueness result for mean field games with congestion
  effect on graphs.
\newblock {\em Appl. Math. Optim.}, 72(2):291--303, 2015.

\bibitem{Gueant2011Mean}
O.~Gu\'eant, J.-M. Lasry, and P.-L. Lions.
\newblock Mean field games and applications.
\newblock In {\em Paris-{P}rinceton {L}ectures on {M}athematical {F}inance
  2010}, volume 2003 of {\em Lecture Notes in Math.}, pages 205--266. Springer,
  Berlin, 2011.

\bibitem{Haurie1985Relationship}
A.~Haurie and P.~Marcotte.
\newblock On the relationship between {N}ash-{C}ournot and {W}ardrop
  equilibria.
\newblock {\em Networks}, 15(3):295--308, 1985.

\bibitem{Helbing2000Simulating}
D.~Helbing, I.~Farkas, and T.~Vicsek.
\newblock Simulating dynamical features of escape panic.
\newblock {\em Nature}, 407(6803):487, 2000.

\bibitem{Henderson1971Statistics}
L.~F. Henderson.
\newblock The statistics of crowd fluids.
\newblock {\em Nature}, 229(5284):381--383, feb 1971.

\bibitem{Hiriart-Urruty1980Extension}
J.-B. Hiriart-Urruty.
\newblock Extension of {L}ipschitz functions.
\newblock {\em J. Math. Anal. Appl.}, 77(2):539--554, 1980.

\bibitem{Huang2003Individual}
M.~Huang, P.~E. Caines, and R.~P. Malham{\'e}.
\newblock Individual and mass behaviour in large population stochastic wireless
  power control problems: centralized and {N}ash equilibrium solutions.
\newblock In {\em 42nd IEEE Conference on Decision and Control, 2003.
  Proceedings}, volume~1, pages 98--103. IEEE, 2003.

\bibitem{Huang2007Large}
M.~Huang, P.~E. Caines, and R.~P. Malham\'e.
\newblock Large-population cost-coupled {LQG} problems with nonuniform agents:
  individual-mass behavior and decentralized {$\epsilon$}-{N}ash equilibria.
\newblock {\em IEEE Trans. Automat. Control}, 52(9):1560--1571, 2007.

\bibitem{Huang2006Large}
M.~Huang, R.~P. Malham\'e, and P.~E. Caines.
\newblock Large population stochastic dynamic games: closed-loop
  {M}c{K}ean-{V}lasov systems and the {N}ash certainty equivalence principle.
\newblock {\em Commun. Inf. Syst.}, 6(3):221--251, 2006.

\bibitem{Hughes2002Continuum}
R.~L. Hughes.
\newblock A continuum theory for the flow of pedestrians.
\newblock {\em Transportation Research Part B: Methodological}, 36(6):507--535,
  jul 2002.

\bibitem{Hughes2003Flow}
R.~L. Hughes.
\newblock The flow of human crowds.
\newblock In {\em Annual review of fluid mechanics, {V}ol. 35}, volume~35 of
  {\em Annu. Rev. Fluid Mech.}, pages 169--182. Annual Reviews, Palo Alto, CA,
  2003.

\bibitem{Kolokoltsov2014Rate}
V.~N. Kolokoltsov, M.~Troeva, and W.~Yang.
\newblock On the rate of convergence for the mean-field approximation of
  controlled diffusions with large number of players.
\newblock {\em Dyn. Games Appl.}, 4(2):208--230, 2014.

\bibitem{Lachapelle2011Mean}
A.~Lachapelle and M.-T. Wolfram.
\newblock On a mean field game approach modeling congestion and aversion in
  pedestrian crowds.
\newblock {\em Transportation Research Part B: Methodological},
  45(10):1572--1589, dec 2011.

\bibitem{Lasry2006JeuxI}
J.-M. Lasry and P.-L. Lions.
\newblock Jeux \`a champ moyen. {I}. {L}e cas stationnaire.
\newblock {\em C. R. Math. Acad. Sci. Paris}, 343(9):619--625, 2006.

\bibitem{Lasry2006JeuxII}
J.-M. Lasry and P.-L. Lions.
\newblock Jeux \`a champ moyen. {II}. {H}orizon fini et contr\^ole optimal.
\newblock {\em C. R. Math. Acad. Sci. Paris}, 343(10):679--684, 2006.

\bibitem{Lasry2007Mean}
J.-M. Lasry and P.-L. Lions.
\newblock Mean field games.
\newblock {\em Jpn. J. Math.}, 2(1):229--260, 2007.

\bibitem{Maury2010Macroscopic}
B.~Maury, A.~Roudneff-Chupin, and F.~Santambrogio.
\newblock A macroscopic crowd motion model of gradient flow type.
\newblock {\em Math. Models Methods Appl. Sci.}, 20(10):1787--1821, 2010.

\bibitem{Maury2011Handling}
B.~Maury, A.~Roudneff-Chupin, F.~Santambrogio, and J.~Venel.
\newblock Handling congestion in crowd motion modeling.
\newblock {\em Netw. Heterog. Media}, 6(3):485--519, 2011.

\bibitem{Meszaros2015Variational}
A.~R. M\'esz\'aros and F.~J. Silva.
\newblock A variational approach to second order mean field games with density
  constraints: the stationary case.
\newblock {\em J. Math. Pures Appl. (9)}, 104(6):1135--1159, 2015.

\bibitem{Piccoli2011Time}
B.~Piccoli and A.~Tosin.
\newblock Time-evolving measures and macroscopic modeling of pedestrian flow.
\newblock {\em Arch. Ration. Mech. Anal.}, 199(3):707--738, 2011.

\bibitem{Pontryagin1962Mathematical}
L.~S. Pontryagin, V.~G. Boltyanskii, R.~V. Gamkrelidze, and E.~F. Mishchenko.
\newblock {\em The mathematical theory of optimal processes}.
\newblock Translated from the Russian by K. N. Trirogoff; edited by L. W.
  Neustadt. Interscience Publishers John Wiley \& Sons, Inc.\, New York-London,
  1962.

\bibitem{Prosinski2017Global}
A.~Prosinski and F.~Santambrogio.
\newblock Global-in-time regularity via duality for congestion-penalized mean
  field games.
\newblock {\em Stochastics}, 89(6-7):923--942, 2017.

\bibitem{Roughgarden2002How}
T.~Roughgarden and E.~Tardos.
\newblock How bad is selfish routing?
\newblock {\em J. ACM}, 49(2):236--259, 2002.

\bibitem{SainteBeuve1974Extension}
M.-F. Sainte-Beuve.
\newblock On the extension of von {N}eumann--{A}umann's theorem.
\newblock {\em J. Functional Analysis}, 17:112--129, 1974.

\bibitem{Santambrogio2015Optimal}
F.~Santambrogio.
\newblock {\em Optimal transport for applied mathematicians}, volume~87 of {\em
  Progress in Nonlinear Differential Equations and their Applications}.
\newblock Birkh\"auser/Springer, Cham, 2015.
\newblock Calculus of variations, PDEs, and modeling.

\bibitem{Smith1978Road}
M.~Smith.
\newblock In a road network, increasing delay locally can reduce delay
  globally.
\newblock {\em Transportation Research}, 12(6):419--422, dec 1978.

\bibitem{Taguchi1982Braess}
A.~Taguchi.
\newblock Braess' paradox in a two-terminal transportation network.
\newblock {\em J. Oper. Res. Soc. Japan}, 25(4):376--389, 1982.

\bibitem{Villani2009Optimal}
C.~Villani.
\newblock {\em Optimal transport}, volume 338 of {\em Grundlehren der
  Mathematischen Wissenschaften [Fundamental Principles of Mathematical
  Sciences]}.
\newblock Springer-Verlag, Berlin, 2009.
\newblock Old and new.

\bibitem{Wardrop1952Theoretical}
J.~G. Wardrop.
\newblock Some theoretical aspects of road traffic research.
\newblock {\em Proc. Inst. Civ. Eng.}, 1(3):325--362, 1952.

\end{thebibliography}

\section*{Index of notations}

The following table provides the main notations used in this paper, together with their brief descriptions and the pages of their definitions.

\renewcommand{\arraystretch}{1.33}
\begin{longtable}{@{\hspace*{0.01\textwidth}} >{\raggedright} m{0.16\textwidth} @{\hspace*{0.02\textwidth}} m{0.7\textwidth} @{\hspace*{0.02\textwidth}} >{\centering} m{0.08\textwidth} @{\hspace*{0.01\textwidth}}}
\hline\hline
Notation & Description & Page \tabularnewline
\hline\hline
\endhead
\hline\hline
\endfoot
$\mathcal C_X$ & $\mathcal C(\mathbb R_+, X)$ with the topology of uniform convergence on compact sets. & \pageref{MathcalCX} \tabularnewline
\hline
$\Lip_c(X)$ & Set of $c$-Lipschitz continuous functions in $\mathcal C_X$. & \pageref{LipcX} \tabularnewline
\hline
$e_t$ & Evaluation map, associates with a curve $\gamma$ its value at time $t$. & \pageref{DefiET} \tabularnewline
\hline
$\mathcal P(X)$ & Set of Borel probability measures on $X$. & \pageref{MathcalPX} \tabularnewline
\hline
$W_1$ & Wasserstein distance on $\mathcal P(X)$. & \pageref{DefiW1} \tabularnewline
\hline
$\abs{\dot\gamma}$ & Metric derivative of $\gamma$. & \pageref{AbsDotGamma} \tabularnewline
\hline
$\gengrad \phi$ & Generalized gradient of $\phi$. & \pageref{GenGradPhi} \tabularnewline
\hline
$\MFG(X, \Gamma, K)$ & Minimal-time mean field game in the space $X$ with target set $\Gamma$ and maximal speed given by the function $K$. & \pageref{MFGXGammaK} \tabularnewline
\hline
$\OCP(X, \Gamma, k)$ & Minimal-time optimal control problem in the space $X$ with target set $\Gamma$ and maximal speed given by the function $k$. & \pageref{OCPXGammak} \tabularnewline
\hline
$\Adm(k)$\newline $\Adm(m)$\newline $\Adm(Q)$ & Sets of $k$-, $m$-, and $\mu^Q$-admissible curves, respectively. & \pageref{DefiAdmk}, \pageref{DefiAdmm}, \pageref{DefiAdmQ} \tabularnewline
\hline
$\tau(t_0, \gamma)$ & First exit time after $t_0$ of the curve $\gamma$. & \pageref{Taut0gamma} \tabularnewline
\hline
$\Opt(k, t_0, x_0)$\newline $\Opt(m, t_0, x_0)$\newline $\Opt(Q, t_0, x_0)$ & Set of optimal curves for $k$, $m$, and $\mu^Q$, respectively, starting at the position $x_0$ at time $t_0$. & \pageref{Optkt0x0}, \pageref{Optmt0x0}, \pageref{OptQt0x0} \tabularnewline
\hline
$\varphi(t, x)$ & Value function of $\OCP(X, \Gamma, k)$. & \pageref{DefiVarphi} \tabularnewline
\hline
$D^+ w(x)$ & Superdifferential of the function $x$ at the point $x$. & \pageref{DefiSuperdifferential} \tabularnewline
\hline
$\mathcal U_{(t_0, x_0)}$ & Set of optimal directions at $(t_0, x_0)$. & \pageref{DefiOptDirections} \tabularnewline
\hline
$\widehat{\nabla w}(t, x)$ & Normalized gradient of the function $w$ with respect to $x$ at $(t, x)$. & \pageref{WidehatNabla} \tabularnewline
\hline
$\Upsilon$ & Set of points $(t, x)$ which are not starting points of optimal trajectories. & \pageref{DefiUpsilon} \tabularnewline
\hline
$\mathcal Q$ & Set of measures in $\mathcal P(\mathcal C_X)$ concentrated on $K_{\max}$-Lipschitz continuous curves. & \pageref{DefiMathcalQ} \tabularnewline
\hline
$\OOpt(Q)$ & Set of optimal trajectories for $Q$ starting at time $0$. & \pageref{EqDefiOOpt} \tabularnewline
\end{longtable}

\end{document}